\documentclass[11pt,showkeys]{amsart}

\usepackage{comment}
\usepackage{bm}
\usepackage{tikz-cd}
\usepackage{amsmath}
\usepackage{fourier}
\usepackage{amssymb}
\usepackage{amscd}
\usepackage{amsthm}
\usepackage[centertags]{amsmath}
\usepackage{amsfonts}
\usepackage{newlfont}
\usepackage{graphicx}
\usepackage{xcolor}
\usepackage{amsfonts, amssymb}
\usepackage{mathrsfs}
\usepackage{latexsym}
\usepackage{tikz}
\usepackage{verbatim}
\usepackage[normalem]{ulem}
\usepackage[all]{xy}
\usepackage{enumitem}

\usepackage{graphicx}
\usepackage{amsmath, amssymb, amsthm}
\usepackage{braket}
\usepackage{geometry}
\usepackage{mathtools}
\usepackage{enumitem}
\usepackage{bbm}
\usepackage{tikz}
\usepackage{tikz-cd}
\usetikzlibrary{positioning, arrows.meta}
\usepackage{comment}
\title[Projection Constants of Polynomial Spaces via Representation Theory]{Projection Constants of Polynomial Spaces via Representation Theory}
\author{}
\date{}

\subjclass[2020]{Primary 46B20, 22D10; Secondary 43A77, 43A85, 46G25} \keywords{Projection constants, minimal projections, representations of compact groups, Peter--Weyl theory, polynomial spaces}

\newcommand{\sboxtimes}
{\mathbin{\vcenter{\hbox{\scalebox{0.65}{$\boxtimes$}}}}}

\newtheorem{theorem}{Theorem}[section]
\newtheorem{lemma}[theorem]{Lemma}

\newtheorem{remark}[theorem]{Remark}
\newtheorem{corollary}[theorem]{Corollary}
\newtheorem{proposition}[theorem]{Proposition}

\newcommand{\hbullet}{\mathbin{\vcenter{\hbox{\scalebox{0.7}{$\bullet$}}}}}

\usepackage[hidelinks]{hyperref}
\hypersetup{
  pdftitle={Projection Constants of Polynomial Spaces via Representation Theory},
  pdfauthor={A. Defant, D. Galicer, M. Mansilla, M. Mastylo, S. Muro, P. Zadunaisky}
}

 \author[Defant]{A.~Defant}
 \address{%
 Institut f\"{u}r Mathematik,
 Carl von Ossietzky Universit\"at,
 26111 Oldenburg,
 Germany}
 \email{defant@mathematik.uni-oldenburg.de}

 \author[Galicer]{D.~Galicer}
 \address{Universidad Torcuato Di Tella. Departamento de Matem\'atica y Estad\'istica.  IMAS-CONICET.
Av. Figueroa Alcorta 7350 (1428), Buenos Aires, Argentina.}
 \email{daniel.galicer$@$utdt.edu}

 \author[Mansilla]{M.~Mansilla}
 \address{Departamento de Matem\'{a}tica,
 Facultad de Cs. Exactas y Naturales, Universidad de Buenos Aires and IAM-CONICET. Saavedra 15 (C1083ACA) C.A.B.A., Argentina}
 \email{mmansilla$@$dm.uba.ar}

 \author[Masty{\l}o]{M.~Masty{\l}o}
 \address{Faculty of Mathematics and Computer Science, Adam Mickiewicz University, Pozna{\'n}, 61-614 Pozna{\'n}, Poland}
 \email{mieczyslaw.mastylo$@$amu.edu.pl}

 \author[Muro]{S.~Muro}
\address{FCEIA, Universidad Nacional de Rosario and CIFASIS, CONICET, Ocampo $\&$ Esmeralda, S2000 Rosario, Argentina}
 \email{muro$@$cifasis-conicet.gov.ar}

 \author[Zadunaisky]{P.~Zadunaisky}
 \address{Departamento de Matem\'{a}tica,
 Facultad de Cs. Exactas y Naturales, Universidad de Buenos Aires and IMAS-CONICET. C.A.B.A., Argentina}
 \email{pzadub$@$dm.uba.ar}

\date{}

\begin{document}

\begin{abstract}
We develop a representation-theoretic framework for computing projection
constants of finite-dimensional subspaces of \(C(K)\), where \(K\) is a
compact homogeneous \(G\)-space. Within this framework, we characterize
the finite-dimensional \(G\)-invariant subspaces for which the
\(G\)-equivariant projection is unique. These are precisely the finite
orthogonal sums of full isotypic components in the Peter--Weyl
decomposition of \(L_2(K)\). For such subspaces, averaging shows that
this unique \(G\)-equivariant projection is minimal; it is the restriction
to \(C(K)\) of the \(L_2(K)\)-orthogonal projection. Consequently, their
projection constants are given by the \(L_1\)-norm of an explicit
reproducing-kernel slice.
We apply this method to spaces of \(d\)-homogeneous polynomials in high
dimension. The examples range from Fourier analysis on the torus,
through spherical harmonic analysis on real and complex Euclidean
spheres, to genuinely noncommutative harmonic analysis on the unitary
group, where representation theory becomes indispensable. For all
these families, we obtain precise high-dimensional asymptotics at the
square-root-of-dimension scale.
\end{abstract}

\maketitle

\tableofcontents

\newpage


\section*{Introduction}

Projection constants measure how well a finite-dimensional Banach space
can be complemented in ambient spaces. If \(X\) is a subspace of a Banach
space \(Y\), its relative projection constant in \(Y\) is
\[
        \boldsymbol\lambda(X,Y)
        :=
        \inf\{ \,\|P\|:P\in\mathcal L(Y),\ P^2=P,\ P(Y)=X \, \}.
\]
The absolute projection constant of \(X\) is
\[
        \boldsymbol\lambda(X)
        :=
        \sup \boldsymbol\lambda(X,Y),
\]
where the supremum is taken over all Banach spaces \(Y\) containing \(X\)
isometrically. Thus \(\boldsymbol\lambda(X)\) is an isometric invariant of \(X\), measuring its
worst-case complementability.
For many concrete families of finite-dimensional Banach spaces, either exact
formulas for their projection constants or precise asymptotic estimates in
terms of the dimension are known.

A standard theorem asserts that whenever a finite-dimensional Banach space \(X\) is realized isometrically as
a subspace of \(C(K)\), for a compact Hausdorff space $K$, its absolute projection constant coincides with
its relative projection constant in \(C(K)\); see \cite[Theorem~III.B.5]{wojtaszczyk1996banach}:
\[
        \boldsymbol\lambda(X)
        =
        \boldsymbol\lambda(X,C(K)).
\]
Moreover, the infimum in the definition of \(\boldsymbol\lambda(X,C(K))\) is
attained. Thus there exists a projection
\[
        P:C(K)\longrightarrow X
\]
such that
\[
        \|P\|
        =
        \boldsymbol\lambda(X,C(K))
        =
        \boldsymbol\lambda(X).
\]
Such a projection is called a minimal projection onto \(X\).
Thus the computation of absolute projection constants may be carried out
inside suitable spaces of continuous functions.

The classical general bounds
are
$$
1
\leq
\boldsymbol\lambda(X)
\leq
\sqrt{n},$$
for every \(n\)-dimensional Banach space \(X\).
The lower bound is attained by \(\ell_\infty^n\), since the Hahn--Banach
theorem gives
$
\boldsymbol\lambda(\ell_\infty^n)=1,
$
whereas the upper bound is the celebrated theorem of Kadets and Snobar.

In this paper, we study natural finite-dimensional spaces whose geometry is
governed by a rich symmetry structure. More specifically, we focus on spaces
of polynomials on compact homogeneous spaces, where the underlying group
action can be exploited to identify the canonical projection.

Our goals are twofold. First, we develop a representation-theoretic framework
for determining when symmetry uniquely identifies the minimal projection.
Second, we apply this framework to several families of polynomial spaces, obtaining explicit integral formulas for their projection constants. These formulas then serve as the starting point for deriving precise high-dimensional asymptotics by probabilistic methods. Although
the applications involve different forms of harmonic analysis, they all arise
from the same underlying mechanism.

\medskip

\textbf{The representation-theoretic framework.}
The first part of the paper develops a representation-theo\-retic symmetry
principle for computing projection constants in spaces of continuous
functions.

Let \(K\) be a compact homogeneous \(G\)-space with invariant probability
measure \(\mu_K\), and let \(S\subset C(K)\) be a finite-dimensional
\(G\)-invariant subspace. We say that \(S\) is accessible when the
\(G\)-equivariant projection onto \(S\) is unique. In this case, averaging
shows that the restriction to \(C(K)\) of the orthogonal projection
\(\mathbf\pi_S:L_2(K,\mu_K)\to S\) is minimal. Consequently, if \(\mathbf k_S\) denotes the
reproducing kernel of \(S\), then
$$
\boldsymbol\lambda(S)
=
\int_K |\mathbf k_S(x_0,y)|\,d\mu_K(y),
\qquad x_0\in K.
$$
Thus the minimization problem is replaced by a structural and analytic
program: identify the relevant symmetry, determine accessibility, compute
the reproducing kernel, and evaluate the \(L_1\)-norm of one of its slices.

The Peter--Weyl decomposition of \(L_2(K,\mu_K)\), together with its
uniform-density consequence in \(C(K)\), provides the natural framework
for carrying out this program. It describes invariant subspaces in terms of isotypic
components and reveals that multiplicities are precisely what distinguish
invariance from accessibility. The abstract theory shows that accessible subspaces are precisely the finite sums of full isotypic components.
In particular, for multiplicity-free actions, every finite-dimensional
invariant subspace is accessible.

This characterization leads to a general integral formula for projection
constants in terms of the kernel data. For compact
abelian groups, it reduces to the \(L_1\)-norm of a Fourier sum. For compact
groups acting on themselves, it becomes the \(L_1\)-norm of a weighted sum
of irreducible characters. The contrast between the left-regular and
biregular actions further shows why additional symmetry is needed in the
noncommutative setting.

\medskip

\noindent

\textbf{Normalized projection constants of polynomial spaces.}
The second part of the paper applies the abstract machinery to polynomial
spaces in high dimension. Although all the examples follow the same general
scheme, each of them involves a different form of harmonic analysis.

Let $n\in\mathbb N$ and let
$X_n=(\mathbb K^n,\|\cdot\|)$, where
$\mathbb K\in\{\mathbb R,\mathbb C\}$. For $d\in\mathbb N_0$, we
denote by
$\mathcal P_d(X_n)=\mathcal P_d(X_n;\mathbb K)$
the Banach space of all scalar-valued $d$-homogeneous polynomials on
$X_n$. Every $P\in\mathcal P_d(X_n)$ has a unique representation
\[
P(z)=\sum_{|\alpha|=d}c_\alpha z^\alpha, \qquad c_\alpha\in\mathbb K,
\]
and $\mathcal P_d(X_n)$ is equipped with the norm
\[
\|P\|=\sup_{\|z\|\leq 1}|P(z)|=\sup_{\|z\|=1}|P(z)|,
\]
where the last equality follows from homogeneity.

Our concrete applications fall into three principal settings. For
$\mathcal P_d(\ell_\infty^n(\mathbb C))$, the relevant compact space
is the torus $\mathbb T^n$, and the associated reproducing kernels are
Fourier sums. For
$\mathcal P_d(\ell_2^n(\mathbb K))$, where
$\mathbb K\in\{\mathbb R,\mathbb C\}$, one passes to the corresponding
Euclidean sphere, and the kernels are described in terms of spherical
harmonics. Finally, in the complex setting, the relevant symmetry for
$\mathcal P_d\bigl(\mathcal L(\ell_2^n(\mathbb C))\bigr)$ is genuinely
noncommutative: the compact model is the unitary group $\mathcal U_n$,
and the corresponding decomposition is expressed in terms of matrix
coefficients of irreducible unitary representations.

Throughout, the degree $d$ is fixed and the ambient dimension tends to
infinity. The natural normalization is the Kadets--Snobar scale, namely the
square root of the dimension of the polynomial space. More precisely, for each of the families of Banach spaces $X_n$ described above, with $n$ denoting the ambient dimension, we prove that
$$
\frac{\boldsymbol\lambda\bigl(\mathcal P_d(X_n)\bigr)}
{\sqrt{\dim \mathcal P_d(X_n)}}
\longrightarrow c_d,
\qquad n\to\infty,
$$
where the constant $c_d>0$ is determined explicitly.
 Thus, for these spaces
of $d$-homogeneous polynomials, the Kadets--Snobar estimate is
asymptotically sharp, and its precise leading constant is identified.

On the torus, Fourier sums yield new precise high-dimensional asymptotics for
homogeneous and tetrahedral polynomials on $\ell_\infty^n(\mathbb C)$. On real and complex Euclidean spheres, the representation-theoretic
framework provides unified characterizations of invariant and
accessible subspaces. In the real
case, it also leads to a new asymptotic formula involving Hermite polynomials,
whereas in the complex case it naturally recovers the projection-constant
formula and asymptotic limit obtainable from the Ryll--Wojtaszczyk formula.
In the genuinely noncommutative setting of the unitary group
\(\mathcal U_n\), where representation theory is essential, we obtain
a complete Peter--Weyl decomposition of the polynomial spaces, an
explicit reproducing kernel and integral formula for their projection
constants, and precise asymptotics for every fixed degree.
The degree-one case recovers and conceptually explains the
previously known result for the trace class.

Despite the differences among these harmonic models, a common Gaussian
structure emerges. In the complex settings of the torus, the complex
Euclidean sphere, and the unitary group, the normalized projection constants
converge to
$$
\frac{\mathbb E |W|^d}{\sqrt{d!}}
=
\frac{\Gamma(1+d/2)}{\sqrt{d!}},
$$
where $W$ is a standard complex Gaussian normalized by
$\mathbb E|W|^2=1$. In the real Hilbertian setting, the corresponding limit is
$$
\frac{\mathbb E |\operatorname{He}_d(Z)|}{\sqrt{d!}},
$$
where $Z$ is a standard real Gaussian and $\operatorname{He}_d$ is the
$d$-th probabilists' Hermite polynomial.

These examples reveal a common mechanism across abelian, spherical, and
genuinely noncommutative settings: representation theory identifies the
canonical projection and its reproducing kernel, while high-dimensional
probability determines the asymptotic behavior of its $L_1$-norm.

\medskip
\textbf{A note on the exposition.} Although the problems considered in this paper arise naturally in the study of projection constants in Banach space theory, their treatment relies on tools from the representation theory of compact groups that may be less familiar to specialists in functional analysis. We have therefore included a self-contained account of the representation-theoretic framework needed for the applications. Our aim is not only to provide the necessary background, but also to make clear how the structure of the group action enters the identification and computation of minimal projections.

We follow the standard conventions of Banach space theory \cite{wojtaszczyk1996banach}
and representation theory \cite{folland2016course}. We have aimed to make the exposition as self-contained as possible, and many
of the relevant definitions are introduced locally when they are first needed.
\medskip

\section{Projection constants in homogeneous $G$-spaces}
\label{sec:Projection constants}

Throughout Subsections~\ref{sec: homogeneity} and~\ref{The core of the strategy}, we allow both real- and
complex-valued function spaces. When appropriate, we use the notation
$C(K;\mathbb R)$ and $C(K;\mathbb C)$ to distinguish between the two
settings. From Subsection~\ref{sec:The Peter--Weyl decomposition}
through the end of Section~1, all function spaces and representations
are considered over $\mathbb C$. In particular, $C(K)$ denotes
$C(K;\mathbb C)$. Real-valued function spaces arising in the
applications are treated by first passing to their natural
complexifications and then restricting to the corresponding real
forms.

\subsection{Homogeneity, invariance and accessibility}\label{sec: homogeneity}
We begin with the abstract framework. The basic objects are compact
homogeneous \(G\)-spaces and finite-dimensional invariant subspaces of
\(C(K)\). The key notion is accessibility: the condition that symmetry
already determines the minimal projection onto the subspace. Throughout, projections between Banach spaces are understood to be bounded and linear.

\subsubsection{Homogeneous $G$-spaces}

Let $G$ be a compact topological group. By a compact \emph{$G$-space} we mean a
compact  Hausdorff space $K$ endowed with a continuous left action of $G$,
that is, a continuous map
$$
G\times K\longrightarrow K,\qquad (g,x)\longmapsto g\hbullet x,
$$
satisfying $e\hbullet x=x$ and $(gh)\hbullet x=g\hbullet(h\hbullet x)$ for all
$g,h\in G$ and $x\in K$, where $e$ is the identity element of $G$.

A compact $G$-space $K$ is said to be \emph{homogeneous} if the action of $G$
on $K$ is transitive, that is, if for every $x,y\in K$ there exists $g\in G$
such that
$g\hbullet x=y.$
Equivalently, for every $x\in K$, its orbit
$$
G\hbullet x:=\{g\hbullet x:g\in G\}
$$
coincides with the whole space $K$.

  For example, the real sphere
$\mathbb S_{\mathbb R}^{n-1}\subset\mathbb R^n$ is homogeneous under the orthogonal group
$\mathcal O_n$, whereas the complex sphere
$\mathbb S_{\mathbb C}^{n-1}\subset\mathbb C^n$ is homogeneous under the unitary group
$\mathcal U_n$.

When $K=G$ is itself a compact group, two important examples arise naturally. The first is the \emph{left-regular action} of $G$ on itself, given by
$$
g\hbullet x=gx,\qquad g, x\in G.
$$
The second is the \emph{biregular action} of $G\times G$ on $G$, defined by
$$
(g_1,g_2)\hbullet x=g_1xg_2^{-1},\qquad (g_1,g_2) \in G \times G ,\; x\in G.
$$
Both actions are transitive and therefore turn $K$ into a homogeneous space.

\subsubsection{The orbit map and Haar measure}
Let \(K\) be a homogeneous \(G\)-space and fix \(x_0\in K\). The associated
\emph{orbit map} is defined by
\[
        \Phi_{x_0}\colon G\longrightarrow K,
        \qquad
        g\longmapsto g\hbullet x_0 .
\]
Since the action is transitive, \(\Phi_{x_0}\) is surjective. Moreover, the
orbit map is continuous and open. Let \(\mu_G\) denote the normalized Haar
measure on \(G\); since \(G\) is compact, \(\mu_G\) is both left- and
right-invariant.

The invariant probability measure on \(K\) is defined as the pushforward
of \(\mu_G\) under the orbit map. In other words, for every Borel set
\(A\subset K\),
\[
        \mu_K(A):=\mu_G\bigl(\Phi_{x_0}^{-1}(A)\bigr).
\]
This definition does not depend on the choice of the base point $x_0$. Indeed,
if $x_1=h\hbullet x_0$ for some $h\in G$, then
\[
        \Phi_{x_1}(g)=g\hbullet x_1=g\hbullet(h\hbullet x_0)=(gh)\hbullet x_0
        =\Phi_{x_0}(gh).
\]
Thus $\Phi_{x_1}=\Phi_{x_0}\circ R_h$, where $R_h(g)=gh$ is the right translation
by $h$. Since $\mu_G$ is right-invariant, the pushforwards of $\mu_G$ under
$\Phi_{x_0}$ and $\Phi_{x_1}$ coincide.

The measure $\mu_K$ is the unique regular Borel probability measure on $K$ invariant under
the action of $G$.
It has full support: every nonempty open subset of $K$ has a nonempty open preimage under the orbit map, and every nonempty open subset of $G$ has positive Haar measure. In particular, the natural map from $C(K)$ into $L_2(K,\mu_K)$ is injective.
For further information  see \cite{diestel2014joys}.

In particular, when \(K=G\) is itself a compact group, the above construction
gives the usual normalized Haar measure on \(G\). For the left-regular action
this is immediate from the orbit map
\[
        g\longmapsto gx_0 .
\]
For the biregular action of \(G\times G\) on \(G\),
\[
        (a,b)\hbullet x:=axb^{-1},
\]
the orbit map based at \(e\) is
\[
        G\times G\longrightarrow G,
        \qquad
        (a,b)\longmapsto ab^{-1}.
\]
The pushforward of \(\mu_G\otimes\mu_G\) under this map is again the
normalized Haar measure on \(G\): indeed, for every \(f\in C(G)\),
\[
        \int_{G\times G} f(ab^{-1})\,d\mu_G(a)\,d\mu_G(b)
        =
        \int_G f(x)\,d\mu_G(x).
\]
Thus the left-regular and the biregular actions induce the same Haar
probability measure on \(G\).

\subsubsection{Invariance and accessibility}
Let \(K\) be a homogeneous \(G\)-space. For each \(g\in G\), the associated
\emph{translation operator}
\[
L_g\colon C(K)\longrightarrow C(K)
\]
is defined by
\[
(L_g f)(x):=f(g^{-1}\hbullet x),
\qquad f\in C(K),\quad x\in K.
\]
Each \(L_g\) is an isometry on \(C(K)\), and \(L_gL_h=L_{gh}\). 
A subspace \(S\subset C(K)\) is said to be \emph{\(G\)-invariant} if
\[
L_g(S)\subset S,\qquad g\in G.
\]
Let \(S\subset C(K)\) be a \(G\)-invariant subspace. A projection
\(
Q\colon C(K)\longrightarrow S
\)
is said to be \emph{\(G\)-equivariant} if
\[
Q\circ L_g=L_g\circ Q,\qquad g\in G.
\]
If \(S\subset C(K)\) is finite-dimensional and \(G\)-invariant, then the
restriction \(\mathbf\pi_S|_{C(K)}\) to \(C(K)\) of the orthogonal projection
\[
\mathbf\pi_S\colon L_2(K,\mu_K)\longrightarrow S
\]
is a \(G\)-equivariant projection. Throughout the paper, Hermitian inner
products are taken to be linear in the first variable.

A finite-dimensional \(G\)-invariant subspace \(S\subset C(K)\) is called
\emph{\(G\)-accessible} if every \(G\)-equivariant projection
\(
Q\colon C(K)\longrightarrow S
\)
coincides with \(\mathbf\pi_S|_{C(K)}\).

For later use in the real Euclidean setting, we record the following
elementary observation, which allows us to pass from the natural
complexification of a real invariant subspace to its real form.

\begin{lemma}
\label{lem: acR-acC}
Let \(K\) be a homogeneous compact \(G\)-space, let
\(S_{\mathbb R}\subset C(K;\mathbb R)\) be a finite-dimensional
\(G\)-invariant subspace, and let
\[
    S_{\mathbb C}
    :=
    S_{\mathbb R}\oplus iS_{\mathbb R}
    \subset C(K;\mathbb C)
\]
be its natural complexification. If \(S_{\mathbb C}\) is
\(G\)-accessible in \(C(K;\mathbb C)\), then \(S_{\mathbb R}\) is
\(G\)-accessible in \(C(K;\mathbb R)\).
\end{lemma}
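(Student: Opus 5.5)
Given a $G$-equivariant projection $Q_{\mathbb R}\colon C(K;\mathbb R)\to S_{\mathbb R}$, we complexify it and invoke accessibility of $S_{\mathbb C}$. Every $f\in C(K;\mathbb C)$ decomposes uniquely as $f=u+iv$ with $u=\operatorname{Re} f$ and $v=\operatorname{Im} f$ in $C(K;\mathbb R)$. Define
\[
Q_{\mathbb C}(u+iv):=Q_{\mathbb R}u+iQ_{\mathbb R}v .
\]
We verify three properties in turn.

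\emph{Projection onto $S_{\mathbb C}$.} The map $Q_{\mathbb C}$ is $\mathbb R$-linear. It also commutes with multiplication by $i$, since $i(u+iv)=-v+iu$ is sent to $-Q_{\mathbb R}v+iQ_{\mathbb R}u=iQ_{\mathbb C}(u+iv)$; hence it is $\mathbb C$-linear. It is bounded, with $\|Q_{\mathbb C}\|\le 2\|Q_{\mathbb R}\|$. Its range lies in $S_{\mathbb R}\oplus iS_{\mathbb R}=S_{\mathbb C}$. It fixes every element $s_1+is_2$ of $S_{\mathbb C}$, because $Q_{\mathbb R}$ fixes $S_{\mathbb R}$.

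\emph{Equivariance.} The translation operators $L_g$ on $C(K;\mathbb C)$ are composition operators, so they commute with taking real and imaginary parts: $L_g(u+iv)=L_gu+iL_gv$. Hence
\[
Q_{\mathbb C}L_g=L_gQ_{\mathbb C}
\]
follows from the same identity for $Q_{\mathbb R}$.

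\emph{Identification of $\mathbf\pi_{S_{\mathbb C}}$.} Accessibility of $S_{\mathbb C}$ now gives $Q_{\mathbb C}=\mathbf\pi_{S_{\mathbb C}}|_{C(K;\mathbb C)}$. To conclude, we must show that on real functions the complex orthogonal projection onto $S_{\mathbb C}$ coincides with the real orthogonal projection $\mathbf\pi_{S_{\mathbb R}}$ in $L_2(K,\mu_K;\mathbb R)$. Take a real orthonormal basis $e_1,\dots,e_m$ of $S_{\mathbb R}$ with respect to the real $L_2(\mu_K)$ inner product. The $e_j$ are real-valued, so they remain orthonormal for the Hermitian inner product. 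They span $S_{\mathbb C}$ over $\mathbb C$, and they are $\mathbb C$-linearly independent, since a complex relation splits into real and imaginary parts. So they form an orthonormal basis of $S_{\mathbb C}$. For real $u$,
\[
\mathbf\pi_{S_{\mathbb C}}u=\sum_j\langle u,e_j\rangle e_j
\]
has real coefficients, and this is exactly $\mathbf\pi_{S_{\mathbb R}}u$. Therefore
\[
Q_{\mathbb R}u=Q_{\mathbb C}u=\mathbf\pi_{S_{\mathbb R}}u \qquad\text{for all } u\in C(K;\mathbb R),
\]
which is accessibility of $S_{\mathbb R}$.

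The argument is routine; the only point needing care is this last identification of the two orthogonal projections. It relies on the inner product being the same $L_2(\mu_K)$ pairing in both settings and on the basis functions being real-valued.
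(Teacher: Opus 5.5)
Your proof is correct and follows the paper's argument: you complexify $Q_{\mathbb R}$, check that it is a $G$-equivariant complex-linear projection onto $S_{\mathbb C}$, invoke accessibility of $S_{\mathbb C}$, and restrict to real-valued functions. Your verification via a real orthonormal basis of $S_{\mathbb R}$ that $\mathbf\pi_{S_{\mathbb C}}$ agrees with $\mathbf\pi_{S_{\mathbb R}}$ on real functions spells out the step the paper compresses into ``restricting this identity.''
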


\begin{proof}
Let
\(
    Q_{\mathbb R}\colon C(K;\mathbb R)\longrightarrow S_{\mathbb R}
\)
be a \(G\)-equivariant real-linear projection. Its complexification,
defined by
\[
    Q_{\mathbb C}(f+ig)
    :=
    Q_{\mathbb R}f+iQ_{\mathbb R}g,
    \qquad f,g\in C(K;\mathbb R),
\]
is a \(G\)-equivariant complex-linear projection from
\(C(K;\mathbb C)\) onto \(S_{\mathbb C}\). By the
\(G\)-accessibility of \(S_{\mathbb C}\), we have
\(
    Q_{\mathbb C}
    =
    \mathbf\pi_{S_{\mathbb C}}\big|_{C(K;\mathbb C)}.
\)
Restricting this identity to \(C(K;\mathbb R)\) gives
\(
    Q_{\mathbb R}
    =
    \mathbf\pi_{S_{\mathbb R}}\big|_{C(K;\mathbb R)}.
\)
Thus \(S_{\mathbb R}\) is \(G\)-accessible.
\end{proof}

\subsection{The core of the strategy}
\label{The core of the strategy}

We now isolate the mechanism behind the paper.
Symmetry turns the restriction to \(C(K)\) of the \(L_2(K,\mu_K)\)-orthogonal
projection onto \(S\) into the canonical projection; its
reproducing kernel then turns the projection constant into the
\(L_1\)-norm of a single kernel slice.

\subsubsection{Reproducing kernels}
\label{Reproducing kernels}

We first recall the elementary kernel facts behind the method. Let \(K\)
be a homogeneous \(G\)-space and let \(S\subset C(K)\) be
finite-dimensional. We regard \(S\) as a subspace of \(L_2(K,\mu_K)\) and
write again
\[
        \mathbf \pi_S:L_2(K,\mu_K)\longrightarrow S
\]
for the orthogonal projection. If
\((f_j)_{j=1}^{\dim S}\) is an orthonormal basis of \(S\), then
\[
        (\mathbf \pi_S f)(x)
        =
        \sum_{j=1}^{\dim S}
        \left(
        \int_K f(y)\overline{f_j(y)}\,d\mu_K(y)
        \right)f_j(x).
\]
Equivalently,
\[
        (\mathbf \pi_S f)(x)
        =
        \int_K f(y)\,
        \overline{\mathbf k_S(x,y)}\,d\mu_K(y),
\]
where
\[
        \mathbf k_S(x,y)
        :=
        \sum_{j=1}^{\dim S}
        \overline{f_j(x)}\,f_j(y).
\]
This function is independent of the chosen orthonormal basis and is the
reproducing kernel of \(S\). We shall use the following standard
properties repeatedly:
\[
        \mathbf k_S(x,\cdot)\in S,
        \qquad
        \mathbf k_S(x,y)
        =
        \overline{\mathbf k_S(y,x)},
        \qquad x,y\in K.
\]
In particular, for every \(f\in L_2(K,\mu_K)\) and \(x\in K\),
\[
        (\mathbf \pi_S f)(x)
        =
        \big\langle
        f,\mathbf k_S(x,\cdot)
        \big\rangle_{L_2(K,\mu_K)}.
\]
Moreover, if \(S\) is \(G\)-invariant, then
\[
         \mathbf k_S(g\hbullet x,g\hbullet y)
        =
         \mathbf k_S(x,y),
        \qquad
        g\in G,\ x,y\in K.
\]
Consequently,
\[
        \mathbf k_S(x,x)=\dim S,
        \qquad x\in K,
\]
and
\[
        \left(
        \int_K |\mathbf k_S(x,y)|^2\,d\mu_K(y)
        \right)^{1/2}
        =
        \sqrt{\dim S},
        \qquad x\in K.
\]
All these identities follow directly from the above orthonormal expansion
and from the \(G\)-invariance of the orthogonal projection.

Finally, reproducing kernels are additive under finite orthogonal
sums. More precisely, if
\(
    S=\bigoplus_{k=1}^m S_k
\)
orthogonally in \(L_2(K,\mu_K)\), where each \(S_k\subset C(K)\) is
finite-dimensional, then
\[
    \mathbf k_S(x,y)
    =
    \sum_{k=1}^m \mathbf k_{S_k}(x,y),
    \qquad x,y\in K.
\]

\subsubsection{Averaging projections}
The following result lies at the heart of our approach, as it reduces the computation of projection constants to an explicit integral involving the reproducing kernel. In the case of the circle group, this idea can already be traced back to Faber's work \cite{faber1914interpolatorische}. For the sake of completeness, we include a proof inspired by \cite{rudin1962projections}; see also \cite[Theorem~III.B.13]{wojtaszczyk1996banach}.

\begin{theorem}\label{thm:accessible-projection-constant}
Let \(K\) be a homogeneous \(G\)-space and let \(S\subset C(K)\) be a
finite-dimensional \(G\)-accessible subspace. Then
\[
        \boldsymbol\lambda(S)
        =
        \|\mathbf \pi_S\colon C(K)\to S\|
        =
        \int_K
        \bigl|\mathbf{k}_S(x_0,y)\bigr|
        \,d\mu_K(y),
\]
where \(\mathbf{k}_S\) is the reproducing kernel of \(S\) in
\(L_2(K,\mu_K)\), and \(x_0\in K\) is arbitrary.
\end{theorem}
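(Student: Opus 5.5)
The plan has two parts: show that $\mathbf\pi_S|_{C(K)}$ is a minimal projection, and then compute its norm from the kernel representation.

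\textbf{Step 1 (averaging).} Let $Q\colon C(K)\to S$ be any projection onto $S$. Define
\[
\widetilde Q f:=\int_G L_g Q L_{g^{-1}} f\,d\mu_G(g),
\]
understood pointwise or as a Bochner integral in the finite-dimensional space $S$. To make this rigorous I would use strong continuity: $g\mapsto L_{g^{-1}}f$ is norm continuous in $C(K)$, because $G\times K\to K$ is continuous and $K$ is compact, so $f(g\hbullet x)$ is uniformly continuous in $g$. Also, $L_g$ restricted to the finite-dimensional invariant space $S$ is continuous in $g$. With this in place I would check three things:
\begin{itemize}
\item $\widetilde Q$ maps into $S$, since $S$ is invariant and closed.
\item $\widetilde Q$ fixes $S$: for $s\in S$ we have $L_{g^{-1}}s\in S$, so $QL_{g^{-1}}s=L_{g^{-1}}s$ and hence $L_gQL_{g^{-1}}s=s$.
\item $\widetilde Q$ is $G$-equivariant, by left invariance of $\mu_G$.
\end{itemize}
Since each $L_g$ is an isometry, $\|\widetilde Q\|\le\|Q\|$. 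By accessibility, $\widetilde Q=\mathbf\pi_S|_{C(K)}$, so $\|\mathbf\pi_S\|\le\|Q\|$ for every projection $Q$. Hence $\boldsymbol\lambda(S,C(K))=\|\mathbf\pi_S\|$, and the cited theorem $\boldsymbol\lambda(S)=\boldsymbol\lambda(S,C(K))$ gives the first equality.

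\textbf{Step 2 (norm computation).} From the kernel formula,
\[
(\mathbf\pi_S f)(x)=\int_K f(y)\overline{\mathbf k_S(x,y)}\,d\mu_K(y).
\]
This immediately gives the upper bound
\[
\|\mathbf\pi_S\|\le\sup_x\int_K|\mathbf k_S(x,y)|\,d\mu_K(y).
\]
The invariance $\mathbf k_S(g\hbullet x,g\hbullet y)=\mathbf k_S(x,y)$, together with the $G$-invariance of $\mu_K$ and transitivity, shows that $x\mapsto\int_K|\mathbf k_S(x,y)|\,d\mu_K(y)$ is constant. So the supremum equals the value at an arbitrary $x_0$.

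For the lower bound, the norm of the functional $f\mapsto(\mathbf\pi_S f)(x_0)$ on $C(K)$ is the total variation of the measure $\overline{\mathbf k_S(x_0,\cdot)}\,d\mu_K$, which equals $\int|\mathbf k_S(x_0,y)|\,d\mu_K(y)$ by the Riesz representation theorem. More concretely, I would take $h=\mathbf k_S(x_0,\cdot)/|\mathbf k_S(x_0,\cdot)|$ on the set where the kernel is nonzero, and approximate it by continuous functions $f_n$ with $|f_n|\le1$ and $f_n\to h$ in $L_1(\mu_K)$, using Lusin's theorem or density of $C(K)$ in $L_1$ composed with the radial retraction onto the unit disc. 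Since the kernel is bounded, $(\mathbf\pi_S f_n)(x_0)\to\int|\mathbf k_S(x_0,y)|\,d\mu_K(y)$. The same argument works in the real-valued case, with $h$ taken to be the sign of the kernel.

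\textbf{Main obstacle.} I expect the main difficulty to be the rigorous definition and properties of the averaged operator in Step 1. The key point is that it yields a well-defined bounded operator into $S$ with norm at most $\|Q\|$, and this rests on the continuity of $g\mapsto L_{g^{-1}}f$. I would establish this first, from uniform continuity of the action on the compact space $G\times K$. Once that holds, the averaged operator can be defined weakly, by evaluating at points and integrating the scalar continuous function $g\mapsto (L_gQL_{g^{-1}}f)(x)$. The remaining verifications are then routine.
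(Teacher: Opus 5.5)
Your proposal is correct and follows essentially the same route as the paper. You average an arbitrary projection over $G$ to get an equivariant projection of no larger norm, identify it with $\mathbf\pi_S|_{C(K)}$ by accessibility, and then compute the norm as the total variation of the measure $\overline{\mathbf k_S(x_0,\cdot)}\,d\mu_K$, using invariance of the kernel to remove the dependence on $x_0$. Your averaging $L_gQL_{g^{-1}}$ is the paper's $L_g^{-1}QL_g$ after the substitution $g\mapsto g^{-1}$, and your explicit approximation argument for the lower bound simply spells out the total-variation step that the paper states directly.
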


\begin{proof}
Let \(Q\colon C(K)\to S\) be any projection. For \(f\in C(K)\), define
\[
        \widetilde Qf
        :=
        \int_G
        L_g^{-1}\bigl(Q(L_gf)\bigr)
        \,d\mu_G(g).
\]
For each fixed \(f\in C(K)\), the integrand is a continuous
\(S\)-valued function and hence Bochner integrable. Moreover,
\[
        \bigl\|L_g^{-1}\bigl(Q(L_gf)\bigr)\bigr\|
        \le
        \|Q\|\,\|f\|,
        \qquad g\in G,
\]
because each \(L_g\) is an isometry of \(C(K)\). Thus
\(\widetilde Q\colon C(K)\to S\) is a well-defined bounded linear
operator with
\[
        \|\widetilde Q\|\le \|Q\|.
\]
The usual averaging argument shows that \(\widetilde Q\) is a
\(G\)-equivariant projection onto \(S\). Since \(S\) is
\(G\)-accessible, it follows that
\[
        \mathbf\pi_S|_{C(K)}
        =
        \widetilde Q.
\]
Consequently, for every \(f\in C(K)\),
\begin{equation*}
        \|\mathbf\pi_Sf\|
        =
        \|\widetilde Qf\| 
        \le
        \int_G
        \bigl\|L_g^{-1}\bigl(Q(L_gf)\bigr)\bigr\|
        \,d\mu_G(g) 
        \le
        \|Q\|\,\|f\|.
\end{equation*}
Hence
\[
        \|\mathbf\pi_S|_{C(K)}\|
        \le
        \|Q\|.
\]
Taking the infimum over all projections \(Q\colon C(K)\to S\) gives
\[
        \boldsymbol\lambda(S)
        =
        \|\mathbf\pi_S\colon C(K)\to S\|.
\]
By the discussion in Subsection~\ref{Reproducing kernels}, for every
\(f\in C(K)\) and \(x\in K\),
\[
        (\mathbf\pi_Sf)(x)
        =
        \int_K
        f(y)\overline{\mathbf k_S(x,y)}
        \,d\mu_K(y).
\]
Therefore
\[
        \|\mathbf\pi_S\colon C(K)\to S\|
        =
        \sup_{x\in K}
        \int_K
        \bigl|\mathbf k_S(x,y)\bigr|
        \,d\mu_K(y).
\]
Indeed, for fixed \(x\in K\), the functional
\[
        C(K) \ni f\longmapsto
        \int_K
        f(y)\overline{\mathbf k_S(x,y)}
        \,d\mu_K(y)
\]
is represented by the scalar measure
\(
        \overline{\mathbf k_S(x,y)}\,d\mu_K(y),
\)
whose total variation is
\[
        \int_K
        \bigl|\mathbf k_S(x,y)\bigr|
        \,d\mu_K(y).
\]
Finally, the \(G\)-invariance of \(\mathbf k_S\) and the homogeneity of
\(K\) imply that this integral is independent of \(x\).
\end{proof}

\subsubsection{Strategy}
The previous theorem suggests the strategy that we shall use repeatedly
throughout this paper, and which has already proved useful in several
settings. We begin by looking at the symmetries of the space \(S\) and,
when possible, identifying an underlying group action that realizes
\(S\) as an invariant subspace of \(C(K)\) for a suitable homogeneous
\(G\)-space \(K\). We then show that these symmetries determine the
projection uniquely, that is, that \(S\) is \(G\)-accessible. The
problem is thereby reduced to making the corresponding reproducing
kernel explicit and, finally, to computing or estimating the
\(L_1\)-norm of one of its slices. In short,
\[
\text{symmetries of \(S\)}
\ \longrightarrow\
\text{underlying group action}
\ \longrightarrow\
\text{accessibility}
\ \longrightarrow\
\text{reproducing kernel}
\ \longrightarrow\
\boldsymbol\lambda(S).
\]

\subsection{Peter--Weyl theory on homogeneous spaces}
\label{sec:The Peter--Weyl decomposition}

We now recall the representation-theoretic decomposition behind the whole
approach. It breaks \(L_2(K,\mu_K)\) into isotypic components, and these
components will later determine both invariant and accessible subspaces.

Recall that throughout the remainder of Section~\ref{sec:Projection constants}, all spaces and operators are considered over the complex scalar field.

\subsubsection{The abelian prototype}

We begin with the abelian case. It is the simplest instance of the
general mechanism developed below and serves as a useful prototype for
the whole paper.  It will also reappear later as a special case of
the general theory.

Let \(G\) be a compact abelian group and let \(\widehat G\) denote its
dual group, that is, the group of all continuous characters
$     \gamma:G\longrightarrow\mathbb T .
$

The classical Peter--Weyl theorem is then just Fourier analysis: the
characters form an orthonormal basis of \(L_2(G)\), and their linear
span is dense in \(C(G)\).
For \(\gamma\in\widehat G\), put
\(
        \mathcal E_\gamma(G):=\operatorname{span}\{\gamma\}.
\)
Thus the Fourier decomposition may be written in block form as
\[
        L_2(G)
        =
        \widehat{\bigoplus}_{\gamma\in\widehat G}
        \mathcal E_\gamma(G).
\]
This notation is deliberately simple here: each block consists of just
one character. With the matrix-coefficient convention adopted below,
the abstract block indexed by the one-dimensional representation
\(\gamma\) is \(\operatorname{span}\{\overline\gamma\}\). Since
\(\gamma\mapsto\overline\gamma\) is a bijection of \(\widehat G\), this
amounts only to a harmless reindexing of the Fourier decomposition.

\begin{theorem}\label{thm:abelian-accessibility}
Let $G$ be a compact abelian group and let $S\subset C(G)$ be a
finite-dimensional subspace. Then the following assertions are equivalent:
\begin{itemize}
    \item[(1)] $S$ is left-invariant;
    \item[(2)] $S$ is left-accessible;
    \item[(3)] there exists a finite set $\Lambda\subset\widehat{G}$ such that
    \(
        S=\operatorname{span}\{\gamma:\gamma\in\Lambda\}.
    \)
\end{itemize}
Moreover, in this case,
\[
        \boldsymbol\lambda(S)
        =
        \int_G
        \Big|
             \sum_{\gamma\in\Lambda}\gamma(g)
        \Big|
        \,d\mu_G(g).
\]
\end{theorem}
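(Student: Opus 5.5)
We prove the cycle of implications $(2)\Rightarrow(1)\Rightarrow(3)\Rightarrow(2)$ and then read off the formula from Theorem~\ref{thm:accessible-projection-constant}. The implication $(2)\Rightarrow(1)$ is immediate, because accessibility is by definition only formulated for finite-dimensional $G$-invariant subspaces. The formula at the end follows once $(2)$ and $(3)$ hold. Indeed, the characters in $\Lambda$ are orthonormal in $L_2(G)$, so the reproducing kernel is
\[
\mathbf k_S(x,y)=\sum_{\gamma\in\Lambda}\overline{\gamma(x)}\gamma(y)=\sum_{\gamma\in\Lambda}\gamma(x^{-1}y).
\]
Taking $x_0=e$ in Theorem~\ref{thm:accessible-projection-constant} then gives the stated integral.

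For $(1)\Rightarrow(3)$, let $S$ be finite-dimensional and left-invariant. The translations $L_g|_S$ form a commuting family of unitary operators on $S$ with respect to the $L_2(G)$ inner product, since Haar measure is invariant. Hence they are simultaneously diagonalizable, and $S$ has a basis of joint eigenfunctions $f$ with $L_gf=c(g)f$.

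Evaluating at a point, $f(g^{-1}x)=c(g)f(x)$. Here $c$ is a continuous homomorphism into $\mathbb T$, continuity being visible from $f(g^{-1})=c(g)f(e)$. Normalizing by the value at $e$, which is nonzero since otherwise $f\equiv0$, yields $f=f(e)\,\overline{c}$, a multiple of a character. Thus $S$ is spanned by finitely many characters, and we let $\Lambda$ be that set.

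Alternatively, one could use density of trigonometric polynomials and the projection $f\mapsto\int_G \overline{\gamma(g)}\,L_gf\,d\mu_G(g)$ onto $\operatorname{span}\{\gamma\}$ (possibly with $\gamma$ conjugated, depending on conventions). Such a projection maps $S$ into $S$ because $S$ is closed and invariant. Either route works; I would use the eigenvector argument.

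The main step is $(3)\Rightarrow(2)$. Let $Q\colon C(G)\to S$ be any $G$-equivariant projection and fix $\gamma\in\widehat G$. Since $L_g\gamma=\overline{\gamma(g)}\gamma$, equivariance gives $L_g(Q\gamma)=\overline{\gamma(g)}\,Q\gamma$. Expanding $Q\gamma=\sum_{\eta\in\Lambda}a_\eta\eta$ and using $L_g\eta=\overline{\eta(g)}\eta$, comparison of coefficients (characters are linearly independent) forces $a_\eta(\overline{\eta(g)}-\overline{\gamma(g)})=0$ for all $g$. So $a_\eta=0$ unless $\eta=\gamma$.

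If $\gamma\in\Lambda$, then $Q\gamma=\gamma$ because $Q$ is a projection onto $S$. If $\gamma\notin\Lambda$, then $Q\gamma=0$. Therefore $Q$ agrees with $\mathbf\pi_S$ on the span of all characters.

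The only remaining point, and the one place where analysis enters, is that this span is uniformly dense in $C(G)$ by the Peter--Weyl theorem recalled above. Both $Q$ and $\mathbf\pi_S|_{C(G)}$ are bounded on $C(G)$. For $\mathbf\pi_S$ this holds since it is an integral operator against the continuous kernel $\mathbf k_S$. Hence the two operators coincide on all of $C(G)$, which is accessibility. I expect this density/boundedness step to need only brief care; the rest is bookkeeping with multiplicity one.
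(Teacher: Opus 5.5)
Your proof is correct. For $(3)\Rightarrow(2)$ and for the integral formula it is essentially the paper's argument. The only difference there is that the paper identifies $Q\gamma$ through its Fourier coefficients rather than by expanding it in $\Lambda$; it then concludes, as you do, by density of trigonometric polynomials.

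The genuine difference is in $(1)\Rightarrow(3)$.

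\begin{itemize}
\item The paper averages an arbitrary projection $P\colon C(G)\to S$ to the equivariant projection $Q=\int_G L_a^{-1}PL_a\,d\mu_G(a)$. It reuses the same diagonalization to get $Q\gamma\in\{0,\gamma\}$ and sets $\Lambda=\{\gamma:Q\gamma=\gamma\}$. It then needs uniform density of trigonometric polynomials, together with closedness of $\operatorname{span}\{\gamma:\gamma\in\Lambda\}$, to show that $S$ lies in this span.
\item Your joint-eigenvector argument stays entirely inside the finite-dimensional space $S$. It needs neither averaging nor Peter--Weyl density for this direction, and it produces the characters directly.
\end{itemize}

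In exchange, your argument relies on the commutativity of the operators $L_g|_S$, so it does not extend beyond abelian $G$. The paper, by contrast, gets both directions from a single diagonalization lemma. Your alternative via $f\mapsto\int_G\overline{\gamma(g)}\,L_gf\,d\mu_G(g)$ is exactly the abelian case of the character projections $P_\pi$ that the paper uses later for Theorem~\ref{thm:invariant-subspaces}.

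One detail is worth stating explicitly. The $L_2(G)$ inner product is positive definite on $S\subset C(G)$ because Haar measure has full support. This is what makes the $L_g|_S$ unitary operators on a genuine inner product space, which your simultaneous diagonalization requires.
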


\begin{proof}
We use the Fourier convention
\[
        \widehat f(\gamma)
        :=
        \int_G f(x)\overline{\gamma(x)}\,d\mu_G(x),
        \qquad
        \gamma\in\widehat G.
\]
We first prove \((1)\Rightarrow(3)\). Assume that \(S\) is
\(G\)-invariant. Let \(P:C(G)\to S\) be any bounded projection and
average it over the action:
\[
        Qf
        :=
        \int_G L_a^{-1}P(L_a f)\,d\mu_G(a),
        \qquad f\in C(G).
\]
Then \(Q\) is again a projection from \(C(G)\) onto \(S\), and it is
\(G\)-equivariant for the left action.
We claim that \(Q\) is diagonal on the characters. Indeed, for
\(\gamma\in\widehat G\), one has
 $ L_a\gamma=\overline{\gamma(a)}\gamma.$
Since \(QL_a= L_aQ\) , it follows that
\[
\overline{\gamma(a)}\,Q\gamma=L_a(Q\gamma), \quad a\in G.
\]
Taking the $\gamma'$-Fourier coefficient gives
\[
\overline{\gamma(a)}\,\widehat{Q\gamma}(\gamma')
=\overline{\gamma'(a)}\,\widehat{Q\gamma}(\gamma'), \quad a\in G.
\]
Hence \(\widehat{Q\gamma}(\gamma')=0\) whenever
\(\gamma'\neq\gamma\), and therefore
\[
        Q\gamma=c_\gamma\gamma
\]
for some scalar \(c_\gamma\). Since \(Q\) is a projection,
\(c_\gamma^2=c_\gamma\), so \(c_\gamma\in\{0,1\}\).
Set
\[
        \Lambda
        :=
        \{\gamma\in\widehat G:Q\gamma=\gamma\}.
\]
Then \(\Lambda\) is finite, because
\(\Lambda\subset S\cap\widehat G\) and \(S\) is finite-dimensional.

We now show that
$
        S=\operatorname{span}\{\gamma:\gamma\in\Lambda\}.
$
For every trigonometric polynomial
\[
        p=\sum_{\gamma\in F}\widehat p(\gamma)\gamma,
\]
where \(F\subset\widehat G\) is finite, we have
\[
        Qp
        =
        \sum_{\gamma\in F\cap\Lambda}
        \widehat p(\gamma)\gamma.
\]
Let \(s\in S\), and choose trigonometric polynomials \(p_n\) converging
uniformly to \(s\). Since \(Q\) is bounded and \(Qs=s\), we have
\(Qp_n\to s\). Each \(Qp_n\) belongs to
\(\operatorname{span}\{\gamma:\gamma\in\Lambda\}\), which is
finite-dimensional and hence closed. Therefore
\[
        s\in\operatorname{span}\{\gamma:\gamma\in\Lambda\}.
\]
The reverse inclusion follows directly from the definition of
\(\Lambda\). This proves \((1)\Rightarrow(3)\).
We next prove \((3)\Rightarrow(2)\). Assume that
$
        S=\operatorname{span}\{\gamma:\gamma\in\Lambda\}
$
for some finite set \(\Lambda\subset\widehat G\), and let
\(Q:C(G)\to S\) be any \(G\)-equivariant projection. By the same
diagonalization argument as above,
\[
        Q\gamma=c_\gamma\gamma,
        \qquad \gamma\in\widehat G.
\]
Since \(Q\) is the identity on \(S\) and has range contained in \(S\),
\[
        c_\gamma
        =
        \begin{cases}
        1, & \gamma\in\Lambda,\\
        0, & \gamma\notin\Lambda.
        \end{cases}
\]
Thus \(Q\) agrees on trigonometric polynomials with the orthogonal
Fourier projection onto \(S\). By density,
\[
        Q=\mathbf \pi_S|_{C(G)}.
\]
Hence \(S\) is \(G\)-accessible, proving \((3)\Rightarrow(2)\).
Finally, we prove \((2)\Rightarrow(1)\). Every \(G\)-accessible subspace
is \(G\)-invariant by definition. Thus the three assertions are
equivalent.

It remains to compute the projection constant. By \((3)\), we may write
\(S=\operatorname{span}\{\gamma:\gamma\in\Lambda\}\) for some finite
set \(\Lambda\subset\widehat G\). The characters in \(\Lambda\) form
an orthonormal basis of \(S\), and hence
\[
        \mathbf k_S(e,g)
        =
        \sum_{\gamma\in\Lambda}\gamma(g).
\]
The asserted integral formula now follows from
Theorem~\ref{thm:accessible-projection-constant}.
\end{proof}
This result has found applications in several settings, including
Dirichlet polynomials \cite{defant2024projection}, polynomials on Boolean cubes
\cite{defant2024asymptotic}, and support-sensitive  spaces of polynomials in Hamming schemes
\cite{defant2026support}.

\subsubsection{Basic representation theory}

\label{Basic representation theory}

The abelian picture has to be replaced, in the non-abelian case, by
 irreducible unitary representations and their matrix
coefficients. We recall only the basic facts needed below. For further details and background, see, for example, \cite{folland2016course}.

Let \(G\) be a compact group. A unitary representation of \(G\) is a
continuous homomorphism
\[
        \pi:G\longrightarrow \mathcal U(V_\pi),
\]
where \(V_\pi\) is a complex Hilbert space and
\(\mathcal U(V_\pi)\) denotes the group of unitary operators on \(V_\pi\).
We equip \(\mathcal U(V_\pi)\) with the strong operator topology, so that
continuity of \(\pi\) means that, for every \(v\in V_\pi\), the map
\(g\mapsto\pi(g)v\) is continuous.

Given two unitary representations \(\pi:G\to\mathcal U(V_\pi)\) and
\(\sigma:G\to\mathcal U(V_\sigma)\), a bounded linear operator
\(T:V_\pi\to V_\sigma\) is called an \emph{intertwining operator} if
\[
        T\pi(g)=\sigma(g)T,
        \qquad g\in G.
\]
The representations \(\pi\) and \(\sigma\) are said to be \emph{unitarily
equivalent}, and we write \(\pi\cong\sigma\), if there is a unitary
intertwining isomorphism between \(V_\pi\) and \(V_\sigma\).

A unitary representation \(\pi:G\to\mathcal U(V_\pi)\) is \emph{irreducible} if
the only closed subspaces of \(V_\pi\) which are invariant under all
operators \(\pi(g)\), \(g\in G\), are \(\{0\}\) and \(V_\pi\).

The \emph{unitary
dual} of \(G\) is the set
\[
        \widehat G
        :=
        \big\{[\pi]:
        \pi \text{ is an irreducible unitary representation of }G
        \big\},
\]
where representations are identified up to unitary equivalence. As usual,
we shall write simply \(\pi\in\widehat G\) and choose one representative
from each equivalence class.

Since \(G\) is compact, every irreducible unitary representation of \(G\)
is finite-dimensional, that is, whenever \(\pi\in\widehat G\), the space
\(V_\pi\) is finite-dimensional. We write
\[
        d_\pi:=\dim V_\pi
\]
and call \(d_\pi\) the degree of \(\pi\).
We shall use the following form of Schur's lemma.

\begin{lemma}
\label{thm:schur-lemma}

Let \(G\) be a compact group and let \(\pi\) and \(\sigma\) be irreducible
unitary representations of \(G\). Then the set of intertwining operators
\(T:V_\pi\to V_\sigma\) is 
\begin{enumerate}
    \item trivial if \(\pi\not\cong\sigma\), i.e. in this case \(T=0\) is the only intertwining operator;
    \item one dimensional if \(\pi\cong\sigma\). In particular, every intertwining operator is a multiple of the identity if  \(\pi=\sigma\).
\end{enumerate}

\end{lemma}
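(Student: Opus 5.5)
The plan is the classical eigenspace argument for Schur's lemma, using only finite dimensionality of \(V_\pi\) and \(V_\sigma\) (recorded above for compact groups) and the fact that \(\pi(g)\), \(\sigma(g)\) are unitary. Fix an intertwining operator \(T\colon V_\pi\to V_\sigma\). First I would check that \(\ker T\subset V_\pi\) is \(\pi\)-invariant and \(\operatorname{ran}T\subset V_\sigma\) is \(\sigma\)-invariant. Both are closed, since the spaces are finite-dimensional. Indeed, if \(Tv=0\) then \(T\pi(g)v=\sigma(g)Tv=0\), and \(\sigma(g)Tv=T\pi(g)v\in\operatorname{ran}T\). Irreducibility then forces either \(T=0\), or \(\ker T=\{0\}\) and \(\operatorname{ran}T=V_\sigma\), so that \(T\) is a linear isomorphism.

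Next, I would show that an invertible intertwiner yields a unitary equivalence. Taking adjoints in \(T\pi(g)=\sigma(g)T\) and using \(\pi(g)^*=\pi(g^{-1})\) and \(\sigma(g)^*=\sigma(g^{-1})\) gives \(\pi(g^{-1})T^*=T^*\sigma(g^{-1})\) for all \(g\). Hence \(T^*\) intertwines \(\sigma\) with \(\pi\), and \(T^*T\) commutes with every \(\pi(g)\). The operator \(T^*T\) is positive and invertible, so \(|T|=(T^*T)^{1/2}\) is a polynomial in \(T^*T\) (finite-dimensional functional calculus) and also commutes with \(\pi(G)\). Then \(U:=T|T|^{-1}\) is a unitary intertwiner, and so \(\pi\cong\sigma\). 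Contrapositively, if \(\pi\not\cong\sigma\) then every intertwiner is \(0\), which is part (1).

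For part (2), first let \(\pi=\sigma\) and let \(T\) intertwine \(\pi\) with itself. Because \(V_\pi\) is a finite-dimensional complex space, \(T\) has an eigenvalue \(\lambda\). Then \(T-\lambda I\) is again an intertwiner, and it is not invertible. By the first step it must vanish, so \(T=\lambda I\). Now let \(\pi\cong\sigma\) via a unitary intertwiner \(U\). For any intertwiner \(T\colon V_\pi\to V_\sigma\), the composition \(U^{-1}T\) intertwines \(\pi\) with itself, so \(U^{-1}T=\lambda I\) and \(T=\lambda U\). The space of intertwiners therefore equals \(\mathbb C U\), which is one-dimensional since \(U\neq 0\).

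The only point needing care is the upgrade from an invertible intertwiner to a unitary one, since the paper defines \(\cong\) through unitary equivalence. The polar decomposition step handles this. An alternative is to note that \(T^*T=cI\) with \(c>0\) by the \(\pi=\sigma\) case applied to \(T^*T\), and then take \(U=c^{-1/2}T\). I would use this second route because it is shorter; it requires proving the \(\pi=\sigma\) case before the \(\pi\not\cong\sigma\) case, which causes no circularity.
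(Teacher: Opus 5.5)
Your proof is correct and complete. The kernel/range dichotomy, the eigenvalue argument for \(\pi=\sigma\), and the upgrade of an invertible intertwiner to a unitary one via \(T^*T=cI\) (or via \(T|T|^{-1}\)) cover every claim, including the fact that \(\cong\) means unitary equivalence in the paper.

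The paper gives no proof of this lemma; it quotes it as the standard Schur lemma, citing Folland. Your argument is the usual finite-dimensional one, and it is legitimate here because the paper records that irreducible representations of compact groups are finite-dimensional. The textbook version instead works in arbitrary Hilbert spaces: it uses the spectral theorem to show that self-intertwiners of an irreducible representation are scalars, and then applies this to \(T^*T\) and \(TT^*\).
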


We shall also use the Schur orthogonality relations for matrix
coefficients. If \(\pi,\sigma\in\widehat G\), \(u,v\in V_\pi\), and
\(u',v'\in V_\sigma\), then
\[
    \int_G
    \langle v,\pi(g)u\rangle\,
    \overline{\langle v',\sigma(g)u'\rangle}
    \,d\mu_G(g)
    =
    \begin{cases}
        \dfrac{1}{d_\pi}
        \langle v,v'\rangle
        \langle u',u\rangle,
        & \pi=\sigma,\\[6pt]
        0,
        & \pi\not\cong\sigma.
    \end{cases}
\]
For \(\pi\in\widehat G\), its character is defined by
\[
    \chi_\pi(g):=\operatorname{tr}\bigl(\pi(g)\bigr),
    \qquad g\in G.
\]
It is a class function, that is,
\[
    \chi_\pi(hgh^{-1})=\chi_\pi(g),
    \qquad g,h\in G.
\]
As an immediate consequence of the orthogonality relations for matrix
coefficients, the irreducible characters satisfy
\[
    \int_G
    \chi_\pi(g)\overline{\chi_\sigma(g)}
    \,d\mu_G(g)
    =
    \begin{cases}
        1, & \pi\cong\sigma,\\
        0, & \pi\not\cong\sigma.
    \end{cases}
\]
For a finite-dimensional unitary representation
\(\pi\colon G\to\mathcal U(V_\pi)\), its contragredient representation
is the unitary representation
\[
    \pi^*\colon G\longrightarrow\mathcal U(V_\pi^*)
\]
defined by
\[
    (\pi^*(g)\varphi)(v)
    :=
    \varphi\bigl(\pi(g^{-1})v\bigr),
    \qquad
    g\in G,\quad
    \varphi\in V_\pi^*,\quad
    v\in V_\pi.
\]
Its character satisfies
\[
    \chi_{\pi^*}(g)
    =
    \overline{\chi_\pi(g)},
    \qquad g\in G.
\]

\subsubsection{Matrix coefficient spaces}

Let \(K\) be a homogeneous \(G\)-space and fix \(x_0\in K\). We write
\[
        H_{x_0}:=\{h\in G:h\hbullet x_0=x_0\}
\]
for the stabilizer of \(x_0\). For \(\pi\in\widehat G\), set
\[
        V_\pi^{H_{x_0}}
        :=
        \{u\in V_\pi:\pi(h)u=u\text{ for all }h\in H_{x_0}\}.
\]
We call the vectors in \(V_\pi^{H_{x_0}}\) the \(x_0\)-admissible vectors
for \(\pi\). The corresponding matrix coefficient space is defined by
\[
        \mathcal E_\pi(K)
        :=
        \operatorname{span}
        \Big\{
        \pi^{x_0}_{vu}:g\hbullet x_0\longmapsto
        \langle v,\pi(g)u\rangle
        :
        u\in V_\pi^{H_{x_0}},\ v\in V_\pi
        \Big\}.
\]
Indeed, admissibility is precisely what makes the formula well-defined on
\(K\): if \(g\hbullet x_0=h\hbullet x_0\), then \(h^{-1}g\in H_{x_0}\), and
hence
\[
        \langle v,\pi(g)u\rangle
        =
        \langle v,\pi(h)u\rangle .
\]
We put
\[
        m_\pi(K):=\dim V_\pi^{H_{x_0}}.
\]
By the homogeneous Peter--Weyl theorem, stated below as
Theorem~\ref{thm:homogeneous-peter-weyl}, \(m_\pi(K)\) is the multiplicity with which
\(V_\pi\) occurs in the unitary representation of \(G\) on
\(L_2(K,\mu_K)\).

The number \(m_\pi(K)\) is independent of the choice of \(x_0\). Indeed, if
\(x_1=a\hbullet x_0\), then
\[
        H_{x_1}=aH_{x_0}a^{-1},
\]
and \(\pi(a)\) maps \(V_\pi^{H_{x_0}}\) unitarily onto
\(V_\pi^{H_{x_1}}\). Moreover, for every
\(u\in V_\pi^{H_{x_0}}\) and \(v\in V_\pi\),
\[
        \pi^{x_1}_{v,\pi(a)u}
        =
        \pi^{x_0}_{v,u}
\]
as functions on \(K\). Hence the space \(\mathcal E_\pi(K)\) is also
independent of the choice of base point.

If \(m_\pi(K)=0\), then
\(\mathcal E_\pi(K)=\{0\}\).
Assume now that \(m_\pi(K)>0\). Choose an orthonormal basis
\(u_1,\ldots,u_{m_\pi(K)}\) of \(V_\pi^{H_{x_0}}\) and an orthonormal
basis \(v_1,\ldots,v_{d_\pi}\) of \(V_\pi\). Then the functions
\[
        e^\pi_{\ell j}(g\hbullet x_0)
        :=
        \sqrt{d_\pi}\,
        \langle v_j,\pi(g)u_\ell\rangle,
        \qquad
        1\leq \ell\leq m_\pi(K),\quad 1\leq j\leq d_\pi,
\]
form an orthonormal basis of \(\mathcal E_\pi(K)\). In particular,
\[
        \dim \mathcal E_\pi(K)=d_\pi\,m_\pi(K).
\]
For \(1\leq \ell\leq m_\pi(K)\), define
\[
        \mathcal E_\pi^{(\ell)}(K)
        :=
        \operatorname{span}
        \{e^\pi_{\ell j}:1\leq j\leq d_\pi\}.
\]
Then
\[
        \mathcal E_\pi(K)
        =
        \bigoplus_{\ell=1}^{m_\pi(K)}
        \mathcal E_\pi^{(\ell)}(K)
        \quad \text{orthogonally in \(L_2(K,\mu_K),\)}
\]
 and each \(\mathcal E_\pi^{(\ell)}(K)\) is a \(G\)-equivariant copy of \(V_\pi\).
More explicitly, for fixed \(\ell\), the map
\[
        V_\pi\longrightarrow \mathcal E_\pi^{(\ell)}(K),
        \quad
        v\longmapsto
        \big(g\hbullet x_0\mapsto \langle v,\pi(g)u_\ell\rangle\big),
\]
realizes this equivalence. Indeed, if
\(
    f_v(g\hbullet x_0):=\langle v,\pi(g)u_\ell\rangle,
\)
then
\(
    L_af_v=f_{\pi(a)v}
\)
for every \(a\in G\).
Equivalently, after choosing the above orthonormal basis of
\(V_\pi^{H_{x_0}}\), one may write
\begin{equation} \label{eq:tensor description}
     \mathcal E_\pi(K)
        \cong
        V_\pi\otimes \mathbb C^{m_\pi(K)},
\end{equation}
where \(G\) acts only on the first factor, and the summands
\(\mathcal E_\pi^{(\ell)}(K)\) correspond to the coordinate copies
\(V_\pi\otimes\mathbb C e_\ell\).

\subsubsection{Non-abelian case}

We now turn to the general form of the Peter--Weyl decomposition on a homogeneous space. This classical result can be found in many standard references; for instance, see \cite{deitmar2014,folland2016course,fulton2013representation,takeuchi1994modern}.

\begin{theorem}
\label{thm:homogeneous-peter-weyl}
Let \(K\) be a homogeneous \(G\)-space. Then
\[
        L_2(K,\mu_K)
        =
        \widehat{\bigoplus}_{\pi\in\widehat G}\,
        \mathcal E_\pi(K).
\]
Moreover, for each \(\pi\in\widehat G\),
\[
        \mathcal E_\pi(K)
        =
        \bigoplus_{\ell=1}^{m_\pi(K)}
        \mathcal E_\pi^{(\ell)}(K),
\]
where every \(\mathcal E_\pi^{(\ell)}(K)\) is \(G\)-equivalent to
\(V_\pi\). The space \(\mathcal E_\pi(K)\), being the sum of all
irreducible subrepresentations equivalent to \(V_\pi\), is called the
\(\pi\)-isotypic component of \(L_2(K,\mu_K)\).
Moreover, the linear span of the spaces
\(\mathcal E_\pi(K)\), \(\pi\in\widehat G\), is dense in \(C(K)\).
\end{theorem}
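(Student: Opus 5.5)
The plan is to deduce the homogeneous statement from the classical Peter--Weyl theorem on \(G\), using the orbit map \(\Phi_{x_0}\colon G\to K\) to pull functions back. Composition with \(\Phi_{x_0}\) identifies \(L_2(K,\mu_K)\) isometrically with the closed subspace of \(L_2(G,\mu_G)\) consisting of right \(H_{x_0}\)-invariant functions. This holds because \(\mu_K\) is the pushforward of \(\mu_G\). In the same way, \(C(K)\) is identified with the right \(H_{x_0}\)-invariant functions in \(C(G)\); this uses that \(\Phi_{x_0}\) is continuous, open and surjective, so \(K\cong G/H_{x_0}\). The identification intertwines \(L_g\) on \(K\) with left translation on \(G\). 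For \(G\) itself I take as known the classical Peter--Weyl theorem: \(L_2(G)=\widehat\bigoplus_{\pi}\mathcal M_\pi\), where \(\mathcal M_\pi\) is spanned by the coefficients \(g\mapsto\langle v,\pi(g)u\rangle\) with \(u,v\in V_\pi\), and the span of all \(\mathcal M_\pi\) is uniformly dense in \(C(G)\).

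Next I introduce the averaging operator \(R f(g):=\int_{H_{x_0}} f(gh)\,d\mu_{H}(h)\), where \(\mu_H\) is Haar measure on the closed subgroup \(H_{x_0}\). This operator is an orthogonal projection of \(L_2(G)\) onto the right \(H_{x_0}\)-invariant functions. It is also a contraction on \(C(G)\) that fixes the invariant continuous functions, and it commutes with left translations. On a matrix coefficient it acts by \(R\langle v,\pi(\cdot)u\rangle=\langle v,\pi(\cdot)P_\pi u\rangle\), where \(P_\pi=\int_{H}\pi(h)\,d\mu_H(h)\) is the orthogonal projection of \(V_\pi\) onto \(V_\pi^{H_{x_0}}\). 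Hence \(R(\mathcal M_\pi)\) is exactly the pullback of \(\mathcal E_\pi(K)\). Applying \(R\) to the Peter--Weyl decomposition of \(L_2(G)\) then gives the orthogonal decomposition of \(L_2(K,\mu_K)\). The spaces \(\mathcal E_\pi(K)\) for inequivalent \(\pi\) are mutually orthogonal by Schur orthogonality, and \(R\) maps a dense subspace onto a dense subspace of its range. Density in \(C(K)\) follows the same way. Given \(f\in C(K)\), I approximate \(f\circ\Phi_{x_0}\) uniformly by finite sums from the spaces \(\mathcal M_\pi\) and apply the contraction \(R\), which fixes \(f\circ\Phi_{x_0}\).

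The inner decomposition \(\mathcal E_\pi(K)=\bigoplus_\ell \mathcal E_\pi^{(\ell)}(K)\) with \(\mathcal E_\pi^{(\ell)}(K)\cong V_\pi\) was essentially recorded before the theorem. The orthonormality of the \(e^\pi_{\ell j}\) is Schur orthogonality transported to \(K\) via \(\mu_K\). The equivariance follows from \(L_af_v=f_{\pi(a)v}\), and irreducibility of each copy follows from that of \(V_\pi\). It remains to justify that \(\mathcal E_\pi(K)\) is the full \(\pi\)-isotypic component, i.e.\ that it contains every irreducible subrepresentation \(W\subset L_2(K)\) equivalent to \(V_\pi\). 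I will argue by projection. The orthogonal projection onto \(\mathcal E_\sigma(K)\) commutes with the action, so its restriction to \(W\) intertwines \(V_\pi\) with copies of \(V_\sigma\). By Schur's lemma (Lemma~\ref{thm:schur-lemma}) this restriction vanishes for \(\sigma\not\cong\pi\). Hence \(W\subset\mathcal E_\pi(K)\) by completeness of the decomposition.

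The main obstacle is the transfer step. One must verify that \(R\) maps \(\mathcal M_\pi\) onto exactly \(\mathcal E_\pi(K)\) with no loss, and that \(L_2\) and \(C\) on \(K\) correspond correctly to right-\(H\)-invariant functions on \(G\). I will handle this via the identity \(RP\!=\)coefficient with \(P_\pi u\), noting that every \(u\in V_\pi^{H_{x_0}}\) satisfies \(P_\pi u=u\).
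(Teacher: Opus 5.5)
Your argument is correct. The paper itself does not prove this theorem. It quotes it as classical, citing Deitmar, Folland, Fulton--Harris and Takeuchi, and it also records the orthonormal basis \(e^\pi_{\ell j}\) and the equivariant copies \(\mathcal E_\pi^{(\ell)}(K)\) without proof.

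What you supply is the standard reduction from the homogeneous case to the group case:
\begin{enumerate}
\item[(i)] pull back along \(\Phi_{x_0}\);
\item[(ii)] average over the stabilizer with \(R=\int_{H_{x_0}}f(\,\cdot\,h)\,d\mu_H(h)\);
\item[(iii)] use the identity \(R\langle v,\pi(\cdot)u\rangle=\langle v,\pi(\cdot)P_\pi u\rangle\).
\end{enumerate}
Step (iii) shows exactly why only admissible vectors \(u\in V_\pi^{H_{x_0}}\) occur and why the multiplicity is \(\dim V_\pi^{H_{x_0}}\). It also handles uniform density in \(C(K)\) in one line, since \(R\) is a sup-norm contraction fixing \(f\circ\Phi_{x_0}\). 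Compared with the citation, this makes the paper self-contained modulo the Peter--Weyl theorem for \(G\) itself.

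Your proof that \(\mathcal E_\pi(K)\) is the full isotypic component uses only the orthogonal projections onto the \(\mathcal E_\sigma(K)\) and Schur's lemma. It therefore does not rely on Proposition~\ref{prop:character-projection}, whose proof in the paper invokes this very theorem, so there is no circularity.

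A few details are worth making explicit:
\begin{enumerate}
\item[(a)] \(H_{x_0}\) is closed, being the preimage of the point \(x_0\) under \(g\mapsto g\hbullet x_0\). It is therefore compact and carries a normalized Haar measure.
\item[(b)] For the \(L_2\) decomposition you need less than you claim. The identity \(R(f\circ\Phi_{x_0})=f\circ\Phi_{x_0}\), together with the fact that pullback is an isometry, already suffices. The full identification of \(L_2(K,\mu_K)\) with all right-\(H_{x_0}\)-invariant \(L_2\)-classes on \(G\) is true, but it needs a small density argument that you can skip.
\item[(c)] Elements of \(\mathcal E_\pi(K)\) are continuous on \(K\) because \(\Phi_{x_0}\) is a quotient map.
\item[(d)] Your \(\mathcal M_\pi\), spanned by \(\langle v,\pi(\cdot)u\rangle\), are the complex conjugates of the usual coefficient spaces. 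The group Peter--Weyl theorem still applies after the harmless relabelling \(\pi\mapsto\pi^*\).
\end{enumerate}
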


Thus the non-abelian Peter--Weyl theorem decomposes \(L_2(K,\mu_K)\)
first into isotypic components and then, inside each isotypic component,
into \(m_\pi(K)\) irreducible copies of \(V_\pi\). The multiplicity
\(m_\pi(K)\) measures exactly how often \(V_\pi\) occurs in
\(\mathcal E_\pi(K)\). The isotypic components and their multiplicities are uniquely determined by the representation, whereas their decompositions into irreducible summands are not. In particular, the abelian case is the special
case in which all irreducible unitary representations are one-dimensional,
and the theorem reduces to the usual Fourier decomposition on compact
abelian groups.

\subsection{Invariance vs. accessibility}
Given a homogeneous \(G\)-space \(K\), we now pass from the Peter--Weyl
decomposition of \(L_2(K,\mu_K)\) to the finite-dimensional subspaces of $C(K)$
compatible with the \(G\)-action. Our main concern is to determine when
the symmetry of such a subspace singles out the minimal projection from $C(K)$ onto
it. More precisely, we ask when \(G\)-invariance already implies
\(G\)-accessibility, and how this implication may fail when the subspace
contains only a proper part of an isotypic component.

\subsubsection{Invariant subspaces}

Let us first  explain how the Peter--Weyl decomposition controls
\(G\)-invariant subspaces. Throughout this subsection \(K\) is a
homogeneous \(G\)-space and \(L_g\) denotes the unitary representation of
\(G\) on \(L_2(K,\mu_K)\) given by
\[
        (L_g f)(x):=f(g^{-1}\hbullet x),
        \qquad g\in G,\ x\in K .
\]
Of course, for a subspace \(S\subset C(K)\), invariance under the
operators \(L_g\) is equivalent to invariance under the translations used
above.

For \(\pi\in\widehat G\), let \(\chi_\pi\) be its character and let
\(d_\pi=\dim V_\pi\). We define an operator
\[
        P_\pi:L_2(K,\mu_K)\longrightarrow L_2(K,\mu_K)
\]
by
\[
        P_\pi f
        :=
        d_\pi
        \int_G
        \overline{\chi_\pi(g)}\, L_g f\, d\mu_G(g),
        \qquad f\in L_2(K,\mu_K).
\]
The integral is understood as a Bochner integral. Since \(g\mapsto L_g f\)
is strongly continuous and \(\chi_\pi\) is continuous on the compact group
\(G\), the operator \(P_\pi\) is well-defined and bounded.

\begin{lemma}
\label{lem:character-projection-commutes}
For every \(\pi\in\widehat G\), every \(a\in G\), and every
\(f\in L_2(K,\mu_K)\), one has
\[
        P_\pi L_a f=L_a P_\pi f .
\]
\end{lemma}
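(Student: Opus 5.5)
The identity $P_\pi L_a = L_a P_\pi$ follows from two facts: the translation operators form a representation, and the character is a class function.

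Since $L_a$ is a bounded operator on $L_2(K,\mu_K)$, it commutes with the Bochner integral. Using $L_aL_g=L_{ag}$, we get
\[
        L_aP_\pi f
        =
        d_\pi\int_G \overline{\chi_\pi(g)}\,L_{ag}f\,d\mu_G(g).
\]
Likewise, using $L_gL_a=L_{ga}$,
\[
        P_\pi L_af
        =
        d_\pi\int_G \overline{\chi_\pi(g)}\,L_{ga}f\,d\mu_G(g).
\]

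In the second integral we substitute $g=ah a^{-1}$, so that $ga=ah$. The map $h\mapsto aha^{-1}$ preserves $\mu_G$, because Haar measure on a compact group is both left- and right-invariant. This turns the second integral into
\[
        d_\pi\int_G \overline{\chi_\pi(aha^{-1})}\,L_{ah}f\,d\mu_G(h).
\]
The class-function property gives $\chi_\pi(aha^{-1})=\chi_\pi(h)$, and the result is exactly the expression for $L_aP_\pi f$.

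The only point needing care is the legitimacy of the change of variables inside an $L_2$-valued Bochner integral. To handle it, I would pair both sides with an arbitrary $\varphi\in L_2(K,\mu_K)$. This reduces the claim to an identity of scalar integrals of continuous functions on $G$, where invariance of $\mu_G$ under conjugation applies directly. The two vectors then have equal inner products with every $\varphi$, so they are equal.
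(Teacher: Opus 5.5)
Your proof is correct and matches the paper's argument: the same substitution $g=aha^{-1}$, the conjugation invariance of Haar measure, and the class-function property of $\chi_\pi$. Pairing with $\varphi\in L_2(K,\mu_K)$ to justify the change of variables inside the Bochner integral is a valid extra step; the paper leaves this point implicit.
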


\begin{proof}
By definition,
\[
        P_\pi L_a f
        =
        d_\pi
        \int_G
        \overline{\chi_\pi(g)}\, L_gL_a f\,d\mu_G(g)
        =
        d_\pi
        \int_G
        \overline{\chi_\pi(g)}\, L_{ga}f\,d\mu_G(g).
\]
We make the change of variables \(g=aha^{-1}\). Since the Haar measure on a
compact group is invariant under inner automorphisms and since
\(\chi_\pi\) is a class function (see Subsection~\ref{Basic representation theory} for the definition), this gives
\[
        P_\pi L_a f
        =
        d_\pi
        \int_G
        \overline{\chi_\pi(h)}\, L_{ah}f\,d\mu_G(h)
        =
        L_a
        \left(
        d_\pi
        \int_G
        \overline{\chi_\pi(h)}\, L_h f\,d\mu_G(h)
        \right)
        =
        L_aP_\pi f,
\]
as claimed.
\end{proof}

The operators $P_\pi$ are precisely the orthogonal projections onto the corresponding isotypic components. This classical fact can be found, for instance, in \cite[Proposition~7.3.3]{deitmar2014}; nevertheless, we include a short proof for the sake of completeness.

\begin{proposition}
\label{prop:character-projection}
For every \(\pi\in\widehat G\), the operator \(P_\pi\) is the orthogonal
projection of \(L_2(K,\mu_K)\) onto \(\mathcal E_\pi(K)\).
\end{proposition}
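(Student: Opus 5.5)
By the homogeneous Peter--Weyl theorem (Theorem~\ref{thm:homogeneous-peter-weyl}), \(L_2(K,\mu_K)\) is the orthogonal Hilbert sum of the spaces \(\mathcal E_\sigma(K)\), \(\sigma\in\widehat G\). Each of these is \(G\)-invariant and is an orthogonal sum of the irreducible pieces \(\mathcal E_\sigma^{(\ell)}(K)\cong V_\sigma\). Since \(P_\pi\) is bounded, it therefore suffices to prove three things: \(P_\pi\) is the identity on each \(\mathcal E_\pi^{(\ell)}(K)\); \(P_\pi\) vanishes on each \(\mathcal E_\sigma^{(\ell)}(K)\) with \(\sigma\not\cong\pi\); and the isotypic components are mutually orthogonal. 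Orthogonality is built into the Peter--Weyl statement. Once the first two facts are known, \(P_\pi\) agrees with the orthogonal projection onto \(\mathcal E_\pi(K)\) on a dense subspace, hence everywhere.

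To evaluate \(P_\pi\) on an irreducible piece, I would use the explicit model of \(\mathcal E_\sigma^{(\ell)}(K)\). Its elements are the functions \(f_v(g\hbullet x_0)=\langle v,\sigma(g)u_\ell\rangle\), and they satisfy \(L_a f_v=f_{\sigma(a)v}\). This gives
\[
P_\pi f_v = f_{T v},\qquad T:=d_\pi\int_G\overline{\chi_\pi(g)}\,\sigma(g)\,d\mu_G(g)\in\mathcal L(V_\sigma),
\]
by pulling the Bochner integral through the continuous linear map \(v\mapsto f_v\) from \(V_\sigma\) into \(L_2\). The operator \(T\) commutes with every \(\sigma(a)\). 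This follows from the same change of variables \(g\mapsto aga^{-1}\) as in Lemma~\ref{lem:character-projection-commutes}, using that \(\chi_\pi\) is a class function. By Schur's lemma (Lemma~\ref{thm:schur-lemma}), \(T=c\,\mathrm{Id}\). Taking traces gives
\[
c\,d_\sigma=d_\pi\int_G\overline{\chi_\pi(g)}\chi_\sigma(g)\,d\mu_G(g),
\]
which equals \(d_\pi\) when \(\sigma\cong\pi\) and \(0\) otherwise, by the orthogonality of irreducible characters. Hence \(c=1\) when \(\sigma=\pi\), since the representatives are fixed in \(\widehat G\), and \(c=0\) otherwise. This is exactly the required identity-or-zero behaviour on irreducible pieces.

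Finally, I would conclude by density. On the algebraic span of all the \(\mathcal E_\sigma(K)\), \(P_\pi\) keeps the \(\pi\)-component and kills all the others. The orthogonal projection onto \(\mathcal E_\pi(K)\) does the same, and both operators are bounded, so they coincide on \(L_2(K,\mu_K)\). The main point needing care is the first step. One has to justify that the Bochner integral defining \(P_\pi\) commutes with the identification \(v\mapsto f_v\), which holds because that map is linear and finite-dimensional, hence continuous. One must also make sure the sign conventions, namely \(\overline{\chi_\pi}\) together with \(L_a f_v=f_{\sigma(a)v}\), produce \(\int\overline{\chi_\pi}\chi_\sigma\) rather than \(\int\overline{\chi_\pi}\,\overline{\chi_\sigma}\). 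With the matrix-coefficient convention fixed above, \(L_a\) acts on \(\mathcal E_\sigma^{(\ell)}(K)\) through \(\sigma\) itself and not through its contragredient, so the trace is \(\chi_\sigma\) as needed.
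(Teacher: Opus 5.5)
Your proof is correct and follows essentially the same route as the paper. In both, Schur's lemma makes \(P_\pi\) a scalar on each irreducible piece \(\mathcal E_\sigma^{(\ell)}(K)\), the trace computation with orthogonality of characters forces that scalar to be \(1\) or \(0\), and density of the Peter--Weyl blocks finishes the argument; the only cosmetic difference is that you run the Schur/trace step on the model space \(V_\sigma\), via \(T=d_\pi\int_G\overline{\chi_\pi(g)}\,\sigma(g)\,d\mu_G(g)\) and the intertwiner \(v\mapsto f_v\), whereas the paper applies Lemma~\ref{lem:character-projection-commutes} directly to the restriction of \(P_\pi\) to \(\mathcal E_\rho^{(\ell)}(K)\).
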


\begin{proof}
Fix \(\rho\in\widehat G\). By the homogeneous Peter--Weyl Theorem~\ref{thm:homogeneous-peter-weyl} we may
write
\[
        \mathcal E_\rho(K)
        =
        \bigoplus_{\ell=1}^{m_\rho(K)}
        \mathcal E_\rho^{(\ell)}(K),
\]
where each \(\mathcal E_{\rho}^{(\ell)}(K)\) is a \(G\)-invariant
subspace \(G\)-equivalent to \(V_\rho\). We first observe that
\(P_\pi\) leaves every \(\mathcal E_{\rho}^{(\ell)}(K)\) invariant.
Indeed, if \(f\in\mathcal E_{\rho}^{(\ell)}(K)\), then
\(
    g\longmapsto
    \overline{\chi_\pi(g)}\,L_gf
\)
takes its values in the finite-dimensional, hence closed, subspace
\(\mathcal E_{\rho}^{(\ell)}(K)\). Its Bochner integral therefore also
belongs to \(\mathcal E_{\rho}^{(\ell)}(K)\). Thus
\[
    P_\pi\big|_{\mathcal E_{\rho}^{(\ell)}(K)}
    :
    \mathcal E_{\rho}^{(\ell)}(K)
    \longrightarrow
    \mathcal E_{\rho}^{(\ell)}(K)
\]
is well defined. By
Lemma~\ref{lem:character-projection-commutes}, this restriction
commutes with the \(G\)-action and is therefore an intertwining
operator.
Hence, by Schur's lemma~\ref{thm:schur-lemma}, it is a scalar
multiple of the identity:
\[
        P_\pi|_{\mathcal E_\rho^{(\ell)}(K)}
        =
        \lambda_{\rho,\ell}\,
        \operatorname{id}_{\mathcal E_\rho^{(\ell)}(K)} .
\]
It remains to compute the scalar. The character of the representation
\(L\) on the irreducible summand
\(\mathcal E_\rho^{(\ell)}(K)\) is \(\chi_\rho\). Therefore
\[
\begin{aligned}
        \operatorname{tr}
        \bigl(
        P_\pi|_{\mathcal E_\rho^{(\ell)}(K)}
        \bigr)
        &=
        d_\pi
        \int_G
        \overline{\chi_\pi(g)}
        \operatorname{tr}
        \bigl(
        L_g|_{\mathcal E_\rho^{(\ell)}(K)}
        \bigr)
        \,d\mu_G(g)                                              =
        d_\pi
        \int_G
        \overline{\chi_\pi(g)}\,\chi_\rho(g)\,d\mu_G(g).
\end{aligned}
\]
By the orthogonality relations for irreducible characters (see again Section~\ref{Basic representation theory}), this trace is
\(d_\pi\) if \(\rho\cong\pi\), and \(0\) otherwise. On the other hand,
\[
        \operatorname{tr}
        \bigl(
        P_\pi|_{\mathcal E_\rho^{(\ell)}(K)}
        \bigr)
        =
        \lambda_{\rho,\ell}\,\dim V_\rho
        =
        \lambda_{\rho,\ell}\,d_\rho .
\]
Consequently,
\[
        \lambda_{\rho,\ell}
        =
        \begin{cases}
        1, & \rho\cong\pi,\\
        0, & \rho\not\cong\pi.
        \end{cases}
\]
Thus \(P_\pi\) is the identity on \(\mathcal E_\pi(K)\) and vanishes on
\(\mathcal E_\rho(K)\) for \(\rho\not\cong\pi\).  Since the algebraic sum of these components is dense in \(L_2(K,\mu_K)\) and \(P_\pi\) is bounded, it follows that
\(P_\pi\) is the orthogonal projection onto \(\mathcal E_\pi(K)\).
\end{proof}

We can now describe invariant subspaces. We show that  every finite-dimensional \(G\)-invariant subspace of
\(C(K)\) is obtained by taking finitely many irreducible copies inside
the isotypic components of the Peter--Weyl decomposition.

\begin{theorem}
\label{thm:invariant-subspaces}
Let \(K\) be a homogeneous \(G\)-space and let
\(S\subset C(K)\) be a finite-dimensional \(G\)-invariant subspace. Then
there is a finite set \(\Lambda\subset\widehat G\) such that
\[
        S
        =
        \bigoplus_{\pi\in\Lambda}
        S_\pi,
        \qquad
        S_\pi:=S\cap\mathcal E_\pi(K),
\]
where the sum is orthogonal in \(L_2(K,\mu_K)\).
Moreover, for every \(\pi\in\Lambda\), after a suitable choice of the
decomposition
\(
        \mathcal E_\pi(K)
        =
        \bigoplus_{\ell=1}^{m_\pi(K)}
        \mathcal E_\pi^{(\ell)}(K)
\)
into \(G\)-equivariant copies of \(V_\pi\), there is an index set
\(
        I_\pi\subset\{1,\ldots,m_\pi(K)\}
\)
such that
\[
        S_\pi
        =
        \bigoplus_{\ell\in I_\pi}
        \mathcal E_\pi^{(\ell)}(K).
\]
\end{theorem}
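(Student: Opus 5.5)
The plan is to use the character projections \(P_\pi\) of Proposition~\ref{prop:character-projection} to split \(S\) into pieces, and then to decompose each piece by finite-dimensional unitary representation theory.

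First, \(S\) is finite-dimensional and \(G\)-invariant, hence closed in \(L_2(K,\mu_K)\). For \(f\in S\), the integrand \(g\mapsto \overline{\chi_\pi(g)}L_gf\) takes values in \(S\). Therefore \(P_\pi f\in S\), and so
\[
P_\pi(S)\subset S\cap\mathcal E_\pi(K)=S_\pi.
\]
Since \(P_\pi\) is the orthogonal projection onto \(\mathcal E_\pi(K)\), it fixes \(S_\pi\), and hence \(P_\pi(S)=S_\pi\). The spaces \(S_\pi\) are mutually orthogonal because the isotypic components are. Let \(\Lambda:=\{\pi: S_\pi\neq\{0\}\}\). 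This set is finite, since the nonzero orthogonal subspaces \(S_\pi\) sit inside the finite-dimensional space \(S\). To show that \(S=\bigoplus_{\pi\in\Lambda}S_\pi\), take \(f\in S\) orthogonal to every \(S_\pi\). Then
\[
\|P_\pi f\|^2=\langle P_\pi f,f\rangle=0 \quad\text{for all }\pi,
\]
because \(P_\pi f\in S_\pi\). By the Peter--Weyl Theorem~\ref{thm:homogeneous-peter-weyl}, \(f=\sum_\pi P_\pi f\) in \(L_2\), so \(f=0\). This gives the first assertion.

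Next, fix \(\pi\in\Lambda\). The space \(S_\pi\) is a finite-dimensional \(G\)-invariant subspace of \(\mathcal E_\pi(K)\). By the usual orthogonal-complement argument, using that the \(L_g\) are unitary, \(S_\pi\) splits orthogonally into irreducible invariant subspaces \(W_1,\dots,W_r\). The same holds for \(T_\pi:=\mathcal E_\pi(K)\ominus S_\pi\), which is also invariant, giving \(W_{r+1},\dots,W_s\). Each \(W_i\) is irreducible inside \(\mathcal E_\pi(K)\), so it must be equivalent to \(V_\pi\).

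To see this, use the identification \eqref{eq:tensor description}. The restriction to \(W_i\) of the orthogonal projection onto some coordinate copy \(\mathcal E^{(\ell)}_\pi(K)\) is a nonzero intertwiner for at least one \(\ell\), since the copies span. By Schur's Lemma~\ref{thm:schur-lemma} it is then an isomorphism, so \(W_i\cong V_\pi\). Comparing dimensions, \(s\,d_\pi=d_\pi m_\pi(K)\), so \(s=m_\pi(K)\).

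Relabel \(\mathcal E^{(\ell)}_\pi(K):=W_\ell\). This is a decomposition of \(\mathcal E_\pi(K)\) into \(G\)-equivariant copies of \(V_\pi\), and with \(I_\pi=\{1,\dots,r\}\) we get \(S_\pi=\bigoplus_{\ell\in I_\pi}\mathcal E^{(\ell)}_\pi(K)\).

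The main point needing care is the claim that every irreducible invariant subspace of \(\mathcal E_\pi(K)\) is equivalent to \(V_\pi\). This is also what legitimates calling the new splitting "a suitable choice of the decomposition". The cleanest route is the projection argument onto the coordinate copies just described. Alternatively, one can compute the trace of \(P_\pi\) restricted to \(W_i\), as in the proof of Proposition~\ref{prop:character-projection}, and use character orthogonality. Everything else is routine.
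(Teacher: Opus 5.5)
Your proof is correct, and its first half is the paper's argument: the character integral gives $P_\pi(S)=S_\pi$, and then $f=\sum_\pi P_\pi f$ in $L_2$. You close it with an orthogonal-complement argument where the paper simply notes that only finitely many of the orthogonal $S_\pi$ can be nonzero; this is an inessential variation.

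The second half takes a genuinely different route.
\begin{itemize}
\item The paper transports the orthogonal projection $Q_\pi$ onto $S_\pi$ to $V_\pi\otimes\mathbb C^{m_\pi(K)}$ and applies Schur's lemma to each block $Q_{rs}$. This gives $Q_\pi=\mathrm{id}_{V_\pi}\otimes A_\pi$, hence $S_\pi\cong V_\pi\otimes M_\pi$ with $M_\pi=\operatorname{ran}A_\pi$. It then takes an orthonormal basis of the multiplicity space adapted to $M_\pi$.
\item You instead split $S_\pi$ and $\mathcal E_\pi(K)\ominus S_\pi$ into irreducibles. You show each piece is a copy of $V_\pi$ via a nonzero coordinate projection and Schur, then count dimensions using $\dim\mathcal E_\pi(K)=d_\pi m_\pi(K)$. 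The Schur step is sound: the coordinate projections are equivariant because the coordinate copies are invariant and the $L_g$ are unitary, and they sum to the identity on $\mathcal E_\pi(K)$.
\end{itemize}

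Your route is the standard elementary one and proves exactly the statement. The paper's route gives more. It parametrizes the $G$-invariant subspaces of $\mathcal E_\pi(K)$ by subspaces of the multiplicity space. Its adapted copies have the form $V_\pi\otimes\mathbb C e'_\ell$, i.e.\ they come from an orthonormal change of basis of $V_\pi^{H_{x_0}}$. The resulting tensor form $S_\tau\cong V_\tau\otimes M_\tau$ is what the proof of Theorem~\ref{thm:accessible-subspaces} uses to write down the non-orthogonal equivariant projection $\mathrm{id}_{V_\tau}\otimes R$. From your abstract splitting $W_1,\dots,W_{m_\pi(K)}$, you would first have to fix unitary intertwiners $V_\pi\to W_\ell$ to recover that picture. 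This is easy, but it is an extra step.
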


\begin{proof}
Since \(S\) is finite-dimensional, it is a closed subspace of
\(L_2(K,\mu_K)\). Its \(G\)-invariance implies that
\(L_gf\in S\) for every \(f\in S\) and \(g\in G\). Hence the
character-integral formula gives
\[
        P_\pi f
        =
        d_\pi
        \int_G
        \overline{\chi_\pi(g)}\,L_g f\,d\mu_G(g)
        \in S .
\]
By Proposition~\ref{prop:character-projection}, \(P_\pi f\in
\mathcal E_\pi(K)\). Thus
\[
        P_\pi(S)\subset S\cap\mathcal E_\pi(K)=S_\pi .
\]
Conversely, since \(P_\pi\) is the identity on \(\mathcal E_\pi(K)\), we
also have \(S_\pi\subset P_\pi(S)\). Hence
\[
        P_\pi(S)=S_\pi .
\]
The Peter--Weyl decomposition  from Theorem~\ref{thm:homogeneous-peter-weyl} gives, for every \(f\in S\),
\[
    f=\sum_{\pi\in\widehat G}P_\pi f
    \qquad\text{in }L_2(K,\mu_K),
\]
with \(P_\pi f\in S_\pi\) for every \(\pi\in\widehat G\). Since \(S\)
is finite-dimensional, only finitely many of the mutually orthogonal
subspaces \(S_\pi\) can be non-zero.
Thus there is a finite set
\(\Lambda\subset\widehat G\) such that
\[
        S=
        \bigoplus_{\pi\in\Lambda}S_\pi .
\]
It remains to describe \(S_\pi\) inside a fixed isotypic component.
By Theorem~\ref{thm:homogeneous-peter-weyl},
\[
    \mathcal E_\pi(K)
    =
    \bigoplus_{\ell=1}^{m_\pi(K)}
    \mathcal E_\pi^{(\ell)}(K),
\]
where each summand is \(G\)-equivalent to \(V_\pi\). Equivalently,
after choosing such a decomposition, there is a \(G\)-equivariant
identification
\[
    \mathcal E_\pi(K)
    \cong
    V_\pi\otimes\mathbb C^{m_\pi(K)},
\]
under which \(G\) acts on the tensor product  by
\[
    \pi(g)\otimes \operatorname{id}_{\mathbb C^{m_\pi(K)}}.
\]
Since \(S_\pi\subset \mathcal E_\pi(K)\) is \(G\)-invariant and the
representation is unitary, its orthogonal complement is also
\(G\)-invariant. Hence the orthogonal projection
\[
    Q_\pi:
    V_\pi\otimes\mathbb C^{m_\pi(K)}
    \longrightarrow
    S_\pi
\]
is \(G\)-equivariant. Fixing an orthonormal basis
\(e_1,\ldots,e_{m_\pi(K)}\) of the multiplicity space, write \(Q_\pi\)
as a block operator matrix
\[
    Q_\pi=(Q_{rs})_{r,s=1}^{m_\pi(K)},
    \quad
    Q_{rs}\in\mathcal L(V_\pi).
\]
The equivariance of \(Q_\pi\) implies that
$ Q_{rs}\pi(g)=\pi(g)Q_{rs}$
    for every $g\in G$ and \(r,s\). Since \(\pi\) is irreducible,
Schur's Lemma~\ref{thm:schur-lemma} yields
\(
    Q_{rs}=a_{rs}\operatorname{id}_{V_\pi}
\)
for suitable scalars \(a_{rs}\). Therefore
\[
    Q_\pi
    =
    \operatorname{id}_{V_\pi}\otimes A_\pi
\]
for some operator
\(
    A_\pi\in
    \mathcal L\bigl(\mathbb C^{m_\pi(K)}\bigr).
\)
Because \(Q_\pi\) is an orthogonal projection, so is \(A_\pi\). Thus,
defining
\(
    M_\pi:=\operatorname{ran}A_\pi,
\)
we obtain
\[
    S_\pi
    =
    \operatorname{ran}Q_\pi
    \cong
    V_\pi\otimes M_\pi.
\]
Finally, choose an orthonormal basis of
\(\mathbb C^{m_\pi(K)}\) whose first vectors form an orthonormal basis
of \(M_\pi\). With respect to the corresponding decomposition of
\(\mathcal E_\pi(K)\) into \(G\)-equivariant copies of \(V_\pi\), there
is an index set
\(
    I_\pi\subset\{1,\ldots,m_\pi(K)\}
\)
such that
\(
    S_\pi
    =
    \bigoplus_{\ell\in I_\pi}
    \mathcal E_\pi^{(\ell)}(K).
\)
This completes the proof.
\end{proof}

\subsubsection{Accessible subspaces}

We now characterize accessibility in terms of the Peter--Weyl
decomposition. The point is that accessibility does not merely ask for
\(G\)-invariance. It asks that the equivariant projection onto the space
be uniquely determined by the Hilbert space structure of \(L_2(K,\mu_K)\).

\begin{theorem}
\label{thm:accessible-subspaces}
Let \(K\) be a homogeneous \(G\)-space and let
\(S\subset C(K)\) be a finite-dimensional \(G\)-invariant subspace. Then
the following assertions are equivalent:
\begin{enumerate}
\item \(S\) is \(G\)-accessible.
\item \(S\) is a finite orthogonal sum of full isotypic components, that
      is, there exists a finite set \(\Lambda\subset\widehat G\) such
      that
      \(
              S
              =
              \bigoplus_{\pi\in\Lambda}
              \mathcal E_\pi(K).
      \)
\end{enumerate}
\end{theorem}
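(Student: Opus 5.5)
We prove the two implications separately. Throughout, we use Theorem~\ref{thm:invariant-subspaces} to write \(S=\bigoplus_{\pi\in\Lambda}S_\pi\) with \(S_\pi=S\cap\mathcal E_\pi(K)\cong V_\pi\otimes M_\pi\), where \(M_\pi\subset\mathbb C^{m_\pi(K)}\) under the tensor identification \eqref{eq:tensor description}. We also use Proposition~\ref{prop:character-projection}, which identifies the orthogonal projections \(P_\pi\) onto the isotypic components and shows that they commute with the action.

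\emph{(2)\(\Rightarrow\)(1).} Let \(S=\bigoplus_{\pi\in\Lambda}\mathcal E_\pi(K)\), and let \(Q\colon C(K)\to S\) be a \(G\)-equivariant projection. Fix \(\rho\in\widehat G\) and an irreducible copy \(E=\mathcal E_\rho^{(\ell)}(K)\subset C(K)\); matrix coefficients are continuous. For each \(\pi\in\Lambda\), the map \(P_\pi Q|_E\colon E\to\mathcal E_\pi(K)\cong V_\pi\otimes\mathbb C^{m_\pi(K)}\) is an intertwiner. Composing with the coordinate maps onto \(V_\pi\otimes\mathbb C e_r\) and applying Schur's Lemma~\ref{thm:schur-lemma}, we find that \(P_\pi Q|_E=0\) whenever \(\pi\not\cong\rho\). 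Hence \(Q(E)=0\) if \(\rho\notin\Lambda\), while \(Q(E)\subset\mathcal E_\rho(K)\) if \(\rho\in\Lambda\). In the latter case \(E\subset S\), so \(Qf=f\) for \(f\in E\). It follows that \(Q\) agrees with \(\mathbf\pi_S\) on the algebraic sum of all isotypic components. Since \(\mathbf\pi_S|_{C(K)}\) is bounded on \(C(K)\) (its kernel is continuous), the density statement in Theorem~\ref{thm:homogeneous-peter-weyl} gives \(Q=\mathbf\pi_S|_{C(K)}\).

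\emph{(1)\(\Rightarrow\)(2).} We argue by contraposition. Suppose that for some \(\pi\in\Lambda\) we have \(0\neq M_\pi\subsetneq\mathbb C^{m_\pi(K)}\); this is the only alternative, since \(M_\pi=\mathbb C^{m_\pi(K)}\) for all \(\pi\in\Lambda\) is exactly (2). We then construct a second equivariant projection. Choose \(B\in\mathcal L(\mathbb C^{m_\pi(K)})\) with \(B|_{M_\pi}=\operatorname{id}\), \(\operatorname{ran}B=M_\pi\), and \(B\neq A_\pi\), where \(A_\pi\) is the orthogonal projection onto \(M_\pi\). For instance, take \(B=A_\pi+u\otimes w\), where \(u\in M_\pi\setminus\{0\}\) and \(w\in M_\pi^\perp\setminus\{0\}\) is regarded as a functional vanishing on \(M_\pi\). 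Then \(B^2=B\). Define
\[
Q:=\mathbf\pi_S\circ(\operatorname{id}-P_\pi)+(\operatorname{id}_{V_\pi}\otimes B)\circ P_\pi .
\]
Each piece is equivariant, because \(P_\pi\) commutes with every \(L_g\) and \(\operatorname{id}\otimes B\) commutes with \(\pi(g)\otimes\operatorname{id}\). The map \(Q\) fixes \(S\), since on \(S_\pi\) the operator \(\operatorname{id}\otimes B\) acts as the identity and on \(S_{\pi'}\) with \(\pi'\neq\pi\) we have \(P_\pi=0\). Its range is \(S\), and \(Q\neq\mathbf\pi_S\) because the two maps differ on the copy of \(V_\pi\) indexed by \(w\).

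We must also check that \(Q\) maps \(C(K)\) into \(C(K)\) boundedly. This follows because \(P_\pi\) restricted to \(C(K)\) is given by the character integral, which is a Bochner integral in \(C(K)\) with norm at most \(d_\pi^2\). Moreover, \(\mathcal E_\pi(K)\) is finite-dimensional and consists of continuous functions. This contradicts accessibility. Finally, note that the case \(M_\pi=0\) simply means \(\pi\notin\Lambda\) after shrinking \(\Lambda\).

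The main obstacle is the careful continuity bookkeeping. In (2)\(\Rightarrow\)(1) the Schur argument applies only to functions lying in single irreducible copies, so density in \(C(K)\), rather than in \(L_2\), is essential. In the converse, the non-orthogonal projection must be shown to act on \(C(K)\) rather than merely on \(L_2\). Both points reduce to the character-integral formula for \(P_\pi\) and the density statement of Theorem~\ref{thm:homogeneous-peter-weyl}.
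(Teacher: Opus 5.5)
Your proposal is correct and follows the paper's proof essentially step for step. For (2)\(\Rightarrow\)(1) you use Schur's lemma on the irreducible pieces and then the uniform density of the Peter--Weyl blocks in \(C(K)\); for (1)\(\Rightarrow\)(2) you argue by contraposition with a non-orthogonal idempotent on the multiplicity space of an incomplete isotypic component. Your operator \(\mathbf\pi_S\circ(\operatorname{id}-P_\pi)+(\operatorname{id}_{V_\pi}\otimes B)\circ P_\pi\) coincides with the paper's \(Q_S\), and your choice \(B=A_\pi+u\otimes w\) simply makes the paper's unspecified non-orthogonal projection \(R\) explicit.
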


\begin{proof}
Assume first that $(2)$ holds, so
\(
        S=
        \bigoplus_{\pi\in\Lambda}
        \mathcal E_\pi(K)
\)
for some finite set \(\Lambda\subset\widehat G\). Let
\(Q:C(K)\to S\) be a \(G\)-equivariant projection. We prove that
\(Q=\mathbf \pi_S|_{C(K)}\).
Fix \(\tau\in\widehat G\). If \(\tau\notin\Lambda\), then
\[
        Q|_{\mathcal E_\tau(K)}
        :
        \mathcal E_\tau(K)
        \longrightarrow
        \bigoplus_{\pi\in\Lambda}
        \mathcal E_\pi(K)
\]
is \(G\)-equivariant. Decomposing both sides into irreducible copies,
Schur's lemma~\ref{thm:schur-lemma} implies that every component of this
map is zero, since no copy of \(V_\tau\) occurs in the range. Hence
\[
        Q|_{\mathcal E_\tau(K)}=0
        \qquad(\tau\notin\Lambda).
\]
If \(\tau\in\Lambda\), then \(\mathcal E_\tau(K)\subset S\), and since
\(Q\) is a projection onto \(S\), one has
\[
        Q|_{\mathcal E_\tau(K)}
        =
        \operatorname{id}_{\mathcal E_\tau(K)}.
\]
Thus \(Q\) and \(\mathbf \pi_S|_{C(K)}\) agree on every isotypic component.
The restriction \(\mathbf \pi_S|_{C(K)}\) is bounded, as follows directly
from its finite-rank integral representation. Since the linear span of the
isotypic components is dense in \(C(K)\), and both operators are bounded
on \(C(K)\), it follows that
\[
        Q=\mathbf \pi_S|_{C(K)}.
\]
Thus \(S\) is \(G\)-accessible, and hence $(1)$ holds.

Conversely, assume that \(S\) is \(G\)-accessible. By
Theorem~\ref{thm:invariant-subspaces}, there is a finite set
\(\Lambda\subset\widehat G\) such that
\[
        S=
        \bigoplus_{\pi\in\Lambda} S_\pi,
        \qquad
        S_\pi:=S\cap\mathcal E_\pi(K),
\]
and, for each \(\pi\in\Lambda\), one may choose an orthogonal decomposition
\[
        \mathcal E_\pi(K)
        =
        \bigoplus_{\ell=1}^{m_\pi(K)}
        \mathcal E_\pi^{(\ell)}(K)
\]
such that there is an index set
\(I_\pi\subset\{1,\ldots,m_\pi(K)\}\) with
\[
        S_\pi
        =
        \bigoplus_{\ell\in I_\pi}
        \mathcal E_\pi^{(\ell)}(K).
\]
Suppose that \(S\) is not a finite sum of full isotypic components. Then,
for some \(\tau\in\Lambda\), the set \(I_\tau\) is a nonempty proper
subset of \(\{1,\ldots,m_\tau(K)\}\). Using the tensor description in \eqref{eq:tensor description}
\[
        \mathcal E_\tau(K)
        \cong
        V_\tau\otimes \mathbb C^{m_\tau(K)},
\]
where \(G\) acts on the tensor product only on the first factor, the
subspace \(S_\tau\) has the form
\[
        S_\tau
        \cong
        V_\tau\otimes M_\tau
\]
for a proper nonzero subspace
\(M_\tau\subset\mathbb C^{m_\tau(K)}\).
Choose a projection
\(
        R:\mathbb C^{m_\tau(K)}\longrightarrow M_\tau
\)
which is not the orthogonal projection. Then
\[
        \operatorname{id}_{V_\tau}\otimes R
        :
        V_\tau\otimes \mathbb C^{m_\tau(K)}
        \longrightarrow
        V_\tau\otimes M_\tau
\]
is a \(G\)-equivariant projection onto \(S_\tau\), but it is not the
orthogonal projection onto \(S_\tau\).
Let \(P_\pi\) denote the orthogonal projection from \(L_2(K,\mu_K)\) onto
\(\mathcal E_\pi(K)\). By the character-integral formula in Proposition~\ref{prop:character-projection},
\(P_\pi|_{C(K)}\) is bounded in the supremum norm.  Define an operator
\(
        Q_S:C(K)\longrightarrow S
\)
by
\[
        Q_S
        :=
        \sum_{\pi\in\Lambda,\ \pi\neq\tau}
        \mathbf \pi_{S_\pi} P_\pi
        +
        (\operatorname{id}_{V_\tau}\otimes R) P_\tau ,
\]
where \(\mathbf \pi_{S_\pi}\) denotes the orthogonal projection of
\(\mathcal E_\pi(K)\) onto \(S_\pi\). This operator is well-defined and
bounded on \(C(K)\), has range \(S\), is a projection, and is
\(G\)-equivariant. However, on \(\mathcal E_\tau(K)\) it differs from the
orthogonal projection onto \(S_\tau\), because \(R\) was chosen
non-orthogonal. Hence
\[
        Q_S\neq \mathbf \pi_S|_{C(K)}.
\]
This contradicts the \(G\)-accessibility of \(S\), showing that \(S\) is
a finite orthogonal sum of full isotypic components.
\end{proof}

\subsubsection{Multiplicity-free actions}
\label{sss:multiplicity-free}

The preceding two characterizations show precisely when invariance
already implies accessibility. Let \(K\) be a homogeneous \(G\)-space.
We say that the action of \(G\) on \(K\) is \emph{multiplicity-free} if
\[
        m_\pi(K)\leq 1
        \qquad
        \text{for every }\pi\in\widehat G .
\]
Equivalently, every non-zero isotypic component \(\mathcal E_\pi(K)\)
is already \(G\)-equivalent to the irreducible representation space
\(V_\pi\).

\begin{remark}
Let \(H\) be a closed subgroup of \(G\). The pair \((G,H)\) is called a
\emph{Gelfand pair} if, for every irreducible unitary representation
\(\pi\) of \(G\), the space of \(H\)-invariant vectors \(V_\pi^H\) has
dimension at most~\(1\). Since
\(
        m_\pi(K)=\dim V_\pi^{H_{x_0}},
\)
the action of \(G\) on \(K\) is multiplicity-free if and only if
\((G,H_{x_0})\) is a Gelfand pair.
\end{remark}

\begin{corollary}
\label{cor:gelfand-pairs}
Let \(K\) be a homogeneous \(G\)-space. Then the following assertions are
equivalent:
\begin{enumerate}
\item \((G,K)\) is multiplicity-free.
\item Every finite-dimensional \(G\)-invariant subspace
      \(S\subset C(K)\) is \(G\)-accessible.
\end{enumerate}
\end{corollary}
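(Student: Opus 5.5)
The plan is to derive both implications from Theorem~\ref{thm:invariant-subspaces} and Theorem~\ref{thm:accessible-subspaces}. The idea is that multiplicity-freeness forces every invariant subspace to be a sum of full isotypic components. Conversely, a multiplicity \(m_\tau(K)\ge 2\) lets us build an invariant subspace that contains only part of an isotypic component.

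For \((1)\Rightarrow(2)\), let \(S\subset C(K)\) be finite-dimensional and \(G\)-invariant. Theorem~\ref{thm:invariant-subspaces} gives a finite set \(\Lambda\) with \(S=\bigoplus_{\pi\in\Lambda}S_\pi\). Each \(S_\pi=\bigoplus_{\ell\in I_\pi}\mathcal E_\pi^{(\ell)}(K)\) is nonzero, and we may discard any \(\pi\) with \(S_\pi=\{0\}\). Since \(m_\pi(K)\le 1\), a nonempty \(I_\pi\subset\{1,\dots,m_\pi(K)\}\) must equal \(\{1\}\), so \(S_\pi=\mathcal E_\pi(K)\). Hence \(S=\bigoplus_{\pi\in\Lambda}\mathcal E_\pi(K)\), and Theorem~\ref{thm:accessible-subspaces} gives accessibility.

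For \((2)\Rightarrow(1)\) I argue by contraposition. Suppose \(m_\tau(K)\ge 2\) for some \(\tau\in\widehat G\). Take \(S:=\mathcal E_\tau^{(1)}(K)\), the span of the functions \(g\hbullet x_0\mapsto\langle v,\pi(g)u_1\rangle\) with \(v\in V_\tau\).
\begin{itemize}
\item \(S\) is finite-dimensional.
\item \(S\) consists of continuous functions: matrix coefficients are continuous on \(G\), and they descend to continuous functions on \(K\) because the orbit map is open and surjective, hence a quotient map.
\item \(S\) is \(G\)-invariant, by the identity \(L_af_v=f_{\pi(a)v}\).
\end{itemize}
Now \(S\cap\mathcal E_\tau(K)=S\) has dimension \(d_\tau<d_\tau m_\tau(K)=\dim\mathcal E_\tau(K)\). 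It follows that \(S\) is not a finite orthogonal sum of full isotypic components: such a sum either contains \(\mathcal E_\tau(K)\) entirely or meets it only in \(\{0\}\), since distinct isotypic components are orthogonal. So Theorem~\ref{thm:accessible-subspaces} shows \(S\) is not accessible, and (2) fails.

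The only mildly delicate point is the justification that \(\mathcal E_\tau^{(1)}(K)\subset C(K)\), and that a sum of full isotypic components cannot equal \(S\). Both are routine: the first follows from the quotient-map property of \(\Phi_{x_0}\), and the second from orthogonality of distinct isotypic components together with the dimension count \(\dim\mathcal E_\tau(K)=d_\tau m_\tau(K)\).
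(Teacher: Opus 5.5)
Your proof is correct and matches the paper's argument step for step. For \((1)\Rightarrow(2)\) you use Theorem~\ref{thm:invariant-subspaces}, noting that \(m_\pi(K)\le 1\) leaves no proper nonzero choice inside \(\mathcal E_\pi(K)\); for \((2)\Rightarrow(1)\) you take the single copy \(\mathcal E_\tau^{(1)}(K)\) as a non-accessible invariant subspace via Theorem~\ref{thm:accessible-subspaces}. Your extra justifications (continuity through the quotient-map property of the orbit map, and the dimension and orthogonality argument showing \(\mathcal E_\tau^{(1)}(K)\) is not a sum of full isotypic components) only make explicit what the paper leaves implicit.
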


\begin{proof}
Assume first that \((G,K)\) is multiplicity-free. Let
\(S\subset C(K)\) be finite-dimensional and \(G\)-invariant. By
Theorem~\ref{thm:invariant-subspaces}, there is a finite set
\(\Lambda\subset\widehat G\) such that
\[
        S
        =
        \bigoplus_{\pi\in\Lambda}
        S_\pi,
        \qquad
        S_\pi:=S\cap\mathcal E_\pi(K).
\]
Moreover, inside each isotypic component,
\[
        \mathcal E_\pi(K)
        =
        \bigoplus_{\ell=1}^{m_\pi(K)}
        \mathcal E_\pi^{(\ell)}(K),
\]
the space \(S_\pi\) is a sum of some of the irreducible copies
\(\mathcal E_\pi^{(\ell)}(K)\). Since \(m_\pi(K)\leq 1\), there is no
proper non-zero choice inside \(\mathcal E_\pi(K)\). Hence, for every
\(\pi\),
\[
        S_\pi=0
        \qquad\text{or}\qquad
        S_\pi=\mathcal E_\pi(K).
\]
Therefore \(S\) is a finite orthogonal sum of full isotypic components.
By Theorem~\ref{thm:accessible-subspaces}, \(S\) is \(G\)-accessible.

Conversely, assume that \((G,K)\) is not multiplicity-free. Then there is some \(\pi\in\widehat G\) with
\(m_\pi(K)>1\). Choose one irreducible copy
\[
        \mathcal E_\pi^{(1)}(K)
        \subset
        \mathcal E_\pi(K).
\]
This is a finite-dimensional \(G\)-invariant subspace of \(C(K)\). But
since \(\mathcal E_\pi^{(1)}(K)\) is not a full isotypic component, it is not accessible by
Theorem~\ref{thm:accessible-subspaces}.

\end{proof}

\subsubsection{Strong accessibility}
\label{sec:strong-accessibility}
The following notion provides a convenient criterion for recognizing multiplicity-one isotypic components.
Let \(K\) be a homogeneous \(G\)-space and fix a base point \(x_0\in K\).
We write
\[
        H:=H_{x_0}:=\{h\in G:h\hbullet x_0=x_0\}
\]
for its stabilizer. If \(S\subset C(K)\) is finite-dimensional and
\(G\)-invariant, we define
\[
        S^H
        :=
        \{f\in S:f(h\hbullet x)=f(x)
        \text{ for all }h\in H,\ x\in K\}.
\]
Thus \(S^H\) is the subspace of functions in \(S\) which are invariant
under the stabilizer of the chosen base point.

We say that  \(S\) is \emph{strongly accessible with respect to \(x_0\)}
if, for every non-zero \(f\in S^H\), there exists
\(\lambda\in\mathbb C\) such that
\[
          \mathbf k_S(x_0,\cdot)=\lambda f .
\]
Equivalently, the kernel section \( \mathbf k_S(x_0,\cdot)\) spans the whole
space of stabilizer-invariant functions in \(S\).

The following simple characterization shows, in
Remark~\ref{nox0}, that strong accessibility is independent of the
choice of base point.

\begin{lemma}
\label{lem:strong-accessibility-stabilizer}
Let \(K\) be a homogeneous \(G\)-space,
 \(S\subset C(K)\) a non-zero finite-dimensional \(G\)-invariant
subspace and $x_0 \in K$. Then \(S\) is strongly accessible with respect to \(x_0\) if
and only if
\(
       \, \dim S^{H_{x_0}}=1 .
\)
\end{lemma}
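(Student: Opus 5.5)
The plan is to show two facts. First, the kernel slice \(\mathbf k_S(x_0,\cdot)\) always belongs to \(S^{H}\), where \(H:=H_{x_0}\), and is non-zero. Second, strong accessibility is literally the statement that this single vector spans \(S^H\). Once both are in place, the equivalence reduces to linear algebra: every non-zero \(f\in S^H\) is a multiple of a fixed non-zero vector \(\mathbf k_S(x_0,\cdot)\in S^H\) if and only if \(S^H\) is one-dimensional.

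\emph{Step 1: the slice lies in \(S^H\).} From Subsection~\ref{Reproducing kernels} we already know that \(\mathbf k_S(x_0,\cdot)\in S\). For \(h\in H\) and \(y\in K\), the invariance identity \(\mathbf k_S(g\hbullet x,g\hbullet y)=\mathbf k_S(x,y)\) gives
\[
\mathbf k_S(x_0,h\hbullet y)=\mathbf k_S(h\hbullet x_0,h\hbullet y)=\mathbf k_S(x_0,y),
\]
because \(h\hbullet x_0=x_0\). Hence \(\mathbf k_S(x_0,\cdot)\in S^H\).

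\emph{Step 2: the slice is non-zero.} We have \(\mathbf k_S(x_0,x_0)=\dim S\geq 1\), since \(S\neq\{0\}\). So \(\mathbf k_S(x_0,\cdot)\neq0\), and in particular \(\dim S^H\geq1\).

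\emph{Step 3: the equivalence.} Suppose \(\dim S^H=1\). Then \(S^H=\operatorname{span}\{\mathbf k_S(x_0,\cdot)\}\). For any non-zero \(f\in S^H\) we can write \(\mathbf k_S(x_0,\cdot)=\lambda f\), where \(\lambda\neq0\) is the reciprocal of the coefficient of \(f\) with respect to the kernel slice. So \(S\) is strongly accessible with respect to \(x_0\).

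Conversely, suppose \(S\) is strongly accessible. Every non-zero \(f\in S^H\) satisfies \(\mathbf k_S(x_0,\cdot)=\lambda f\). Since the left side is non-zero by Step 2, we get \(\lambda\neq0\), and therefore \(f=\lambda^{-1}\mathbf k_S(x_0,\cdot)\). Thus \(S^H\subset\operatorname{span}\{\mathbf k_S(x_0,\cdot)\}\). Together with Steps 1 and 2 this gives \(\dim S^H=1\).

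There is no real obstacle here. The only points needing care are the non-vanishing of the slice, which rules out the degenerate case \(\lambda=0\), and the direction of the invariance identity used in Step 1. Both follow from facts about the reproducing kernel recorded in Subsection~\ref{Reproducing kernels}.
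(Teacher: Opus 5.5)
Your proof is correct and follows essentially the same route as the paper: you show that the slice \(\mathbf k_S(x_0,\cdot)\) lies in \(S^{H}\) by the \(G\)-invariance of the kernel, show that it is non-zero, and then reduce the equivalence to linear algebra. The only difference is cosmetic. You get non-vanishing from \(\mathbf k_S(x_0,x_0)=\dim S\geq 1\), whereas the paper argues that a zero slice would make every \(f\in S\) vanish at \(x_0\), and hence identically by invariance and transitivity; both arguments rest on the same invariance and transitivity facts.
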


\begin{proof}
The kernel section \(\mathbf k_S(x_0,\cdot)\) belongs to \(S\). It is also
\(H\)-invariant. Indeed, if \(h\in H\), then \(h\hbullet x_0=x_0\), and
the \(G\)-invariance of the reproducing kernel gives
\[
        \mathbf k_S(x_0,h\hbullet x)
        =
        \mathbf k_S(h^{-1}\hbullet x_0,x)
        =
        \mathbf k_S(x_0,x),
        \qquad x\in K .
\]
Hence \(\mathbf k_S(x_0,\cdot)\in S^H\). Moreover, this function is non-zero:
otherwise every \(f\in S\) would vanish at \(x_0\), and by the
\(G\)-invariance of \(S\) and the transitivity of the action, every
\(f\in S\) would vanish identically.

If \(S\) is strongly accessible with respect to \(x_0\), then every
non-zero element of \(S^H\) is a scalar multiple of
\(\mathbf k_S(x_0,\cdot)\). Therefore \(\dim S^H=1\). Conversely, if
\(\dim S^H=1\), then the non-zero vector \(\mathbf k_S(x_0,\cdot)\) spans
\(S^H\), and the defining condition of strong accessibility follows.
\end{proof}

\begin{remark}
\label{nox0}
   Although the definition of strong accessibility involves a base point, the property itself does
not. Indeed, let \(x_1=g_0\hbullet x_0\). Then
\(
        H_{x_1}=g_0H_{x_0}g_0^{-1}.
\)
The map
\(
        f\longmapsto \bigl[x\mapsto f(g_0^{-1}\hbullet x)\bigr]
\)
is an isomorphism from \(S^{H_{x_0}}\) onto \(S^{H_{x_1}}\). Hence
\(
        \dim S^{H_{x_0}}=\dim S^{H_{x_1}},
\)
and Lemma~\ref{lem:strong-accessibility-stabilizer} shows the claim.
\end{remark}

We now express this condition in terms of the Peter--Weyl blocks of
\(C(K)\).

\begin{theorem}
\label{thm:strong-accessibility}

Let \(K\) be a homogeneous \(G\)-space and  \(S\subset C(K)\) be a non-zero finite-dimensional \(G\)-invariant
subspace. Then the following assertions are equivalent.
\begin{itemize}
\item[(1)] \(S\) is strongly accessible. \item[(2)] There exists \(\pi\in\widehat G\) with \(m_\pi(K)=1\) such that
\(
        S=\mathcal E_\pi(K).
\)
\end{itemize}
\end{theorem}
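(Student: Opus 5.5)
By Lemma~\ref{lem:strong-accessibility-stabilizer}, condition (1) is equivalent to \(\dim S^{H}=1\), where \(H=H_{x_0}\). The plan is therefore to compute \(\dim S^H\) for an arbitrary finite-dimensional \(G\)-invariant \(S\), using the decomposition of Theorem~\ref{thm:invariant-subspaces}. Write \(S=\bigoplus_{\pi\in\Lambda}S_\pi\) with \(S_\pi=\bigoplus_{\ell\in I_\pi}\mathcal E_\pi^{(\ell)}(K)\neq 0\).

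The key claim is that every \(G\)-invariant irreducible copy \(E\cong V_\pi\) inside \(C(K)\) satisfies \(\dim E^H=1\). Since each irreducible summand contributes exactly one dimension, this will give
\[
\dim S^H=\sum_{\pi\in\Lambda}|I_\pi|.
\]
Note that \(f\in E^H\) means \(L_hf=f\) for \(h\in H\), because \(f(h\hbullet x)=(L_{h^{-1}}f)(x)\).

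To prove the claim, I will use an equivariant unitary isomorphism \(T\colon V_\pi\to E\), and then \(E^H=T(V_\pi^H)\). A priori \(\dim V_\pi^H=m_\pi(K)\), which is too big, so I need to use that elements of \(E\) are functions on \(K\). For this, I consider the evaluation functional \(\varepsilon\colon v\mapsto (Tv)(x_0)\) on \(V_\pi\). It is \(H\)-invariant: \(\varepsilon(\pi(h)v)=(L_hTv)(x_0)=(Tv)(h^{-1}\hbullet x_0)=\varepsilon(v)\). Hence \(\varepsilon=\langle\cdot,w\rangle\) for some \(w\in V_\pi^H\), and \(w\neq 0\) by the same transitivity argument as in Lemma~\ref{lem:strong-accessibility-stabilizer}.

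Now take \(u\in V_\pi^H\). The function \(Tu\) is \(H\)-invariant on \(K\), and
\[
(Tu)(g\hbullet x_0)=\varepsilon(\pi(g)^{-1}u)=\langle u,\pi(g)w\rangle .
\]
Evaluating instead at the \(H\)-invariance level, I expect to show \(Tu\) is a multiple of the kernel-type function \(x\mapsto\langle w,\pi(g)w\rangle\) for \(x=g\hbullet x_0\). Cleaner route: \(E^H\) is the image of the averaging operator \(\int_H L_h\,d\mu_H\) on \(E\), and \(E\) is spanned by translates \(L_gk\) of \(k=\mathbf k_E(x_0,\cdot)\). I will show that every \(H\)-invariant \(f\in E\) is determined by its value at \(x_0\): the reproducing property gives
\[
f(g\hbullet x_0)=\langle f,L_g k\rangle=\langle L_g^{-1}f,k\rangle,
\]
and I will reduce this to \(\langle T^{-1}f,\pi(g)w'\rangle\)-type expressions. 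This is the step I expect to be the main obstacle. Honestly, \(\dim E^H\) might exceed \(1\) in general (for instance, \(H\)-fixed but non-zonal functions in a multiplicity-two copy), in which case the correct computation is \(\dim S^H=\sum_\pi |I_\pi|\,m_\pi(K)\).

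Either formula suffices. In the first version, \(\dim S^H=1\) forces a single \(\pi\) with \(|I_\pi|=1\). I would then additionally need \(m_\pi(K)=1\), which the second formula gives directly: \(\dim S^H=1\) iff \(\Lambda=\{\pi\}\), \(|I_\pi|=1\) and \(m_\pi(K)=1\), i.e. \(S=\mathcal E_\pi(K)\) with \(m_\pi(K)=1\). So I will aim to prove the second formula.

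To prove it, I will use the tensor description \eqref{eq:tensor description}: \(\mathcal E_\pi(K)\cong V_\pi\otimes\mathbb C^{m_\pi(K)}\), with \(H\) acting on the first factor only. Then
\[
\mathcal E_\pi(K)^H\cong V_\pi^H\otimes\mathbb C^{m_\pi(K)},
\]
which has dimension \(m_\pi(K)^2\). The same reasoning gives
\[
S_\pi^H\cong V_\pi^H\otimes M_\pi,
\]
of dimension \(m_\pi(K)\,|I_\pi|\). Summing over the orthogonal invariant pieces, which works because the \(H\)-average \(\int_H L_h\) preserves each \(S_\pi\), yields \(\dim S^H=\sum_\pi m_\pi(K)|I_\pi|\).

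With this formula, the direction (2)\(\Rightarrow\)(1) is immediate, since \(\dim\mathcal E_\pi(K)^H=1\). For (1)\(\Rightarrow\)(2), every term in the sum is a positive integer and the total is \(1\), which forces a single \(\pi\) with \(m_\pi(K)=|I_\pi|=1\).
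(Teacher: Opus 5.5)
Your final argument is correct and is essentially the paper's proof. You decompose $S$ via Theorem~\ref{thm:invariant-subspaces}, observe that each irreducible copy contributes $\dim V_\pi^H=m_\pi(K)$ to $S^H$, so that $\dim S^H=\sum_{\pi\in\Lambda}|I_\pi|\,m_\pi(K)$, and conclude with Lemma~\ref{lem:strong-accessibility-stabilizer}. You should, however, delete the initial ``key claim'' that every irreducible copy $E$ satisfies $\dim E^H=1$, together with the remark that ``either formula suffices''. That claim is false in general: for the left-regular action $H$ is trivial, so $\dim E^H=d_\pi$.
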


\begin{proof}
By Theorem~\ref{thm:invariant-subspaces}, there are a finite set
\(\Lambda\subset\widehat G\) and subsets
\(I_\pi\subset\{1,\ldots,m_\pi(K)\}\) such that
\[
        S
        =
        \bigoplus_{\pi\in\Lambda}
        \bigoplus_{\ell\in I_\pi}
        \mathcal E_\pi^{(\ell)}(K).
\]
Each space \(\mathcal E_\pi^{(\ell)}(K)\) is \(G\)-equivalent to \(V_\pi\).
Under this equivalence, its \(H\)-invariant part corresponds to
\(V_\pi^H\), where \(H=H_{x_0}\) is the stabilizer of an arbitrary
base point \(x_0\in K\).
 Hence
\[
        \dim \bigl(\mathcal E_\pi^{(\ell)}(K)\bigr)^H
        =
        \dim V_\pi^H
        =
        m_\pi(K).
\]
Consequently,
\[
        \dim S^H
        =
        \sum_{\pi\in\Lambda}
        |I_\pi|\,m_\pi(K).
\]
Assume first that \(S\) is strongly accessible with respect to \(x_0\).
By Lemma~\ref{lem:strong-accessibility-stabilizer}, \(\dim S^H=1\).
The preceding formula then forces exactly one contribution: there is a
single representation \(\pi\in\widehat G\) with \(m_\pi(K)=1\) and
\(|I_\pi|=1\). Since \(m_\pi(K)=1\), this single copy is already the
whole Peter--Weyl block \(\mathcal E_\pi(K)\). Thus
\(
        S=\mathcal E_\pi(K).
\)

Conversely, assume that \(S=\mathcal E_\pi(K)\) for some
\(\pi\in\widehat G\) with \(m_\pi(K)=1\). Then \(\mathcal{E}_\pi(K)\) consists of
one copy equivalent to \(V_\pi\), and its \(H\)-invariant part has
dimension
\(
        \dim V_\pi^H=m_\pi(K)=1.
\)
Hence \(\dim S^H=1\), and
Lemma~\ref{lem:strong-accessibility-stabilizer} shows that \(S\) is
strongly accessible with respect to \(x_0\).
\end{proof}

Combining Theorem~\ref{thm:strong-accessibility} with
Theorem~\ref{thm:accessible-subspaces}, we obtain the following
immediate consequence.

\begin{corollary}
Let \(K\) be a homogeneous \(G\)-space. Then every strongly accessible
subspace of \(C(K)\) is accessible.
\end{corollary}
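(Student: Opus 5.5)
The corollary follows by chaining Theorem~\ref{thm:strong-accessibility} with Theorem~\ref{thm:accessible-subspaces}; no new analysis is needed.

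Let \(S\subset C(K)\) be strongly accessible. Strong accessibility is only defined for non-zero finite-dimensional \(G\)-invariant subspaces, so \(S\) satisfies the standing hypotheses of Theorem~\ref{thm:strong-accessibility}. By Remark~\ref{nox0}, the choice of base point \(x_0\) does not matter.

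Applying the implication \((1)\Rightarrow(2)\) of Theorem~\ref{thm:strong-accessibility}, we obtain some \(\pi\in\widehat G\) with \(m_\pi(K)=1\) such that
\[
S=\mathcal E_\pi(K).
\]
Thus \(S\) is a single full isotypic component. In the notation of Theorem~\ref{thm:accessible-subspaces}(2), this is the case \(\Lambda=\{\pi\}\), a finite orthogonal sum of full isotypic components with one summand.

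The implication \((2)\Rightarrow(1)\) of Theorem~\ref{thm:accessible-subspaces} then gives that \(S\) is \(G\)-accessible. This step requires \(S\) to be finite-dimensional and \(G\)-invariant, and it is: \(\dim\mathcal E_\pi(K)=d_\pi m_\pi(K)=d_\pi<\infty\), and isotypic components are \(G\)-invariant.

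There is no real obstacle here. The only point to check is that the hypotheses of the two theorems match, and they do.
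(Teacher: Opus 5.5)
Your proposal is correct and is exactly the paper's argument, which states the corollary as an immediate consequence of combining Theorem~\ref{thm:strong-accessibility} with Theorem~\ref{thm:accessible-subspaces}. One minor remark: the paper's definition of strong accessibility does not literally exclude \(S=\{0\}\), which satisfies it vacuously. That case is still trivially accessible, since the only projection onto \(\{0\}\) is zero, i.e.\ it is the empty sum of isotypic components.
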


\subsection{The projection constant machine}
\label{sec:The projection constant machine}
We now combine the characterization of accessibility from
Theorem~\ref{thm:accessible-subspaces} with the kernel formula from
Theorem~\ref{thm:accessible-projection-constant}.

Let
\(K\) be a homogeneous \(G\)-space and fix \(x_0\in K\). For
\(\pi\in\widehat G\), choose an orthonormal basis
\[
        u_1^\pi,\ldots,u_{m_\pi(K)}^\pi
\]
of the admissible directions \(V_\pi^{H_{x_0}}\). Define
\[
    \chi_{\pi,x_0}^K(g\hbullet x_0)
    :=
    \sum_{\ell=1}^{m_\pi(K)}
    \langle u_\ell^\pi,\pi(g)u_\ell^\pi\rangle,
    \qquad g\in G.
\]

This function is well-defined on \(K\) and is independent of the
chosen orthonormal basis of \(V_\pi^{H_{x_0}}\). If
\(x_1=a\hbullet x_0\), then \(\{\pi(a)u_l^\pi:\,l=1,\dots,m_\pi(K)\}\) is an orthonormal basis of \(V_\pi^{H_{x_1}}\). Thus, given \(x=g\hbullet x_1\),
\[
    \chi_{\pi,x_1}^K(x)=\sum_{\ell=1}^{m_\pi(K)}
    \langle \pi(a)u_\ell^\pi,\pi(g)\pi(a)u_\ell^\pi\rangle
    =\chi_{\pi,x_0}^K(a^{-1}ga\hbullet x_0)=
    \chi_{\pi,x_0}^K(a^{-1}\hbullet x),
    \qquad x\in K.
\]
For a fixed base point \(x_0\), we shall abbreviate
\(\chi_{\pi,x_0}^K\) to \(\chi_\pi^K\).

\begin{theorem}
\label{thm:homogeneous-machine}
Let \(K\) be a homogeneous \(G\)-space, let \(x_0\in K\), and let
\(
    S
    =
    \bigoplus_{\pi\in\Lambda}
    \mathcal E_\pi(K)
    \subset C(K),
\)
where \(\Lambda\subset\widehat G\) is finite. Then \(S\) is
\(G\)-accessible. Moreover, the reproducing kernel of \(S\) satisfies
\begin{equation*}
    \mathbf k_S(x_0,x)
    =
    \sum_{\pi\in\Lambda}
    d_\pi\,\chi_\pi^K(x),
    \qquad x\in K.
\end{equation*}
Consequently,
\[
    \boldsymbol\lambda(S)
    =
    \int_K
    \Big|
        \sum_{\pi\in\Lambda}
        d_\pi\,\chi_\pi^K(x)
    \Big|
    \,d\mu_K(x).
\]
\end{theorem}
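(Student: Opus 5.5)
Accessibility is immediate from Theorem~\ref{thm:accessible-subspaces}, since \(S\) is by assumption a finite orthogonal sum of full isotypic components. Once the kernel formula is established, the integral formula for \(\boldsymbol\lambda(S)\) follows directly from Theorem~\ref{thm:accessible-projection-constant}, taking the base point there to be \(x_0\). So the only real work is computing \(\mathbf k_S(x_0,\cdot)\).

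By additivity of reproducing kernels under orthogonal sums (Subsection~\ref{Reproducing kernels}) and the orthogonality of distinct isotypic components (Theorem~\ref{thm:homogeneous-peter-weyl}), we have \(\mathbf k_S=\sum_{\pi\in\Lambda}\mathbf k_{\mathcal E_\pi(K)}\). It therefore suffices to show
\[
\mathbf k_{\mathcal E_\pi(K)}(x_0,x)=d_\pi\,\chi_\pi^K(x)
\]
for a single \(\pi\) with \(m_\pi(K)>0\); the case \(m_\pi(K)=0\) is trivial, since then both sides vanish. For this I will use the orthonormal basis \(e^\pi_{\ell j}(g\hbullet x_0)=\sqrt{d_\pi}\langle v_j,\pi(g)u_\ell\rangle\) from the matrix coefficient subsection. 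The defining formula gives
\[
\mathbf k(x_0,g\hbullet x_0)=\sum_{\ell,j}\overline{e^\pi_{\ell j}(x_0)}\,e^\pi_{\ell j}(g\hbullet x_0),
\]
where \(x_0=e\hbullet x_0\), so that \(e^\pi_{\ell j}(x_0)=\sqrt{d_\pi}\langle v_j,u_\ell\rangle\). Hence the sum equals
\[
d_\pi\sum_\ell\sum_j \overline{\langle v_j,u_\ell\rangle}\langle v_j,\pi(g)u_\ell\rangle .
\]

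The inner sum over \(j\) is handled by Parseval in \(V_\pi\). With the convention that inner products are linear in the first variable, \(\sum_j\langle v_j,\pi(g)u_\ell\rangle\overline{\langle v_j,u_\ell\rangle}=\langle u_\ell,\pi(g)u_\ell\rangle\), because \(w=\sum_j\langle w,v_j\rangle v_j\). Summing over \(\ell\) yields \(d_\pi\chi_\pi^K(g\hbullet x_0)\), as required.

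The main obstacle is bookkeeping of conjugations. I need to check that the kernel convention \(\mathbf k_S(x,y)=\sum\overline{f_j(x)}f_j(y)\) combined with the inner product convention produces \(\langle u_\ell,\pi(g)u_\ell\rangle\) and not its conjugate. I also need to confirm that the result is well defined on \(K\), which follows from the admissibility of the \(u_\ell\). Finally, the independence from the choices of \(x_0\) and of the orthonormal basis, noted before the statement, together with the \(x_0\)-independence in Theorem~\ref{thm:accessible-projection-constant}, makes the final formula consistent.
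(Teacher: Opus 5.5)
Your proposal is correct and follows the paper's own proof essentially step for step. Accessibility comes from Theorem~\ref{thm:accessible-subspaces}. The kernel of each block $\mathcal E_\pi(K)$ is computed from the orthonormal basis $e^\pi_{\ell j}$, and the Parseval identity in $V_\pi$ collapses the sum over $j$ to $\langle u_\ell,\pi(g)u_\ell\rangle$, with the conjugations correct under the paper's conventions. Additivity over the orthogonal blocks and Theorem~\ref{thm:accessible-projection-constant} then give the integral formula.
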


\begin{proof}
By Theorem~\ref{thm:accessible-subspaces}, the space \(S\) is
\(G\)-accessible.
We first prove the kernel formula. Fix \(\pi\in\Lambda\), and let
\(
    u_1^\pi,\ldots,u_{m_\pi(K)}^\pi
\)
be an orthonormal basis of \(V_\pi^{H_{x_0}}\), and
\(
    v_1^\pi,\ldots,v_{d_\pi}^\pi
\)
an orthonormal basis of \(V_\pi\). Then the functions
\[
    e_{\ell j}^\pi(g\hbullet x_0)
    =
    \sqrt{d_\pi}\,
    \bigl\langle v_j^\pi,\pi(g)u_\ell^\pi\bigr\rangle,
    \qquad
    1\leq \ell\leq m_\pi(K),\quad
    1\leq j\leq d_\pi,
\]
form an orthonormal basis of \(\mathcal E_\pi(K)\).
Therefore, for \(g\in G\),
\[
\begin{aligned}
    \mathbf k_{\mathcal E_\pi(K)}
        (x_0,g\hbullet x_0)
    &=
    \sum_{\ell=1}^{m_\pi(K)}
    \sum_{j=1}^{d_\pi}
    \overline{e_{\ell j}^\pi(x_0)}\,
    e_{\ell j}^\pi(g\hbullet x_0)
    \\
    &=
    d_\pi
    \sum_{\ell=1}^{m_\pi(K)}
    \sum_{j=1}^{d_\pi}
    \overline{
        \bigl\langle v_j^\pi,u_\ell^\pi\bigr\rangle
    }\,
    \bigl\langle
        v_j^\pi,\pi(g)u_\ell^\pi
    \bigr\rangle.
\end{aligned}
\]
Since the inner product is linear in the first variable and
\((v_j^\pi)_{j=1}^{d_\pi}\) is an orthonormal basis of \(V_\pi\),
\[
    \sum_{j=1}^{d_\pi}
    \overline{
        \bigl\langle v_j^\pi,u_\ell^\pi\bigr\rangle
    }\,
    \bigl\langle
        v_j^\pi,\pi(g)u_\ell^\pi
    \bigr\rangle
    =
    \bigl\langle
        u_\ell^\pi,\pi(g)u_\ell^\pi
    \bigr\rangle.
\]
It follows that
\[
    \mathbf k_{\mathcal E_\pi(K)}
        (x_0,g\hbullet x_0)
    =
    d_\pi
    \sum_{\ell=1}^{m_\pi(K)}
    \bigl\langle
        u_\ell^\pi,\pi(g)u_\ell^\pi
    \bigr\rangle
        =
    d_\pi\,\chi_\pi^K(g\hbullet x_0).
\]
The spaces \(\mathcal E_\pi(K)\), \(\pi\in\Lambda\), are mutually
orthogonal. Hence the reproducing kernel of their orthogonal direct
sum is the sum of their reproducing kernels (see again Section~\ref{Reproducing kernels}). Consequently,
\[
    \mathbf k_S(x_0,x)
    =
    \sum_{\pi\in\Lambda}
    \mathbf k_{\mathcal E_\pi(K)}(x_0,x)
    =
    \sum_{\pi\in\Lambda}
    d_\pi\,\chi_\pi^K(x).
\]
The integral formula now follows from
Theorem~\ref{thm:accessible-projection-constant}.
\end{proof}

Corollaries~\ref{cor:left-regular-machine}
and~\ref{cor:biregular-machine} below describe how the preceding theorem
specializes to the left-regular and biregular actions of a compact group
on itself.

\subsection{Groups acting on groups}
\label{sec:Groups acting on groups}

We finally record two natural ways in which a compact group acts on
itself. The contrast between them will be important later.

\subsubsection{Left-regular action}

Let \(G\) be a compact group acting on itself by left multiplication,
\[
        a\hbullet x:=ax,
        \qquad a,x\in G.
\]
The induced representation on \(C(G)\) is the left-regular action
\[
        (L_a f)(x):=f(a^{-1}x),
        \qquad f\in C(G).
\]
For \(\pi\in\widehat G\), let
\[
        \mathcal E_\pi(G)
        :=
        \operatorname{span}
        \big\{x\mapsto \langle v,\pi(x)u\rangle:u,v\in V_\pi\big\}
        \subset C(G)
\]
be the usual matrix coefficient space. Since the stabilizer of the
identity is trivial, every vector in \(V_\pi\) is admissible. Hence
\[
        m_\pi(G)=d_\pi .
\]
Consequently, under the left-regular action,
\[
        \mathcal E_\pi(G)
        =
        \bigoplus_{\ell=1}^{d_\pi}
        \mathcal E_\pi^{(\ell)}(G),
\]
where each \(\mathcal E_\pi^{(\ell)}(G)\) is \(G\)-equivalent to
\(V_\pi\).
More explicitly, if \(v_1,\ldots,v_{d_\pi}\) is an orthonormal basis of
\(V_\pi\), then
\[
        \mathcal E_\pi^{(\ell)}(G)
        :=
        \operatorname{span}
        \big\{x\mapsto \langle v,\pi(x)v_\ell\rangle:v\in V_\pi\big\},
        \qquad
        1\leq \ell\leq d_\pi .
\]
For fixed \(\ell\), the map
\[
        V_\pi\longrightarrow \mathcal E_\pi^{(\ell)}(G),
        \qquad
        v\longmapsto
        \bigl(x\mapsto \sqrt{d_\pi}\,\langle v,\pi(x)v_\ell\rangle\bigr),
\]
realizes this $G$-equivalence.

Thus the Peter--Weyl decomposition from Theorem~\ref{thm:homogeneous-peter-weyl} may be read, under the
left-regular action, as follows: \(L_2(G)\) is the orthogonal sum of the
isotypic components \(\mathcal E_\pi(G)\), and inside
\(\mathcal E_\pi(G)\) the irreducible representation \(V_\pi\) appears
with multiplicity \(d_\pi\).
In particular, a left-invariant subspace of \(C(G)\) may contain only
some of the irreducible copies \(\mathcal E_\pi^{(\ell)}(G)\); see
Theorem~\ref{thm:invariant-subspaces}. Hence, by
Theorem~\ref{thm:accessible-subspaces}, left-invariance alone does not
imply accessibility unless the action is multiplicity-free (for more details see Subsection~\ref{Invariance and accessibility in the left- and biregular case}).

\subsubsection{Biregular action}
\label{sec:Biregular action}

We now let \(G\times G\) act on \(G\) by
\[
        (a,b)\hbullet x
        :=
        axb^{-1},
        \qquad
        a,b,x\in G.
\]
The induced representation on \(C(G)\) is given by
\[
        ((a,b)\hbullet f)(x)
        :=
        f(a^{-1}xb),
        \qquad
        f\in C(G).
\]
For \(\pi\in\widehat G\), we again write
\[
\mathcal E_\pi(G)
:=
\operatorname{span}
\bigl\{
x\longmapsto\langle v,\pi(x)u\rangle:
u,v\in V_\pi
\bigr\}.
\]
These spaces are invariant under the biregular action.
We recall that the irreducible unitary representations of \(G\times G\)
are precisely the outer tensor products
\[
        \pi\sboxtimes\sigma,
        \qquad
        \pi,\sigma\in\widehat G,
\]
where \(\pi\sboxtimes\sigma\) acts on \(V_\pi\otimes V_\sigma\) by
\[
        (\pi\sboxtimes\sigma)(a,b)
        :=
        \pi(a)\otimes\sigma(b),
        \qquad
        a,b\in G.
\]
Thus
\[
        \widehat{G\times G}
        \cong
        \widehat G\times\widehat G.
\]
The stabilizer of the identity \(e\in G\) under the biregular action is
the diagonal subgroup
\[
        \Delta G
        :=
        \big\{(g,g):g\in G\big\}.
\]
Hence, if one applies the homogeneous Peter--Weyl Theorem~\ref{thm:homogeneous-peter-weyl} to the
biregular action of \(G\times G\), the decomposition first appears in
the form
\begin{equation} \label{biPW:first}
     L_2(G)
        =
        \widehat{\bigoplus}_{\pi,\sigma\in\widehat G}
        \mathcal E_{\pi\sboxtimes\,\sigma}(G),
\end{equation}
where \(\mathcal E_{\pi\sboxtimes\,\sigma}(G)\) denotes the biregular
isotypic component corresponding to the irreducible representation
\(\pi\sboxtimes\sigma\) of \(G\times G\). Its multiplicity is
\[
        m_{\pi\sboxtimes\,\sigma}(G)
        =
        \dim(V_\pi\otimes V_\sigma)^{\Delta G},
\]
where \((V_\pi\otimes V_\sigma)^{\Delta G}
\subset V_\pi\otimes V_\sigma
\) denotes the subspace of
vectors fixed by the diagonal action
\(
        g\longmapsto \pi(g)\otimes\sigma(g).
\)
The next lemma shows that almost all of these possible summands in~\eqref{biPW:first} vanish:
a component can occur only when \(\sigma\cong\pi^*\), and then it occurs
with multiplicity one.

\begin{lemma}
\label{lem:biregular-multiplicity-one}
Let \(\pi,\sigma\in\widehat G\). Then
\[
        \dim(V_\pi\otimes V_\sigma)^{\Delta G}
        =
        \begin{cases}
        1, & \sigma\cong\pi^*,\\
        0, & \sigma\not\cong\pi^*.
        \end{cases}
\]
In particular, the only irreducible representations that occur in the
biregular Peter--Weyl decomposition of \(L_2(G)\) are
\(
        \pi\sboxtimes\pi^*,\,
                \pi\in\widehat G,
\)
and each of them occurs with multiplicity one.
\end{lemma}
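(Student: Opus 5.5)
The plan is to identify $V_\pi\otimes V_\sigma$ with a space of linear maps and then use Schur's lemma (Lemma~\ref{thm:schur-lemma}). Since $V_\pi$ is finite-dimensional, there is a canonical linear isomorphism
\[
        V_\pi\otimes V_\sigma\;\cong\;\mathcal L(V_\pi^*,V_\sigma),
        \qquad
        (v\otimes w)(\varphi):=\varphi(v)\,w .
\]
The first step is to check how the diagonal action $\pi(g)\otimes\sigma(g)$ transforms under this identification. For $T=v\otimes w$ we have
\[
        \bigl((\pi(g)v)\otimes(\sigma(g)w)\bigr)(\varphi)=\varphi(\pi(g)v)\,\sigma(g)w .
\]
Since $\varphi(\pi(g)v)=(\pi^*(g^{-1})\varphi)(v)$, this equals $\bigl(\sigma(g)\,T\,\pi^*(g)^{-1}\bigr)(\varphi)$. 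By linearity the diagonal action becomes $T\mapsto\sigma(g)\,T\,\pi^*(g)^{-1}$ for every $T\in\mathcal L(V_\pi^*,V_\sigma)$.

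Consequently $(V_\pi\otimes V_\sigma)^{\Delta G}$ is identified with the space of operators $T$ satisfying $T\pi^*(g)=\sigma(g)T$ for all $g\in G$, that is, with the space of intertwining operators from $\pi^*$ to $\sigma$.

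The second step verifies that $\pi^*$ is an irreducible unitary representation. We equip $V_\pi^*$ with the inner product transported from $V_\pi$ by the Riesz map. Under this identification $\pi^*$ becomes the conjugate representation $\overline{\pi}$, which is unitary. It is irreducible because a closed $\pi^*$-invariant subspace $W\subset V_\pi^*$ has $\pi$-invariant annihilator $W^\perp\subset V_\pi$, and irreducibility of $\pi$ forces $W^\perp$, hence $W$, to be trivial. Alternatively, one can note that $\int|\chi_{\pi^*}|^2\,d\mu_G=\int|\chi_\pi|^2\,d\mu_G=1$. Schur's lemma then shows that the intertwiner space has dimension $1$ if $\sigma\cong\pi^*$ and dimension $0$ otherwise, which proves the displayed formula.

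The final assertion follows by combining this with the observation before the lemma that $m_{\pi\sboxtimes\sigma}(G)=\dim(V_\pi\otimes V_\sigma)^{\Delta G}$, where $\Delta G$ is the stabilizer of $e$. By Theorem~\ref{thm:homogeneous-peter-weyl}, the component $\mathcal E_{\pi\sboxtimes\sigma}(G)$ is therefore nonzero exactly when $\sigma\cong\pi^*$, and in that case it occurs with multiplicity one.

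The main obstacle is bookkeeping rather than substance. One has to keep the conventions straight so that the intertwining relation involves $\pi^*$ and not $\pi$ or a unitarily twisted version of it, and one has to confirm that $\pi^*$ is a genuine element of $\widehat G$ so that Schur's lemma applies. Both points reduce to the short computations described above.
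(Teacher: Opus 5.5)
Your proof is correct and follows the paper's argument. The paper also identifies the $\Delta G$-fixed tensors with intertwiners through a tensor--Hom isomorphism and then applies Schur's lemma. The only difference is orientation: the paper uses maps $V_\sigma^*\to V_\pi$ intertwining $\sigma^*$ and $\pi$, and then invokes $\pi\cong\sigma^*\Leftrightarrow\sigma\cong\pi^*$, whereas you use maps $V_\pi^*\to V_\sigma$ and obtain $\sigma\cong\pi^*$ directly. Your explicit check that $\pi^*$ is irreducible and unitary fills in a point the paper leaves implicit.
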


\begin{proof}
Let
\[
        \omega
        =
        \sum_i u_i\otimes w_i
        \in V_\pi\otimes V_\sigma.
\]
To \(\omega\) we associate the linear map
\[
        T_\omega:V_\sigma^*\longrightarrow V_\pi,
        \qquad
        T_\omega(\varphi)
        :=
        \sum_i\varphi(w_i)u_i.
\]
The assignment \(\omega\mapsto T_\omega\) is a linear bijection between
\(V_\pi\otimes V_\sigma\) and the space of linear maps
\(V_\sigma^*\to V_\pi\).
The tensor \(\omega\) is fixed by the diagonal action if and only if
\[
        \sum_i\pi(g)u_i\otimes\sigma(g)w_i
        =
        \sum_i u_i\otimes w_i
        \qquad
        \text{for every }g\in G.
\]
Applying a functional \(\varphi\in V_\sigma^*\) to the second tensor
factor, this condition becomes
\[
        T_\omega\bigl(\sigma^*(g)\varphi\bigr)
        =
        \pi(g)T_\omega(\varphi),
        \qquad
        g\in G,\quad \varphi\in V_\sigma^*.
\]
Thus the diagonal fixed vectors in \(V_\pi\otimes V_\sigma\)
correspond exactly to the intertwining operators
\[
        T_\omega:V_\sigma^*\longrightarrow V_\pi
\]
between \(\sigma^*\) and \(\pi\).
By Schur's lemma~\ref{thm:schur-lemma}, there is no non-zero such operator if
\(\pi\not\cong\sigma^*\). If \(\pi\cong\sigma^*\), the space of such
operators is one-dimensional. Since
\(\pi\cong\sigma^*\) is equivalent to \(\sigma\cong\pi^*\), the stated
formula follows.
\end{proof}

For \(\pi\in\widehat G\), consider the irreducible representation
\[
        \pi\sboxtimes\pi^*
        :
        G\times G
        \longrightarrow
        \mathcal U(V_\pi\otimes V_\pi^*)
\]
given by
\[
        (\pi\sboxtimes\pi^*)(a,b)(u\otimes\varphi)
        :=
        \pi(a)u\otimes\pi^*(b)\varphi,
\]
for \(a,b\in G\), \(u\in V_\pi\), and
\(\varphi\in V_\pi^*\). The next lemma identifies its concrete
realization inside \(C(G)\).

\begin{lemma}
\label{lem:biregular-matrix-coefficients}
For every \(\pi\in\widehat G\), equip
\(V_\pi\otimes V_\pi^*\) with the outer tensor product action
\[
    (a,b)\hbullet(u\otimes\varphi)
    :=
    \pi(a)u\otimes\pi^*(b)\varphi,
    \qquad a,b\in G.
\]
Then the map
\[
    \Theta_\pi:V_\pi\otimes V_\pi^*
    \longrightarrow C(G),
    \qquad
    \Theta_\pi(u\otimes\varphi)(x)
    :=
    \varphi\bigl(\pi(x^{-1})u\bigr),
\]
is a \(G\times G\)-equivariant isomorphism onto the matrix
coefficient space \(\mathcal E_\pi(G)\). Consequently,
\[
    \mathcal E_{\pi\sboxtimes\pi^*}(G)
    =
    \mathcal E_\pi(G)
    \cong
    V_\pi\otimes V_\pi^*
\]
as \(G\times G\)-representations.
\end{lemma}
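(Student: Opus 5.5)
The plan is to check four things about \(\Theta_\pi\) in turn: its image, its equivariance, its injectivity, and the identification with the biregular isotypic component.

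\textbf{Image.} First verify that \(\Theta_\pi\) takes values in \(\mathcal E_\pi(G)\) and is surjective onto it. By Riesz, every \(\varphi\in V_\pi^*\) has the form \(\varphi=\langle\cdot,w\rangle\) for a unique \(w\in V_\pi\). Since \(\pi\) is unitary, \(\pi(x^{-1})=\pi(x)^*\), so
\[
\Theta_\pi(u\otimes\langle\cdot,w\rangle)(x)=\langle \pi(x)^*u,w\rangle=\langle u,\pi(x)w\rangle .
\]
This is exactly the matrix coefficient \(x\mapsto\langle v,\pi(x)u'\rangle\) with \(v=u\) and \(u'=w\). As \(u,w\) range over \(V_\pi\), these functions span \(\mathcal E_\pi(G)\). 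Hence \(\Theta_\pi\) is a well-defined linear map (bilinear in \((u,\varphi)\), hence defined on the tensor product) onto \(\mathcal E_\pi(G)\).

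\textbf{Equivariance.} Compute directly, using the biregular action \(((a,b)\hbullet f)(x)=f(a^{-1}xb)\):
\[
\Theta_\pi(\pi(a)u\otimes\pi^*(b)\varphi)(x)=(\pi^*(b)\varphi)(\pi(x^{-1})\pi(a)u)=\varphi(\pi(b^{-1}x^{-1}a)u).
\]
On the other hand,
\[
((a,b)\hbullet\Theta_\pi(u\otimes\varphi))(x)=\varphi(\pi((a^{-1}xb)^{-1})u)=\varphi(\pi(b^{-1}x^{-1}a)u).
\]
The two expressions agree, so \(\Theta_\pi\) is \(G\times G\)-equivariant.

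\textbf{Injectivity.} Two routes are available. The first is a dimension count: the Schur orthogonality relations show that the \(d_\pi^2\) functions \(\langle v_j,\pi(x)v_\ell\rangle\) are orthogonal and nonzero. So \(\dim\mathcal E_\pi(G)=d_\pi^2=\dim(V_\pi\otimes V_\pi^*)\), and a surjective linear map between spaces of equal finite dimension is bijective. The second route is more structural: \(V_\pi\otimes V_\pi^*\) carries the irreducible representation \(\pi\sboxtimes\pi^*\) of \(G\times G\), and \(\Theta_\pi\neq0\), so its kernel is a proper invariant subspace and hence zero. I would present the dimension count and mention the irreducibility argument as the cleaner alternative.

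\textbf{Isotypic component.} Since \(\Theta_\pi\) is an equivariant isomorphism, \(\mathcal E_\pi(G)\) is a copy of \(\pi\sboxtimes\pi^*\) inside \(L_2(G)\), so \(\mathcal E_\pi(G)\subset\mathcal E_{\pi\sboxtimes\pi^*}(G)\). By Lemma~\ref{lem:biregular-multiplicity-one}, the multiplicity of \(\pi\sboxtimes\pi^*\) is one. Theorem~\ref{thm:homogeneous-peter-weyl} then gives \(\dim\mathcal E_{\pi\sboxtimes\pi^*}(G)=\dim(V_\pi\otimes V_\pi^*)\), which forces equality of the two spaces. This last identification is the only delicate point: one must invoke both the multiplicity-one lemma and the isotypic description, and, if the dimension route is used for injectivity, justify that the \(d_\pi^2\) coefficients are linearly independent via Schur orthogonality.
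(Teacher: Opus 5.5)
Your proposal is correct and follows the paper's proof step by step. You identify the image with \(\mathcal E_\pi(G)\), obtain bijectivity from \(\dim\mathcal E_\pi(G)=d_\pi^2\), check equivariance by the same direct computation, and conclude \(\mathcal E_\pi(G)=\mathcal E_{\pi\sboxtimes\pi^*}(G)\) from the multiplicity-one lemma; your Riesz computation \(\Theta_\pi(u\otimes\langle\cdot,w\rangle)(x)=\langle u,\pi(x)w\rangle\) and your use of Schur orthogonality for the linear independence of the \(d_\pi^2\) coefficients just make explicit two points the paper asserts without detail.
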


\begin{proof}
Let \(v_1,\ldots,v_{d_\pi}\) be a basis of \(V_\pi\), and let
\(v_1^*,\ldots,v_{d_\pi}^*\) be the corresponding dual basis. Then
\[
        \Theta_\pi(v_i\otimes v_j^*)(x)
        =
        v_j^*(\pi(x^{-1})v_i),
\]
which span precisely the functions
\(x\mapsto\langle v,\pi(x)u\rangle\), and hence the matrix coefficient
space \(\mathcal E_\pi(G)\) in our convention. Hence the image
of \(\Theta_\pi\) is \(\mathcal E_\pi(G)\).
Moreover, these \(d_\pi^2\) matrix coefficients are linearly
independent. Therefore
\[
        \dim\mathcal E_\pi(G)
        =
        d_\pi^2
        =
        \dim(V_\pi\otimes V_\pi^*),
\]
and \(\Theta_\pi\) is a linear isomorphism onto
\(\mathcal E_\pi(G)\).
It remains to verify equivariance. For \(a,b,x\in G\),
\(u\in V_\pi\), and \(\varphi\in V_\pi^*\), one has
\[
\begin{aligned}
        \Theta_\pi\bigl((a,b)\hbullet (u\otimes\varphi)\bigr)(x)
        &=
        \Theta_\pi
        \bigl(\pi(a)u\otimes\pi^*(b)\varphi\bigr)(x)
        \\
        &=
        (\pi^*(b)\varphi)\bigl(\pi(x^{-1})\pi(a)u\bigr)
        \\
        &=
        \varphi\bigl(\pi(b^{-1}x^{-1}a)u\bigr)
        \\
        &=
        \Theta_\pi(u\otimes\varphi)(a^{-1}xb)
        =
        \bigl((a,b)\hbullet\Theta_\pi(u\otimes\varphi)\bigr)(x).
\end{aligned}
\]
Thus \(\Theta_\pi\) is \(G\times G\)-equivariant.
Since \(\Theta_\pi\) is non-zero and \(G\times G\)-equivariant, and
its domain is irreducible of type \(\pi\sboxtimes\pi^*\), its image
\(\mathcal E_\pi(G)\) is an irreducible \(G\times G\)-subrepresentation
of the corresponding biregular isotypic component:
\[
        \mathcal E_\pi(G) \subset
        \mathcal E_{\pi\sboxtimes\,\pi^*}(G).
\]
By Lemma~\ref{lem:biregular-multiplicity-one}, the representation
\(\pi\sboxtimes\pi^*\) occurs in \(L_2(G)\) with multiplicity one.
Therefore its isotypic component consists of a single irreducible copy,
and the above inclusion must be an equality. Hence
\[
        \mathcal E_{\pi\sboxtimes\,\pi^*}(G)
        = \mathcal E_\pi(G),
\]
and \(\Theta_\pi\) gives the asserted \(G\times G\)-equivariant
identification with \(V_\pi\otimes V_\pi^*\).
\end{proof}

We can now simplify the Peter--Weyl decomposition
\eqref{biPW:first} for the biregular action. By
Lemma~\ref{lem:biregular-multiplicity-one}, the biregular isotypic
component associated with \(\rho\sboxtimes\sigma\) vanishes unless
\(\sigma\simeq\rho^*\), and every non-zero component occurs with
multiplicity one. On the other hand,
Lemma~\ref{lem:biregular-matrix-coefficients} identifies the
non-zero components corresponding to \(\pi\sboxtimes\,\pi^*\) with the
usual matrix coefficient space:
\[
    \mathcal E_{\pi\sboxtimes\,\pi^*}(G)
    =
    \mathcal E_\pi(G)
    \simeq
    V_\pi\otimes V_\pi^*
\]
as \(G\times G\)-representations, where \(V_\pi\otimes V_\pi^*\) is
equipped with the action given by $\pi\sboxtimes\pi^*$. Consequently,
\[
    L_2(G)
    =
    \widehat{\bigoplus}_{\pi\in\widehat G}
    \mathcal E_{\pi\sboxtimes\,\pi^*}(G)
    =
    \widehat{\bigoplus}_{\pi\in\widehat G}
    \mathcal E_\pi(G),
\]
and, equivalently, at the level of \(G\times G\)-representations,
\[
    L_2(G)
    \simeq
    \widehat{\bigoplus}_{\pi\in\widehat G}
    V_\pi\otimes V_\pi^*.
\]
Thus the non-zero isotypic components of the biregular representation
are precisely the matrix coefficient spaces
\(\mathcal E_\pi(G)\), and each occurs with multiplicity one. Hence
the biregular Peter--Weyl decomposition is multiplicity-free.

\begin{remark}
\label{rem:left-vs-biregular}
The same matrix coefficient space \(\mathcal E_\pi(G)\) has two different
representation-theoretic meanings. Under the left-regular action of
\(G\), it decomposes into \(d_\pi\) copies of \(V_\pi\). Under the
biregular action of \(G\times G\), it is irreducible and isomorphic to
\(V_\pi\otimes V_\pi^*\). When \(d_\pi>1\), biregular invariance therefore
imposes a strictly stronger condition on subspaces of
\(\mathcal E_\pi(G)\) than left invariance.
\end{remark}

\subsubsection{Invariance and accessibility in the left- and biregular case}
\label{Invariance and accessibility in the left- and biregular case}

We now apply the preceding results to the two natural actions of a
compact group on itself.

\begin{corollary}
\label{cor:left-regular-accessibility}
Let \(G\) be a compact group and let \(S\subset C(G)\) be
finite-dimensional. Then \(S\) is left-accessible
if and only if there is a finite set \(\Lambda\subset\widehat G\) such
that
\(
        S
        =
        \bigoplus_{\pi\in\Lambda}
        \mathcal E_\pi(G).
\)
\end{corollary}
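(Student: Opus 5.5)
This is a direct specialization of Theorem~\ref{thm:accessible-subspaces} to the homogeneous space \(K=G\) under the left-regular action, so the plan is to identify the isotypic components of \(L_2(G)\) for this action with the matrix coefficient spaces \(\mathcal E_\pi(G)\) and then quote that theorem. One small point must be handled first. The corollary is stated for an arbitrary finite-dimensional \(S\), whereas Theorem~\ref{thm:accessible-subspaces} assumes \(G\)-invariance. Left-accessibility is, by definition, a property of finite-dimensional left-invariant subspaces, so in the direction ``accessible \(\Rightarrow\) sum of blocks'' invariance is part of the hypothesis. In the converse direction, each \(\mathcal E_\pi(G)\) is left-invariant, because \(L_a\) maps \(x\mapsto\langle v,\pi(x)u\rangle\) to \(x\mapsto\langle \pi(a)v,\pi(x)u\rangle\). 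Hence every finite sum of such spaces is left-invariant.

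Next I identify the isotypic components. With base point \(x_0=e\), the stabilizer \(H_e\) is trivial, so \(V_\pi^{H_e}=V_\pi\). The homogeneous matrix coefficient space from the general construction, \(\operatorname{span}\{g\hbullet e\mapsto\langle v,\pi(g)u\rangle\}\), is therefore literally the usual \(\mathcal E_\pi(G)\), and \(m_\pi(G)=d_\pi\). Theorem~\ref{thm:homogeneous-peter-weyl} then shows that \(\mathcal E_\pi(G)\) is the full \(\pi\)-isotypic component of \(L_2(G)\) for the left-regular representation. I also check that the invariant measure \(\mu_K\) is the Haar measure \(\mu_G\), which was already noted earlier.

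Applying Theorem~\ref{thm:accessible-subspaces} with \(K=G\), a finite-dimensional left-invariant \(S\) is left-accessible if and only if \(S=\bigoplus_{\pi\in\Lambda}\mathcal E_\pi(G)\) for some finite \(\Lambda\subset\widehat G\), and the sum is automatically orthogonal. This proves the corollary.

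The only real obstacle is the invariance bookkeeping in the first paragraph, namely making sure the statement for arbitrary finite-dimensional \(S\) does not require anything beyond the invariant case. As a side remark, since \(m_\pi(G)=d_\pi>1\) whenever \(d_\pi>1\), a single copy \(\mathcal E_\pi^{(\ell)}(G)\) is then left-invariant but not left-accessible, as in Corollary~\ref{cor:gelfand-pairs}.
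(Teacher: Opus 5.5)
Your proposal is correct and follows the paper's proof: the stabilizer of \(e\) is trivial, so \(V_\pi^{H_e}=V_\pi\), the isotypic components are exactly the matrix coefficient spaces \(\mathcal E_\pi(G)\) with \(m_\pi(G)=d_\pi\), and Theorem~\ref{thm:accessible-subspaces} then gives the equivalence. Your extra check that each \(\mathcal E_\pi(G)\) is left-invariant, which is needed to apply that theorem in the converse direction, is a detail the paper leaves implicit.
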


\begin{proof}
For the left-regular action, \(G\) is a homogeneous \(G\)-space with base
point \(e\) and trivial stabilizer. Hence every vector in \(V_\pi\) is
admissible and
\(
        m_\pi(G)=d_\pi .
\)
Thus each isotypic component decomposes as
\[
        \mathcal E_\pi(G)
        =
        \bigoplus_{\ell=1}^{d_\pi}
        \mathcal E_\pi^{(\ell)}(G)
\]
into \(d_\pi\) copies of \(V_\pi\). The assertion now follows directly
from Theorem~\ref{thm:accessible-subspaces}, which says that left-accessible
subspaces are precisely the finite orthogonal sums of full isotypic
components.
\end{proof}

\begin{corollary}
\label{cor:left-invariance-accessibility-abelian}
Let \(G\) be a compact group. Then every finite-dimensional
left-invariant subspace \(S\subset C(G)\) is left-accessible if and only
if \(G\) is abelian.
\end{corollary}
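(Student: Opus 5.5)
The plan is to reduce the statement to Corollary~\ref{cor:gelfand-pairs} applied to the left-regular action. By that corollary, every finite-dimensional left-invariant subspace of \(C(G)\) is left-accessible if and only if the left-regular action is multiplicity-free, that is, \(m_\pi(G)\le 1\) for every \(\pi\in\widehat G\). Since the stabilizer of the identity under left multiplication is trivial, every vector of \(V_\pi\) is admissible, and hence \(m_\pi(G)=d_\pi\), as already recorded before Corollary~\ref{cor:left-regular-accessibility}. Because every \(\pi\in\widehat G\) is irreducible and nonzero, we have \(d_\pi\ge 1\). So the condition in the statement is equivalent to \(d_\pi=1\) for all \(\pi\in\widehat G\).

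It then remains to show that a compact group \(G\) is abelian if and only if all of its irreducible unitary representations are one-dimensional.

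\emph{Abelian implies one-dimensional.} If \(G\) is abelian, then for every \(\pi\in\widehat G\) and every \(h\in G\) the operator \(\pi(h)\) commutes with all \(\pi(g)\). By Schur's Lemma~\ref{thm:schur-lemma}, \(\pi(h)\) is therefore a scalar. Consequently every subspace of \(V_\pi\) is invariant, and irreducibility forces \(d_\pi=1\).

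\emph{One-dimensional implies abelian.} Suppose \(d_\pi=1\) for every \(\pi\in\widehat G\). Then every matrix coefficient is a character \(\gamma\colon G\to\mathbb T\), and in particular \(\gamma(xy)=\gamma(yx)\) for all \(x,y\in G\). By the density statement in Theorem~\ref{thm:homogeneous-peter-weyl}, applied to the left-regular action, the span of these characters is dense in \(C(G)\). Hence every \(f\in C(G)\) satisfies \(f(xy)=f(yx)\). Since \(G\) is compact Hausdorff, continuous functions separate points (Urysohn's lemma), and therefore \(xy=yx\) for all \(x,y\in G\).

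The only step with real content is the last one, namely passing from one-dimensionality of \(\widehat G\) back to commutativity of \(G\) via Peter--Weyl density and point separation. Everything else is bookkeeping with the multiplicity formula \(m_\pi(G)=d_\pi\) and Corollary~\ref{cor:gelfand-pairs}.
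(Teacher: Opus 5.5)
Your proof is correct and follows the paper's own route: reduce via Corollary~\ref{cor:gelfand-pairs} and the identity \(m_\pi(G)=d_\pi\) for the left-regular action to the condition that every irreducible unitary representation of \(G\) is one-dimensional. The only difference is that the paper simply cites the equivalence ``all \(d_\pi=1\) \(\Leftrightarrow\) \(G\) abelian'' from the literature, whereas you prove it directly; your argument, via Schur's lemma in one direction and Peter--Weyl density plus point separation in the other, is valid.
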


\begin{proof}
By Corollary~\ref{cor:gelfand-pairs}, invariance implies accessibility
for all finite-dimensional subspaces precisely when the homogeneous
\(G\)-space is multiplicity-free. In the left-regular case this means
\[
        m_\pi(G)=d_\pi=1
        \qquad
        \text{for every }\pi\in\widehat G .
\]
This is equivalent to saying that all irreducible unitary
representations of \(G\) are one-dimensional, which is equivalent to
\(G\) being abelian (see e.g. \cite{folland2016course}).
\end{proof}

\begin{corollary}
\label{cor:biregular-invariance-accessibility}
Let \(G\) be a compact group and let \(S\subset C(G)\) be
finite-dimensional. Then the following assertions are equivalent:
\begin{enumerate}
\item \(S\) is biregular-invariant.
\item \(S\) is biregular-accessible.
\item There is a finite set \(\Lambda\subset\widehat G\) such that
      \(
              S
              =
              \bigoplus_{\pi\in\Lambda}
              \mathcal E_\pi(G).
      \)
\end{enumerate}
\end{corollary}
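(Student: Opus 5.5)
The plan is to view \(G\) as a homogeneous \(G\times G\)-space under the biregular action \((a,b)\hbullet x=axb^{-1}\), with base point \(e\) and stabilizer the diagonal subgroup \(\Delta G\). Then we combine the general results of the preceding subsections with the explicit description of the biregular Peter--Weyl decomposition obtained just above. The key input is that this decomposition is multiplicity-free. By Lemma~\ref{lem:biregular-multiplicity-one}, the only irreducible representations of \(G\times G\) with \(m_{\rho}(G)\neq 0\) are \(\rho=\pi\sboxtimes\pi^*\), each with multiplicity one. By Lemma~\ref{lem:biregular-matrix-coefficients}, the corresponding isotypic components are exactly the matrix coefficient spaces, \(\mathcal E_{\pi\sboxtimes\pi^*}(G)=\mathcal E_\pi(G)\).

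For \((1)\Leftrightarrow(2)\), we invoke Corollary~\ref{cor:gelfand-pairs} for the homogeneous \(G\times G\)-space \(G\). Since \(m_{\rho}(G)\le 1\) for every \(\rho\in\widehat{G\times G}\cong\widehat G\times\widehat G\), the action is multiplicity-free. Hence every finite-dimensional biregular-invariant subspace is biregular-accessible. The converse \((2)\Rightarrow(1)\) is immediate from the definition, since accessibility presupposes invariance.

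For \((2)\Leftrightarrow(3)\), we apply Theorem~\ref{thm:accessible-subspaces} to the \(G\times G\)-action. A biregular-accessible \(S\) is a finite orthogonal sum of full \(G\times G\)-isotypic components \(\mathcal E_\rho(G)\), \(\rho\in\Lambda'\subset\widehat{G\times G}\). We discard the \(\rho\) whose components vanish, since \(\mathcal E_\rho(G)=\{0\}\) when \(m_\rho(G)=0\). The remaining \(\rho\) are of the form \(\pi\sboxtimes\pi^*\), and for these \(\mathcal E_\rho(G)=\mathcal E_\pi(G)\). This gives \(S=\bigoplus_{\pi\in\Lambda}\mathcal E_\pi(G)\) for a finite \(\Lambda\subset\widehat G\). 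Conversely, such a sum is a finite sum of full biregular isotypic components, so by the same theorem, or directly by Theorem~\ref{thm:homogeneous-machine}, it is accessible.

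The only point needing care is bookkeeping rather than analysis. The indexing set in Theorem~\ref{thm:accessible-subspaces} lives in \(\widehat{G\times G}\), and we must translate it into a subset of \(\widehat G\) via \(\pi\mapsto\pi\sboxtimes\pi^*\). We also must make sure the vanishing components \(\rho\not\cong\pi\sboxtimes\pi^*\) cause no trouble. Both follow directly from the two lemmas cited above, so I expect no genuine obstacle.
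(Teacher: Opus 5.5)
Your proposal is correct and follows essentially the same route as the paper. You first show that the biregular Peter--Weyl decomposition is multiplicity-free; the paper cites Remark~\ref{rem:left-vs-biregular} for this, which packages exactly the two lemmas you invoke. You then obtain \((1)\Leftrightarrow(2)\) from Corollary~\ref{cor:gelfand-pairs}, and \((2)\Leftrightarrow(3)\) from Theorem~\ref{thm:accessible-subspaces} together with \(\mathcal E_{\pi\sboxtimes\pi^*}(G)=\mathcal E_\pi(G)\). Your explicit translation of indices via \(\pi\mapsto\pi\sboxtimes\pi^*\), and your discarding of the vanishing components, just spell out what the paper leaves implicit.
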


\begin{proof}
By Remark~\ref{rem:left-vs-biregular}, every matrix coefficient space
\(\mathcal E_\pi(G)\) is irreducible under the biregular action and is
\(G\times G\)-equivalent to \(V_\pi\otimes V_\pi^*\). Thus the
biregular Peter--Weyl decomposition is multiplicity-free.

The claim now follows from Corollary \ref{cor:gelfand-pairs} together with the description of accessible subspaces of Theorem \ref{thm:accessible-subspaces}.


\end{proof}

\begin{corollary}
\label{cor:left-biregular-accessible-subspaces}
Let \(G\) be a compact group and let \(S\subset C(G)\) be
finite-dimensional. Then
\[
    S \text{ is left-accessible}
    \quad\Longleftrightarrow\quad
    S \text{ is biregular-accessible}.
\]
Moreover, for either action, the accessible subspaces are precisely
the finite orthogonal sums
\(
    S
    =
    \bigoplus_{\pi\in\Lambda}
    \mathcal E_\pi(G),
\)
where \(\Lambda\subset\widehat G\) is finite.
\end{corollary}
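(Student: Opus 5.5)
This corollary is essentially a combination of Corollary~\ref{cor:left-regular-accessibility} and Corollary~\ref{cor:biregular-invariance-accessibility}. We compare the two descriptions of accessible subspaces and observe that they are literally the same family of subspaces of \(C(G)\).

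First, Corollary~\ref{cor:left-regular-accessibility} applies to the left-regular action. It states that a finite-dimensional \(S\subset C(G)\) is left-accessible if and only if \(S=\bigoplus_{\pi\in\Lambda}\mathcal E_\pi(G)\) for some finite \(\Lambda\subset\widehat G\). Here \(\mathcal E_\pi(G)\) is the full left-regular isotypic component, which is the full matrix coefficient space.

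Second, Corollary~\ref{cor:biregular-invariance-accessibility}, specifically the equivalence (2)\(\Leftrightarrow\)(3), applies to the biregular action of \(G\times G\). It shows that \(S\) is biregular-accessible if and only if \(S\) has the same form \(\bigoplus_{\pi\in\Lambda}\mathcal E_\pi(G)\). Lemma~\ref{lem:biregular-matrix-coefficients} guarantees that these are the same spaces: the biregular isotypic component \(\mathcal E_{\pi\sboxtimes\pi^*}(G)\) coincides with the matrix coefficient space \(\mathcal E_\pi(G)\), and the components indexed by \(\rho\sboxtimes\sigma\) with \(\sigma\not\cong\rho^*\) vanish by Lemma~\ref{lem:biregular-multiplicity-one}. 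So the finite sums of full biregular isotypic components are exactly the sums \(\bigoplus_{\pi\in\Lambda}\mathcal E_\pi(G)\), with the indexing sets \(\Lambda\subset\widehat G\) corresponding bijectively.

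Since both accessibility notions are characterized by the same condition, they are equivalent, and the \emph{moreover} clause is just that common condition. The only point needing care is this identification of the two notions of ``full isotypic component'': on the left side we must use the components \(\mathcal E_\pi(G)\) of the left-regular decomposition, and on the biregular side the components \(\mathcal E_{\pi\sboxtimes\pi^*}(G)\). Lemma~\ref{lem:biregular-matrix-coefficients} shows these are the same subspaces of \(C(G)\), so no real obstacle remains.
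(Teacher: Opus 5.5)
Your proposal is correct and matches the paper's proof, which likewise obtains this corollary immediately from Corollary~\ref{cor:left-regular-accessibility} and Corollary~\ref{cor:biregular-invariance-accessibility}. Both characterize accessibility by the same condition \(S=\bigoplus_{\pi\in\Lambda}\mathcal E_\pi(G)\). Your extra step identifying the biregular isotypic components with the matrix coefficient spaces via Lemma~\ref{lem:biregular-matrix-coefficients} is harmless but redundant, since that identification is already built into statement (3) of Corollary~\ref{cor:biregular-invariance-accessibility}.
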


\begin{proof}
This follows immediately from
Corollary~\ref{cor:left-regular-accessibility} and
Corollary~\ref{cor:biregular-invariance-accessibility}.
\end{proof}

\subsubsection{The projection constant machine in the left- and biregular case}
We now specialize the homogeneous machine
from Section~\ref{sec:The projection constant machine}
to the group case.

\begin{corollary}
\label{cor:left-regular-machine}
Let \(G\) be a compact group and let
\(S  =        \bigoplus_{\pi\in\Lambda}
        \mathcal E_\pi(G)
        \),
        where
        $\Lambda\subset\widehat G$ is finite.
        Then
\[
        \boldsymbol\lambda(S)
        =
        \int_G
        \Big|
        \sum_{\pi\in\Lambda}
        d_\pi\,\chi_\pi(g)
         \Big|
        \,d\mu_G(g).
\]
\end{corollary}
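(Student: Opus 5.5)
The plan is to apply Theorem~\ref{thm:homogeneous-machine} to the left-regular action of \(G\) on itself, with base point \(x_0=e\). Corollary~\ref{cor:left-regular-accessibility} already tells us that \(S=\bigoplus_{\pi\in\Lambda}\mathcal E_\pi(G)\) is left-accessible. The machine then gives
\[
\boldsymbol\lambda(S)=\int_G\Big|\sum_{\pi\in\Lambda}d_\pi\,\chi_\pi^G(g)\Big|\,d\mu_G(g),
\]
where we use that the invariant measure \(\mu_K\) for the left-regular action is the normalized Haar measure \(\mu_G\), as noted in the discussion of the orbit map. The only remaining task is to identify \(\chi^G_{\pi,e}\) with the ordinary character \(\chi_\pi\).

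For this identification, note that the stabilizer of \(e\) is trivial. Hence \(V_\pi^{H_e}=V_\pi\) and \(m_\pi(G)=d_\pi\). We take the orthonormal basis \(u_1^\pi,\dots,u_{d_\pi}^\pi\) of \(V_\pi\) itself. Since \(g\hbullet e=g\), the definition yields
\[
\chi^G_{\pi,e}(g)=\sum_{\ell=1}^{d_\pi}\langle u_\ell^\pi,\pi(g)u_\ell^\pi\rangle .
\]
This sum is the trace of \(\pi(g)\) in an orthonormal basis, up to the convention for the inner product. Because the inner product is linear in the first variable, \(\langle u,\pi(g)u\rangle=\overline{\langle \pi(g)u,u\rangle}\), so the sum equals \(\overline{\operatorname{tr}\pi(g)}=\overline{\chi_\pi(g)}=\chi_\pi(g^{-1})\).

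The main obstacle is this conjugation. I expect to resolve it in one of two ways. The first is the substitution \(g\mapsto g^{-1}\), which preserves \(\mu_G\) on a compact group. Applied to \(\sum_\pi d_\pi\overline{\chi_\pi(g)}=\sum_\pi d_\pi\chi_\pi(g^{-1})\), it turns the integral into \(\int_G|\sum_{\pi\in\Lambda} d_\pi\chi_\pi(g)|\,d\mu_G(g)\). The second is simpler: the absolute value of the complex conjugate of \(\sum_\pi d_\pi\chi_\pi(g)\) equals the absolute value of that sum, since the \(d_\pi\) are real. Either argument gives the stated formula, and the second needs no change of variables.
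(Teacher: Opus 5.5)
Your proposal is correct and follows the paper's own proof: apply Theorem~\ref{thm:homogeneous-machine} to the left-regular action with base point $e$, use the trivial stabilizer to get $\chi^G_\pi(g)=\sum_\ell\langle u_\ell^\pi,\pi(g)u_\ell^\pi\rangle=\overline{\chi_\pi(g)}$, and drop the conjugation inside the absolute value because the $d_\pi$ are real. The paper uses your second way of handling the conjugation; the substitution $g\mapsto g^{-1}$ would also work but is not needed.
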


\begin{proof}
By Corollary~\ref{cor:left-regular-accessibility}, the space \(S\) is
accessible for the left-regular action. We apply
Theorem~\ref{thm:homogeneous-machine} with base point \(e\in G\). 
Since the stabilizer of \(e\) is trivial, all vectors in \(V_\pi\) are
admissible. If \(u_1^\pi,\ldots,u_{d_\pi}^\pi\) is an orthonormal basis
of \(V_\pi\), then
\[
        \chi_\pi^G(g)
        =
        \sum_{\ell=1}^{d_\pi}
        \langle u_\ell^\pi,\pi(g)u_\ell^\pi\rangle
        =
                \overline{\chi_\pi(g)}.
\]
Therefore, Theorem~\ref{thm:homogeneous-machine} gives
\[
\boldsymbol\lambda(S)
=
\int_G
\Big|
\sum_{\pi\in\Lambda}
d_\pi\overline{\chi_\pi(g)}
\Big|
\,d\mu_G(g)
=
\int_G
\Big|
\sum_{\pi\in\Lambda}
d_\pi\chi_\pi(g)
\Big|
\,d\mu_G(g),
\]
which proves the formula.
\end{proof}


The biregular version is worth recording separately, since by Corollary~\ref{cor:biregular-invariance-accessibility} biregular invariance alone guarantees the structural hypothesis required in
Corollary~\ref{cor:left-regular-machine}.

\begin{corollary}
\label{cor:biregular-machine}
Let \(G\) be a compact group and let \(S\subset C(G)\) be a
finite-dimensional biregular-invariant subspace. Then there is a finite set \(\Lambda\subset\widehat G\) such
that
\(
        S
        =
        \bigoplus_{\pi\in\Lambda}
        \mathcal E_\pi(G),
\)
and
\[
        \boldsymbol\lambda(S)
        =
        \int_G
         \Big|
        \sum_{\pi\in\Lambda}
        d_\pi\,\chi_\pi(g)
        \Big|
        \,d\mu_G(g).
\]
\end{corollary}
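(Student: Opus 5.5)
The plan is to reduce Corollary~\ref{cor:biregular-machine} to the two results already established for the group case. First, I would apply Corollary~\ref{cor:biregular-invariance-accessibility}, the implication (1)$\Rightarrow$(3), to the finite-dimensional biregular-invariant subspace \(S\). It gives a finite set \(\Lambda\subset\widehat G\) with
\[
S=\bigoplus_{\pi\in\Lambda}\mathcal E_\pi(G).
\]
This decomposition is orthogonal in \(L_2(G)\) by the Peter--Weyl theorem. It rests on the multiplicity-one statement of Lemma~\ref{lem:biregular-multiplicity-one} together with Corollary~\ref{cor:gelfand-pairs}, so no further work is needed for the structural part.

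Second, with this description of \(S\) in hand, I would invoke Corollary~\ref{cor:left-regular-machine} directly, since its hypothesis is exactly that \(S\) be a finite sum of full matrix coefficient spaces. That yields
\[
\boldsymbol\lambda(S)=\int_G\Big|\sum_{\pi\in\Lambda}d_\pi\chi_\pi(g)\Big|\,d\mu_G(g).
\]

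One point deserves a remark: the projection constant is independent of which action is used. Both actions induce the same Haar probability measure on \(G\), as shown in the discussion of the orbit map. The orthogonal projection \(\mathbf\pi_S\) and the kernel \(\mathbf k_S\) depend only on \(S\) and \(L_2(G,\mu_G)\), not on the group. Hence the two actions give the same formula.

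As a cross-check, one could instead apply Theorem~\ref{thm:homogeneous-machine} to the biregular action at the base point \(e\). There the admissible vectors for \(\pi\sboxtimes\pi^*\) form the single line spanned by the normalized identity \(d_\pi^{-1/2}\sum_j v_j\otimes v_j^*\). The corresponding function \(\chi^G_{\pi\sboxtimes\pi^*}\) would then be \(d_\pi^{-1}\) times a character, and the factor \(d_{\pi\sboxtimes\pi^*}=d_\pi^2\) restores the weight \(d_\pi\). The one mildly delicate step in this route is bookkeeping the conjugation convention, namely \(\chi_\pi\) versus \(\overline{\chi_\pi}\). This is harmless, because the absolute value is invariant under conjugation, exactly as at the end of the proof of Corollary~\ref{cor:left-regular-machine}. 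For this reason I would present the short route through the two corollaries as the proof.
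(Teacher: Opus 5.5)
Your proposal is correct and takes the same route as the paper. The paper justifies the corollary in one line: Corollary~\ref{cor:biregular-invariance-accessibility} gives $S=\bigoplus_{\pi\in\Lambda}\mathcal E_\pi(G)$, and Corollary~\ref{cor:left-regular-machine} then gives the integral formula. Your optional cross-check through Theorem~\ref{thm:homogeneous-machine} for the biregular action is also correct, since the kernel slice there is $d_\pi^2\cdot d_\pi^{-1}\overline{\chi_\pi}=d_\pi\overline{\chi_\pi}$, but it is not needed.
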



\section{Normalized projection constants in high dimensions}
Given a concrete function space contained in $C(K)$, the abstract part of the paper reduces the computation of its projection constant to a representation-theoretic problem: identifying the underlying symmetries and the relevant isotypic components, establishing accessibility, and deriving an explicit integral formula in terms of a reproducing-kernel slice.

This, however, is only the first step. A further analytic problem remains, namely, to determine the asymptotic behavior of the resulting integral as the ambient dimension tends to infinity. This step can be delicate and, in the examples considered below, requires a combination of probabilistic and combinatorial arguments.

We carry out this program in three natural high-dimensional settings. Although they follow a common scheme, the underlying harmonic analysis is quite different. On the torus, the machinery reduces to ordinary Fourier analysis; on Euclidean spheres, it becomes spherical harmonic analysis; and on the unitary group, it leads to genuinely noncommutative harmonic analysis.

\subsection{Fourier analysis on the torus}
\label{sec:torus-application}

We begin the applications of the abstract theory with polynomial spaces on
\(\ell_\infty^n(\mathbb C)\). In this setting, the relevant compact
homogeneous space is the torus, and the repre\-sentation-theoretic machinery
reduces to ordinary Fourier analysis.

\begin{theorem}
\label{thm:torus-polynomial-projection-asymptotic}
For every fixed \(d\in\mathbb N\),
\[
    \lim_{n\to\infty}
    \frac{
        \boldsymbol\lambda\bigl(\mathcal P_d(\ell_\infty^n(\mathbb C))\bigr)
    }{
        \sqrt{\binom{n+d-1}{d}}
    }
    =
    \frac{\Gamma\bigl(1+\frac d2\bigr)}{\sqrt{d!}}.
\]
Equivalently,
\[
    \lim_{n\to\infty}
    \frac{
        \boldsymbol\lambda\bigl(\mathcal P_d(\ell_\infty^n(\mathbb C))\bigr)
    }{
        n^{d/2}
    }
    =
    \frac{\Gamma\bigl(1+\frac d2\bigr)}{d!}.
\]
\end{theorem}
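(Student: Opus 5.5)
By the maximum modulus principle, the norm of \(P\in\mathcal P_d(\ell_\infty^n(\mathbb C))\) is \(\sup_{z\in\mathbb T^n}|P(z)|\). So \(\mathcal P_d(\ell_\infty^n(\mathbb C))\) is isometrically the subspace \(S_{n,d}=\operatorname{span}\{z^\alpha:|\alpha|=d\}\subset C(\mathbb T^n)\). The monomials \(z^\alpha\), \(\alpha\in\mathbb N_0^n\), are characters of the compact abelian group \(\mathbb T^n\), so Theorem~\ref{thm:abelian-accessibility} with \(\Lambda=\{z^\alpha:|\alpha|=d\}\) gives
\[
\boldsymbol\lambda\bigl(\mathcal P_d(\ell_\infty^n(\mathbb C))\bigr)
=\int_{\mathbb T^n}\Big|\sum_{|\alpha|=d}z^\alpha\Big|\,d\mu(z)
=\mathbb E\,\bigl|h_d(\xi_1,\dots,\xi_n)\bigr|,
\]
where the \(\xi_j\) are i.i.d.\ uniform on \(\mathbb T\) and \(h_d\) is the complete homogeneous symmetric polynomial. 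Since the characters are orthonormal, \(\mathbb E|h_d(\xi)|^2=\binom{n+d-1}{d}\sim n^d/d!\). The two stated limits are therefore equivalent, and it suffices to prove \(n^{-d/2}\mathbb E|h_d(\xi)|\to \Gamma(1+d/2)/d!\).

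Next I will reduce \(h_d\) to power sums. Newton's identity writes \(h_d\) as a polynomial in \(p_k=\sum_j\xi_j^k\):
\[
h_d=\sum_{\lambda\vdash d} z_\lambda^{-1}p_\lambda,\qquad z_\lambda=\prod_k k^{m_k}m_k!,\qquad p_\lambda=\prod_i p_{\lambda_i}.
\]
Put \(W_n:=n^{-1/2}p_1\). For each \(k\), \(\mathbb E|p_k|^2=n\), so \(p_k=O_{L_2}(\sqrt n)\). A partition \(\lambda\) with \(\ell(\lambda)<d\) parts satisfies \(n^{-d/2}p_\lambda=O(n^{(\ell(\lambda)-d)/2})\to0\) in \(L_1\). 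To make this rigorous I will use Hölder together with the moment bounds \(\mathbb E|p_k|^{2m}\le C_m n^m\), which come from expanding and counting index patterns, or from Khintchine-type bounds for sums of independent bounded mean-zero variables. The only surviving partition is \(\lambda=(1^d)\), with \(z_\lambda=d!\), so
\[
n^{-d/2}h_d(\xi)=\frac{W_n^d}{d!}+o_{L_1}(1).
\]

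The central limit theorem gives \(W_n\to W\) in distribution, with \(W\) standard complex Gaussian and \(\mathbb E|W|^2=1\), because \(\mathbb E\xi_j=\mathbb E\xi_j^2=0\) and \(\mathbb E|\xi_j|^2=1\). The moment bound \(\sup_n\mathbb E|W_n|^{2d}<\infty\) makes \(|W_n|^d\) uniformly integrable, hence \(\mathbb E|W_n|^d\to\mathbb E|W|^d=\Gamma(1+d/2)\); the last equality holds because \(|W|^2\) is exponential with mean \(1\). Combining,
\[
n^{-d/2}\,\boldsymbol\lambda\to\Gamma(1+d/2)/d!,
\]
and dividing by \(\sqrt{\binom{n+d-1}{d}}\sim n^{d/2}/\sqrt{d!}\) gives the first form.

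I expect the main obstacle to be the error control in the second step: showing uniformly that every product \(p_\lambda\) with \(\ell(\lambda)<d\) is \(o(n^{d/2})\) in \(L_1\), and securing the uniform moment bounds on \(W_n\). I will do this directly, expanding \(\mathbb E|p_{k_1}\cdots p_{k_r}|^2\) as a sum over index tuples: a term is nonzero only when the frequency multiset balances, which forces at most \(r\) free indices and gives the bound \(O(n^{r})\). If that turns out to be messy, the fallback is to bound \(\mathbb E|h_d-n^{d/2}W_n^d/d!|^2\) directly by orthogonality of monomials: \(p_1^d/d!\) places weight \(\binom{d}{\alpha}/d!=1/\alpha!\) on \(z^\alpha\), so the difference is \(\sum_{|\alpha|=d}(1-1/\alpha!)z^\alpha\). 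This vanishes on multilinear \(\alpha\) and has squared \(L_2\)-norm \(O(n^{d-1})\), since the non-multilinear \(\alpha\) number \(O(n^{d-1})\). That single \(L_2\) estimate replaces the whole Newton-identity argument, and I will probably adopt it as the main route.
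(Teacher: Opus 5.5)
Your proof is correct. The route you say you will adopt is essentially the paper's: it also controls the non-multilinear monomials in $L_2$ via orthogonality of characters and an $O(n^{d-1})$ count, compares the kernel with a multiple of $(z_1+\cdots+z_n)^d$, and finishes with the central limit theorem and uniform integrability. There are two differences: the paper goes through the tetrahedral part $U_{n,d}$ in two steps, which it needs anyway for Theorem~\ref{thm:torus-tetrahedral-projection-asymptotic}, and it places uniform integrability on $U_{n,d}/\|U_{n,d}\|_2$, which has unit $L_2$-norm, rather than on $|W_n|^d$. Your Newton-identity route is also sound, and it mirrors the power-sum argument the paper uses for the unitary group in Theorem~\ref{thm:unitary-projection-asymptotic}.
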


The asymptotics of \(\boldsymbol\lambda\bigl(\mathcal P_d(\ell_p^n(\mathbb C))\bigr)\), \(1\le p\le\infty\) was established in \cite{defant2011bohr}, up to a multiplicative constant of size 
\(C^d\)
 for some absolute constant 
\(C>0\).

The proof will be completed at the end of this subsection in~\ref{proof::torus-polynomial-projection-asymptotic}, after an analysis
of the corresponding Fourier sums.
The same analysis will also yield the corresponding asymptotic formula
for tetrahedral polynomials; see
Theorem~\ref{thm:torus-tetrahedral-projection-asymptotic} below.

As usual, let
\[
    \mathbb T^n
    :=
    \bigl\{
        z=(z_1,\ldots,z_n)\in\mathbb C^n:
        |z_j|=1 \text{ for }1\le j\le n
    \bigr\}
\]
be the \(n\)-dimensional torus, endowed with its normalized Haar measure
\(\mu_{\mathbb T^n}\). We regard \(\mathbb T^n\) as a compact abelian group acting on
itself by left multiplication. Its characters are the monomials
\[
    z^\alpha
    :=
    z_1^{\alpha_1}\cdots z_n^{\alpha_n},
    \qquad
    \alpha\in\mathbb Z^n.
\]
Throughout this subsection, all \(L^p\)-norms are taken with respect to
the Haar measure \(\mu_{\mathbb T^n}\).
For \(d\in\mathbb N\), define
\[
    \mathcal P_d(\mathbb T^n)
    :=
    \operatorname{span}
    \bigl\{
        z^\alpha:
        \alpha\in\mathbb N_0^n,\ |\alpha|=d
    \bigr\} \subset C(\mathbb T^n),
\]
and let
\[
    \mathcal T_d(\mathbb T^n)
    :=
    \operatorname{span}
    \bigl\{
        z^\alpha:
        \alpha\in\{0,1\}^n,\ |\alpha|=d
    \bigr\}
\]
be its tetrahedral subspace.
Both spaces are finite-dimensional spans of characters. Therefore,
Theorem~\ref{thm:abelian-accessibility} shows that they are
\(\mathbb T^n\)-accessible and identifies their projection constants with
the \(L^1\)-norms of their Fourier kernels. More precisely, put
\[
    S_{n,d}(z)
    :=
    \sum_{\substack{\alpha\in\mathbb N_0^n\\|\alpha|=d}}
    z^\alpha
\]
and
\[
    U_{n,d}(z)
    :=
    \sum_{\substack{A\subset\{1,\ldots,n\}\\|A|=d}}
    z_A,
    \qquad
    z_A:=\prod_{j\in A}z_j.
\]
Then by Theorem~\ref{thm:abelian-accessibility}, 
\begin{equation}
\label{eq:torus-projection-kernels}
    \boldsymbol\lambda\bigl(\mathcal P_d(\mathbb T^n)\bigr)
    =
    \|S_{n,d}\|_1,
    \qquad
    \boldsymbol\lambda\bigl(\mathcal T_d(\mathbb T^n)\bigr)
    =
    \|U_{n,d}\|_1.
\end{equation}
Thus the asymptotic problem is reduced to determining the \(L^1/L^2\)
ratios of these two explicit Fourier sums.

\subsubsection{Reduction to the multilinear part}

Since the characters of \(\mathbb T^n\) form an orthonormal basis of
\(L^2(\mathbb T^n)\), we have
\begin{equation}
\label{eq:torus-kernel-l2-norms}
    \|S_{n,d}\|_2^2
    =
    \binom{n+d-1}{d},
    \qquad
    \|U_{n,d}\|_2^2
    =
    \binom nd.
\end{equation}
For fixed \(d\),
\begin{equation} \label{DIMO1}
    \binom{n+d-1}{d}
    =
    \frac{n^d}{d!}
    \prod_{j=0}^{d-1}
    \left(1+\frac{j}{n}\right)
    =
    \frac{n^d}{d!}
    \left(1+O_d\left(\frac1n\right)\right),
\end{equation}
whereas
\begin{equation} \label{DIMO2}
    \binom nd
    =
    \frac{n^d}{d!}
    \prod_{j=0}^{d-1}
    \left(1-\frac{j}{n}\right)
    =
    \frac{n^d}{d!}
    \left(1+O_d\left(\frac1n\right)\right).
\end{equation}
Consequently,
\begin{equation}
\label{eq:torus-kernel-l2-asymptotics}
    \|S_{n,d}\|_2
    \sim
    \|U_{n,d}\|_2
    \sim
    \frac{n^{d/2}}{\sqrt{d!}}.
\end{equation}
We now separate the multilinear part of \(S_{n,d}\). Write
\[
    S_{n,d}=U_{n,d}+V_{n,d},
\]
where
\[
    V_{n,d}
    :=
    \sum_{\substack{
        \alpha\in\mathbb N_0^n,\ |\alpha|=d\\
        \max_j\alpha_j\geq 2
    }}
    z^\alpha.
\]
Thus \(V_{n,d}\) consists precisely of the monomials in which at least one
variable occurs with exponent greater than one.

\begin{lemma}
\label{lem:torus-remainder}
For every fixed \(d\in\mathbb N\),
\[
    \frac{\|V_{n,d}\|_2}{\|S_{n,d}\|_2}
    \longrightarrow 0
    \qquad\text{and}\qquad
    \frac{\|V_{n,d}\|_1}{\|S_{n,d}\|_2}
    \longrightarrow 0,
    \qquad n\to\infty.
\]
\end{lemma}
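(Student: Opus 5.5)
The plan is to handle the $L_2$ statement by counting monomials, and then to obtain the $L_1$ statement from it via $\|V_{n,d}\|_1\le\|V_{n,d}\|_2$. The latter inequality is just Cauchy--Schwarz, since $\mu_{\mathbb T^n}$ is a probability measure.

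The monomials $z^\alpha$ are orthonormal in $L_2(\mathbb T^n)$, so $\|V_{n,d}\|_2^2$ is the number of multi-indices $\alpha\in\mathbb N_0^n$ with $|\alpha|=d$ and $\max_j\alpha_j\ge 2$. Every $\alpha$ with $|\alpha|=d$ is either tetrahedral (all entries in $\{0,1\}$) or belongs to this set. Using \eqref{eq:torus-kernel-l2-norms}, this gives
\[
    \|V_{n,d}\|_2^2
    =
    \binom{n+d-1}{d}-\binom nd .
\]
By \eqref{DIMO1} and \eqref{DIMO2}, both binomial coefficients equal $\frac{n^d}{d!}(1+O_d(1/n))$. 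Hence their difference is $O_d(n^{d-1})$.

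One can also see this directly. A multi-index of size $d$ with some entry $\ge 2$ has support of size at most $d-1$. Choosing the support set and then the distribution of $d$ among its elements gives at most $C_d\,n^{d-1}$ choices.

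Dividing by $\|S_{n,d}\|_2^2=\binom{n+d-1}{d}\sim n^d/d!$, we get
\[
    \frac{\|V_{n,d}\|_2^2}{\|S_{n,d}\|_2^2}=O_d\!\left(\frac1n\right)\longrightarrow 0 .
\]
This proves the first claim. The second claim then follows from $\|V_{n,d}\|_1\le\|V_{n,d}\|_2$.

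No step is really an obstacle. The only point requiring care is the uniform-in-$n$ bound $O_d(n^{d-1})$ for the count of non-tetrahedral multi-indices, and this is immediate from the exact binomial expansions \eqref{DIMO1}--\eqref{DIMO2}. The case $d=1$ is trivial, since there $V_{n,1}=0$.
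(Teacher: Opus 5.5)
Your proposal is correct and follows essentially the same route as the paper. Like the paper, you use orthogonality to get $\|V_{n,d}\|_2^2=\binom{n+d-1}{d}-\binom nd=O_d(n^{d-1})$, divide by $\|S_{n,d}\|_2^2\sim n^d/d!$, and pass to the $L_1$ statement via $\|V_{n,d}\|_1\le\|V_{n,d}\|_2$. Your supplementary support-counting argument is also what the paper itself uses later, for the remainder $E_{n,d}$ in the proof of Proposition~\ref{thm:torus-multilinear-limit}.
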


\begin{proof}
The Fourier supports of \(U_{n,d}\) and \(V_{n,d}\) are disjoint.
Therefore, by orthogonality and
\eqref{eq:torus-kernel-l2-norms},
\[
    \|V_{n,d}\|_2^2
    =
    \|S_{n,d}\|_2^2-\|U_{n,d}\|_2^2
    =
    \binom{n+d-1}{d}-\binom nd.
\]
For fixed \(d\), both binomial coefficients are polynomials in \(n\) of
degree \(d\) with the same leading coefficient \(1/d!\). Hence their
difference has degree at most \(d-1\), and therefore
\[
    \|V_{n,d}\|_2^2=O_d(n^{d-1}).
\]
Since
\(
    \|S_{n,d}\|_2^2
    \sim
    \frac{n^d}{d!}
\)
by Equation~\eqref{eq:torus-kernel-l2-asymptotics},
it follows that
\[
    \frac{\|V_{n,d}\|_2}{\|S_{n,d}\|_2}
    =
    O_d(n^{-1/2})
    \longrightarrow 0.
\]
Finally, because \(\mu_{\mathbb T^n}\) is a probability measure,
\(\|V_{n,d}\|_1\leq \|V_{n,d}\|_2\), which proves the second assertion.
\end{proof}

\begin{lemma}
\label{lem:torus-reduction}
For every fixed \(d\in\mathbb N\),
\[
    \frac{\|S_{n,d}\|_1}{\|S_{n,d}\|_2}
    -
    \frac{\|U_{n,d}\|_1}{\|U_{n,d}\|_2}
    \longrightarrow 0,
    \qquad n\to\infty.
\]
\end{lemma}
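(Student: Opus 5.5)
The plan is to split the difference of ratios into two error terms, one from replacing \(S_{n,d}\) by \(U_{n,d}\) in the numerator and one from changing the normalizing denominator. Both are controlled by Lemma~\ref{lem:torus-remainder} and by \eqref{eq:torus-kernel-l2-asymptotics}.

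First, since \(S_{n,d}=U_{n,d}+V_{n,d}\), the triangle inequality in \(L^1(\mathbb T^n)\) gives
\[
    \bigl|\|S_{n,d}\|_1-\|U_{n,d}\|_1\bigr|\le \|V_{n,d}\|_1 .
\]
Dividing by \(\|S_{n,d}\|_2\) and invoking the second assertion of Lemma~\ref{lem:torus-remainder}, we obtain
\[
    \frac{\|S_{n,d}\|_1}{\|S_{n,d}\|_2}-\frac{\|U_{n,d}\|_1}{\|S_{n,d}\|_2}\longrightarrow 0 .
\]

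Second, I compare the two normalizations of \(\|U_{n,d}\|_1\). Write
\[
    \frac{\|U_{n,d}\|_1}{\|S_{n,d}\|_2}-\frac{\|U_{n,d}\|_1}{\|U_{n,d}\|_2}
    =\frac{\|U_{n,d}\|_1}{\|U_{n,d}\|_2}\Bigl(\frac{\|U_{n,d}\|_2}{\|S_{n,d}\|_2}-1\Bigr).
\]
The first factor is at most \(1\), because \(\mu_{\mathbb T^n}\) is a probability measure and so \(\|U_{n,d}\|_1\le\|U_{n,d}\|_2\). For the second factor, orthogonality of the Fourier supports gives \(\|U_{n,d}\|_2^2=\|S_{n,d}\|_2^2-\|V_{n,d}\|_2^2\). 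Hence
\[
    \frac{\|U_{n,d}\|_2}{\|S_{n,d}\|_2}=\sqrt{1-\frac{\|V_{n,d}\|_2^2}{\|S_{n,d}\|_2^2}}\longrightarrow 1
\]
by the first assertion of Lemma~\ref{lem:torus-remainder}. Alternatively, this limit follows directly from \eqref{DIMO1} and \eqref{DIMO2}. So the second error term also tends to \(0\).

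Adding the two estimates proves the claim. There is no real obstacle here. The only point requiring care is to avoid needing any a priori information on \(\|U_{n,d}\|_1\) itself, and the bound \(\|U_{n,d}\|_1\le\|U_{n,d}\|_2\) provides exactly that.
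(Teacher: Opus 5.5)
Your proof is correct and essentially the same as the paper's. The paper likewise controls the numerator swap by \(\|V_{n,d}\|_1/\|S_{n,d}\|_2\) via Lemma~\ref{lem:torus-remainder}, and uses \(\|U_{n,d}\|_1\le\|U_{n,d}\|_2\) to reduce the change of normalization to \(\bigl|\|U_{n,d}\|_2/\|S_{n,d}\|_2-1\bigr|\to 0\). The only cosmetic difference is that you also derive this last limit from Pythagoras and the first assertion of Lemma~\ref{lem:torus-remainder}, whereas the paper reads it off from \eqref{DIMO1} and \eqref{DIMO2}.
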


\begin{proof}
By the reverse triangle inequality,
\[
    \bigl|
        \|S_{n,d}\|_1-\|U_{n,d}\|_1
    \bigr|
    \leq
    \|V_{n,d}\|_1.
\]
Hence
\[
\begin{aligned}
    \left|
        \frac{\|S_{n,d}\|_1}{\|S_{n,d}\|_2}
        -
        \frac{\|U_{n,d}\|_1}{\|U_{n,d}\|_2}
    \right|
    \leq
    \frac{\|V_{n,d}\|_1}{\|S_{n,d}\|_2}
+
    \|U_{n,d}\|_1
    \left|
        \frac{1}{\|S_{n,d}\|_2}
        -
        \frac{1}{\|U_{n,d}\|_2}
    \right|.
\end{aligned}
\]
Since \(\|U_{n,d}\|_1\leq\|U_{n,d}\|_2\), the second term is bounded by
\[
    \left|
        \frac{\|U_{n,d}\|_2}{\|S_{n,d}\|_2}-1
    \right|.
\]
The first term tends to zero by
Lemma~\ref{lem:torus-remainder}, while
\[
    \frac{\|U_{n,d}\|_2^2}{\|S_{n,d}\|_2^2}
    =
    \frac{\binom nd}{\binom{n+d-1}{d}}
    \longrightarrow 1
\]
by the asymptotic formulas given in
\eqref{DIMO1} and \eqref{DIMO2}.
 This proves the claim.
\end{proof}

It remains to determine the asymptotic \(L^1/L^2\)-ratio of
\(U_{n,d}\).

\subsubsection{Gaussian approximation of the multilinear sum}

For \(n\geq d\), let
\[
    N_{n,d}:=\binom nd,
    \qquad
    X_{n,d}:=\frac{U_{n,d}}{\sqrt{N_{n,d}}}.
\]
We regard the coordinate functions \(z_1,\ldots,z_n\) as independent
Steinhaus random variables. Since the monomials
\[
    z_A=\prod_{j\in A}z_j,
    \qquad
    A\subset\{1,\ldots,n\},
    \qquad
    |A|=d,
\]
form an orthonormal family in \(L^2(\mathbb T^n)\),
\[
    \mathbb E|X_{n,d}|^2
    =
    \frac{\|U_{n,d}\|_2^2}{N_{n,d}}
    =
    1.
\]
Let \(W\) denote a standard circular complex Gaussian random variable, 
normalized by
\[
    W=\frac{1}{\sqrt2}(X+iY),
\]
where \(X\) and \(Y\) are independent standard real Gaussian random
variables. Then \(\mathbb E|W|^2=1\), and
\begin{equation}
\label{eq:complex-gaussian-moments}
    \mathbb E|W|^r
    =
    \Gamma\left(1+\frac r2\right),
    \qquad r>-2.
\end{equation}

\begin{proposition}
\label{thm:torus-multilinear-limit}
For every fixed \(d\in\mathbb N\), the random variables \(X_{n,d}\)
converge in distribution to \(W^d/\sqrt{d!}\):
\[
    X_{n,d}
    \Longrightarrow
    \frac{W^d}{\sqrt{d!}},
    \qquad n\to\infty.
\]
Moreover,
\begin{equation}
\label{eq:torus-full-l1-l2-limit-first}
    \frac{\|U_{n,d}\|_1}{\|U_{n,d}\|_2}
    =
    \mathbb E|X_{n,d}|
    \longrightarrow
    \frac{\Gamma\bigl(1+\frac d2\bigr)}{\sqrt{d!}}
\end{equation}
and
\begin{equation}
\label{eq:torus-full-l1-l2-limit}
    \frac{\|S_{n,d}\|_1}{\|S_{n,d}\|_2}
    \longrightarrow
    \frac{\Gamma\bigl(1+\frac d2\bigr)}{\sqrt{d!}}.
\end{equation}
\end{proposition}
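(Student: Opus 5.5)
The natural route is to express \(U_{n,d}\) through power sums via Newton's identities and then apply the multivariate central limit theorem. Put
\[
    p_k(z):=\sum_{j=1}^n z_j^k,\qquad k=1,\ldots,d .
\]
The elementary symmetric polynomial \(U_{n,d}=e_d(z)\) is a fixed polynomial, independent of \(n\), in \(p_1,\ldots,p_d\):
\[
    e_d=\frac{1}{d!}\,p_1^d+R_d(p_1,\ldots,p_d),
\]
where every monomial \(p_1^{a_1}\cdots p_d^{a_d}\) in \(R_d\) has \(\sum_k ka_k=d\) and \(\sum_k a_k<d\), i.e. it contains at least one \(p_k\) with \(k\ge2\). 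For fixed \(k\), the variables \(z_j^k\) are again i.i.d. Steinhaus. So the vector \(n^{-1/2}(p_1,\ldots,p_d)\) has independent, mean-zero, bounded summands with covariance \(\mathbb E z_j^k\overline{z_j^{l}}=\delta_{kl}\) and pseudo-covariance \(\mathbb E z_j^kz_j^l=0\). By the multivariate CLT it therefore converges in distribution to \((W_1,\ldots,W_d)\), a vector of i.i.d. standard circular complex Gaussians.

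Next we scale. Any monomial in \(R_d\) satisfies
\[
    n^{-d/2}p_1^{a_1}\cdots p_d^{a_d}
    =
    n^{-(d-\sum a_k)/2}\prod_k\bigl(n^{-1/2}p_k\bigr)^{a_k},
\]
with \(d-\sum a_k\ge1\). The product converges in distribution, so the whole term tends to \(0\) in probability. Using \(N_{n,d}\sim n^d/d!\), Slutsky's lemma and the continuous mapping theorem give
\[
    X_{n,d}=\frac{e_d}{\sqrt{N_{n,d}}}
    \Longrightarrow
    \sqrt{d!}\,\frac{W_1^d}{d!}
    =
    \frac{W^d}{\sqrt{d!}} .
\]

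Convergence in distribution does not by itself give convergence of \(\mathbb E|X_{n,d}|\); this is where uniform integrability is needed. Here \(\mathbb E|X_{n,d}|^2=1\) for all \(n\). Hence the family \(\{|X_{n,d}|\}_n\) is bounded in \(L^2\) and therefore uniformly integrable. Consequently
\[
    \mathbb E|X_{n,d}|\longrightarrow \frac{\mathbb E|W|^d}{\sqrt{d!}}
    =
    \frac{\Gamma(1+\frac d2)}{\sqrt{d!}}
\]
by \eqref{eq:complex-gaussian-moments}. Since \(\|U_{n,d}\|_1/\|U_{n,d}\|_2=\mathbb E|X_{n,d}|\), this is \eqref{eq:torus-full-l1-l2-limit-first}. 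Finally, \eqref{eq:torus-full-l1-l2-limit} follows immediately from Lemma~\ref{lem:torus-reduction}.

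The main obstacle is the first step: making rigorous the Newton-identity decomposition and the claim that every correction monomial is \(o_P(n^{d/2})\). This needs care in tracking the weight condition \(\sum_k ka_k=d\), and in checking that the CLT also controls the higher power sums \(p_k\) with \(k\ge2\), which uses that \(z_j^k\) has zero mean for \(k\ge1\). If that bookkeeping proved awkward, I would fall back on the method of moments, computing the mixed moments \(\mathbb E X_{n,d}^a\overline{X_{n,d}}^{\,b}\) combinatorially. The polynomial approach seems cleaner, though.
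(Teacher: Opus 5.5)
Your argument is correct, but it runs in the opposite direction from the paper's.

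\textbf{The paper's route.} The paper expands the power,
$(\sum_j z_j)^d=d!\,U_{n,d}+E_{n,d}$, where $E_{n,d}$ collects the monomials with a repeated variable. There are $O_d(n^{d-1})$ such monomials, each with coefficient at most $d!$, so orthogonality of characters gives $\|E_{n,d}\|_2^2=O_d(n^{d-1})$. Hence $X_{n,d}-Y_n^d/\sqrt{d!}\to0$ in $L^2$ at rate $O_d(n^{-1/2})$. Only the planar CLT for the single variable $Y_n=n^{-1/2}\sum_j z_j$ is needed.

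\textbf{Your route.} You invert the relation through Newton's identities, $e_d=\sum_{\mu\vdash d}(-1)^{d-\ell(\mu)}z_\mu^{-1}p_\mu$ (with $z_\mu$ as in the unitary-group section), and discard the terms with $\mu\neq 1^d$ in probability. This avoids the counting. It does require a symmetric-function identity and control of the higher power sums, and the remainder is controlled only in probability, not in $L^2$. That is enough here, because uniform integrability comes separately from $\mathbb E|X_{n,d}|^2=1$, exactly as in the paper.

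What your route buys is a visible parallel: it is the torus analogue of the paper's own proof for $\mathcal P_d(\mathcal U_n)$. There the kernel slice is literally $\sum_{\mu\vdash d}n^{\ell(\mu)}z_\mu^{-1}p_\mu(\operatorname{eigs}(g))$, and the terms with $\mu\neq1^d$ are shown to be negligible in the same way.

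\textbf{The step you flag as the main obstacle is lighter than you expect.} You do not need the joint CLT for $(p_1,\ldots,p_d)$.
\begin{itemize}
\item By orthogonality of the characters $z_j^k$, $\mathbb E|n^{-1/2}p_k|^2=1$, so each $n^{-1/2}p_k$ is tight.
\item Products of tight sequences are tight.
\item Multiplying by $n^{-(d-\ell(\mu))/2}\to0$ therefore sends every correction term to $0$ in probability.
\end{itemize}
So the CLT is needed only for $p_1$, just as in the paper.
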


\begin{proof}
Regarding \(z_1,\ldots,z_n\) as the coordinate functions on
\((\mathbb T^n,\mu_{\mathbb T^n})\), set
\[
    Y_n:=\frac{1}{\sqrt n}\sum_{j=1}^n z_j.
\]
We first prove the \(L^2\)-approximation
\[
    X_{n,d}-\frac{Y_n^d}{\sqrt{d!}}
    \longrightarrow 0
    \qquad\text{in }L^2.
\]
Expanding the \(d\)-th power gives
\begin{equation}
\label{eq:power-multilinear-decomposition}
    \left(\sum_{j=1}^n z_j\right)^d
    =
    d!\,U_{n,d}+E_{n,d},
\end{equation}
where \(E_{n,d}\) is a linear combination of those degree-\(d\)
monomials involving at most \(d-1\) distinct variables. More explicitly,
\[
    E_{n,d}
    =
    \sum_{\substack{\alpha\in\mathbb N_0^n,\ |\alpha|=d\\
                    \alpha\notin\{0,1\}^n}}
    \binom{d}{\alpha}z^\alpha,
    \qquad
    \binom d\alpha
    :=
    \frac{d!}{\alpha_1!\cdots\alpha_n!}.
\]
For fixed \(d\), every monomial occurring in \(E_{n,d}\) involves
\(r\leq d-1\) distinct variables. For each such \(r\), there are
\(\binom nr\) ways to choose the variables and
\(\binom{d-1}{r-1}\) ways to assign them positive exponents summing to
\(d\). Hence the number of distinct monomials in \(E_{n,d}\) is
\[
    \sum_{r=1}^{d-1}
    \binom nr\binom{d-1}{r-1}
    =
    O_d(n^{d-1}).
\] Moreover,  the  coefficient of each monomial is bounded by \(d!\).
Since distinct characters are orthogonal in \(L^2(\mathbb T^n)\), it
follows that
\begin{equation}
\label{eq:power-remainder-bound}
    \|E_{n,d}\|_2^2
    =
    O_d(n^{d-1}).
\end{equation}
Using~\eqref{eq:power-multilinear-decomposition}, we obtain
\[
    \frac{Y_n^d}{\sqrt{d!}}
    =
    \frac{\sqrt{d!}}{n^{d/2}}\,U_{n,d}
    +
    \frac{E_{n,d}}{\sqrt{d!}\,n^{d/2}}.
\]
Therefore,
\begin{align*}
    \left\|
        X_{n,d}-\frac{Y_n^d}{\sqrt{d!}}
    \right\|_2
    \leq
    \left|
        \frac{1}{\sqrt{N_{n,d}}}
        -
        \frac{\sqrt{d!}}{n^{d/2}}
    \right|
    \|U_{n,d}\|_2
    +
    \frac{\|E_{n,d}\|_2}{\sqrt{d!}\,n^{d/2}}
=
    \left|
        1-\sqrt{\frac{d!N_{n,d}}{n^d}}
    \right|
    +
    \frac{\|E_{n,d}\|_2}{\sqrt{d!}\,n^{d/2}}.
\end{align*}
Now, by \eqref{DIMO2},
\(
    \frac{d!N_{n,d}}{n^d}
    \longrightarrow 1,
\)
while~\eqref{eq:power-remainder-bound} yields
\(
    \frac{\|E_{n,d}\|_2}{n^{d/2}}
    =
    O_d\bigl(n^{-1/2}\bigr).
\)
Consequently,
\begin{equation}
\label{eq:multilinear-l2-approximation}
    \left\|
        X_{n,d}-\frac{Y_n^d}{\sqrt{d!}}
    \right\|_2
    \longrightarrow 0.
\end{equation}
In particular, by Chebyshev's inequality,
\[
    X_{n,d}-\frac{Y_n^d}{\sqrt{d!}}
    \longrightarrow 0
    \qquad\text{in probability}.
\]
Viewed as random vectors in \(\mathbb R^2\), the Steinhaus variables
\(z_1,z_2,\ldots\) are independent, have mean zero, and have covariance
matrix \(\frac12 I_2\). Hence the two-dimensional central limit theorem
gives
\[
    Y_n
    =
    \frac{1}{\sqrt n}\sum_{j=1}^n z_j
    \Longrightarrow W.
\]
By the continuous mapping theorem,
\[
    \frac{Y_n^d}{\sqrt{d!}}
    \Longrightarrow
    \frac{W^d}{\sqrt{d!}}.
\]
Together with~\eqref{eq:multilinear-l2-approximation}, Slutsky's theorem
therefore yields
\[
    X_{n,d}
    \Longrightarrow
    \frac{W^d}{\sqrt{d!}}.
\]
It remains to pass to first absolute moments. Since
\(\mathbb E|X_{n,d}|^2=1\) for every \(n\geq d\), the family
\(\bigl(|X_{n,d}|\bigr)_{n\geq d}\) is uniformly integrable.
Consequently,
\[
    \mathbb E|X_{n,d}|
    \longrightarrow
    \mathbb E\left|\frac{W^d}{\sqrt{d!}}\right|
    =
    \frac{\mathbb E|W|^d}{\sqrt{d!}}.
\]
Using~\eqref{eq:complex-gaussian-moments}, we conclude that
\[
    \mathbb E|X_{n,d}|
    \longrightarrow
    \frac{\Gamma\bigl(1+\frac d2\bigr)}{\sqrt{d!}}.
\]
Finally,
\[
    \mathbb E|X_{n,d}|
    =
    \frac{\|U_{n,d}\|_1}{\sqrt{N_{n,d}}}
    =
    \frac{\|U_{n,d}\|_1}{\|U_{n,d}\|_2},
\]
which proves the limit in~\eqref{eq:torus-full-l1-l2-limit-first}. The limit in~\eqref{eq:torus-full-l1-l2-limit} then is an immediate consequence of Lemma~\ref{lem:torus-reduction}.
\end{proof}

\subsubsection{Projection constants}\label{proof::torus-polynomial-projection-asymptotic}

We now return to polynomial spaces on \(\ell_\infty^n(\mathbb C)\).

\begin{proof}[Proof of
Theorem~\ref{thm:torus-polynomial-projection-asymptotic}]
By the maximum modulus principle, the restriction map
\[
    \mathcal P_d(\ell_\infty^n(\mathbb C))
    \longrightarrow
    \mathcal P_d(\mathbb T^n)
\]
is an isometric isomorphism. Hence, by
Theorem~\ref{thm:abelian-accessibility} and
Equation~\eqref{eq:torus-projection-kernels},
\[
    \boldsymbol\lambda\bigl(\mathcal P_d(\ell_\infty^n(\mathbb C))\bigr)
    =
    \boldsymbol\lambda\bigl(\mathcal P_d(\mathbb T^n)\bigr)
    =
    \|S_{n,d}\|_1.
\]
Moreover, by Equation~\eqref{eq:torus-kernel-l2-norms}, one has
\(
    \|S_{n,d}\|_2
    =
    \sqrt{\binom{n+d-1}{d}}.
\)
Therefore the limit in Equation~\eqref{eq:torus-full-l1-l2-limit} gives
\[
    \frac{
        \boldsymbol\lambda\bigl(\mathcal P_d(\ell_\infty^n(\mathbb C))\bigr)
    }{
        \sqrt{\binom{n+d-1}{d}}
    }
    =
    \frac{\|S_{n,d}\|_1}{\|S_{n,d}\|_2}
    \longrightarrow
    \frac{\Gamma\bigl(1+\frac d2\bigr)}{\sqrt{d!}}.
\]
Finally,
\(
    \sqrt{\binom{n+d-1}{d}}
    \sim
    \frac{n^{d/2}}{\sqrt{d!}}
\)
 by Equation~\eqref{DIMO1},
which yields the equivalent formulation
\[
    \frac{
        \boldsymbol\lambda\bigl(\mathcal P_d(\ell_\infty^n(\mathbb C))\bigr)
    }{
        n^{d/2}
    }
    \longrightarrow
    \frac{\Gamma\bigl(1+\frac d2\bigr)}{d!}.
\]
This completes the proof.
\end{proof}

The same argument gives the tetrahedral analogue. Let
\(\mathcal T_d(\ell_\infty^n(\mathbb C))\) denote the space of
tetrahedral \(d\)-homogeneous polynomials
\[
    P(z)
    =
    \sum_{\substack{A\subset\{1,\ldots,n\}\\|A|=d}}
    c_A\prod_{j\in A}z_j,
\]
equipped with the supremum norm on the unit ball of
\(\ell_\infty^n(\mathbb C)\).

\begin{theorem}
\label{thm:torus-tetrahedral-projection-asymptotic}
For every fixed \(d\in\mathbb N\),
\[
    \lim_{n\to\infty}
    \frac{
        \boldsymbol\lambda\bigl(\mathcal T_d(\ell_\infty^n(\mathbb C))\bigr)
    }{
        \sqrt{\binom nd}
    }
    =
    \frac{\Gamma\bigl(1+\frac d2\bigr)}{\sqrt{d!}}.
\]
Equivalently,
\[
    \lim_{n\to\infty}
    \frac{
        \boldsymbol\lambda\bigl(\mathcal T_d(\ell_\infty^n(\mathbb C))\bigr)
    }{
        n^{d/2}
    }
    =
    \frac{\Gamma\bigl(1+\frac d2\bigr)}{d!}.
\]
\end{theorem}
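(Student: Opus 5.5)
The plan is to mirror the proof of Theorem~\ref{thm:torus-polynomial-projection-asymptotic} and drop the reduction step, since the multilinear kernel \(U_{n,d}\) is treated directly.

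\textbf{Step 1: pass to the torus.} By the maximum modulus principle, applied in each variable separately, restriction to \(\mathbb T^n\) is an isometric isomorphism from \(\mathcal T_d(\ell_\infty^n(\mathbb C))\) onto \(\mathcal T_d(\mathbb T^n)\). Hence
\[
\boldsymbol\lambda\bigl(\mathcal T_d(\ell_\infty^n(\mathbb C))\bigr)=\boldsymbol\lambda\bigl(\mathcal T_d(\mathbb T^n)\bigr).
\]

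\textbf{Step 2: identify the projection constant with a kernel norm.} The space \(\mathcal T_d(\mathbb T^n)\) is the span of the finite set of characters \(\{z_A : |A|=d\}\). Theorem~\ref{thm:abelian-accessibility}, in the form of \eqref{eq:torus-projection-kernels}, therefore gives
\[
\boldsymbol\lambda\bigl(\mathcal T_d(\mathbb T^n)\bigr)=\|U_{n,d}\|_1.
\]

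\textbf{Step 3: normalize and pass to the limit.} By \eqref{eq:torus-kernel-l2-norms}, \(\|U_{n,d}\|_2=\sqrt{\binom nd}\). The quotient in the statement is thus exactly \(\|U_{n,d}\|_1/\|U_{n,d}\|_2=\mathbb E|X_{n,d}|\). By \eqref{eq:torus-full-l1-l2-limit-first} in Proposition~\ref{thm:torus-multilinear-limit}, this converges to \(\Gamma(1+\frac d2)/\sqrt{d!}\). That limit rests on the CLT for \(Y_n\), the \(L^2\)-approximation of \(X_{n,d}\) by \(Y_n^d/\sqrt{d!}\), and uniform integrability coming from \(\mathbb E|X_{n,d}|^2=1\).

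\textbf{Step 4: the equivalent formulation.} By \eqref{DIMO2}, \(\sqrt{\binom nd}\sim n^{d/2}/\sqrt{d!}\). Multiplying the limit from Step 3 by \(1/\sqrt{d!}\) gives the second form, with constant \(\Gamma(1+\frac d2)/d!\).

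No step is a genuine obstacle, because the analytic work is already contained in Proposition~\ref{thm:torus-multilinear-limit}. The only point that needs any care is the isometry in Step 1. A polynomial that is affine in each variable attains its supremum over the closed polydisc on the distinguished boundary \(\mathbb T^n\), which follows by applying the one-variable maximum principle coordinatewise.
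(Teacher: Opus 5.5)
Your proposal is correct and follows essentially the same route as the paper's proof. Both restrict to $\mathbb T^n$ via the maximum modulus principle, identify $\boldsymbol\lambda$ with $\|U_{n,d}\|_1$ through Theorem~\ref{thm:abelian-accessibility}, normalize by $\|U_{n,d}\|_2=\sqrt{\binom nd}$, invoke the limit \eqref{eq:torus-full-l1-l2-limit-first}, and use \eqref{DIMO2} for the equivalent $n^{d/2}$ formulation.
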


\begin{proof}
By the maximum modulus principle, restriction to \(\mathbb T^n\)
identifies
\[
    \mathcal T_d(\ell_\infty^n(\mathbb C))
    \quad\text{and}\quad
    \mathcal T_d(\mathbb T^n)
\]
isometrically. Thus Theorem~\ref{thm:abelian-accessibility} and
Equation~\eqref{eq:torus-projection-kernels} give
\[
    \boldsymbol\lambda\bigl(\mathcal T_d(\ell_\infty^n(\mathbb C))\bigr)
    =
    \boldsymbol\lambda\bigl(\mathcal T_d(\mathbb T^n)\bigr)
    =
    \|U_{n,d}\|_1.
\]
Since
\(
    \|U_{n,d}\|_2
    =
    \sqrt{\binom nd},
\)
the limit in Equation~\eqref{eq:torus-full-l1-l2-limit-first} yields the first assertion.
The second assertion then follows immediately from Equation~\eqref{DIMO2}.
\end{proof}

\subsection{Spherical harmonic analysis on Euclidean spheres}
\label{sec:hilbertian-spheres}

We now apply the homogeneous-space machinery to real and complex Euclidean
spheres.

In both settings, the classical decompositions into spherical harmonics
coincide with the Peter--Weyl decompositions associated with the natural
transitive actions of the corresponding compact groups:
$\mathcal O_n$ on the real sphere and $\mathcal U_n$ on the complex sphere.
This allows us to identify the invariant and accessible subspaces, compute
their reproducing kernels, and derive explicit formulas for the associated
projection constants.

There is, however, a minor distinction between the two settings. On the
complex sphere, Peter--Weyl theory applies directly to the naturally occurring
complex-valued function spaces. On the real sphere, we first work with the
natural complexifications of the spaces of real spherical harmonics and
homogeneous polynomials, and then pass back to their real forms using
Lemma~\ref{lem: acR-acC}. This will ultimately yield the projection constants of
\(
\mathcal P_d\bigl(\ell_2^n(\mathbb R)\bigr)
=
\mathcal P_d\bigl(\ell_2^n(\mathbb R);\mathbb R\bigr).
\)

Besides the structural description of invariant subspaces, we are interested
in the high-dimensional behavior of the resulting projection constants. For
fixed degree $d$, the real case leads to a limit expressed in terms of the
$d$-th probabilists' Hermite polynomial, whereas the complex case yields the
corresponding complex Gaussian moment.

\subsubsection{The real Euclidean sphere}
\label{sec:real-sphere}

Throughout this subsection, we assume that $n\geq 2$. Let
\[
\mathbb S_{\mathbb R}^{n-1}
:=
\bigl\{x\in\mathbb R^n:\|x\|_2=1\bigr\}
\]
be the real Euclidean sphere, endowed with its normalized rotation-invariant
measure $\sigma_n$. The orthogonal group $\mathcal O_n$ acts transitively on
$\mathbb S_{\mathbb R}^{n-1}$ by
\[
A\hbullet x:=Ax,
\qquad
A\in\mathcal O_n,\quad x\in\mathbb S_{\mathbb R}^{n-1}.
\]
We choose the base point $e_1=(1,0,\ldots,0)$. Its stabilizer is
\[
\mathcal O_n(e_1)
:=
\bigl\{A\in\mathcal O_n:Ae_1=e_1\bigr\},
\]
which is naturally isomorphic to $\mathcal O_{n-1}$, acting orthogonally on
$e_1^\perp$. Thus $\mathbb S_{\mathbb R}^{n-1}$ is a homogeneous
$\mathcal O_n$-space.

Although the underlying sphere is real, we first work with complex-valued
functions, so that the Peter--Weyl theory developed in
Subsections~1.3--1.6 applies directly. We shall then pass to the corresponding
real forms and, finally, to spaces of real homogeneous polynomials on
$\ell_2^n(\mathbb R)$.

For $d\in\mathbb N_0$, let
\[
\mathcal H_d\bigl(\mathbb S_{\mathbb R}^{n-1};\mathbb C\bigr)
\]
denote the space of restrictions to $\mathbb S_{\mathbb R}^{n-1}$ of
complex-valued $d$-homogeneous harmonic polynomials on $\mathbb R^n$, that is,
polynomials with complex coefficients which are homogeneous of degree $d$ and
annihilated by the real Laplacian. Likewise, let
\[
\mathcal P_d\bigl(\mathbb S_{\mathbb R}^{n-1};\mathbb C\bigr)
\]
be the space of restrictions to $\mathbb S_{\mathbb R}^{n-1}$ of
complex-valued $d$-homogeneous polynomials on $\mathbb R^n$. Equivalently,
\[
\mathcal H_d\bigl(\mathbb S_{\mathbb R}^{n-1};\mathbb C\bigr)
=
\mathcal H_d\bigl(\mathbb S_{\mathbb R}^{n-1};\mathbb R\bigr)
\oplus
i\mathcal H_d\bigl(\mathbb S_{\mathbb R}^{n-1};\mathbb R\bigr),
\]
and
\[
\mathcal P_d\bigl(\mathbb S_{\mathbb R}^{n-1};\mathbb C\bigr)
=
\mathcal P_d\bigl(\mathbb S_{\mathbb R}^{n-1};\mathbb R\bigr)
\oplus
i\mathcal P_d\bigl(\mathbb S_{\mathbb R}^{n-1};\mathbb R\bigr).
\]
Each space $\mathcal H_d(\mathbb S_{\mathbb R}^{n-1};\mathbb C)$ is
finite-dimensional and $\mathcal O_n$-invariant, and these spaces are pairwise
orthogonal in $L_2(\mathbb S_{\mathbb R}^{n-1},\sigma_n;\mathbb C)$. The
classical harmonic decomposition gives
\begin{equation}
\label{eq:P-d-harmonic-decomposition}
\mathcal P_d\bigl(\mathbb S_{\mathbb R}^{n-1};\mathbb C\bigr)
=
\bigoplus_{j=0}^{\lfloor d/2\rfloor}
\mathcal H_{d-2j}\bigl(\mathbb S_{\mathbb R}^{n-1};\mathbb C\bigr);
\end{equation}
see, for instance,
\cite[Theorem~2.18]{atkinson2012spherical}  or \cite[Proposition~2.2]{defant2026minimal}.

The restrictions to $\mathbb S_{\mathbb R}^{n-1}$ of real polynomials on
$\mathbb R^n$ form a unital subalgebra of
$C(\mathbb S_{\mathbb R}^{n-1};\mathbb R)$ which separates points. Hence the
real Stone--Weierstrass theorem implies that their linear span is dense in
$C(\mathbb S_{\mathbb R}^{n-1};\mathbb R)$. After complexification, the
restrictions of complex-valued polynomials are dense in
$C(\mathbb S_{\mathbb R}^{n-1};\mathbb C)$. Together with
\eqref{eq:P-d-harmonic-decomposition}, this shows that
\[
\bigoplus_{d=0}^{\infty}
\mathcal H_d\bigl(\mathbb S_{\mathbb R}^{n-1};\mathbb C\bigr)
\]
is dense in $C(\mathbb S_{\mathbb R}^{n-1};\mathbb C)$. Consequently, one
obtains the classical orthogonal decomposition
\begin{equation}
\label{eq:L2-harmonic-decomposition}
L_2\bigl(\mathbb S_{\mathbb R}^{n-1},\sigma_n;\mathbb C\bigr)
=
\widehat{\bigoplus}_{d\geq 0}
\mathcal H_d\bigl(\mathbb S_{\mathbb R}^{n-1};\mathbb C\bigr).
\end{equation}
Our next goal is to show that each block
$\mathcal H_d(\mathbb S_{\mathbb R}^{n-1};\mathbb C)$ is strongly
accessible. To this end, we study its $\mathcal O_n(e_1)$-fixed subspace,
\[
\begin{split}
\mathcal H_d\bigl(\mathbb S_{\mathbb R}^{n-1};\mathbb C\bigr)^{\mathcal O_n(e_1)}
:=
\bigl\{
f\in\mathcal H_d\bigl(\mathbb S_{\mathbb R}^{n-1};\mathbb C\bigr):
f(u\xi)=f(\xi) 
\text{ for all }u\in\mathcal O_n(e_1)
\text{ and }\xi\in\mathbb S_{\mathbb R}^{n-1}
\bigr\}.
\end{split}
\]
For $d\geq 0$, define the normalized zonal harmonic of degree $d$ by
\[
L^\diamond_{n,d}(t)
:=
\sum_{j=0}^{\lfloor d/2\rfloor}
b_j(n,d)t^{d-2j}(1-t^2)^j,
\qquad
-1\leq t\leq 1,
\]
where
\[
b_j(n,d)
=
\frac{(-1)^j d!\,\Gamma\bigl(\frac{n-1}{2}\bigr)}
{4^j j!(d-2j)!\,
\Gamma\bigl(j+\frac{n-1}{2}\bigr)}.
\]
We shall use the following well-known lemma; see
\cite[Section~2.1.2]{atkinson2012spherical} and in our context
\cite[Proposition~3.4 and Corollary~3.5]{defant2026minimal}.

\begin{lemma}
\label{lem:diamond-function}
The function
\[
L^\diamond_{n,d}\bigl(\langle\,\cdot\,,e_1\rangle\bigr)
\in
C\bigl(\mathbb S_{\mathbb R}^{n-1};\mathbb C\bigr)
\]
is the unique element
\[
f\in
\mathcal H_d\bigl(\mathbb S_{\mathbb R}^{n-1};\mathbb C\bigr)^{\mathcal O_n(e_1)}
\]
satisfying $f(e_1)=1$. In particular,
\[
\dim_{\mathbb C}
\mathcal H_d\bigl(\mathbb S_{\mathbb R}^{n-1};\mathbb C\bigr)^{\mathcal O_n(e_1)}
=1.
\]
\end{lemma}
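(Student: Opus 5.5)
The plan has two parts: first reduce an \(\mathcal O_n(e_1)\)-invariant harmonic to a polynomial in the single variable \(t=\langle x,e_1\rangle\); then show that harmonicity fixes all its coefficients once the value at \(e_1\) is fixed.

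\emph{Reduction to one variable.} Let \(f\in\mathcal H_d(\mathbb S_{\mathbb R}^{n-1};\mathbb C)^{\mathcal O_n(e_1)}\), with homogeneous harmonic extension \(F\) to \(\mathbb R^n\). Write \(x=(t,y)\) with \(y\in\mathbb R^{n-1}\). Since \(\mathcal O_{n-1}\) acts transitively on each sphere \(\{|y|=r\}\) (for \(n\ge2\); for \(n=2\) the group is \(\{\pm1\}\) and the same reasoning applies), \(f(x)\) depends only on \(t\) and \(|y|\). Polynomiality and homogeneity then allow me to average \(F\) over \(\mathcal O_{n-1}\) acting on \(y\). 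The averaged polynomial equals \(F\) and is \(\mathcal O_{n-1}\)-invariant. By the classical theorem on \(\mathcal O_{n-1}\)-invariant polynomials (or, directly, by the fact that an invariant polynomial in \(y\) is a polynomial in \(|y|^2\)), I get
\[
F(t,y)=\sum_{j=0}^{\lfloor d/2\rfloor} c_j\, t^{d-2j}|y|^{2j}.
\]
On the sphere \(|y|^2=1-t^2\), so \(f=\varphi(\langle\cdot,e_1\rangle)\) with \(\varphi(t)=\sum_j c_j t^{d-2j}(1-t^2)^j\).

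\emph{Harmonicity fixes the coefficients.} I apply the Laplacian to \(F\). Using \(\Delta_y |y|^{2j}=2j(2j+n-3)|y|^{2j-2}\) and \(\partial_t^2 t^{d-2j}=(d-2j)(d-2j-1)t^{d-2j-2}\), the condition \(\Delta F=0\) becomes, after collecting the coefficient of \(t^{d-2j}|y|^{2j-2}\),
\[
(d-2j+2)(d-2j+1)\,c_{j-1}+2j(2j+n-3)\,c_j=0,\qquad 1\le j\le\lfloor d/2\rfloor .
\]
This is a first-order recursion, so every \(c_j\) is a fixed multiple of \(c_0=f(e_1)\). Hence the invariant subspace has dimension at most one, and \(f(e_1)=1\) determines \(f\) uniquely. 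Solving the recursion gives
\[
c_j=(-1)^j\frac{d!}{(d-2j)!\,4^j j!\,\prod_{i=1}^{j}\bigl(i+\tfrac{n-3}{2}\bigr)}=b_j(n,d),
\]
because \(\prod_{i=1}^j(i+\frac{n-3}{2})=\Gamma(j+\frac{n-1}{2})/\Gamma(\frac{n-1}{2})\). Conversely, the polynomial \(F\) built from the coefficients \(b_j(n,d)\) is homogeneous of degree \(d\), harmonic by construction, \(\mathcal O_n(e_1)\)-invariant, and restricts to \(L^\diamond_{n,d}(\langle\cdot,e_1\rangle)\) with value \(b_0=1\) at \(e_1\). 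This gives existence, and together the two steps give dimension exactly one.

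\emph{Expected obstacle.} The main delicate point is the first step, namely justifying that an invariant \emph{polynomial} \(F\) (and not just the function \(f\) on the sphere) has the form above. On the sphere, \(F\) is only determined modulo \(|x|^2-1\). Homogeneity settles this: two homogeneous polynomials of the same degree \(d\) that agree on the sphere agree everywhere. It is also worth recording that \(\mathcal H_d\) is \(\mathcal O_n\)-invariant, so the averaging stays inside the space. The Laplacian computation itself is routine bookkeeping.
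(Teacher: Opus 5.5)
Your proof is correct and complete. Invariance plus homogeneity forces $F=\sum_j c_j\,t^{d-2j}|y|^{2j}$, and $\Delta F=0$ is exactly your first-order recursion, whose solution normalized by $c_0=f(e_1)=1$ is $b_j(n,d)$. The averaging step is harmless but unnecessary, since homogeneity already gives $F(ux)=F(x)$ for $u\in\mathcal O_n(e_1)$.

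The paper gives no proof of its own here; it only cites \cite{atkinson2012spherical} and \cite{defant2026minimal}. There the Legendre harmonic is derived by essentially this same invariance-plus-Laplacian-recursion argument.
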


Combining this result  with Lemma~\ref{lem:strong-accessibility-stabilizer}
and Remark~\ref{nox0}, we obtain the following.

\begin{corollary}
\label{cor:real-harmonics-strong-accessibility}
The space
$\mathcal H_d(\mathbb S_{\mathbb R}^{n-1};\mathbb C)$ is strongly accessible.
\end{corollary}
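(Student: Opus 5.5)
The statement follows by combining Lemma~\ref{lem:diamond-function} with the stabilizer criterion for strong accessibility, Lemma~\ref{lem:strong-accessibility-stabilizer}. We work with the homogeneous $\mathcal O_n$-space $K=\mathbb S_{\mathbb R}^{n-1}$, base point $x_0=e_1$ and stabilizer $H_{e_1}=\mathcal O_n(e_1)$. Put $S:=\mathcal H_d(\mathbb S_{\mathbb R}^{n-1};\mathbb C)$.

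First, we check the hypotheses of Lemma~\ref{lem:strong-accessibility-stabilizer}. The space $S$ is finite-dimensional. It is non-zero, since it contains $L^\diamond_{n,d}(\langle\cdot,e_1\rangle)$, whose value at $e_1$ is $1$.

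It is also $\mathcal O_n$-invariant. If $p$ is a complex $d$-homogeneous harmonic polynomial and $A\in\mathcal O_n$, then $p\circ A^{-1}$ is again $d$-homogeneous. It is also harmonic, because the Laplacian commutes with orthogonal changes of variables. Hence $L_A f=f\circ A^{-1}$ lies in $S$ for every $f\in S$.

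Next, the subspace $S^{H_{e_1}}$ in the sense of Subsection~\ref{sec:strong-accessibility} is exactly $\mathcal H_d(\mathbb S_{\mathbb R}^{n-1};\mathbb C)^{\mathcal O_n(e_1)}$ as defined above. Both are defined by the condition $f(u\xi)=f(\xi)$ for $u\in\mathcal O_n(e_1)$ and all $\xi$. Lemma~\ref{lem:diamond-function} therefore gives $\dim S^{H_{e_1}}=1$.

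Lemma~\ref{lem:strong-accessibility-stabilizer} then shows that $S$ is strongly accessible with respect to $e_1$. By Remark~\ref{nox0}, this property does not depend on the base point, so $S$ is strongly accessible.

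There is no serious obstacle here. The only point needing care is the invariance of harmonicity under $\mathcal O_n$, and this is the standard fact that $\Delta(p\circ A)=(\Delta p)\circ A$ for orthogonal $A$.

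As a byproduct, Theorem~\ref{thm:strong-accessibility} shows that $S=\mathcal E_\pi(\mathbb S_{\mathbb R}^{n-1})$ for some $\pi\in\widehat{\mathcal O_n}$ with multiplicity one.
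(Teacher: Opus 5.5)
Your proof is correct and follows the paper's route exactly. Lemma~\ref{lem:diamond-function} gives $\dim S^{\mathcal O_n(e_1)}=1$, Lemma~\ref{lem:strong-accessibility-stabilizer} then yields strong accessibility at $e_1$, and Remark~\ref{nox0} removes the dependence on the base point; your explicit checks of the lemma's hypotheses (finite-dimensionality, $S\neq 0$, and $\mathcal O_n$-invariance) simply fill in details the paper leaves implicit.
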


We now identify the Peter--Weyl decomposition of
\(L_2(\mathbb S_{\mathbb R}^{n-1},\sigma_n;\mathbb C)\). For each
\(d\geq 0\), let
\[
    \pi_d:\mathcal O_n
    \longrightarrow
    \mathcal U\bigl(
        \mathcal H_d(\mathbb S_{\mathbb R}^{n-1};\mathbb C)
    \bigr)
\]
be the restriction of the natural \(\mathcal O_n\)-action, that is,
\[
    \bigl(\pi_d(U)f\bigr)(x)
    =
    f(U^{-1}x).
\]
By Corollary~\ref{cor:real-harmonics-strong-accessibility} and
Theorem~\ref{thm:strong-accessibility}, there exists
\(\rho_d\in\widehat{\mathcal O_n}\) such that
\[
    \mathcal H_d(\mathbb S_{\mathbb R}^{n-1};\mathbb C)
    =
    \mathcal E_{\rho_d}(\mathbb S_{\mathbb R}^{n-1})
\]
and
\[
    m_{\rho_d}(\mathbb S_{\mathbb R}^{n-1})=1.
\]
Thus this isotypic component consists of a single irreducible copy of
\(\rho_d\). Since the induced action on it is precisely \(\pi_d\), we
have
\[
    \pi_d\simeq\rho_d.
\]
In particular, \(\pi_d\) is irreducible, and, choosing \(\pi_d\) as the
representative of the corresponding equivalence class, we may write
\[
    \mathcal E_{\pi_d}(\mathbb S_{\mathbb R}^{n-1})
    =
    \mathcal H_d(\mathbb S_{\mathbb R}^{n-1};\mathbb C),
    \qquad
    m_{\pi_d}(\mathbb S_{\mathbb R}^{n-1})=1.
\]
Since the harmonic spaces are non-zero and pairwise orthogonal, their
identification with full isotypic components implies that the
representations \(\pi_d\), \(d\geq0\), are pairwise inequivalent.
Moreover, the decomposition
\eqref{eq:L2-harmonic-decomposition} shows that no other irreducible
representation occurs. Hence the Peter--Weyl decomposition of the
natural \(\mathcal O_n\)-action is
\[
L_2\bigl(\mathbb S_{\mathbb R}^{n-1},\sigma_n;\mathbb C\bigr)
=
\widehat{\bigoplus}_{d\geq0}
\mathcal E_{\pi_d}\bigl(\mathbb S_{\mathbb R}^{n-1}\bigr)
=
\widehat{\bigoplus}_{d\geq0}
\mathcal H_d\bigl(\mathbb S_{\mathbb R}^{n-1};\mathbb C\bigr).
\]
The action is therefore multiplicity-free, and we can characterize all
finite-dimensional invariant and accessible complex-valued subspaces on
the real sphere.

\begin{theorem}
\label{thm:real-sphere-accessibility}
Let
$S\subset C(\mathbb S_{\mathbb R}^{n-1};\mathbb C)$ be finite-dimensional.
Then the following assertions are equivalent.
\begin{enumerate}
\item $S$ is $\mathcal O_n$-invariant.
\item $S$ is $\mathcal O_n$-accessible.
\item There exists a finite set $F\subset\mathbb N_0$ such that
\(
S
=
\bigoplus_{d\in F}
\mathcal H_d\bigl(\mathbb S_{\mathbb R}^{n-1};\mathbb C\bigr).
\)
\end{enumerate}
\end{theorem}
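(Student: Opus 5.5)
The plan is to deduce the theorem almost entirely from the general machinery of Section~\ref{sec:Projection constants}, using the Peter--Weyl decomposition of $L_2(\mathbb S_{\mathbb R}^{n-1},\sigma_n;\mathbb C)$ established just before the statement. That decomposition identifies the isotypic components of the natural $\mathcal O_n$-action as exactly the harmonic spaces $\mathcal E_{\pi_d}(\mathbb S_{\mathbb R}^{n-1})=\mathcal H_d(\mathbb S_{\mathbb R}^{n-1};\mathbb C)$, $d\ge 0$. Each has multiplicity $m_{\pi_d}=1$, the $\pi_d$ are pairwise inequivalent, and every $\rho\in\widehat{\mathcal O_n}$ not of the form $\pi_d$ has $\mathcal E_\rho(\mathbb S_{\mathbb R}^{n-1})=\{0\}$. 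In particular the action is multiplicity-free.

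For $(1)\Rightarrow(2)$, I will invoke Corollary~\ref{cor:gelfand-pairs}: since $(\mathcal O_n,\mathbb S_{\mathbb R}^{n-1})$ is multiplicity-free, every finite-dimensional $\mathcal O_n$-invariant subspace is $\mathcal O_n$-accessible. The implication $(2)\Rightarrow(1)$ is immediate, since accessible subspaces are invariant by definition.

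For $(2)\Rightarrow(3)$, Theorem~\ref{thm:accessible-subspaces} gives a finite set $\Lambda\subset\widehat{\mathcal O_n}$ with $S=\bigoplus_{\rho\in\Lambda}\mathcal E_\rho(\mathbb S_{\mathbb R}^{n-1})$. Components with $\rho\notin\{\pi_d\}$ vanish, so I discard them. Setting $F:=\{d:\pi_d\in\Lambda\}$, which is finite because $\Lambda$ is and the $\pi_d$ are pairwise inequivalent, yields $S=\bigoplus_{d\in F}\mathcal H_d$. For $(3)\Rightarrow(2)$, a finite sum of the full isotypic components $\mathcal H_d=\mathcal E_{\pi_d}$ is accessible by the same Theorem~\ref{thm:accessible-subspaces}, or directly by Theorem~\ref{thm:homogeneous-machine}.

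I do not expect a real obstacle, since the substantive input, namely the identification of the Peter--Weyl blocks with harmonic spaces via Lemma~\ref{lem:diamond-function}, strong accessibility and density, was carried out before the statement. The only point needing care is the bookkeeping that the Peter--Weyl decomposition contains no isotypic components other than the $\mathcal H_d$. This follows from the density of $\bigoplus_d\mathcal H_d$ in $L_2$ together with the mutual orthogonality of distinct isotypic components, which forces every other $\mathcal E_\rho$ to be zero.
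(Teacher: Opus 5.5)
Your proposal is correct and follows the paper's proof. The equivalence $(1)\Leftrightarrow(2)$ comes from Corollary~\ref{cor:gelfand-pairs} via multiplicity-freeness, and the equivalence with $(3)$ comes from Theorem~\ref{thm:accessible-subspaces} together with the identification of the isotypic components as the spaces $\mathcal H_d(\mathbb S_{\mathbb R}^{n-1};\mathbb C)$; your density-and-orthogonality argument that all other components vanish is exactly what the paper records just before the statement.
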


\begin{proof}
The equivalence of the first two assertions follows from
Corollary~\ref{cor:gelfand-pairs}, since the action of
\(\mathcal O_n\) on \(\mathbb S_{\mathbb R}^{n-1}\) is multiplicity-free.
The equivalence with the third assertion follows from
Theorem~\ref{thm:accessible-subspaces} together with the above
identification of the isotypic components.
\end{proof}

We next determine the reproducing kernels and projection constants of these
spaces. Set
\[
N_{n,d}
:=
\dim_{\mathbb C}
\mathcal H_d\bigl(\mathbb S_{\mathbb R}^{n-1};\mathbb C\bigr)
=
\dim_{\mathbb R}
\mathcal H_d\bigl(\mathbb S_{\mathbb R}^{n-1};\mathbb R\bigr).
\]
Explicitly,
\[
N_{n,d}
=
\begin{cases}
1, & d=0,\\[4pt]
\dfrac{(n+2d-2)(n+d-3)!}{d!(n-2)!}, & d\geq1
\end{cases}
\]
(see, for example, \cite[Equation (18)]{defant2026minimal}).
The homogeneous machine, Theorem~\ref{thm:homogeneous-machine}, applied to
\[
\mathcal E_{\pi_d}\bigl(\mathbb S_{\mathbb R}^{n-1}\bigr)
=
\mathcal H_d\bigl(\mathbb S_{\mathbb R}^{n-1};\mathbb C\bigr),
\]
gives
\[
\mathbf k_{\mathcal H_d(\mathbb S_{\mathbb R}^{n-1};\mathbb C)}(e_1,\xi)
=
d_{\pi_d}\,
\chi_{\pi_d}^{\mathbb S_{\mathbb R}^{n-1}}(\xi).
\]
Since $m_{\pi_d}(\mathbb S_{\mathbb R}^{n-1})=1$, the spherical character
$\chi_{\pi_d}^{\mathbb S_{\mathbb R}^{n-1}}$ is
$\mathcal O_n(e_1)$-invariant and normalized at $e_1$. Hence
Lemma~\ref{lem:diamond-function} yields
\[
\chi_{\pi_d}^{\mathbb S_{\mathbb R}^{n-1}}(\xi)
=
L^\diamond_{n,d}\bigl(\langle\xi,e_1\rangle\bigr).
\]
Since
\[
d_{\pi_d}
=
\dim_{\mathbb C}
\mathcal H_d\bigl(\mathbb S_{\mathbb R}^{n-1};\mathbb C\bigr)
=N_{n,d},
\]
we obtain
\begin{equation}
\label{eq:harmonic-kernel}
\mathbf k_{\mathcal H_d(\mathbb S_{\mathbb R}^{n-1};\mathbb C)}(e_1,\xi)
=
N_{n,d}L^\diamond_{n,d}\bigl(\langle\xi,e_1\rangle\bigr),
\qquad
\xi\in\mathbb S_{\mathbb R}^{n-1}.
\end{equation}
Notice that this reproducing-kernel slice is real-valued.

\begin{theorem}
\label{prop:real-invariant-kernel}
Let
$S\subset C(\mathbb S_{\mathbb R}^{n-1};\mathbb C)$ be an
$\mathcal O_n$-invariant finite-dimensional subspace. Then there exists a
finite set $F\subset\mathbb N_0$ such that
\[
S
=
\bigoplus_{d\in F}
\mathcal H_d\bigl(\mathbb S_{\mathbb R}^{n-1};\mathbb C\bigr).
\]
Moreover, for each $\xi\in\mathbb S_{\mathbb R}^{n-1}$,
\[
\mathbf k_S(e_1,\xi)
=
\sum_{d\in F}
N_{n,d}L^\diamond_{n,d}\bigl(\langle\xi,e_1\rangle\bigr).
\]
Consequently,
\[
\boldsymbol\lambda(S)
=
\int_{\mathbb S_{\mathbb R}^{n-1}}
\bigg|
\sum_{d\in F}
N_{n,d}L^\diamond_{n,d}\bigl(\langle\xi,e_1\rangle\bigr)
\bigg|
\,\mathrm{d}\sigma_n(\xi).
\]
\end{theorem}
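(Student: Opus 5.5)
The plan is to assemble three ingredients established above: the classification of invariant subspaces, the additivity of reproducing kernels, and the averaging theorem.

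First, for the structural decomposition: let $S$ be a finite-dimensional $\mathcal O_n$-invariant subspace of $C(\mathbb S_{\mathbb R}^{n-1};\mathbb C)$. By Theorem~\ref{thm:real-sphere-accessibility}, the implication (1)$\Rightarrow$(3), there is a finite set $F\subset\mathbb N_0$ with $S=\bigoplus_{d\in F}\mathcal H_d(\mathbb S_{\mathbb R}^{n-1};\mathbb C)$. Behind this implication lie the identification $\mathcal H_d=\mathcal E_{\pi_d}(\mathbb S_{\mathbb R}^{n-1})$, the multiplicity-one statement $m_{\pi_d}=1$, and Corollary~\ref{cor:gelfand-pairs}. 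The same theorem also gives that $S$ is $\mathcal O_n$-accessible.

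Second, for the kernel formula: the summands $\mathcal H_d(\mathbb S_{\mathbb R}^{n-1};\mathbb C)$, $d\in F$, are pairwise orthogonal in $L_2(\sigma_n)$. By the additivity of reproducing kernels under finite orthogonal sums (Subsection~\ref{Reproducing kernels}), we get $\mathbf k_S(e_1,\xi)=\sum_{d\in F}\mathbf k_{\mathcal H_d}(e_1,\xi)$. Substituting \eqref{eq:harmonic-kernel} for each summand then yields the stated expression $\sum_{d\in F}N_{n,d}L^\diamond_{n,d}(\langle\xi,e_1\rangle)$.

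Alternatively, one can apply Theorem~\ref{thm:homogeneous-machine} directly with $\Lambda=\{\pi_d:d\in F\}$ and base point $x_0=e_1$. This gives $\mathbf k_S(e_1,\cdot)=\sum_{d\in F} d_{\pi_d}\chi^{\mathbb S}_{\pi_d}$, and the facts $d_{\pi_d}=N_{n,d}$ and $\chi^{\mathbb S}_{\pi_d}=L^\diamond_{n,d}(\langle\cdot,e_1\rangle)$ recorded before \eqref{eq:harmonic-kernel} finish this step.

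Third, for the projection constant: since $S$ is accessible, Theorem~\ref{thm:accessible-projection-constant} with base point $x_0=e_1$ gives
\[
\boldsymbol\lambda(S)=\int_{\mathbb S_{\mathbb R}^{n-1}}|\mathbf k_S(e_1,\xi)|\,d\sigma_n(\xi).
\]
Inserting the kernel formula from the second step produces the claimed integral.

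The only point needing care is the identification of the spherical character with the normalized zonal harmonic. One must check that $\chi^{\mathbb S}_{\pi_d}$ lies in $\mathcal H_d^{\mathcal O_n(e_1)}$ and equals $1$ at $e_1$. It lies in the right space because it equals $N_{n,d}^{-1}\mathbf k(e_1,\cdot)$, and the proof of Lemma~\ref{lem:strong-accessibility-stabilizer} shows that kernel slices are stabilizer-invariant. It equals $1$ at $e_1$ because $\langle u,u\rangle=1$ for a unit admissible vector $u$. Uniqueness in Lemma~\ref{lem:diamond-function} then forces the equality. Since this was already done before the statement, no step is a genuine obstacle; the proof is essentially bookkeeping.
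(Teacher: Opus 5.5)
Your proposal is correct and follows essentially the same route as the paper: the decomposition comes from Theorem~\ref{thm:real-sphere-accessibility}, the kernel formula from additivity over orthogonal summands together with \eqref{eq:harmonic-kernel}, and the integral formula from accessibility. The only difference is citation: the paper invokes Theorem~\ref{thm:homogeneous-machine}, which itself rests on Theorem~\ref{thm:accessible-projection-constant}, the result you apply directly; your extra check that the spherical character is the normalized zonal harmonic just repeats what the paper establishes before the statement.
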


\begin{proof}
The description of $S$ follows from
Theorem~\ref{thm:real-sphere-accessibility}. Since the summands are
orthogonal, the reproducing kernel of $S$ is the sum of the reproducing
kernels of its harmonic components. The kernel formula therefore follows
from \eqref{eq:harmonic-kernel}. Finally, $S$ is $\mathcal O_n$-accessible,
and Theorem~\ref{thm:homogeneous-machine} gives the asserted formula for its complex projection
constant.
\end{proof}

In particular, by \eqref{eq:P-d-harmonic-decomposition},
\[
\mathbf k_{\mathcal P_d(\mathbb S_{\mathbb R}^{n-1};\mathbb C)}(e_1,\xi)
=
\sum_{j=0}^{\lfloor d/2\rfloor}
N_{n,d-2j}
L^\diamond_{n,d-2j}\bigl(\langle\xi,e_1\rangle\bigr),
\]
and hence
\begin{equation}
\label{eq:complex-real-sphere-projection-constant}
\boldsymbol \lambda\bigl(\mathcal P_d(\mathbb S_{\mathbb R}^{n-1};\mathbb C)\bigr)
=
\int_{\mathbb S_{\mathbb R}^{n-1}}
\bigg|
\sum_{j=0}^{\lfloor d/2\rfloor}
N_{n,d-2j}
L^\diamond_{n,d-2j}\bigl(\langle\xi,e_1\rangle\bigr)
\bigg|
\,\mathrm{d}\sigma_n(\xi).
\end{equation}
We now pass from the complex-valued polynomial space to its real form. Set
\[
S_{\mathbb R}
:=
\mathcal P_d\bigl(\mathbb S_{\mathbb R}^{n-1};\mathbb R\bigr)
\subset
C\bigl(\mathbb S_{\mathbb R}^{n-1};\mathbb R\bigr)
\]
and
\[
S_{\mathbb C}
:=
\mathcal P_d\bigl(\mathbb S_{\mathbb R}^{n-1};\mathbb C\bigr)
=
S_{\mathbb R}\oplus iS_{\mathbb R}.
\]

\begin{proposition}
\label{prop:real-complex-projection-constants}
The space
$\mathcal P_d(\mathbb S_{\mathbb R}^{n-1};\mathbb R)$ is an
$\mathcal O_n$-accessible subspace of
$C(\mathbb S_{\mathbb R}^{n-1};\mathbb R)$, and
\[
\boldsymbol \lambda\bigl(\mathcal P_d(\mathbb S_{\mathbb R}^{n-1};\mathbb R)\bigr)
=
\boldsymbol \lambda\bigl(\mathcal P_d(\mathbb S_{\mathbb R}^{n-1};\mathbb C)\bigr).
\]
Both projection constants are given by the integral in
\eqref{eq:complex-real-sphere-projection-constant}.
\end{proposition}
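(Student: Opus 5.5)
Accessibility of \(S_{\mathbb R}\) comes directly from Lemma~\ref{lem: acR-acC}. By \eqref{eq:P-d-harmonic-decomposition}, \(S_{\mathbb C}=\mathcal P_d(\mathbb S_{\mathbb R}^{n-1};\mathbb C)\) is a finite orthogonal sum of the harmonic blocks \(\mathcal H_{d-2j}(\mathbb S_{\mathbb R}^{n-1};\mathbb C)\). Theorem~\ref{thm:real-sphere-accessibility} identifies these blocks as full isotypic components. Hence \(S_{\mathbb C}\) is \(\mathcal O_n\)-accessible in \(C(\mathbb S_{\mathbb R}^{n-1};\mathbb C)\). Since \(S_{\mathbb C}=S_{\mathbb R}\oplus iS_{\mathbb R}\) is the natural complexification, Lemma~\ref{lem: acR-acC} shows that \(S_{\mathbb R}\) is \(\mathcal O_n\)-accessible in \(C(\mathbb S_{\mathbb R}^{n-1};\mathbb R)\).

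Next I compute \(\boldsymbol\lambda(S_{\mathbb R})\) with Theorem~\ref{thm:accessible-projection-constant}, taken over the reals. Its proof (averaging, uniqueness of the equivariant projection, total variation of the representing measure) never uses complex scalars, so it applies verbatim; I will note this explicitly. The real orthogonal projection \(\mathbf\pi_{S_{\mathbb R}}\) has a real reproducing kernel \(\mathbf k_{S_{\mathbb R}}(x,y)=\sum_j f_j(x)f_j(y)\), built from a real orthonormal basis \((f_j)\) of \(S_{\mathbb R}\). This basis is also a complex orthonormal basis of \(S_{\mathbb C}\): orthonormality is inherited, and \(\dim_{\mathbb C}S_{\mathbb C}=\dim_{\mathbb R}S_{\mathbb R}\). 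Therefore \(\mathbf k_{S_{\mathbb R}}=\mathbf k_{S_{\mathbb C}}\). Equivalently, \(\mathbf\pi_{S_{\mathbb C}}\) restricted to real functions is \(\mathbf\pi_{S_{\mathbb R}}\), which is consistent with the real-valuedness of the kernel slice noted after \eqref{eq:harmonic-kernel}.

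It follows that
\[
\boldsymbol\lambda(S_{\mathbb R})=\int|\mathbf k_{S_{\mathbb R}}(e_1,\xi)|\,d\sigma_n(\xi)=\int|\mathbf k_{S_{\mathbb C}}(e_1,\xi)|\,d\sigma_n(\xi)=\boldsymbol\lambda(S_{\mathbb C}).
\]
Both quantities equal \eqref{eq:complex-real-sphere-projection-constant} by Theorem~\ref{prop:real-invariant-kernel}.

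The main subtlety is the norm identity in the real setting. There, the dual of \(C(K;\mathbb R)\) consists of real measures. I must check that the norm of \(f\mapsto\int f\,k\,d\sigma_n\), for a real kernel \(k\), is \(\int|k|\,d\sigma_n\). This holds by the Riesz representation theorem, with approximation of \(\operatorname{sign}k\) by continuous real functions. I also need the sup-over-\(x\) reduction to a single slice, which follows from \(\mathcal O_n\)-invariance of the kernel as before. All remaining steps are bookkeeping.
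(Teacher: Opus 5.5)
Your proposal is correct and follows essentially the same route as the paper. Accessibility of $S_{\mathbb C}$ comes from Theorem~\ref{thm:real-sphere-accessibility} and transfers to $S_{\mathbb R}$ via Lemma~\ref{lem: acR-acC}; then the real and complex orthogonal projections share one real-valued kernel, so Theorem~\ref{thm:accessible-projection-constant} makes both minimal with equal norm given by the same integral. Your added checks (real Riesz representation, a real orthonormal basis of $S_{\mathbb R}$ being a complex orthonormal basis of $S_{\mathbb C}$) only spell out what the paper leaves implicit, and the paper's opening convention for Section~1 already covers real scalars, so you need not re-verify the averaging theorem over $\mathbb R$.
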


\begin{proof}
The complexification $S_{\mathbb C}$ is $\mathcal O_n$-accessible by
Theorem~\ref{thm:real-sphere-accessibility}. Hence Lemma~\ref{lem: acR-acC} implies that
$S_{\mathbb R}$ is $\mathcal O_n$-accessible in the real-valued function
space.
The real and complex $L_2$-orthogonal projections onto $S_{\mathbb R}$ and
$S_{\mathbb C}$, respectively, are represented by the same real-valued
reproducing kernel
\[
\mathbf k_{\mathcal P_d(\mathbb S_{\mathbb R}^{n-1};\mathbb R)}(x,y)
=
\mathbf k_{\mathcal P_d(\mathbb S_{\mathbb R}^{n-1};\mathbb C)}(x,y).
\]
Indeed, the complex orthogonal projection is the complexification of the real
orthogonal projection. Therefore both operator norms are equal to
\[
\sup_{x\in\mathbb S_{\mathbb R}^{n-1}}
\int_{\mathbb S_{\mathbb R}^{n-1}}
\bigl|
\mathbf k_{\mathcal P_d(\mathbb S_{\mathbb R}^{n-1};\mathbb R)}(x,y)
\bigr|
\,\mathrm{d}\sigma_n(y)
=
\sup_{x\in\mathbb S_{\mathbb R}^{n-1}}
\int_{\mathbb S_{\mathbb R}^{n-1}}
\bigl|
\mathbf k_{\mathcal P_d(\mathbb S_{\mathbb R}^{n-1};\mathbb C)}(x,y)
\bigr|
\,\mathrm{d}\sigma_n(y).
\]
By Theorem~\ref{thm:accessible-projection-constant}, both orthogonal projections are minimal in their
respective scalar categories. Since they have the same operator norm,
the two projection constants coincide.
Formula~\eqref{eq:complex-real-sphere-projection-constant} then gives their
common value.
\end{proof}

The finite sum in \eqref{eq:complex-real-sphere-projection-constant} can be
rewritten in terms of Jacobi polynomials. Once the kernel formula is known,
this is a purely algebraic, although nontrivial, calculation. The resulting
formula is the following. For $d\geq1$, see
\cite[Proposition~4.5 and Theorem~4.6]{defant2026minimal}; the case $d=0$ is immediate.

\begin{proposition}
\label{prop:jacobi-projection-formula}
For each $d\in\mathbb N_0$,
\[
\boldsymbol \lambda\bigl(\mathcal P_d(\mathbb S_{\mathbb R}^{n-1};\mathbb C)\bigr)
=
\boldsymbol \lambda\bigl(\mathcal P_d(\mathbb S_{\mathbb R}^{n-1};\mathbb R)\bigr),
\]
and their common value is
\[
\frac{
\Gamma\bigl(\frac n2\bigr)\Gamma(d+n)}
{2\sqrt{\pi}\,\Gamma(n-1)
\Gamma\bigl(d+\frac{n+1}{2}\bigr)}
\int_{-1}^{1}
\bigg|
P_d^{\bigl(\frac{n-1}{2},\frac{n-1}{2}\bigr)}(t)
\bigg|
(1-t^2)^{\frac{n-3}{2}}
\,\mathrm{d} t.
\]
Here \(P_d^{(\alpha,\beta)}\) is the Jacobi polynomial normalized by
\(P_d^{(\alpha,\beta)}(1)=\binom{d+\alpha}{d}\), and
\(\alpha=\beta=(n-1)/2\).
\end{proposition}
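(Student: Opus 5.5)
The equality of the real and complex projection constants is already Proposition~\ref{prop:real-complex-projection-constants}. It therefore remains to turn the integral \eqref{eq:complex-real-sphere-projection-constant} into the stated Jacobi form. The plan is to pass to one variable and then collapse the kernel into a single Jacobi polynomial.

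\textbf{Step 1: reduction to one variable.} The integrand depends only on $t=\langle\xi,e_1\rangle$. The pushforward of $\sigma_n$ under $\xi\mapsto\langle\xi,e_1\rangle$ is
\[
c_n(1-t^2)^{\frac{n-3}{2}}\,dt,
\qquad
c_n=\frac{\Gamma(\frac n2)}{\sqrt\pi\,\Gamma(\frac{n-1}{2})}.
\]
Hence the projection constant equals
\[
c_n\int_{-1}^1\bigl|K_{n,d}(t)\bigr|(1-t^2)^{\frac{n-3}{2}}\,dt,
\qquad
K_{n,d}(t):=\sum_{j=0}^{\lfloor d/2\rfloor}N_{n,d-2j}\,L^\diamond_{n,d-2j}(t).
\]

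\textbf{Step 2: closed form for $K_{n,d}$.} Write $\lambda=\frac{n-2}{2}$. Each normalized zonal harmonic is a normalized Gegenbauer polynomial, $L^\diamond_{n,k}(t)=C^{\lambda}_k(t)/C^\lambda_k(1)$. Moreover $N_{n,k}/C^\lambda_k(1)=(k+\lambda)/\lambda$. So
\[
K_{n,d}=\frac1\lambda\sum_{j}(d-2j+\lambda)\,C^\lambda_{d-2j}.
\]
For this sum I will use the classical identity
\[
\sum_{j}(d-2j+\lambda)\,C^\lambda_{d-2j}=\lambda\,C^{\lambda+1}_d,
\]
which follows from $\frac{d}{dt}$-free manipulations of generating functions. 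Indeed, $(1-r^2)(1-2rt+r^2)^{-\lambda-1}$ equals $\sum_k\frac{k+\lambda}{\lambda}C^\lambda_k r^k$, and multiplying by $(1-r^2)^{-1}$ sums over $k\equiv d \pmod 2$. This gives $K_{n,d}=C^{\frac n2}_d(t)$.

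\textbf{Step 3: convert to the Jacobi normalization.} Gegenbauer polynomials are related to Jacobi polynomials by
\[
C^{\mu}_d=\frac{(2\mu)_d}{(\mu+\frac12)_d}\,P_d^{(\mu-\frac12,\mu-\frac12)}.
\]
With $\mu=\frac n2$ this gives
\[
K_{n,d}=\frac{(n)_d}{(\frac{n+1}{2})_d}\,P^{(\frac{n-1}{2},\frac{n-1}{2})}_d.
\]
Here
\[
(n)_d=\frac{\Gamma(n+d)}{\Gamma(n)},
\qquad
\Big(\tfrac{n+1}{2}\Big)_d=\frac{\Gamma(d+\frac{n+1}{2})}{\Gamma(\frac{n+1}{2})}.
\]
It then remains to combine these factors with $c_n$. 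Using $\Gamma(\frac{n+1}{2})/\Gamma(\frac{n-1}{2})=\frac{n-1}{2}$ and $\Gamma(n)=(n-1)\Gamma(n-1)$, the constant becomes
\[
\frac{\Gamma(\frac n2)\Gamma(d+n)}{2\sqrt\pi\,\Gamma(n-1)\Gamma(d+\frac{n+1}{2})}.
\]

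\textbf{Expected obstacle and fallback.} The main obstacle is Step 2, specifically keeping the normalizations consistent. I have to check that $L^\diamond_{n,d}$, as defined by the $b_j(n,d)$, really is $C^\lambda_d/C^\lambda_d(1)$. I would verify this by comparing leading coefficients, or through the uniqueness in Lemma~\ref{lem:diamond-function}. The case $n=2$, where $\lambda=0$, needs a Chebyshev limit, or can be checked directly. Alternatively, as the text notes, one may simply invoke \cite[Proposition~4.5 and Theorem~4.6]{defant2026minimal} for $d\ge1$. The case $d=0$ is trivial, since both sides equal $1$.
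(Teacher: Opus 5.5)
Your proposal is correct. For the equality of the real and complex constants you argue as the paper does, via Proposition~\ref{prop:real-complex-projection-constants}. For the Jacobi form, however, the paper gives no computation at all: it calls it ``a purely algebraic, although nontrivial, calculation'' and cites \cite[Proposition~4.5 and Theorem~4.6]{defant2026minimal}. You actually carry the calculation out, and your normalizations all check: $L^\diamond_{n,k}=C^\lambda_k/C^\lambda_k(1)$, $N_{n,k}/C^\lambda_k(1)=(k+\lambda)/\lambda$, the collapse to $C^{n/2}_d$, the Gegenbauer--Jacobi factor $(n)_d/(\frac{n+1}{2})_d$, and the final Gamma constant.

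Your route buys self-containedness and a closed form for the kernel slice itself, $\mathbf k_{\mathcal P_d(\mathbb S_{\mathbb R}^{n-1};\mathbb C)}(e_1,\xi)=C^{n/2}_d(\langle\xi,e_1\rangle)$. This explains why a single Jacobi polynomial with the shifted parameter $\frac{n-1}{2}$ is integrated against the sphere's weight $(1-t^2)^{\frac{n-3}{2}}$. It is also precisely the Gegenbauer polynomial that the proof of Lemma~\ref{lem:jacobi-hermite-limit} later expands. The citation only buys brevity.

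One simplification removes your separate $n=2$ case. State your key identity as the Poisson-kernel expansion $\sum_{k\ge0}N_{n,k}L^\diamond_{n,k}(t)r^k=(1-r^2)(1-2rt+r^2)^{-n/2}$, which holds for every $n\ge2$. For $n=2$ one has $b_j(2,k)=(-1)^j\binom{k}{2j}$, so $L^\diamond_{2,k}=T_k$, and the identity is the classical $1+2\sum_{k\ge1}T_k(t)r^k$. Dividing by $1-r^2$ gives $\sum_d K_{n,d}(t)r^d=(1-2rt+r^2)^{-n/2}$ at once, so you never divide by $\lambda$. (Minor: your $c_n$ is the reciprocal of the $c_n$ in Lemma~\ref{lem:ultraspherical-gaussian-limit}.)
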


We finally relate the polynomial space on the sphere to the original space of real-valued
homogeneous polynomials on the real Hilbert space. The restriction map
\[
\mathcal P_d\bigl(\ell_2^n(\mathbb R)\bigr)
=
\mathcal P_d\bigl(\ell_2^n(\mathbb R);\mathbb R\bigr)
\longrightarrow
\mathcal P_d\bigl(\mathbb S_{\mathbb R}^{n-1};\mathbb R\bigr),
\qquad
P\longmapsto P\big|_{\mathbb S_{\mathbb R}^{n-1}},
\]
is a real-linear isometric isomorphism. Indeed, by $d$-homogeneity,
\[
\|P\|
=
\sup_{\|x\|_2\leq1}|P(x)|
=
\sup_{\|x\|_2=1}|P(x)|.
\]
Moreover, if $P$ vanishes on the sphere, then homogeneity implies that it
vanishes on all of $\mathbb R^n$. Consequently, Proposition~\ref{prop:jacobi-projection-formula}
gives
\begin{equation}
\label{eq:real-polynomial-projection-constant}
\boldsymbol \lambda\bigl(\mathcal P_d(\ell_2^n(\mathbb R))\bigr)
=
\boldsymbol \lambda\bigl(\mathcal P_d(\mathbb S_{\mathbb R}^{n-1};\mathbb R)\bigr)
=
\boldsymbol \lambda\bigl(\mathcal P_d(\mathbb S_{\mathbb R}^{n-1};\mathbb C)\bigr).
\end{equation}
In \cite[Proposition~4.5 and Corollary~4.7]{defant2026minimal}, the asymptotic behavior
of these projection constants was studied in the regime where the dimension
$n$ is fixed and the degree $d$ tends to infinity. More precisely, for
$n=2$,
\[
\lim_{d\to\infty}
\frac{
\boldsymbol \lambda\bigl(\mathcal P_d(\ell_2^2(\mathbb R))\bigr)}
{\log d}
=
\frac{4}{\pi^2},
\]
whereas, for $n>2$,
\[
\lim_{d\to\infty}
\frac{
\boldsymbol \lambda\bigl(\mathcal P_d(\ell_2^n(\mathbb R))\bigr)}
{d^{\frac{n-2}{2}}}
=
\frac{2^{n+1}}{\pi^2(n-2)}
\frac{
\Gamma\bigl(\frac n4+\frac12\bigr)^2}
{\Gamma(n-1)}.
\]
By Proposition~\ref{prop:real-complex-projection-constants}, the same
formulas hold for the corresponding complexified spaces.
Here we take the opposite point of view: we fix the degree $d$ and let the
dimension $n$ tend to infinity.

\begin{theorem}
\label{thm:high-dimensional-real-polynomials}
For each $d\in\mathbb N_0$,
\[
\lim_{n\to\infty}
\frac{
\boldsymbol \lambda\bigl(\mathcal P_d(\ell_2^n(\mathbb R))\bigr)}
{\sqrt{\binom{n+d-1}{d}}}
=
\frac{\mathbb E|\mathrm{He}_d(Z)|}{\sqrt{d!}},
\]
where $\mathrm{He}_d$ denotes the $d$-th probabilists' Hermite polynomial and
$Z\sim N(0,1)$. Equivalently,
\[
\lim_{n\to\infty}
\frac{
\boldsymbol \lambda\bigl(\mathcal P_d(\ell_2^n(\mathbb R))\bigr)}
{\sqrt{n^d}}
=
\frac{\mathbb E|\mathrm{He}_d(Z)|}{d!}.
\]
By \eqref{eq:real-polynomial-projection-constant}, the same limits hold with
$\boldsymbol \lambda(\mathcal P_d(\mathbb S_{\mathbb R}^{n-1};\mathbb C))$ or
$\boldsymbol \lambda(\mathcal P_d(\mathbb S_{\mathbb R}^{n-1};\mathbb R))$ in the
numerator.
\end{theorem}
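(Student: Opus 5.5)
We work directly with the kernel formula \eqref{eq:complex-real-sphere-projection-constant}. By \eqref{eq:real-polynomial-projection-constant} and Theorem~\ref{thm:accessible-projection-constant},
\[
\boldsymbol\lambda\bigl(\mathcal P_d(\ell_2^n(\mathbb R))\bigr)=\int_{\mathbb S_{\mathbb R}^{n-1}}|\mathbf k_n(\xi)|\,d\sigma_n(\xi),
\qquad
\mathbf k_n(\xi):=\mathbf k_{\mathcal P_d(\mathbb S_{\mathbb R}^{n-1};\mathbb C)}(e_1,\xi).
\]
From Subsection~\ref{Reproducing kernels} we have $\|\mathbf k_n\|_{L_2(\sigma_n)}=\sqrt{\dim\mathcal P_d}=\sqrt{\binom{n+d-1}{d}}$. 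So the statement is again an $L^1/L^2$ ratio. We aim to show
\[
X_n:=\frac{\mathbf k_n(\xi)}{\sqrt{\binom{n+d-1}{d}}}\Longrightarrow \frac{\mathrm{He}_d(Z)}{\sqrt{d!}},
\qquad \xi\sim\sigma_n .
\]
Since $\mathbb E|X_n|^2=1$, the family $(|X_n|)$ is uniformly integrable, and convergence of $\mathbb E|X_n|$ follows exactly as in Proposition~\ref{thm:torus-multilinear-limit}. The second formulation then follows from \eqref{DIMO1}. The case $d=0$ is trivial.

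To identify the limit, set $t=\langle\xi,e_1\rangle$ and recall the classical fact that $\sqrt n\,t\Rightarrow Z$. The kernel is a finite sum over $j\le\lfloor d/2\rfloor$ of the terms
\[
N_{n,d-2j}L^\diamond_{n,d-2j}(t).
\]
We expand each $L^\diamond_{n,m}$ from its explicit coefficients $b_i(n,m)$ and substitute $t=s/\sqrt n$, with $s=\sqrt n\,t$ of order one. One has $\Gamma(\frac{n-1}2)/\Gamma(i+\frac{n-1}2)\sim(2/n)^i$ and $(1-t^2)^i\to1$. Hence
\[
L^\diamond_{n,m}(s/\sqrt n)=n^{-m/2}\sum_i\frac{(-1)^i m!}{2^i i!(m-2i)!}s^{m-2i}+o(n^{-m/2})=n^{-m/2}\mathrm{He}_m(s)+o(n^{-m/2}),
\]
uniformly for $s$ in compacts. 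We also use $N_{n,m}\sim n^m/m!$.

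It follows that the $j$-th summand of the kernel is of order $n^{(d-2j)/2}$. Only the top harmonic $j=0$ survives after dividing by $\sqrt{\binom{n+d-1}{d}}\sim n^{d/2}/\sqrt{d!}$:
\[
X_n=\frac{\mathrm{He}_d(\sqrt n\,t)}{\sqrt{d!}}+o(1).
\]
More cleanly, the lower harmonics have $L_2$-norms $\sqrt{N_{n,d-2j}}=O(n^{(d-2)/2})$. These are negligible relative to $n^{d/2}$ in $L_2$, hence in probability. So we may discard them by orthogonality, as in Lemma~\ref{lem:torus-reduction}.

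For the top term we write $\sqrt{d!}\,n^{-d/2}N_{n,d}L^\diamond_{n,d}(t)=p_n(\sqrt n\,t)$. Here $p_n$ is a polynomial of degree $d$ whose coefficients converge to those of $\mathrm{He}_d$. The coefficient convergence requires expanding $(1-s^2/n)^i$, whose corrections are $O(1/n)$ in the coefficients. Continuous mapping together with Slutsky then gives $p_n(\sqrt n\,t)\Rightarrow\mathrm{He}_d(Z)$.

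The main obstacle is this coefficient bookkeeping: verifying the Gamma-ratio asymptotics and the combinatorial identity with $\mathrm{He}_d$, whose coefficients are $(-1)^i d!/(2^i i!(d-2i)!)$. This works because $4^i(2/n)^i\cdot n^{-(d-2i)/2}\cdot n^{d}/d!\cdot\sqrt{d!}\,n^{-d/2}$ reproduces $2^{-i}$ times the correct power. Alternatively, one could use the Jacobi form from Proposition~\ref{prop:jacobi-projection-formula} together with the known limit of Gegenbauer polynomials toward Hermite polynomials. I would try the direct expansion first.
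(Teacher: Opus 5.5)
Your argument is correct, and it takes a genuinely different route from the paper's.

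\textbf{The paper's route.} The paper never works with the harmonic decomposition of the kernel. It starts from the closed form in Proposition~\ref{prop:jacobi-projection-formula}, imported from \cite{defant2026minimal}. There the whole slice $\sum_j N_{n,d-2j}L^\diamond_{n,d-2j}(t)$ is collapsed into a single multiple of $P_d^{(\alpha,\alpha)}$ with $\alpha=(n-1)/2$, that is, of the Gegenbauer polynomial $C_d^{n/2}(t)$. It then substitutes $t=u/\sqrt n$ and proves two limits:
\begin{itemize}
\item $n^{-d/2}P_d^{(\alpha,\alpha)}(u/\sqrt n)\to \mathrm{He}_d(u)/(2^dd!)$, by Lemma~\ref{lem:jacobi-hermite-limit};
\item pointwise convergence of the rescaled ultraspherical density to the Gaussian density, by Lemma~\ref{lem:ultraspherical-gaussian-limit}.
\end{itemize}
It concludes by dominated convergence. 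This requires the explicit bounds $C_d(1+|u|)^d$ and $Ce^{-u^2/4}$, plus a separate Gamma-quotient computation showing that the prefactor $A_n$ tends to $2^d$.

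\textbf{Your route.} You keep the orthogonal decomposition into zonal harmonics. You discard the lower harmonics in $L_2$ via $\|\mathbf k_{\mathcal H_m}(e_1,\cdot)\|_2=\sqrt{N_{n,m}}=O(n^{(d-2)/2})$, which gives $O(n^{-j})$ for the $j$-th harmonic after normalization. You then show that the top zonal harmonic alone has the Hermite limit. Finally, you replace domination by uniform integrability, which comes for free from $\|X_n\|_2=1$, i.e.\ from the identity $\|\mathbf k_S(x,\cdot)\|_2=\sqrt{\dim S}$ of Subsection~\ref{Reproducing kernels}.

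\textbf{What each buys.} Your argument is more self-contained. It needs neither the nontrivial summation identity behind the Jacobi formula nor any growth or tail estimates. It also has the same architecture as the torus and unitary-group proofs: dominant component, Gaussian limit, uniform integrability. The paper's route gives an exact one-polynomial formula valid for every $n$ and $d$; this is also what underlies the fixed-$n$, $d\to\infty$ asymptotics quoted from \cite{defant2026minimal}. Its limit argument is purely deterministic. It also shows that the full kernel, not only its top harmonic, is a single orthogonal polynomial with a Hermite limit, which is consistent with your observation that the lower harmonics are negligible.

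\textbf{A small slip.} In your bookkeeping line, the factor coming from $b_i(n,d)$ is $4^{-i}(2/n)^i=2^{-i}n^{-i}$, not $4^{i}(2/n)^i$. With that correction, the coefficients match those of $\mathrm{He}_d/\sqrt{d!}$ exactly as you claim.
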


The Gaussian constant in
Theorem~\ref{thm:high-dimensional-real-polynomials} also admits a precise
asymptotic expansion as $d\to\infty$, namely
\[
\frac{\mathbb E|\mathrm{He}_d(Z)|}{\sqrt{d!}}
=
\frac{2^{7/4}}{\pi^{5/4}}\frac{1}{d^{1/4}}
\bigg(
1+O\bigg(\frac{1}{d^2}\bigg)
\bigg).
\]
This follows from the asymptotic formula for the $L_1$-norm of Hermite
polynomials due to Larsson--Cohn
\cite[Remarks~2.6 and~3.2]{larsson2002p}.

For the proof of Theorem~\ref{thm:high-dimensional-real-polynomials}, we use
the following two elementary facts.

\begin{lemma}
\label{lem:ultraspherical-gaussian-limit}
For each $n\geq2$, set
\[
c_n
:=
\int_{-1}^{1}(1-t^2)^{\frac{n-3}{2}}\,\mathrm{d} t
=
\frac{
\sqrt{\pi}\,\Gamma\bigl(\frac{n-1}{2}\bigr)}
{\Gamma\bigl(\frac n2\bigr)}
\]
and
\[
f_n(t)
:=
\begin{cases}
c_n^{-1}(1-t^2)^{\frac{n-3}{2}}, & |t|<1,\\
0, & |t|\geq1.
\end{cases}
\]
Then, for each $u\in\mathbb R$,
\[
g_n(u)
:=
\frac{1}{\sqrt n}
f_n\bigg(\frac{u}{\sqrt n}\bigg)
\longrightarrow
\frac{1}{\sqrt{2\pi}}e^{-u^2/2}.
\]
Moreover, there exist constants $C>0$ and $n_0\in\mathbb N$ such that
\[
g_n(u)
\leq
Ce^{-u^2/4},
\qquad
u\in\mathbb R,\quad n\geq n_0.
\]
\end{lemma}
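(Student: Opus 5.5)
The plan is to use Stirling-type asymptotics for the gamma ratio in $c_n$ together with the elementary pointwise limit of the weight. For the pointwise statement, fix $u\in\mathbb R$. For $n$ large we have $|u|/\sqrt n<1$, so
\[
g_n(u)=\frac{1}{\sqrt n\,c_n}\Bigl(1-\frac{u^2}{n}\Bigr)^{\frac{n-3}{2}}.
\]
The factor $(1-u^2/n)^{(n-3)/2}$ tends to $e^{-u^2/2}$. This follows by taking logarithms: $\frac{n-3}{2}\log(1-u^2/n)\to -u^2/2$. It then remains to show $\sqrt n\,c_n\to\sqrt{2\pi}$, that is,
\[
\frac{\Gamma(\frac{n-1}{2})}{\Gamma(\frac n2)}\sim\sqrt{\tfrac{2}{n}}.
\]
This is the standard asymptotic $\Gamma(x+a)/\Gamma(x+b)\sim x^{a-b}$, applied with $x=n/2$, $a=-1/2$, $b=0$. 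If we prefer not to invoke the full Stirling formula, Wendel's inequality or log-convexity of $\Gamma$ gives two-sided bounds
\[
\sqrt{\tfrac{2}{n}}\le \frac{\Gamma(\frac{n-1}{2})}{\Gamma(\frac n2)}\le \sqrt{\tfrac{2}{n-2}},
\]
which suffice for the limit.

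For the uniform domination, note first that $\sqrt n\,c_n$ converges to a positive limit. Hence it is bounded below by some $\delta>0$ for all $n\ge n_0$, and then $g_n(u)\le \delta^{-1}(1-u^2/n)^{(n-3)/2}$ on $|u|<\sqrt n$, with $g_n=0$ elsewhere. Using $\log(1-s)\le -s$ for $0\le s<1$, we get
\[
\Bigl(1-\frac{u^2}{n}\Bigr)^{\frac{n-3}{2}}\le \exp\Bigl(-\frac{(n-3)u^2}{2n}\Bigr)\le e^{-u^2/4}
\]
as soon as $(n-3)/n\ge 1/2$, i.e.\ $n\ge 6$. Taking $C=\delta^{-1}$ and enlarging $n_0$ to be at least $6$ gives the bound. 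The exponent must be nonnegative for the inequality direction to be valid, which holds for $n\ge3$.

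The only mildly delicate point is the gamma-ratio asymptotic, since everything else is an elementary inequality. I expect no real obstacle: citing $\Gamma(x+1/2)/\Gamma(x)\sim\sqrt x$ (or the Wendel bounds above) settles it. As a sanity check, note that $f_n$ is the density of $\langle\xi,e_1\rangle$ under $\sigma_n$. The limit then expresses the classical Poincaré--Borel fact that $\sqrt n\langle\xi,e_1\rangle$ is asymptotically standard Gaussian, which is how this lemma will feed into the proof of Theorem~\ref{thm:high-dimensional-real-polynomials} via dominated convergence.
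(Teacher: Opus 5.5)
Your proposal is correct and follows the paper's proof essentially step by step. Both arguments use the explicit formula for $g_n$ on $|u|<\sqrt n$, the logarithmic limit of $(1-u^2/n)^{(n-3)/2}$, the gamma-ratio asymptotic $\sqrt n\,c_n\to\sqrt{2\pi}$, and domination via $\log(1-x)\le -x$ for $n\ge 6$. Your two-sided gamma bounds, which are Gautschi's inequality with $x=(n-2)/2$, are also correct; they give $\sqrt n\,c_n\ge\sqrt{2\pi}$ for all $n\ge 3$, so you may take $C=(2\pi)^{-1/2}$ directly.
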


\begin{proof}
For \(|u|<\sqrt n\), the definition gives
\[
g_n(u)
=
\frac{1}{\sqrt n\,c_n}
\bigg(1-\frac{u^2}{n}\bigg)^{\frac{n-3}{2}},
\]
while \(g_n(u)=0\) outside its support. Every fixed \(u\in\mathbb R\)
satisfies \(|u|<\sqrt n\) for all sufficiently large \(n\), and
\[
\bigg(1-\frac{u^2}{n}\bigg)^{\frac{n-3}{2}}
=
\exp\bigg(
\frac{n-3}{2}
\log\bigg(1-\frac{u^2}{n}\bigg)
\bigg)
\longrightarrow
e^{-u^2/2}.
\]
Furthermore, the standard asymptotic formula for quotients of Gamma
functions gives
\[
\frac{1}{\sqrt n\,c_n}
=
\frac{\Gamma\bigl(\frac n2\bigr)}
{\sqrt{n\pi}\,\Gamma\bigl(\frac{n-1}{2}\bigr)}
\longrightarrow
\frac{1}{\sqrt{2\pi}}.
\]
This proves the pointwise convergence.
For the domination, the sequence
\(\bigl(1/(\sqrt n\,c_n)\bigr)_{n\geq2}\) is bounded. If
\(|u|<\sqrt n\) and \(n\geq6\), the estimate
\(\log(1-x)\leq-x\), valid for \(0\leq x<1\), gives
\[
\bigg(1-\frac{u^2}{n}\bigg)^{\frac{n-3}{2}}
\leq
\exp\bigg(-\frac{n-3}{2n}u^2\bigg)
\leq
e^{-u^2/4}.
\]
Outside the support of \(g_n\), the estimate is trivial. Hence there is
a constant \(C>0\) such that, for every \(n\geq6\) and \(u\in\mathbb R\),
\[
        g_n(u)\leq Ce^{-u^2/4},
\]
which proves the claim.
\end{proof}

\begin{lemma}
\label{lem:jacobi-hermite-limit}
Fix $d\in\mathbb N_0$, and set $\alpha=(n-1)/2$. Then, for each
$u\in\mathbb R$,
\[
n^{-d/2}
P_d^{(\alpha,\alpha)}
\bigg(\frac{u}{\sqrt n}\bigg)
\longrightarrow
\frac{1}{2^d d!}\mathrm{He}_d(u).
\]
The convergence is uniform on compact subsets of $\mathbb R$. Moreover, there
exists a constant $C_d>0$ such that, for each $n\geq2$ and each
$u\in\mathbb R$,
\[
\bigg|
n^{-d/2}
P_d^{(\alpha,\alpha)}
\bigg(\frac{u}{\sqrt n}\bigg)
\bigg|
\leq
C_d(1+|u|)^d.
\]
\end{lemma}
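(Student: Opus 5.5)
Since both sides are polynomials in \(u\) of degree at most \(d\), I would prove the lemma by comparing coefficients. Everything rests on an explicit finite expansion of the Gegenbauer-type Jacobi polynomial. I will use the classical representation
\[
P_d^{(\alpha,\alpha)}(t)
=
\frac{\Gamma(\alpha+d+1)}{\Gamma(2\alpha+d+1)}
\sum_{k=0}^{\lfloor d/2\rfloor}
\frac{(-1)^k\,\Gamma(2\alpha+2d-2k+1)}{2^{d}\,k!\,(d-2k)!\,\Gamma(\alpha+d-k+1)}\,t^{d-2k}.
\]
It follows from the Rodrigues formula, or from the relation to Gegenbauer polynomials \(C_d^{(\alpha+1/2)}\) together with their standard power series. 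I would check the normalization \(P_d^{(\alpha,\alpha)}(1)=\binom{d+\alpha}{d}\) at \(d=0,1\) as a sanity test.

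Substituting \(t=u/\sqrt n\) and multiplying by \(n^{-d/2}\), the coefficient of \(u^{d-2k}\) becomes
\[
a_{k}(n)
=
\frac{(-1)^k}{2^d k!(d-2k)!}\,
n^{-d+k}\,
\frac{\Gamma(\alpha+d+1)\,\Gamma(2\alpha+2d-2k+1)}{\Gamma(2\alpha+d+1)\,\Gamma(\alpha+d-k+1)}.
\]
The key step is the asymptotics of the Gamma quotient with \(\alpha=(n-1)/2\). Writing \(m=n\) and using \(\Gamma(x+a)/\Gamma(x+b)\sim x^{a-b}\), one has
\[
\frac{\Gamma(2\alpha+2d-2k+1)}{\Gamma(2\alpha+d+1)}\sim n^{d-2k},
\qquad
\frac{\Gamma(\alpha+d+1)}{\Gamma(\alpha+d-k+1)}\sim (n/2)^{k}.
\]
Hence \(n^{-d+k}\cdot n^{d-2k}\cdot (n/2)^k = 2^{-k}\), and
\[
a_k(n)\to \frac{1}{2^d d!}\cdot\frac{(-1)^k d!}{2^k k!(d-2k)!}.
\]
These limits are exactly the coefficients of \(\mathrm{He}_d(u)/(2^dd!)\), because \(\mathrm{He}_d(u)=\sum_k \frac{(-1)^k d!}{2^k k!(d-2k)!}u^{d-2k}\). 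The quotients are in fact exact finite products, such as \((2\alpha+d+1)_{d-2k}\) and \((\alpha+d-k+1)_k\), so the limits can be read off directly without Stirling's formula.

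Finally, since there are finitely many coefficients and each converges, the polynomials converge uniformly on compact sets. For the global bound, every convergent sequence \(a_k(n)\), \(n\ge 2\), is bounded; say \(|a_k(n)|\le A_d\) for all \(k\) and \(n\). Then the polynomial is bounded by \(A_d\sum_k|u|^{d-2k}\le C_d(1+|u|)^d\). The only real obstacle is getting the explicit expansion with the correct normalization constant. If it is off by a factor, the leading coefficient check \(k=0\) pins it down, since the leading coefficient of \(P_d^{(\alpha,\alpha)}\) is \(\Gamma(2\alpha+2d+1)/(2^d d!\,\Gamma(2\alpha+d+1))\), and that scaling propagates consistently to all \(k\).
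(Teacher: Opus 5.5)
Your proposal is correct and follows essentially the paper's route: an explicit finite power-series expansion of $P_d^{(\alpha,\alpha)}$, coefficientwise limits read off from finite Pochhammer products, and the growth bound obtained from uniformly bounded coefficients. The only difference is cosmetic. The paper keeps the factorization $P_d^{(\alpha,\alpha)}=\frac{((n+1)/2)_d}{(n)_d}\,C_d^{n/2}$ and expands the Gegenbauer polynomial. Your single-sum formula is exactly that expansion after applying the duplication formula, and your normalization is correct.
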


\begin{proof}
We use the relation between Jacobi and Gegenbauer polynomials,
\[
P_d^{(\alpha,\alpha)}(x)
=
\frac{(\alpha+1)_d}{(2\alpha+1)_d}
C_d^{\alpha+\frac12}(x),
\]
where
\[
(a)_d:=a(a+1)\cdots(a+d-1),
\qquad
(a)_0:=1,
\]
denotes the Pochhammer symbol. Since $\alpha=(n-1)/2$, this becomes
\[
P_d^{(\alpha,\alpha)}(x)
=
\frac{\bigl(\frac{n+1}{2}\bigr)_d}{(n)_d}
C_d^{n/2}(x).
\]
For fixed $d$,
\[
\frac{\bigl(\frac{n+1}{2}\bigr)_d}{(n)_d}
=
\prod_{k=0}^{d-1}
\frac{\frac{n+1}{2}+k}{n+k}
\longrightarrow
2^{-d}.
\]
We now use the finite expansion
\[
C_d^\lambda(x)
=
\sum_{m=0}^{\lfloor d/2\rfloor}
(-1)^m
\frac{(\lambda)_{d-m}}{m!(d-2m)!}
(2x)^{d-2m}.
\]
Taking $\lambda=n/2$ and $x=u/\sqrt n$, we obtain
\[
\begin{split}
n^{-d/2}
C_d^{n/2}\bigg(\frac{u}{\sqrt n}\bigg)
=
\sum_{m=0}^{\lfloor d/2\rfloor}
(-1)^m
\frac{\bigl(\frac n2\bigr)_{d-m}}{m!(d-2m)!}\,
2^{d-2m}n^{-d/2}
\bigg(\frac{u}{\sqrt n}\bigg)^{d-2m}.
\end{split}
\]
For each fixed $m$,
\(
\bigl(\tfrac n2\bigr)_{d-m}
\sim
\bigg(\frac n2\bigg)^{d-m}.
\)
Therefore the $m$-th summand converges to
\[
(-1)^m
\frac{u^{d-2m}}{2^m m!(d-2m)!}.
\]
Hence
\[
n^{-d/2}
C_d^{n/2}\bigg(\frac{u}{\sqrt n}\bigg)
\longrightarrow
\sum_{m=0}^{\lfloor d/2\rfloor}
\frac{(-1)^m u^{d-2m}}
{2^m m!(d-2m)!}.
\]
By the standard formula for the probabilists' Hermite polynomial,
\[
\mathrm{He}_d(u)
=
d!
\sum_{m=0}^{\lfloor d/2\rfloor}
\frac{(-1)^m u^{d-2m}}
{2^m m!(d-2m)!}.
\]
Thus
\[
n^{-d/2}
C_d^{n/2}\bigg(\frac{u}{\sqrt n}\bigg)
\longrightarrow
\frac{1}{d!}\mathrm{He}_d(u).
\]
Multiplying by the Pochhammer factor, which converges to $2^{-d}$, gives
\[
n^{-d/2}
P_d^{(\alpha,\alpha)}
\bigg(\frac{u}{\sqrt n}\bigg)
\longrightarrow
\frac{1}{2^d d!}\mathrm{He}_d(u).
\]
The preceding finite expansions show coefficientwise convergence.
Since the degree \(d\) is fixed, this implies uniform convergence on
compact subsets of \(\mathbb R\).

It remains to establish the required growth bound. Using the same
expansion and the estimate
\[
        \biggl(\frac n2\biggr)_r
        \leq
        A_d n^r,
        \qquad 0\leq r\leq d,
\]
each summand of
\[
        n^{-d/2}
        C_d^{n/2}\biggl(\frac{u}{\sqrt n}\biggr)
\]
is bounded in absolute value by a constant depending only on \(d\),
times \(|u|^{d-2m}\). Since only finitely many values of \(m\) occur, we
obtain
\[
        \biggl|
        n^{-d/2}
        C_d^{n/2}\biggl(\frac{u}{\sqrt n}\biggr)
        \biggr|
        \leq
        B_d(1+|u|)^d.
\]
Finally,
\[
        0<
        \frac{\bigl(\frac{n+1}{2}\bigr)_d}{(n)_d}
        \leq 1,
        \qquad n\geq2,
\]
and hence the corresponding Jacobi polynomials satisfy the same type of
polynomial growth bound.
\end{proof}

\smallskip

\begin{proof}[Proof of Theorem~\ref{thm:high-dimensional-real-polynomials}]
The case $d=0$ is immediate. Assume from now on that $d\geq1$. By
\eqref{eq:real-polynomial-projection-constant} and
Proposition~\ref{prop:jacobi-projection-formula}, with
$\alpha=(n-1)/2$, we have
\[
\begin{split}
\boldsymbol\lambda\bigl(\mathcal P_d(\ell_2^n(\mathbb R))\bigr)
=&
\frac{
\Gamma\bigl(\frac n2\bigr)\Gamma(n+d)}
{2\sqrt{\pi}\,\Gamma(n-1)
\Gamma\bigl(d+\frac{n+1}{2}\bigr)}\,
\int_{-1}^{1}
\bigl|P_d^{(\alpha,\alpha)}(t)\bigr|
(1-t^2)^{\frac{n-3}{2}}
\,\mathrm{d} t.
\end{split}
\]
We normalize the ultraspherical weight as in
Lemma~\ref{lem:ultraspherical-gaussian-limit}. Thus
\[
f_n(t)
=
\frac{1}{c_n}
(1-t^2)^{\frac{n-3}{2}}
\mathbf 1_{[-1,1]}(t),
\qquad
c_n
=
\frac{
\sqrt{\pi}\,\Gamma\bigl(\frac{n-1}{2}\bigr)}
{\Gamma\bigl(\frac n2\bigr)}.
\]
Then
\[
\boldsymbol\lambda\bigl(\mathcal P_d(\ell_2^n(\mathbb R))\bigr)
=
A_n
\int_{-1}^{1}
\bigl|P_d^{(\alpha,\alpha)}(t)\bigr|
f_n(t)\,\mathrm{d} t,
\]
where
\[
A_n
:=
\frac12
\frac{
\Gamma\bigl(\frac{n-1}{2}\bigr)\Gamma(n+d)}
{\Gamma(n-1)
\Gamma\bigl(d+\frac{n+1}{2}\bigr)}.
\]
The Gamma quotient asymptotics give
\[
\frac{\Gamma(n+d)}{\Gamma(n-1)}
\sim n^{d+1} \quad \text{and} \quad
\frac{
\Gamma\bigl(\frac{n-1}{2}\bigr)}
{\Gamma\bigl(d+\frac{n+1}{2}\bigr)}
\sim
\bigg(\frac n2\bigg)^{-d-1}.
\]
Consequently,
\(
A_n\longrightarrow2^d.
\)
We now study the integral. With $g_n$ as in
Lemma~\ref{lem:ultraspherical-gaussian-limit}, the change of variables
$t=u/\sqrt n$ gives
\[
\begin{split}
\frac{1}{n^{d/2}}
\int_{-1}^{1}
\bigl|P_d^{(\alpha,\alpha)}(t)\bigr|
f_n(t)\,\mathrm{d} t
=
\int_{\mathbb R}
\bigg|
n^{-d/2}
P_d^{(\alpha,\alpha)}
\bigg(\frac{u}{\sqrt n}\bigg)
\bigg|
g_n(u)\,\mathrm{d} u.
\end{split}
\]
By Lemmas~\ref{lem:jacobi-hermite-limit} and
\ref{lem:ultraspherical-gaussian-limit}, the integrand converges pointwise to
\[
\frac{1}{2^d d!}
|\mathrm{He}_d(u)|
\frac{1}{\sqrt{2\pi}}e^{-u^2/2}.
\]
Moreover, for \(n\geq n_0\), the same two lemmas give
\[
\bigg|
n^{-d/2}
P_d^{(\alpha,\alpha)}
\bigg(\frac{u}{\sqrt n}\bigg)
\bigg|
g_n(u)
\leq
C_d(1+|u|)^d e^{-u^2/4},
\]
and the function on the right-hand side is integrable on $\mathbb R$. Hence
the dominated convergence theorem yields
\[
\begin{split}
\lim_{n\to\infty}
\frac{1}{n^{d/2}}
\int_{-1}^{1}
\bigl|P_d^{(\alpha,\alpha)}(t)\bigr|
f_n(t)\,\mathrm{d} t
=
\frac{1}{2^d d!}
\frac{1}{\sqrt{2\pi}}
\int_{\mathbb R}
|\mathrm{He}_d(u)|e^{-u^2/2}\,\mathrm{d} u.
\end{split}
\]
Equivalently, if $Z\sim N(0,1)$, then
\[
\lim_{n\to\infty}
\frac{1}{n^{d/2}}
\int_{-1}^{1}
\bigl|P_d^{(\alpha,\alpha)}(t)\bigr|
f_n(t)\,\mathrm{d} t
=
\frac{\mathbb E|\mathrm{He}_d(Z)|}{2^d d!}.
\]
Combining this with $A_n\to2^d$, we obtain
\[
\lim_{n\to\infty}
\frac{
\boldsymbol\lambda\bigl(\mathcal P_d(\ell_2^n(\mathbb R))\bigr)}
{n^{d/2}}
=
\frac{\mathbb E|\mathrm{He}_d(Z)|}{d!}.
\]
This proves the second assertion of Theorem~\ref{thm:high-dimensional-real-polynomials}.
Finally, since \(d\) is fixed and \( \binom{n+d-1}{d} \sim \frac{n^d}{d!}  \), the first assertion follows as well.
\end{proof}

\subsubsection{The complex Euclidean sphere}
\label{sec:complex-sphere}
Unlike the preceding subsection on the real Euclidean sphere, the
present discussion takes place entirely over the complex scalar field.
Throughout this subsection, we assume that \(n\geq 2\). Let
\[
    \mathbb S_{\mathbb C}^{n-1}
    :=
    \{z\in\mathbb C^n:\|z\|_2=1\}
\]
be the complex Euclidean sphere, endowed with its normalized
rotation-invariant measure \(\sigma_n\). The unitary group
\(\mathcal U_n\) acts transitively on
\(\mathbb S_{\mathbb C}^{n-1}\) by
\[
    U\hbullet z:=Uz,
    \qquad
    U\in\mathcal U_n,\quad
    z\in\mathbb S_{\mathbb C}^{n-1}.
\]
We choose the base point \(e_1=(1,0,\ldots,0)\). Its stabilizer is
\[
    \mathcal U_n(e_1)
    :=
    \{U\in\mathcal U_n:Ue_1=e_1\},
\]
which is naturally isomorphic to \(\mathcal U_{n-1}\), acting
unitarily on \(e_1^\perp\). Thus
\(\mathbb S_{\mathbb C}^{n-1}\) is a homogeneous
\(\mathcal U_n\)-space.
For \(p,q\in\mathbb N_0\), let
\[
    \mathcal H_{p,q}(\mathbb S_{\mathbb C}^{n-1})
\]
denote the space of restrictions to
\(\mathbb S_{\mathbb C}^{n-1}\) of harmonic polynomials on
\(\mathbb C^n\) that are bihomogeneous of degree \(p\) in \(z\) and
degree \(q\) in \(\overline z\). Equivalently, these are the restrictions of polynomials \(P\) satisfying
\[
    P(\lambda z)
    =
    \lambda^p\overline{\lambda}^{\,q}P(z),
    \qquad
    \lambda\in\mathbb C,
\]
and annihilated by
\(
    \Delta_{\mathbb C}
    :=
    \sum_{j=1}^n
    \partial_{z_j}\partial_{\overline z_j},
\)
which is one quarter of the real Laplacian on \(\mathbb R^{2n}\).
Each space
\(\mathcal H_{p,q}(\mathbb S_{\mathbb C}^{n-1})\) is
finite-dimensional and \(\mathcal U_n\)-invariant, and these spaces
are pairwise orthogonal. Their Hilbertian direct sum gives the
classical complex spherical harmonic decomposition
\begin{equation}
\label{eq:L2-complex-harmonic-decomposition}
    L_2(\mathbb S_{\mathbb C}^{n-1},\sigma_n)
    =
    \widehat{\bigoplus}_{p,q\geq 0}
    \mathcal H_{p,q}(\mathbb S_{\mathbb C}^{n-1});
\end{equation}
see, for example, \cite{defant2026ryll}.

Our next goal is to identify the
\(\mathcal U_n(e_1)\)-fixed vectors in each harmonic block. For
\(p,q\in\mathbb N_0\), define
\[
    L^\diamond_{n,p,q}(w)
    :=
    \sum_{j=0}^{\min\{p,q\}}
    \frac{(-1)^j p!q!(n-2)!}
         {j!(p-j)!(q-j)!(j+n-2)!}\,
    w^{p-j}\overline w^{\,q-j}(1-|w|^2)^j,
    \qquad |w|\leq 1.
\]
In particular,
\[
    L^\diamond_{n,p,q}(1)=1.
\]
The following standard counterpart of
Lemma~\ref{lem:diamond-function} identifies the corresponding
normalized zonal harmonic. In our context, see, for example, \cite[Corollary~4.2]{defant2026ryll}.

\begin{lemma}
\label{lem:complex-diamond-function}
The function
\[
L^\diamond_{n,p,q}\bigl(\langle\,\cdot\,,e_1\rangle\bigr)
\in
C\bigl(\mathbb S_{\mathbb C}^{n-1}\bigr)
\]
is the unique element
\[
    f\in
    \mathcal H_{p,q}(\mathbb S_{\mathbb C}^{n-1})
    ^{\mathcal U_n(e_1)}
\]
satisfying \(f(e_1)=1\). In particular,
\[
    \dim
    \mathcal H_{p,q}(\mathbb S_{\mathbb C}^{n-1})
    ^{\mathcal U_n(e_1)}
    =
    1.
\]
\end{lemma}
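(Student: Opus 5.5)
We prove three things in turn: the function $F(z):=L^\diamond_{n,p,q}(\langle z,e_1\rangle)$ lies in $\mathcal H_{p,q}(\mathbb S_{\mathbb C}^{n-1})$; it is $\mathcal U_n(e_1)$-invariant; and every invariant element of the block is a multiple of it. Throughout, $\langle z,e_1\rangle=z_1$.

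\emph{Membership and invariance.} On the sphere $1-|z_1|^2=\sum_{k\ge2}|z_k|^2$. So $F$ is the restriction of the polynomial
\[
P(z)=\sum_j c_j\, z_1^{p-j}\overline z_1^{\,q-j}\,r^{j},\qquad r:=\sum_{k\ge2}|z_k|^2,
\]
which is bihomogeneous of bidegree $(p,q)$. Invariance is immediate: every $U\in\mathcal U_n(e_1)$ fixes $z_1$ and acts unitarily on $e_1^\perp$, hence preserves $r$. The normalization $F(e_1)=1$ is just $L^\diamond_{n,p,q}(1)=1$, since only $j=0$ survives at $w=1$.

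\emph{Harmonicity.} We compute $\Delta_{\mathbb C}P$ term by term. We have
\[
\partial_{z_1}\partial_{\overline z_1}\bigl(z_1^{a}\overline z_1^{\,b}\bigr)=ab\,z_1^{a-1}\overline z_1^{\,b-1},
\qquad
\sum_{k\ge2}\partial_{z_k}\partial_{\overline z_k}\, r^{j}=j(j+n-2)\,r^{j-1}.
\]
The second identity comes from $\partial_{\overline z_k} r^j=jr^{j-1}z_k$, followed by $\partial_{z_k}$ and summation over $k$. There are no mixed terms, because $z_1$ and $r$ involve disjoint variables. Hence
\[
\Delta_{\mathbb C}P=\sum_j\Bigl[c_j(p-j)(q-j)+c_{j+1}(j+1)(j+n-1)\Bigr]z_1^{p-j-1}\overline z_1^{\,q-j-1}r^{j},
\]
which vanishes exactly when $c_{j+1}=-c_j\frac{(p-j)(q-j)}{(j+1)(j+n-1)}$. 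The stated coefficients $c_j=\frac{(-1)^jp!q!(n-2)!}{j!(p-j)!(q-j)!(j+n-2)!}$ satisfy this recurrence, with $c_0=1$. This is the only computation; it is routine but it is where the explicit formula enters.

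\emph{Uniqueness.} We give a direct argument that any $\mathcal U_n(e_1)$-invariant $f\in\mathcal H_{p,q}$ is a multiple of $F$.

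\begin{itemize}
\item Let $f$ be the restriction of a harmonic $(p,q)$-bihomogeneous polynomial $Q$, and average $Q$ over $\mathcal U_{n-1}$ acting on $(z_2,\dots,z_n)$. By homogeneity the averaged polynomial still restricts to $f$, and it is invariant.
\item Classical invariant theory, or a direct Haar-averaging computation of monomials $z'^\alpha\overline{z'}^{\beta}$, shows that $\mathcal U_{n-1}$-invariant polynomials in $z'=(z_2,\dots,z_n)$ are polynomials in $r$.
\item So the averaged polynomial can be written $Q(z)=\sum_{a,b,j}d_{abj}\,z_1^a\overline z_1^{\,b}r^j$. Bihomogeneity forces $a+j=p$ and $b+j=q$, so $Q=\sum_j d_j z_1^{p-j}\overline z_1^{\,q-j}r^j$.
\item Harmonicity then forces the same recurrence as above, so $Q$ is determined by $d_0$.
\end{itemize}

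Hence the invariant subspace is at most one-dimensional, and therefore equal to $\mathbb C F$. Since $F(e_1)=1$, the normalization singles out $F$ among its multiples.

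We expect the invariant-theory step to be the main obstacle. The fallback is the following argument: the restriction of an invariant polynomial to $e_1^\perp$ is a function on each sphere $|z'|=\rho$ that is invariant under the transitive group $\mathcal U_{n-1}$ (for $n-1\ge1$), hence constant there. Combined with bihomogeneity in $z'$, this gives the form $\rho^{2j}$ times a constant.
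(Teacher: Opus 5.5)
Your argument is correct, but it is not the paper's route. The paper does not prove this lemma. It quotes it, as the complex counterpart of Lemma~\ref{lem:diamond-function}, from \cite[Corollary~4.2]{defant2026ryll}.

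You instead give a self-contained elementary proof. Your ansatz is $\sum_j c_j z_1^{p-j}\overline z_1^{\,q-j}r^j$ with $r=\sum_{k\ge2}|z_k|^2$. The identity $\sum_{k\ge2}\partial_{z_k}\partial_{\overline z_k}r^j=j(j+n-2)r^{j-1}$ then turns harmonicity into the two-term recurrence $c_{j+1}(j+1)(j+n-1)=-c_j(p-j)(q-j)$, which the stated coefficients satisfy with $c_0=1$. For uniqueness, you force any invariant element into the same ansatz: invariant polynomials in $z'$ are polynomials in $r$, and bihomogeneity fixes the exponents. The recurrence then determines everything from $d_0$.

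What your route buys, besides confirming the explicit coefficients used later (e.g.\ $L^\diamond_{n,d,0}(w)=w^d$), is that it uses no representation theory. In particular, it does not use the irreducibility of $\mathcal H_{p,q}(\mathbb S_{\mathbb C}^{n-1})$. This matters because in the paper the lemma is fed into Lemma~\ref{lem:strong-accessibility-stabilizer} and Theorem~\ref{thm:strong-accessibility} precisely to \emph{deduce} that $\pi_{p,q}$ is irreducible and that the action is multiplicity-free. The citation is, of course, shorter.

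Two small points of wording:
\begin{itemize}
\item The averaged polynomial restricts to $f$ because $f$ is $\mathcal U_n(e_1)$-invariant, not ``by homogeneity''. In fact no averaging is needed: a $(p,q)$-bihomogeneous polynomial is determined by its values on the sphere, so $Q$ is already invariant.
\item Your fallback should be applied to each coefficient $Q_{ab}(z')$ of $z_1^a\overline z_1^{\,b}$ (or to $Q(z_1,\cdot)$ for fixed $z_1$), not just to the restriction to $e_1^\perp$. Each $Q_{ab}$ is invariant and bihomogeneous of bidegree $(p-a,q-b)$ in $(z',\overline{z'})$. Invariance under the scalars $e^{i\theta}I_{n-1}$ then gives $Q_{ab}=0$ unless $p-a=q-b=j$, in which case $Q_{ab}$ is a constant times $r^j$.
\end{itemize}
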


Combining this lemma with
Lemma~\ref{lem:strong-accessibility-stabilizer}, we obtain the complex
counterpart of
Corollary~\ref{cor:real-harmonics-strong-accessibility}.

\begin{corollary}
\label{cor:complex-harmonics-strong-accessibility}
For every \(p,q\in\mathbb N_0\), the space
\(
    \mathcal H_{p,q}(\mathbb S_{\mathbb C}^{n-1})
\)
is strongly accessible.
\end{corollary}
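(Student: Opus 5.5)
The statement, Corollary~\ref{cor:complex-harmonics-strong-accessibility}, should follow directly from Lemma~\ref{lem:complex-diamond-function} and the dimension criterion for strong accessibility, Lemma~\ref{lem:strong-accessibility-stabilizer}. This mirrors the real case, Corollary~\ref{cor:real-harmonics-strong-accessibility}.

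First I will check the hypotheses of Lemma~\ref{lem:strong-accessibility-stabilizer}. We take \(K=\mathbb S_{\mathbb C}^{n-1}\) and \(G=\mathcal U_n\), with base point \(x_0=e_1\) and stabilizer \(H_{e_1}=\mathcal U_n(e_1)\). The space \(S=\mathcal H_{p,q}(\mathbb S_{\mathbb C}^{n-1})\) is finite-dimensional and \(\mathcal U_n\)-invariant, as recorded before \eqref{eq:L2-complex-harmonic-decomposition}. It is non-zero, since by Lemma~\ref{lem:complex-diamond-function} it contains \(L^\diamond_{n,p,q}(\langle\cdot,e_1\rangle)\), which takes the value \(1\) at \(e_1\).

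Next I will identify the two fixed-point spaces. The space \(S^{H}\) of Subsection~\ref{sec:strong-accessibility} is defined by \(f(h\hbullet x)=f(x)\) for \(h\in H\). Since \(h\hbullet x=hx\), this is exactly the space \(\mathcal H_{p,q}(\mathbb S_{\mathbb C}^{n-1})^{\mathcal U_n(e_1)}\) appearing in the lemma. Lemma~\ref{lem:complex-diamond-function} then gives \(\dim S^{H}=1\).

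Lemma~\ref{lem:strong-accessibility-stabilizer} now shows that \(S\) is strongly accessible with respect to \(e_1\). By Remark~\ref{nox0}, strong accessibility does not depend on the base point, so \(S\) is strongly accessible. No step here looks like a real obstacle; the only point needing care is matching the two notions of stabilizer-invariance, which is immediate.
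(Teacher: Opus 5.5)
Your proposal is correct and matches the paper's argument: the paper obtains the corollary by combining Lemma~\ref{lem:complex-diamond-function} (the $\mathcal U_n(e_1)$-fixed subspace is one-dimensional) with the dimension criterion of Lemma~\ref{lem:strong-accessibility-stabilizer}. Your extra checks — that the space is non-zero and that the conclusion does not depend on the base point, via Remark~\ref{nox0} — are details the paper leaves implicit.
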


We now identify the Peter--Weyl decomposition associated with the
natural action of \(\mathcal U_n\). For \(p,q\in\mathbb N_0\), let
\[
    \pi_{p,q}:\mathcal U_n
    \longrightarrow
    \mathcal U\bigl(
        \mathcal H_{p,q}(\mathbb S_{\mathbb C}^{n-1})
    \bigr)
\]
be the representation defined by
\[
    \bigl(\pi_{p,q}(U)f\bigr)(z)
    :=
    f(U^{-1}z).
\]
By
Corollary~\ref{cor:complex-harmonics-strong-accessibility} and
Theorem~\ref{thm:strong-accessibility}, there exists an irreducible
representation \(\rho_{p,q}\in\widehat{\mathcal U_n}\) such that
\[
    \mathcal H_{p,q}(\mathbb S_{\mathbb C}^{n-1})
    =
    \mathcal E_{\rho_{p,q}}
        (\mathbb S_{\mathbb C}^{n-1})
\]
and
\[
    m_{\rho_{p,q}}
        (\mathbb S_{\mathbb C}^{n-1})
    =
    1.
\]
Thus this isotypic component consists of a single irreducible copy of
\(\rho_{p,q}\). Since the induced action on it is precisely
\(\pi_{p,q}\), we have
\(
    \pi_{p,q}\simeq\rho_{p,q}.
\)
In particular, \(\pi_{p,q}\) is irreducible, and, choosing it as the
representative of the corresponding equivalence class, we may write
\[
    \mathcal E_{\pi_{p,q}}
        (\mathbb S_{\mathbb C}^{n-1})
    =
    \mathcal H_{p,q}(\mathbb S_{\mathbb C}^{n-1}),
    \qquad
    m_{\pi_{p,q}}
        (\mathbb S_{\mathbb C}^{n-1})
    =
    1.
\]
Since distinct harmonic blocks are non-zero and orthogonal, their
identification with full isotypic components implies that the
representations \(\pi_{p,q}\), \(p,q\geq0\), are pairwise inequivalent.
Equation~\eqref{eq:L2-complex-harmonic-decomposition} then shows that
no other irreducible representation occurs. Hence
\[
    L_2(\mathbb S_{\mathbb C}^{n-1},\sigma_n)
    =
    \widehat{\bigoplus}_{p,q\geq 0}
    \mathcal E_{\pi_{p,q}}
        (\mathbb S_{\mathbb C}^{n-1})
    =
    \widehat{\bigoplus}_{p,q\geq 0}
    \mathcal H_{p,q}(\mathbb S_{\mathbb C}^{n-1})
\]
is the Peter--Weyl decomposition of the natural
\(\mathcal U_n\)-action. This action is therefore multiplicity-free.
Corollary~\ref{cor:gelfand-pairs} and
Theorem~\ref{thm:accessible-subspaces} now give the following
characterization of finite-dimensional invariant and accessible
subspaces, the complex counterpart of
Theorem~\ref{thm:real-sphere-accessibility}.

\begin{theorem}
\label{thm:complex-sphere-accessibility}
Let
\(
    S\subset C(\mathbb S_{\mathbb C}^{n-1})
\)
be finite-dimensional. Then the following assertions are equivalent.
\begin{itemize}
\item[(1)] \(S\) is \(\mathcal U_n\)-invariant.
\item[(2)] \(S\) is \(\mathcal U_n\)-accessible.
\item[(3)] There exists a finite set
      \(F\subset\mathbb N_0^2\) such that
\(
    S
    =
    \bigoplus_{(p,q)\in F}
    \mathcal H_{p,q}(\mathbb S_{\mathbb C}^{n-1}).
\)
\end{itemize}
\end{theorem}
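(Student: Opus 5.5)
The proof follows the real-sphere argument of Theorem~\ref{thm:real-sphere-accessibility} almost verbatim, with the Peter--Weyl identification for \(\mathcal U_n\) established just before the statement as the key input. That discussion shows, via Corollary~\ref{cor:complex-harmonics-strong-accessibility} and Theorem~\ref{thm:strong-accessibility}, three facts:
\begin{itemize}
\item each block \(\mathcal H_{p,q}(\mathbb S_{\mathbb C}^{n-1})\) equals a full isotypic component \(\mathcal E_{\pi_{p,q}}(\mathbb S_{\mathbb C}^{n-1})\) with multiplicity one;
\item the representations \(\pi_{p,q}\) are pairwise inequivalent;
\item by \eqref{eq:L2-complex-harmonic-decomposition}, no other irreducible representation of \(\mathcal U_n\) occurs in \(L_2(\mathbb S_{\mathbb C}^{n-1},\sigma_n)\).
\end{itemize}
In particular, \(m_\pi(\mathbb S_{\mathbb C}^{n-1})\le 1\) for every \(\pi\in\widehat{\mathcal U_n}\), so the action is multiplicity-free.

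\textbf{Equivalence of (1) and (2).} Since the action is multiplicity-free, Corollary~\ref{cor:gelfand-pairs} applies directly. Every finite-dimensional \(\mathcal U_n\)-invariant subspace is accessible, and accessibility includes invariance by definition.

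\textbf{Equivalence of (2) and (3).} Theorem~\ref{thm:accessible-subspaces} says that accessible subspaces are exactly the finite orthogonal sums \(\bigoplus_{\pi\in\Lambda}\mathcal E_\pi(\mathbb S_{\mathbb C}^{n-1})\) with \(\Lambda\subset\widehat{\mathcal U_n}\) finite. The only nonzero isotypic components are the \(\mathcal H_{p,q}(\mathbb S_{\mathbb C}^{n-1})\). Hence we may discard every \(\pi\in\Lambda\) with \(\mathcal E_\pi=\{0\}\). Each remaining \(\pi\) equals \(\pi_{p,q}\) for a unique pair \((p,q)\), because the \(\pi_{p,q}\) are pairwise inequivalent. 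This yields the finite set \(F\subset\mathbb N_0^2\). Conversely, any such \(F\) defines a finite sum of full isotypic components, which is accessible by the same theorem.

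\textbf{Where the difficulty lies.} There is no real obstacle at this stage. The only delicate point, that no irreducible representation outside the \(\pi_{p,q}\) has a nonzero isotypic component, is already settled: completeness \eqref{eq:L2-complex-harmonic-decomposition} together with the orthogonality of isotypic components forces \(\mathcal E_\pi=\{0\}\) for every other \(\pi\). I would simply cite this when passing from \(\Lambda\) to \(F\).
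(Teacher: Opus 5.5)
Your proposal is correct and is essentially the paper's own argument. The paper also obtains (1)$\Leftrightarrow$(2) from Corollary~\ref{cor:gelfand-pairs} via the multiplicity-freeness of the $\mathcal U_n$-action, and (2)$\Leftrightarrow$(3) from Theorem~\ref{thm:accessible-subspaces} combined with the identification $\mathcal E_{\pi_{p,q}}(\mathbb S_{\mathbb C}^{n-1})=\mathcal H_{p,q}(\mathbb S_{\mathbb C}^{n-1})$ and the completeness in \eqref{eq:L2-complex-harmonic-decomposition}; your explicit check that every other isotypic component vanishes is a detail the paper leaves implicit, and you handle it correctly.
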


We next determine the reproducing kernels of these spaces. Set
\[
    N_{n,p,q}
    :=
    \dim\mathcal H_{p,q}(\mathbb S_{\mathbb C}^{n-1})
    =
    \frac{(n+p+q-1)(n+p-2)!(n+q-2)!}
         {p!q!(n-1)!(n-2)!}
\]
(see, for example, \cite[Equation~(5)]{defant2026ryll}).
Applying Theorem~\ref{thm:homogeneous-machine} to the full isotypic
component
\(
    \mathcal E_{\pi_{p,q}}
        (\mathbb S_{\mathbb C}^{n-1})
    =
    \mathcal H_{p,q}(\mathbb S_{\mathbb C}^{n-1}),
\)
we obtain
\[
    \mathbf k_{\mathcal H_{p,q}
        (\mathbb S_{\mathbb C}^{n-1})}(e_1,\xi)
    =
    d_{\pi_{p,q}}\,
    \chi_{\pi_{p,q}}^{\mathbb S_{\mathbb C}^{n-1}}(\xi).
\]
Since
\(
    m_{\pi_{p,q}}(\mathbb S_{\mathbb C}^{n-1})=1,
\)
the spherical character
\(\chi_{\pi_{p,q}}^{\mathbb S_{\mathbb C}^{n-1}}\) is a normalized
\(\mathcal U_n(e_1)\)-invariant element of
\(\mathcal H_{p,q}(\mathbb S_{\mathbb C}^{n-1})\). Hence
Lemma~\ref{lem:complex-diamond-function} gives
\[
    \chi_{\pi_{p,q}}^{\mathbb S_{\mathbb C}^{n-1}}(\xi)
    =
    L^\diamond_{n,p,q}(\xi_1).
\]
Moreover,
\[
    d_{\pi_{p,q}}
    =
    \dim\mathcal H_{p,q}(\mathbb S_{\mathbb C}^{n-1})
    =
    N_{n,p,q}.
\]
Consequently,
\begin{equation}
\label{eq:complex-harmonic-kernel}
    \mathbf k_{\mathcal H_{p,q}
        (\mathbb S_{\mathbb C}^{n-1})}(e_1,\xi)
    =
    N_{n,p,q}\,
    L^\diamond_{n,p,q}(\xi_1).
\end{equation}
Together with Theorem~\ref{thm:complex-sphere-accessibility}, this yields
the following complex counterpart of Theorem~\ref{prop:real-invariant-kernel}
and of the corresponding kernel formula for the real sphere.

\begin{theorem}
\label{prop:complex-sphere-kernel}
Let
\(
    S\subset C(\mathbb S_{\mathbb C}^{n-1})
\)
be a finite-dimensional \(\mathcal U_n\)-invariant subspace. Then
there exists a finite set \(F\subset\mathbb N_0^2\) such that
\[
    S
    =
    \bigoplus_{(p,q)\in F}
    \mathcal H_{p,q}(\mathbb S_{\mathbb C}^{n-1}).
\]
Moreover, for every
\(\xi\in\mathbb S_{\mathbb C}^{n-1}\),
\[
    \mathbf k_S(e_1,\xi)
    =
    \sum_{(p,q)\in F}
    N_{n,p,q}\,
    L^\diamond_{n,p,q}(\xi_1).
\]
Consequently,
\[
    \boldsymbol\lambda(S)
    =
    \int_{\mathbb S_{\mathbb C}^{n-1}}
    \left|
        \sum_{(p,q)\in F}
        N_{n,p,q}\,
        L^\diamond_{n,p,q}(\xi_1)
    \right|
    \,d\sigma_n(\xi).
\]
\end{theorem}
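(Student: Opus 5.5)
The plan mirrors the proof of Theorem~\ref{prop:real-invariant-kernel} in the real case, using the three complex-sphere ingredients already established.

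First, the structural description. Let \(S\subset C(\mathbb S_{\mathbb C}^{n-1})\) be finite-dimensional and \(\mathcal U_n\)-invariant. Theorem~\ref{thm:complex-sphere-accessibility}, implication (1)\(\Rightarrow\)(3), gives a finite set \(F\subset\mathbb N_0^2\) with \(S=\bigoplus_{(p,q)\in F}\mathcal H_{p,q}(\mathbb S_{\mathbb C}^{n-1})\). The same theorem, (1)\(\Rightarrow\)(2), shows that \(S\) is \(\mathcal U_n\)-accessible. Behind this lies the multiplicity-freeness of the \(\mathcal U_n\)-action together with Corollary~\ref{cor:gelfand-pairs}.

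Second, the kernel formula. The summands \(\mathcal H_{p,q}(\mathbb S_{\mathbb C}^{n-1})\) are pairwise orthogonal in \(L_2(\mathbb S_{\mathbb C}^{n-1},\sigma_n)\). Reproducing kernels are additive under finite orthogonal sums (Subsection~\ref{Reproducing kernels}), so \(\mathbf k_S(e_1,\xi)=\sum_{(p,q)\in F}\mathbf k_{\mathcal H_{p,q}}(e_1,\xi)\). Each term equals \(N_{n,p,q}L^\diamond_{n,p,q}(\xi_1)\) by \eqref{eq:complex-harmonic-kernel}.

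That identity is the only step with real content, so I would re-check it. By Theorem~\ref{thm:homogeneous-machine} applied to the full isotypic component \(\mathcal E_{\pi_{p,q}}=\mathcal H_{p,q}\), the kernel slice is \(d_{\pi_{p,q}}\chi^{\mathbb S_{\mathbb C}^{n-1}}_{\pi_{p,q}}\).
\begin{itemize}
\item Since \(m_{\pi_{p,q}}=1\), the spherical character is \(\xi=U e_1\mapsto\langle u,\pi_{p,q}(U)u\rangle\) for a unit vector \(u\) spanning the stabilizer-fixed line.
\item This function is invariant under \(\mathcal U_n(e_1)\): replacing \(U\) by \(hU\) with \(h\in\mathcal U_n(e_1)\) and using \(\pi_{p,q}(h)^*u=u\) leaves it unchanged.
\item It takes the value \(\|u\|^2=1\) at \(e_1\).
\end{itemize}
Lemma~\ref{lem:complex-diamond-function} then identifies it with \(L^\diamond_{n,p,q}(\langle\cdot,e_1\rangle)=L^\diamond_{n,p,q}(\xi_1)\). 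Finally, \(d_{\pi_{p,q}}=\dim\mathcal H_{p,q}=N_{n,p,q}\).

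Third, the projection constant. Since \(S\) is accessible, Theorem~\ref{thm:accessible-projection-constant} with base point \(x_0=e_1\) gives \(\boldsymbol\lambda(S)=\int|\mathbf k_S(e_1,\xi)|\,d\sigma_n(\xi)\). Substituting the kernel formula yields the asserted integral.

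The main obstacle is only the bookkeeping of conjugation conventions. The inner product is linear in the first slot, and \(\mathbf k_S(x,y)=\sum\overline{f_j(x)}f_j(y)\). With these conventions the slice \(\xi\mapsto\mathbf k_S(e_1,\xi)\) must lie in \(\mathcal H_{p,q}\) itself and not in \(\mathcal H_{q,p}\). This is guaranteed by the general fact \(\mathbf k_S(x,\cdot)\in S\). In any case, a conjugation would not change the absolute value in the final integral.
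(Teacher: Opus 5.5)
Your proposal is correct and follows essentially the same route as the paper. The paper gets the decomposition and accessibility from Theorem~\ref{thm:complex-sphere-accessibility}, obtains the kernel from orthogonal additivity together with \eqref{eq:complex-harmonic-kernel}, and derives the projection-constant formula from Theorem~\ref{thm:homogeneous-machine} and Theorem~\ref{thm:accessible-projection-constant}, in parallel with the proof of Theorem~\ref{prop:real-invariant-kernel}; your re-check of \eqref{eq:complex-harmonic-kernel} (stabilizer-invariance and normalization of the spherical character, then Lemma~\ref{lem:complex-diamond-function}) is the same argument the paper gives just before the statement.
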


We now specialize this description to analytic homogeneous
polynomials. For \(d\in\mathbb N_0\), restriction to the sphere gives
an isometric identification
\[
    \mathcal P_d(\ell_2^n(\mathbb C))
    \cong
    \mathcal P_d(\mathbb S_{\mathbb C}^{n-1}),
\]
where
\[
    \mathcal P_d(\mathbb S_{\mathbb C}^{n-1})
    =
    \mathcal H_{d,0}(\mathbb S_{\mathbb C}^{n-1}).
\]
In this case,
\[
    L^\diamond_{n,d,0}(w)=w^d
\]
and
\[
    N_{n,d,0}
    =
    \dim\mathcal P_d(\ell_2^n(\mathbb C))
    =
    \binom{n+d-1}{d}.
\]
Therefore, by \eqref{eq:complex-harmonic-kernel},
\[
    \mathbf k_{\mathcal P_d
        (\mathbb S_{\mathbb C}^{n-1})}(e_1,\xi)
    =
    \binom{n+d-1}{d}\,\xi_1^d.
\]
It follows that
\begin{equation}
\label{eq:complex-polynomial-kernel-integral}
    \boldsymbol\lambda
        \bigl(\mathcal P_d(\ell_2^n(\mathbb C))\bigr)
    =
    \binom{n+d-1}{d}
    \int_{\mathbb S_{\mathbb C}^{n-1}}
    |\xi_1|^d\,d\sigma_n(\xi).
\end{equation}
For \(\xi\) distributed according to \(\sigma_n\), the random variable
\(|\xi_1|^2\) has the Beta\((1,n-1)\) density
\((n-1)(1-t)^{n-2}\) on \([0,1]\). Hence
\begin{align*}
    \int_{\mathbb S_{\mathbb C}^{n-1}}
    |\xi_1|^d\,d\sigma_n(\xi)
    =
    (n-1)\int_0^1
    t^{d/2}(1-t)^{n-2}\,dt   =
    (n-1)
    B\left(1+\frac d2,n-1\right) =
    \frac{\Gamma(n)\Gamma(1+d/2)}
         {\Gamma(n+d/2)}.
\end{align*}
Substituting this identity into
\eqref{eq:complex-polynomial-kernel-integral}, we recover the classical
Ryll--Wojtaszczyk formula
\cite{ryll1983homogeneous}:
\begin{equation}
\label{Ryll--Wojtaszczyk}
    \boldsymbol\lambda
        \bigl(\mathcal P_d(\ell_2^n(\mathbb C))\bigr)
    =
    \frac{\Gamma(n+d)\Gamma(1+d/2)}
         {\Gamma(1+d)\Gamma(n+d/2)}.
\end{equation}
We finally record the two natural asymptotic regimes. First, for fixed
\(d\in\mathbb N_0\), the standard asymptotics for quotients of Gamma
functions give the complex counterpart of
Theorem~\ref{thm:high-dimensional-real-polynomials}:
\[
    \lim_{n\to\infty}
    \frac{
        \boldsymbol\lambda
        \bigl(\mathcal P_d(\ell_2^n(\mathbb C))\bigr)
    }{
        \sqrt{\binom{n+d-1}{d}}
    }
    =
    \frac{\Gamma\left(1+\frac d2\right)}{\sqrt{d!}}.
\]
Equivalently, let \(W=(X+iY)/\sqrt2\), where \(X\) and \(Y\) are
independent \(N(0,1)\) random variables. Then
\(\mathbb E|W|^2=1\) and
\(\mathbb E|W|^d=\Gamma(1+d/2)\), so
\[
    \lim_{n\to\infty}
    \frac{
        \boldsymbol\lambda
        \bigl(\mathcal P_d(\ell_2^n(\mathbb C))\bigr)
    }{
        \sqrt{\binom{n+d-1}{d}}
    }
    =
    \frac{\mathbb E|W|^d}{\sqrt{d!}}.
\]
On the other hand, for fixed \(n\geq2\), another application of the
gamma-ratio asymptotics to \eqref{Ryll--Wojtaszczyk} gives
\[
    \lim_{d\to\infty}
    \boldsymbol\lambda
        \bigl(\mathcal P_d(\ell_2^n(\mathbb C))\bigr)
    =
    2^{n-1}.
\]

\begin{remark}
The preceding formulas concern analytic homogeneous polynomial spaces,
that is, the blocks corresponding to \((p,q)=(d,0)\). In a separate
project, the authors study the projection constants of the more
general bihomogeneous spaces
\(
    \mathcal P_{p,q}(\mathbb S_{\mathbb C}^{n-1}),
\)
including their high-dimensional asymptotic behavior. In that setting,
the limiting formulas are naturally expressed in terms of integrals
involving Laguerre polynomials.
\end{remark}

\subsection{Noncommutative harmonic analysis on the unitary group}
\label{sec:unitary-group-application}

From this point on, we write \(\ell_2^n\) for the complex Hilbert space
\(\ell_2^n(\mathbb C)\). We denote by
\(\mathcal P_d\bigl(\mathcal L(\ell_2^n)\bigr)\) the space of all complex
\(d\)-homogeneous polynomials on \(\mathcal L(\ell_2^n)\), equipped with
the supremum norm on the closed operator-norm unit ball.

We now turn to a genuinely noncommutative setting. Our main result is
Theorem~\ref{thm:unitary-projection-asymptotic}, which gives, for every
\(d\in\mathbb N\),
\begin{equation}
\label{non-com-limit}
    \lim_{n\to\infty}
    \frac{
        \boldsymbol{\lambda}
        \bigl(\mathcal P_d(\mathcal L(\ell_2^n))\bigr)
    }{
        \sqrt{\binom{n^2+d-1}{d}}
    }
    =
    \frac{\Gamma\left(1+\frac d2\right)}{\sqrt{d!}}.
\end{equation}
In this setting, the strategy from Subsection~\ref{The core of the strategy} takes a particularly
transparent form. The relevant symmetries arise from the biregular
action of \(\mathcal U_n\times\mathcal U_n\) on \(\mathcal U_n\).
Peter--Weyl theory identifies the corresponding invariant subspace in
Theorem~\ref{thm:unitary-pw-polynomials}, while
Theorem~\ref{thm:unitary-reproducing-kernel} reduces the computation of
the projection constant to the \(L_1\)-norm of an explicit kernel
slice. The remaining asymptotic analysis is then governed by the
Diaconis--Shahshahani limit theorem
\cite{diaconis1994eigenvalues} for traces of Haar-distributed unitary
matrices.

The proof of \eqref{non-com-limit}, and more generally of
Theorem~\ref{thm:unitary-projection-asymptotic}, is given in
Subsection~\ref{sec:unitary-high-dimensional-asymptotics}. Before turning to it, we
develop the necessary representation-theoretic preparation.

\subsubsection{From operators to the unitary group}
\label{subsub:operators-to-unitary-group}

The first step in our general strategy is to realize the polynomial
space isometrically on a compact homogeneous space, where the available
symmetries and the Peter--Weyl decomposition can be brought into play.
In the present setting, this is achieved by restricting polynomials
from the operator space to the unitary group.
Set
\[
    \mathcal P_d(\mathcal U_n)
    :=
    \bigl\{
        p|_{\mathcal U_n}:
        p\in\mathcal P_d(\mathcal L(\ell_2^n))
    \bigr\}.
\]
Nelson's theorem \cite{nelson1961distinguished} states that the
unitary group is a norming set for these holomorphic polynomials.
Consequently, the restriction map
\[
    \mathcal P_d\bigl(\mathcal L(\ell_2^n)\bigr)
    \longrightarrow
    \mathcal P_d(\mathcal U_n),
    \qquad
    p\longmapsto p|_{\mathcal U_n},
\]
is injective and isometric with respect to the supremum norm; see also
\cite[Theorem~1]{harris1997holomorphic}. We shall therefore identify
\[
    \mathcal P_d\bigl(\mathcal L(\ell_2^n)\bigr)
    \quad\text{isometrically with}\quad
    \mathcal P_d(\mathcal U_n).
\]
We equip \(\mathcal U_n\) with its normalized Haar measure
\(\mu_{\mathcal U_n}\) and consider the biregular action of
\(\mathcal U_n\times\mathcal U_n\) on~\(\mathcal U_n\),
\[
    (g,h)\hbullet x
    :=
    gxh^{-1},
    \qquad
    g,h,x\in\mathcal U_n.
\]
The induced action on \(C(\mathcal U_n)\) is given by
\[
    ((g,h)\hbullet f)(x)
    :=
    f(g^{-1}xh),
    \qquad
    f\in C(\mathcal U_n).
\]
Thus \(\mathcal U_n\) is a homogeneous
\(\mathcal U_n\times\mathcal U_n\)-space. Once the Peter--Weyl blocks
contained in \(\mathcal P_d(\mathcal U_n)\) have been identified,
Corollary~\ref{cor:biregular-machine} applies and yields the desired
projection-constant formula.

\subsubsection{Back to the projection constant of the trace class}
\label{sec:unitary-trace-class}
Before turning to the proof of Equation~\eqref{non-com-limit}, we first
consider the special case \(d=1\).

In \cite[Theorem~2.1]{defant2025projection}, the authors derived an
integral formula for the projection constant of the trace class
\(\mathcal S_1(n)\), which is isometrically identified with
\(\mathcal P_1\bigl(\mathcal L(\ell_2^n)\bigr)\) by the trace pairing.
More precisely, they proved that
\begin{equation}\label{trace-one}
        \boldsymbol{\lambda}\bigl(\mathcal S_1(n)\bigr)
        =
        n
        \int_{\mathcal U_n}
        \bigl|\operatorname{tr}(U)\bigr|
        \,d\mu_{\mathcal U_n}(U).
\end{equation}
Using the Weingarten calculus, they further obtained the asymptotic formula
\begin{equation}\label{trace-two}
        \lim_{n\to\infty}
        \frac{\boldsymbol\lambda\bigl(\mathcal S_1(n)\bigr)}{n}
        =
        \frac{\sqrt{\pi}}{2}.
\end{equation}
We first show how the results of the preceding sections recover the
integral representation~\eqref{trace-one}, and later also the limit formula~\eqref{trace-two}.
Let
\[
        \pi_{\mathrm{std}}:\mathcal U_n\longrightarrow
        \mathcal U(\ell_2^n),
        \qquad
        \pi_{\mathrm{std}}(U)=U,
\]
be the standard representation of \(\mathcal U_n\). It is irreducible,
and its matrix coefficients are the coordinate functions
\[
        u_{ij}(U)
        :=
        \langle Ue_j,e_i\rangle,
        \qquad 1\le i,j\le n.
\]
With the matrix-coefficient convention adopted above, these holomorphic
coordinate functions belong to the block indexed by the contragredient
representation. They form an orthogonal basis of the matrix coefficient space
\[
        \mathcal E_{\pi_{\mathrm{std}}^*}(\mathcal U_n)
        \subset L_2(\mathcal U_n,\mu_{\mathcal U_n}).
\]
They also form a basis of
\(
        \mathcal P_1(\mathcal U_n)
        :=
        \{p|_{\mathcal U_n}:p: \mathcal L(\ell_2^n)\to\mathbb C
        \text{ is linear}\}.
\)
Indeed, if
\[
        \sum_{i,j=1}^n c_{ij}u_{ij}(U)=0
        \qquad
        \text{for all }U\in\mathcal U_n,
\]
and $C=\big(\,\overline{c_{ij}}\,\big)$, then
\[
        \operatorname{tr}(C^*U)
        =
        \sum_{i,j=1}^n c_{ij}u_{ij}(U)
        =
        0,
        \qquad U\in\mathcal U_n.
\]
Since the linear span of \(\mathcal U_n\) is all of
\(\mathcal L(\ell_2^n)\), this forces \(C=0\). Hence
\[
        \mathcal P_1(\mathcal U_n)
        =
        \mathcal E_{\pi_{\mathrm{std}}^*}(\mathcal U_n).
\]
By trace duality and the restriction isometry established above, the map
\[
        \mathcal S_1(n)
        \longrightarrow
        \mathcal P_1(\mathcal U_n),
        \qquad
        A\longmapsto
        \big[U\mapsto \operatorname{tr}(AU)\big],
\]
is an onto isometry. Thus, with the uniform norm on
\(\mathcal E_{\pi_{\mathrm{std}}^*}(\mathcal U_n)\), we obtain the
isometric identification
\[
        \mathcal S_1(n)
        \cong
        \mathcal E_{\pi_{\mathrm{std}}^*}(\mathcal U_n).
\]
Since
\[
        d_{\pi_{\mathrm{std}}^*}=n
        \qquad\text{and}\qquad
        \chi_{\pi_{\mathrm{std}}^*}(U)
        =
        \overline{\operatorname{tr}(U)},
\]
Corollary~\ref{cor:biregular-machine} provides a new proof of
\eqref{trace-one}. Theorem~\ref{thm:unitary-projection-asymptotic}
substantially extends this result and, as a consequence, yields a
corresponding strengthening of the asymptotic formula
in~\eqref{trace-two}.

\subsubsection{Group actions on symmetric tensor products}
\label{sec:unitary-symmetric-tensors}
To uncover the representation-theoretic structure of
\(\mathcal P_d(\mathcal U_n)\), we now pass to the tensorial model of
homogeneous polynomials and make explicit how it interacts with the
group action. For background on symmetric tensor products and spaces
of homogeneous polynomials, we refer to
\cite{defant2019libro,dineen2012complex}.

Let \(X\) be a finite-dimensional complex vector space and let
\(d\in\mathbb N\). We denote the \(d\)-fold symmetric tensor product of
\(X\) by
\[
        \bigotimes\nolimits^{s,d}X.
\]
It is spanned by the symmetrized tensors
\[
    \otimes^{s,d}(x_1,\ldots,x_d)
    :=
    \frac{1}{d!}
    \sum_{\sigma\in S_d}
    x_{\sigma(1)}\otimes\cdots\otimes x_{\sigma(d)},
\]
where \(S_d\) denotes the symmetric group of all permutations of
\(\{1,\ldots,d\}\).
For \(x\in X\), we write
\[
        \otimes^d x
        :=
        \otimes^{s,d}(x,\ldots,x).
\]
By polarization over \(\mathbb C\),
\[
        \bigotimes\nolimits^{s,d}X
        =
        \operatorname{span}
        \bigl\{\otimes^d x:x\in X\bigr\}.
\]
Suppose that a group \(G\) acts linearly on \(X\). The induced action on
\(\bigotimes^{s,d}X\) is given by
\[
        g\hbullet
        \otimes^{s,d}(x_1,\ldots,x_d)
        :=
        \otimes^{s,d}
        (g\hbullet x_1,\ldots,g\hbullet x_d).
\]
The corresponding contragredient action on \(X^*\) is defined by
\[
        (g\hbullet\varphi)(x)
        :=
        \varphi(g^{-1}\hbullet x),
\]
while the induced action on \(\mathcal P_d(X)\) is
\[
        (g\hbullet p)(x)
        :=
        p(g^{-1}\hbullet x).
\]
With these conventions, the canonical map
\[
        \Psi:
        \bigotimes\nolimits^{s,d}X^*
        \longrightarrow
        \mathcal P_d(X),
        \qquad
        \Psi(\otimes^d\varphi)(x)
        :=
        \varphi(x)^d,
\]
extended linearly, is a \(G\)-equivariant isomorphism.

The same observation applies to product-group actions. Suppose that
\(G\times H\) acts linearly on \(X\). Then the induced action on
\(\mathcal P_d(X)\) is given by
\[
        ((g,h)\hbullet p)(x)
        :=
        p\bigl((g,h)^{-1}\hbullet x\bigr),
\]
and the canonical identification
\[
        \mathcal P_d(X)
        \cong
        \bigotimes\nolimits^{s,d}X^*
\]
is \(G\times H\)-equivariant.
We finally apply this tensorial description to
\[
    X=\mathcal L(\mathbb C^n).
\]
Consider the canonical linear isomorphism
\[
    (\mathbb C^n)^*\otimes\mathbb C^n
    \longrightarrow
    \mathcal L(\mathbb C^n),
    \qquad
    \varphi\otimes x
    \longmapsto
    \bigl(y\mapsto\varphi(y)x\bigr).
\]
Under the biregular action, this identification is equivariant when
\[
    (g,h)\hbullet(\varphi\otimes x)
    :=
    h^*\varphi\otimes gx,
\]
where
\(
    (h^*\varphi)(y):=\varphi(h^{-1}y).
\)
Indeed, the tensor \(h^*\varphi\otimes gx\) corresponds to the
rank-one operator
\[
    y\longmapsto \varphi(h^{-1}y)\,gx
    =
    g\bigl(\varphi(h^{-1}y)x\bigr),
\]
which is precisely \(gAh^{-1}\) when
\(A(y)=\varphi(y)x\). Thus
\[
    \mathcal L(\mathbb C^n)
    \cong
    (\mathbb C^n)^*\otimes\mathbb C^n
\]
equivariantly.
Passing to duals gives
\[
    \mathcal L(\mathbb C^n)^*
    \cong
    \mathbb C^n\otimes(\mathbb C^n)^*.
\]
Under this identification, the induced action is
\[
    (g,h)\hbullet(x\otimes\varphi)
    :=
    hx\otimes g^*\varphi.
\]
Consequently,
\begin{equation}
\label{symmetry}
    \mathcal P_d\bigl(\mathcal L(\mathbb C^n)\bigr)
    \cong
    \bigotimes\nolimits^{s,d}
    \bigl(\mathbb C^n\otimes(\mathbb C^n)^*\bigr)
\end{equation}
equivariantly, where the action on the symmetric tensor power is the
diagonal action induced by the preceding formula. This realization
will serve as the starting point for the representation-theoretic
decomposition in Theorem~\ref{thm:unitary-pw-polynomials}.

\subsubsection{Representation-theoretic setup}
\label{sec:unitary-polynomial-projection}
We begin by recalling the notation for the polynomial irreducible
representations of \(\mathcal U_n\). Since some readers from functional analysis may be less familiar with
the representation theory of the unitary group, we develop the
necessary material here in slightly greater detail.

A partition \(\lambda\vdash d\) is a
non-increasing sequence
\[
        \lambda=(\lambda_1,\lambda_2,\ldots)
\]
of non-negative integers satisfying
\[
        |\lambda|
        :=
        \sum_j\lambda_j
        =
        d.
\]
We write
\[
        (d)=(d,0,\ldots,0),
        \qquad
        (1^d)=(1,\ldots,1).
\]
The length of \(\lambda\) is
\[
        \ell(\lambda)
        :=
        \#\{j:\lambda_j\ge 1\}.
\]
For each \(r\ge 1\), let
\[
        m_r(\lambda)
        :=
        \#\{j:\lambda_j=r\}
\]
denote the multiplicity of the part \(r\). Thus \(m_r(\lambda)\) counts
the number of entries of \(\lambda\) equal to \(r\), whereas
\(\lambda_r\) denotes the \(r\)-th entry of the partition. We set
\[
        z_\lambda
        :=
        \prod_{r\ge 1}
        r^{m_r(\lambda)}m_r(\lambda)!.
\]
Since all but finitely many \(m_r(\lambda)\) vanish, this product is
finite.
For an alphabet \(X=(x_i)_i\), define the power sums
\[
        p_r(X)
        :=
        \sum_i x_i^r,
        \qquad
        p_\lambda(X)
        :=
        \prod_{j=1}^{\ell(\lambda)}
        p_{\lambda_j}(X).
\]
Later, \(X\) will be the eigenvalue alphabet of a unitary matrix. Thus, if
\(g\in\mathcal U_n\) has eigenvalues
\[
        \operatorname{eigs}(g)
        =
        (\alpha_1,\ldots,\alpha_n),
\]
then
\[
        p_r(\operatorname{eigs}(g))
        =
        \sum_{i=1}^n\alpha_i^r
        =
        \operatorname{tr}(g^r).
\]
For example, if \(\lambda=(3,2,2,1)\), then
\[
        \ell(\lambda)=4,
        \qquad
        m_1(\lambda)=1,
        \quad
        m_2(\lambda)=2,
        \quad
        m_3(\lambda)=1,
\]
and hence
\[
        z_\lambda
        =
        1^1 1!\,2^2 2!\,3^1 1!
        =
        24.
\]
Now let \(n\ge 1\), and let \(\lambda\) be a partition with
\(\ell(\lambda)\le n\). The Schur polynomial \(s_\lambda\) in \(n\)
variables is defined by the Weyl, or bialternant, formula
\[
        s_\lambda(x_1,\ldots,x_n)
        :=
        \frac{
        \det\bigl(x_i^{\lambda_j+n-j}\bigr)_{1\le i,j\le n}}
        {\det\bigl(x_i^{n-j}\bigr)_{1\le i,j\le n}}.
\]
The denominator is the Vandermonde determinant, and the quotient,
initially defined for pairwise distinct \(x_1,\ldots,x_n\), extends
uniquely to a symmetric polynomial in \(x_1,\ldots,x_n\). We set
\(s_\lambda=0\) when \(\ell(\lambda)>n\). For orientation,
\[
        s_{(1)}(x_1,\ldots,x_n)
        =
        x_1+\cdots+x_n.
\]
For \(n=2\),
\[
        s_{(2)}(x_1,x_2)
        =
        x_1^2+x_1x_2+x_2^2,
        \qquad
        s_{(1,1)}(x_1,x_2)
        =
        x_1x_2.
\]
For every partition \(\lambda\) with \(\ell(\lambda)\leq n\), let
\[
    \pi_\lambda:
    \mathcal U_n
    \longrightarrow
    \mathcal U\bigl(S_\lambda(\mathbb C^n)\bigr)
\]
denote the  irreducible unitary representation, unique up to equivalence, whose character
is the corresponding Schur polynomial:
\[
    \chi_{\pi_\lambda}(g)
    =
    s_\lambda(\operatorname{eigs}(g)),
    \qquad g\in\mathcal U_n.
\]
An irreducible unitary representation of \(\mathcal U_n\) is called
polynomial if it is equivalent to \(\pi_\lambda\) for some partition
\(\lambda\) with \(\ell(\lambda)\leq n\). Equivalently, these are
precisely the irreducible representations that extend to representations
of \(GL_n(\mathbb C)\) whose matrix coefficients are polynomials in the
matrix entries. Merely requiring an algebraic extension to
\(GL_n(\mathbb C)\) would also allow negative powers of the determinant.

Thus \(\lambda\) labels the representation, while
\(S_\lambda(\mathbb C^n)\) is its carrier space. When the action is
understood, we also refer to \(S_\lambda(\mathbb C^n)\) as the
corresponding polynomial representation. Its dimension is obtained by
evaluating the character at the identity:
\[
        \dim S_\lambda(\mathbb C^n)
        =
        \chi_{\pi_\lambda}(I_n)
        =
        s_\lambda(1^n),
        \qquad
        1^n=(1,\ldots,1).
\]
The basic examples are
\[
        S_{(1)}(\mathbb C^n)
        =
        \mathbb C^n,
        \qquad
        \chi_{\pi_{(1)}}(g)
        =
        \operatorname{tr}(g),
\]
and, more generally,
\[
        S_{(d)}(\mathbb C^n)
        =
        \bigotimes^{s,d} \mathbb C^n,
        \qquad
        S_{(1^d)}(\mathbb C^n)
        =
        \bigwedge^d(\mathbb C^n).
\]
Here
\(
    \bigotimes\nolimits^{s,d}\mathbb C^n
    =
    \operatorname{Sym}^d(\mathbb C^n)
\)
denotes the \(d\)-th symmetric tensor power of \(\mathbb C^n\)
(explained in the preceding section), while
\(    \bigwedge\nolimits^d\mathbb C^n
\)
denotes its \(d\)-th exterior power. The latter is non-zero, and gives
an irreducible representation, only for \(d\leq n\). The former is
spanned by symmetrized tensors, whereas the latter is spanned by
alternating tensors.

For every partition \(\lambda\) with \(\ell(\lambda)\leq n\), we equip \(S_\lambda(\mathbb C^n)\) with a
\(\mathcal U_n\)-invariant Hermitian inner product. Such an inner
product may be obtained by averaging any Hermitian inner product over
\(\mathcal U_n\).
More precisely, since \(S_\lambda(\mathbb C^n)\) is finite-dimensional, it may first
be equipped with an arbitrary Hermitian inner product
\(\langle\cdot,\cdot\rangle_0\). Averaging this inner product over the
compact group \(\mathcal U_n\), we define
\[
    \langle u,v\rangle_{\mathcal U_n}
    :=
    \int_{\mathcal U_n}
    \bigl\langle
        \pi_\lambda(U)u,\pi_\lambda(U)v
    \bigr\rangle_0
    \,d\mu_{\mathcal U_n}(U),
    \qquad
    u,v\in S_\lambda(\mathbb C^n).
\]
This is
again a positive-definite Hermitian inner product, and the invariance
of Haar measure gives
\[
    \bigl\langle
        \pi_\lambda(V)u,\pi_\lambda(V)v
    \bigr\rangle_{\mathcal U_n}
    =
    \langle u,v\rangle_{\mathcal U_n},
    \qquad V\in\mathcal U_n.
\]
Thus \(\pi_\lambda\) becomes a unitary representation. We shall tacitly
equip \(S_\lambda(\mathbb C^n)\) with such a
\(\mathcal U_n\)-invariant Hermitian inner product.
By irreducibility, the invariant Hermitian inner product is unique
up to multiplication by a positive scalar. We fix one such choice
throughout; its normalization will not play a role.

In the standard tensor constructions, we use the natural induced
inner products. Thus \(\mathbb C^n\) carries its usual Euclidean
Hermitian inner product, and on the $d$-th tensor power $\bigotimes\nolimits^{d}\mathbb C^n$ we set
\[
    \left\langle
        x_1\otimes\cdots\otimes x_d,\,
        y_1\otimes\cdots\otimes y_d
    \right\rangle
    :=
    \prod_{j=1}^d\langle x_j,y_j\rangle_2.
\]
Its restrictions to the symmetric and exterior powers
    $\operatorname{Sym}^d(\mathbb C^n)$
   and
    $\bigwedge\nolimits^d\mathbb C^n$
are already \(\mathcal U_n\)-invariant.

For completeness, we also recall how the polynomial representations fit
into the full unitary dual of \(\mathcal U_n\). All irreducible unitary
representations of \(\mathcal U_n\) are parametrized by dominant integral
highest weights
\[
        \mu=(\mu_1\ge\cdots\ge\mu_n)\in\mathbb Z^n.
\]
Partitions \(\lambda\) with \(\ell(\lambda)\le n\) parametrize precisely
the polynomial irreducible representations. More generally, if
\(\mu\in\mathbb Z^n\) is a dominant integral highest weight and
\[
        m:=\mu_n,
        \qquad
        \lambda_i:=\mu_i-\mu_n
        \quad(1\le i\le n),
\]
then \(\lambda\) is a partition  with $\lambda_n=0$, and the irreducible representation
corresponding to \(\mu\) can be written uniquely as
\[
        S_\lambda(\mathbb C^n)\otimes(\det)^m.
\]
Here
\[
        (\det)^m:
        \mathcal U_n\longrightarrow\mathbb C,
        \qquad
        g\longmapsto(\det g)^m,
\]
is the one-dimensional determinant character. In the present
application, the biregular decomposition of
\(\mathcal P_d(\mathcal U_n)\) is indexed by the partitions
\(\lambda\vdash d\) with \(\ell(\lambda)\leq n\); the corresponding
irreducible representation types are
\(\pi_\lambda^*\sboxtimes\pi_\lambda\).
For further background on Schur functors, Schur polynomials, and the
parametrization of polynomial irreducible representations of
\(\mathcal U_n\), we refer to \cite{fulton2013representation}.

\subsubsection{The Peter--Weyl decomposition of \(\mathcal P_d(\mathcal U_n)\)}
\label{subsub:rep-identifications}

We now identify which biregular Peter--Weyl blocks occur in
\(\mathcal P_d(\mathcal U_n)\). The answer is exactly governed by the
partitions of \(d\) introduced above.

\begin{theorem}
\label{thm:unitary-pw-polynomials}
Let \(d,n\in\mathbb N\). For every partition
\(\lambda\vdash d\) with \(\ell(\lambda)\le n\), let
\(
    \pi_\lambda:
    \mathcal U_n
    \longrightarrow
    \mathcal U\bigl(S_\lambda(\mathbb C^n)\bigr)
\)
denote the corresponding irreducible unitary representation. Then
\[
    \mathcal P_d(\mathcal U_n)
    =
    \bigoplus_{\substack{\lambda\vdash d\\ \ell(\lambda)\le n}}
    \mathcal E_{\pi_\lambda^*}(\mathcal U_n)
\]
orthogonally in \(L_2(\mathcal U_n)\). Equivalently, under the
biregular action of
\(\mathcal U_n\times\mathcal U_n\), the space
\(\mathcal P_d(\mathcal U_n)\) is the multiplicity-free direct sum of
the irreducible representation types
\(
    \pi_\lambda^*\sboxtimes\pi_\lambda
    \), where 
    \(\lambda\vdash d\) and \(\ell(\lambda)\le n\).
\end{theorem}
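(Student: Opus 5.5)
The plan is to combine the equivariant tensor model \eqref{symmetry} with Schur--Weyl/Cauchy duality and the multiplicity-free biregular Peter--Weyl decomposition (Corollary~\ref{cor:biregular-invariance-accessibility}). There are four steps.

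\textbf{Step 1: invariance.} The space \(\mathcal P_d(\mathcal U_n)\) is invariant under the biregular action. Indeed, if \(p\) is a \(d\)-homogeneous polynomial on \(\mathcal L(\ell_2^n)\), then \(x\mapsto p(g^{-1}xh)\) is again one, since \(x\mapsto g^{-1}xh\) is linear. Hence Corollary~\ref{cor:biregular-invariance-accessibility} gives a finite set \(\Lambda\subset\widehat{\mathcal U_n}\) with
\[
\mathcal P_d(\mathcal U_n)=\bigoplus_{\rho\in\Lambda}\mathcal E_\rho(\mathcal U_n)
\]
orthogonally. Each block \(\mathcal E_\rho(\mathcal U_n)\) is of the single irreducible biregular type determined by \(\rho\) (Lemma~\ref{lem:biregular-matrix-coefficients}). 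The whole task is therefore to identify \(\Lambda\), and it suffices to determine which irreducible \(\mathcal U_n\times\mathcal U_n\)-types occur in \(\mathcal P_d(\mathcal U_n)\). Since Nelson's theorem makes restriction injective, \(\mathcal P_d(\mathcal U_n)\cong\mathcal P_d(\mathcal L(\mathbb C^n))\) as \(\mathcal U_n\times\mathcal U_n\)-representations.

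\textbf{Step 2: decomposing the tensor model.} By \eqref{symmetry},
\[
\mathcal P_d(\mathcal L(\mathbb C^n))\cong \textstyle\bigotimes^{s,d}\bigl(\mathbb C^n\otimes(\mathbb C^n)^*\bigr),
\]
where \(h\) acts on \(\mathbb C^n\) and \(g\) acts contragrediently on \((\mathbb C^n)^*\). I would decompose \(\mathrm{Sym}^d(V\otimes W)\) for \(V=\mathbb C^n\), \(W=(\mathbb C^n)^*\) by the \(GL\times GL\) Cauchy formula
\[
\mathrm{Sym}^d(V\otimes W)\cong\bigoplus_{\lambda\vdash d,\ \ell(\lambda)\le n}S_\lambda V\otimes S_\lambda W.
\]
The cleanest verification is at the level of characters, which is enough since characters determine unitary representations of compact groups. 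Take \(g\) with eigenvalues \(\alpha_i\) and \(h\) with eigenvalues \(\beta_j\). The operator \(\mathrm{Sym}^d(h\otimes (g^{-1})^{t})\) has trace equal to the complete homogeneous symmetric function \(h_d\) in the products \(\beta_j\overline{\alpha_i}\). The Cauchy identity
\[
h_d(xy)=\sum_{\lambda\vdash d}s_\lambda(x)s_\lambda(y)
\]
then gives the character
\[
\sum_{\lambda\vdash d,\ \ell(\lambda)\le n}\overline{s_\lambda(\operatorname{eigs} g)}\,s_\lambda(\operatorname{eigs} h),
\]
using \(s_\lambda=0\) for \(\ell(\lambda)>n\). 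This is exactly the character of
\[
\bigoplus_{\lambda}\pi_\lambda^*\sboxtimes\pi_\lambda .
\]
The representation is therefore the multiplicity-free sum of the types \(\pi_\lambda^*\sboxtimes\pi_\lambda\), \(\lambda\vdash d\), \(\ell(\lambda)\le n\). These types are pairwise inequivalent because the Schur polynomials \(s_\lambda\), \(\ell(\lambda)\le n\), are linearly independent.

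\textbf{Step 3: matching with the blocks.} By Lemma~\ref{lem:biregular-matrix-coefficients} and the biregular decomposition, the block of type \(\rho\sboxtimes\rho^*\) is \(\mathcal E_\rho(\mathcal U_n)\). With \(\rho=\pi_\lambda^*\) we have \(\rho^*\cong\pi_\lambda\), so the type \(\pi_\lambda^*\sboxtimes\pi_\lambda\) lives exactly in \(\mathcal E_{\pi_\lambda^*}(\mathcal U_n)\). Since \(\mathcal P_d(\mathcal U_n)\) is a sum of full blocks by Step 1, and its types are those listed in Step 2, we get \(\Lambda=\{\pi_\lambda^*:\lambda\vdash d,\ \ell(\lambda)\le n\}\). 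Orthogonality follows from Peter--Weyl. As a check, for \(d=1\) this reproduces \(\mathcal P_1(\mathcal U_n)=\mathcal E_{\pi_{\mathrm{std}}^*}(\mathcal U_n)\) from \S\ref{sec:unitary-trace-class}.

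\textbf{Main obstacle.} The main obstacle is the bookkeeping of conventions in Step 2. I need to confirm which factor receives \(\pi_\lambda\) and which receives \(\pi_\lambda^*\) under the paper's choices: the action \(f\mapsto f(g^{-1}xh)\), the dualization \((g,h)\hbullet(x\otimes\varphi)=hx\otimes g^*\varphi\), and the matrix-coefficient convention that places holomorphic coordinates in the contragredient block. I would first settle this on \(d=1\) and then propagate it through the character computation. A secondary point is justifying the Cauchy identity in \(n\) variables with the truncation \(\ell(\lambda)\le n\). I would cite it from \cite{fulton2013representation}, or derive it from the bialternant formula via the Cauchy determinant.
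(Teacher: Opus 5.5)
Your proposal is correct and follows essentially the paper's route. Like the paper, you use the equivariant tensor model \eqref{symmetry}, the Cauchy decomposition of $\mathrm{Sym}^d\bigl(\mathbb C^n\otimes(\mathbb C^n)^*\bigr)$, Lemma~\ref{lem:biregular-matrix-coefficients} to place each type $\pi_\lambda^*\sboxtimes\pi_\lambda$ in the block $\mathcal E_{\pi_\lambda^*}(\mathcal U_n)$, and the multiplicity-freeness of the biregular Peter--Weyl decomposition. The differences are minor. You verify the Cauchy formula through characters instead of citing it, which also settles the convention bookkeeping you were worried about. You also invoke Corollary~\ref{cor:biregular-invariance-accessibility} explicitly to pass from an abstract isomorphism to an equality of subspaces of $L_2(\mathcal U_n)$, a step the paper leaves implicit.
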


\begin{proof}
By the discussion in
Subsection~\ref{subsub:operators-to-unitary-group} and the equivariant
identification in \eqref{symmetry}, restriction to \(\mathcal U_n\)
gives
\[
    \mathcal P_d(\mathcal U_n)
    \cong
    \mathcal P_d\bigl(\mathcal L(\mathbb C^n)\bigr)
    \cong
    \bigotimes\nolimits^{s,d}
    \bigl(\mathbb C^n\otimes(\mathbb C^n)^*\bigr)
\]
as \(\mathcal U_n\times\mathcal U_n\)-representations. Here the
action on \(\mathbb C^n\otimes(\mathbb C^n)^*\) is given by
\[
    (g,h)\cdot(x\otimes\varphi)
    =
    hx\otimes g^*\varphi.
\]
Applying the Cauchy formula for symmetric powers (see, for example,
\cite[Exercise~6.11]{fulton2013representation}) to the present
\(\mathcal U_n\times\mathcal U_n\)-module yields
\[
    \mathcal P_d(\mathcal U_n)
    \cong
    \bigoplus_{\substack{\lambda\vdash d\\ \ell(\lambda)\le n}}
    S_\lambda(\mathbb C^n)
    \otimes
    S_\lambda(\mathbb C^n)^*
\]
as \(\mathcal U_n\times\mathcal U_n\)-representations.
On the
summand corresponding to \(\lambda\), the action is
\[
    (g,h)\cdot(u\otimes\varphi)
    =
    \pi_\lambda(h)u
    \otimes
    \pi_\lambda^*(g)\varphi,
\]
where
\(
    \pi_\lambda:
    \mathcal U_n
    \longrightarrow
    \mathcal U\bigl(S_\lambda(\mathbb C^n)\bigr)
\)
denotes the irreducible representation associated with \(\lambda\).
The first group factor acts on \(S_\lambda(\mathbb C^n)^*\), while
the second acts on \(S_\lambda(\mathbb C^n)\). Interchanging the tensor
factors therefore puts the action in the form used in
Lemma~\ref{lem:biregular-matrix-coefficients}.

We now identify this summand with the corresponding biregular
Peter--Weyl block. Fix
\(\lambda\vdash d\) with \(\ell(\lambda)\le n\), and apply
Lemma~\ref{lem:biregular-matrix-coefficients} to the irreducible
representation
\(
    \pi_\lambda^*
\)
on \(S_\lambda(\mathbb C^n)^*\). Since
\[
    \bigl(S_\lambda(\mathbb C^n)^*\bigr)^*
    \cong
    S_\lambda(\mathbb C^n),
\]
the lemma gives
\[
    \mathcal E_{\pi_\lambda^*}(\mathcal U_n)
    \cong
    S_\lambda(\mathbb C^n)^*
    \otimes
    S_\lambda(\mathbb C^n),
\]
where the \(\mathcal U_n\times\mathcal U_n\)-action is
\[
    (g,h)\cdot(\varphi\otimes u)
    =
    \pi_\lambda^*(g)\varphi
    \otimes
    \pi_\lambda(h)u.
\]
Consider the flip isomorphism
\[
    F_\lambda:
    S_\lambda(\mathbb C^n)^*
    \otimes
    S_\lambda(\mathbb C^n)
    \longrightarrow
    S_\lambda(\mathbb C^n)
    \otimes
    S_\lambda(\mathbb C^n)^*,
    \qquad
    F_\lambda(\varphi\otimes u)
    :=
    u\otimes\varphi.
\]
For \(g,h\in\mathcal U_n\),
\[
\begin{aligned}
    F_\lambda\bigl(
        (g,h)\cdot(\varphi\otimes u)
    \bigr)
    &=
    F_\lambda\bigl(
        \pi_\lambda^*(g)\varphi
        \otimes
        \pi_\lambda(h)u
    \bigr) =
    \pi_\lambda(h)u
    \otimes
    \pi_\lambda^*(g)\varphi 
    =
    (g,h)\cdot F_\lambda(\varphi\otimes u).
\end{aligned}
\]
Hence \(F_\lambda\) is
\(\mathcal U_n\times\mathcal U_n\)-equivariant. Therefore
\[
    S_\lambda(\mathbb C^n)
    \otimes
    S_\lambda(\mathbb C^n)^*
    \cong
    \mathcal E_{\pi_\lambda^*}(\mathcal U_n)
\]
as \(\mathcal U_n\times\mathcal U_n\)-representations.
Distinct partitions \(\lambda\vdash d\) yield pairwise inequivalent
irreducible representations \(\pi_\lambda^*\). Since the biregular
Peter--Weyl decomposition of \(L_2(\mathcal U_n)\) is
multiplicity-free, the corresponding blocks are mutually orthogonal
and each occurs with multiplicity one. Consequently,
\[
    \mathcal P_d(\mathcal U_n)
    =
    \bigoplus_{\substack{\lambda\vdash d\\ \ell(\lambda)\le n}}
    \mathcal E_{\pi_\lambda^*}(\mathcal U_n)
\]
orthogonally in \(L_2(\mathcal U_n)\).
\end{proof}

\subsubsection{The reproducing kernel of
\(\mathcal P_d(\mathcal U_n)\)}
\label{sec:unitary-reproducing-kernel}

We now use the preceding Peter--Weyl decomposition to obtain an
explicit formula for the reproducing kernel of
\(\mathcal P_d(\mathcal U_n)\). Let
\[
    \mathbf k_{\mathcal P_d(\mathcal U_n)}
\]
denote this kernel in \(L_2(\mathcal U_n)\), and let
\(I\in\mathcal U_n\) be the identity matrix.
By Theorem~\ref{thm:unitary-pw-polynomials} and
Theorem~\ref{thm:homogeneous-machine}, for every
\(g\in\mathcal U_n\),
\[
    \mathbf k_{\mathcal P_d(\mathcal U_n)}(I,g)
    =
    \sum_{\substack{\lambda\vdash d\\ \ell(\lambda)\le n}}
    d_{\pi_\lambda^*}\,
    \overline{\chi_{\pi_\lambda^*}(g)}
    =   \sum_{\substack{\lambda\vdash d\\ \ell(\lambda)\le n}}
    d_{\pi_\lambda}\,
    \chi_{\pi_\lambda}(g)
    =
    \sum_{\substack{\lambda\vdash d\\ \ell(\lambda)\le n}}
    \dim S_\lambda(\mathbb C^n)\,
    \chi_{\pi_\lambda}(g).
\]
Indeed,
\(
    d_{\pi_\lambda^*}=d_{\pi_\lambda}
    =\dim S_\lambda(\mathbb C^n)
\)
and
\(
    \chi_{\pi_\lambda^*}
    =
    \overline{\chi_{\pi_\lambda}}.
\)
Using
\[
        \dim S_\lambda(\mathbb C^n)=s_\lambda(1^n)
        \qquad\text{and}\qquad
        \chi_{S_\lambda(\mathbb C^n)}(g)
        =
        s_\lambda(\operatorname{eigs}(g)),
\]
this becomes
\[
        \mathbf k_{\mathcal P_d(\mathcal U_n)}(I,g)
        =
        \sum_{\substack{\lambda\vdash d\\ \ell(\lambda)\le n}}
        s_\lambda(1^n)\,
        s_\lambda(\operatorname{eigs}(g)).
\]
The next elementary symmetric-function identity rewrites this Schur sum
in the trace variables.

\begin{lemma}
\label{lem:schur-power-sum-specialization}
For every \(d,n\in\mathbb N\),
\[
        \sum_{\substack{\lambda\vdash d\\ \ell(\lambda)\le n}}
        s_\lambda(1^n)s_\lambda(x)
        =
        \sum_{\mu\vdash d}
        \frac{n^{\ell(\mu)}}{z_\mu}\,p_\mu(x).
\]
\end{lemma}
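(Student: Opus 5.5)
We prove the identity in the ring of symmetric polynomials in the $n$ variables $x=(x_1,\ldots,x_n)$, taking for granted the Frobenius character formula relating Schur and power-sum polynomials. For every $\lambda\vdash d$ it reads
\[
    s_\lambda=\sum_{\mu\vdash d}\frac{\chi^\lambda(\mu)}{z_\mu}\,p_\mu ,
\]
where $\chi^\lambda(\mu)$ is the value of the irreducible character of the symmetric group $S_d$ labelled by $\lambda$ on the conjugacy class of cycle type $\mu$. In $n$ variables it holds with the convention $s_\lambda=0$ for $\ell(\lambda)>n$. This convention is consistent because the specialization from infinitely many variables to $n$ variables kills exactly the Schur functions with more than $n$ rows.

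The cleanest route is through the Cauchy identity in two sets of variables $x=(x_1,\ldots,x_n)$ and $y=(y_1,\ldots,y_n)$. Taking the degree-$d$ homogeneous part gives
\[
    \sum_{\substack{\lambda\vdash d\\ \ell(\lambda)\le n}} s_\lambda(x)s_\lambda(y)
    =
    \sum_{\mu\vdash d}\frac{1}{z_\mu}\,p_\mu(x)\,p_\mu(y).
\]
This follows by expanding $\prod_{i,j}(1-x_iy_j)^{-1}=\exp\bigl(\sum_{r\ge1}p_r(x)p_r(y)/r\bigr)$ into power sums, which yields the right-hand side. On the Schur side one uses column orthogonality of the $S_d$-characters, $\sum_{\lambda\vdash d}\chi^\lambda(\mu)\chi^\lambda(\nu)=z_\mu\delta_{\mu\nu}$. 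Combined with the Frobenius formula, this gives $\sum_\lambda s_\lambda(x)s_\lambda(y)=\sum_\mu z_\mu^{-1}p_\mu(x)p_\mu(y)$ for all $\lambda\vdash d$. The restriction $\ell(\lambda)\le n$ is then automatic, since the remaining terms vanish in $n$ variables.

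Finally, specialize $y=1^n$. Then $p_r(1^n)=n$ for every $r\ge1$, so $p_\mu(1^n)=n^{\ell(\mu)}$, and the identity becomes exactly the assertion of the lemma. Note that the sum on the right runs over all $\mu\vdash d$, with no length restriction, which matches the statement.

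The only genuinely nontrivial input is the Cauchy/Frobenius identity, which we import from standard references such as \cite{fulton2013representation}. The only point needing care is the bookkeeping of the finite-variable truncation. The cleanest way to handle it is to prove the identity for infinitely many variables, or for $N\ge d$ variables where no truncation occurs. One then sets $x_{n+1}=\cdots=x_N=0$ in $x$ and $y_{n+1}=\cdots=y_N=0$ in $y$ before specializing the first $n$ variables of $y$ to $1$. This step kills the Schur polynomials with $\ell(\lambda)>n$ and leaves the power sums in the correct form, so that $p_r$ evaluated at $(1^n,0^{N-n})$ is $n$.
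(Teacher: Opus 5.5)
Your proposal is correct and follows the paper's proof. Both expand the Cauchy kernel $\prod_{i,j}(1-x_iy_j)^{-1}$ once in Schur functions and once in power sums, take the degree-$d$ part, and specialize $y=1^n$ using $p_\mu(1^n)=n^{\ell(\mu)}$ and $s_\lambda(1^n)=0$ for $\ell(\lambda)>n$; the only difference is that you derive the Schur-side expansion from the Frobenius formula and column orthogonality and spell out the finite-variable truncation, where the paper simply cites Macdonald for both expansions.
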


\begin{proof}
We use the two standard expansions of the Cauchy kernel,
\[
        \prod_{i,j}(1-x_i y_j)^{-1}
        =
        \sum_{\lambda}s_\lambda(x)s_\lambda(y),
\]
and
\[
        \prod_{i,j}(1-x_i y_j)^{-1}
        =
        \sum_{\mu}\frac1{z_\mu}p_\mu(x)p_\mu(y),
\]
see \cite[Ch.~I, \S4, Eqs.~(4.2), (4.4)]{Macdonald1995}. Taking the
homogeneous part of degree \(d\) and then specializing \(y=1^n\), we get
\[
        p_r(1^n)=n
        \qquad\text{and hence}\qquad
        p_\mu(1^n)=n^{\ell(\mu)}.
\]
Moreover, \(s_\lambda(1^n)=0\) whenever \(\ell(\lambda)>n\). This gives
the claimed identity.
\end{proof}

We therefore obtain the following explicit kernel formula.

\begin{theorem}
\label{thm:unitary-reproducing-kernel}
For every \(n,d\in\mathbb N\) and every \(g\in\mathcal U_n\),
\[
        \mathbf k_{\mathcal P_d(\mathcal U_n)}(I,g)
        =
        \sum_{\mu\vdash d}
        \frac{n^{\ell(\mu)}}{z_\mu}
        \prod_{j=1}^{\ell(\mu)}
        \operatorname{tr}\bigl(g^{\mu_j}\bigr).
\]
Equivalently,
\[
        \mathbf k_{\mathcal P_d(\mathcal U_n)}(I,g)
        =
        \sum_{\substack{\lambda\vdash d\\ \ell(\lambda)\le n}}
        \dim S_\lambda(\mathbb C^n)\,
        \chi_{S_\lambda(\mathbb C^n)}(g)
        =
        \sum_{\substack{\lambda\vdash d\\ \ell(\lambda)\le n}}
        s_\lambda(1^n)\,
        s_\lambda(\operatorname{eigs}(g)).
\]
\end{theorem}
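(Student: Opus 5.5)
The plan is to assemble the kernel formula from three ingredients already in place: the Peter--Weyl decomposition of Theorem~\ref{thm:unitary-pw-polynomials}, the group-case machine (Theorem~\ref{thm:homogeneous-machine} and Corollary~\ref{cor:left-regular-machine}), and the symmetric-function identity of Lemma~\ref{lem:schur-power-sum-specialization}.

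First, Theorem~\ref{thm:unitary-pw-polynomials} gives
\[
\mathcal P_d(\mathcal U_n)=\bigoplus_{\lambda\vdash d,\ \ell(\lambda)\le n}\mathcal E_{\pi_\lambda^*}(\mathcal U_n),
\]
an orthogonal sum of full matrix-coefficient blocks. By additivity of reproducing kernels over orthogonal sums (Subsection~\ref{Reproducing kernels}), it suffices to compute the kernel slice of each block $\mathcal E_{\pi}(\mathcal U_n)$ at the base point $I$.

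Second, for a single block we use the left-regular computation in the proof of Corollary~\ref{cor:left-regular-machine}. There the stabilizer is trivial, so
\[
\mathbf k_{\mathcal E_\pi}(I,g)=d_\pi\,\chi^G_\pi(g)=d_\pi\,\overline{\chi_\pi(g)}.
\]
The kernel depends only on the subspace, not on which action is used, so this formula applies to the blocks appearing here. Taking $\pi=\pi_\lambda^*$ and using $d_{\pi_\lambda^*}=d_{\pi_\lambda}$ together with $\chi_{\pi_\lambda^*}=\overline{\chi_{\pi_\lambda}}$, each block contributes $s_\lambda(1^n)\,s_\lambda(\operatorname{eigs}(g))$. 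Summing over $\lambda$ yields the second (Schur) form of the statement, reproducing the computation displayed just before Lemma~\ref{lem:schur-power-sum-specialization}.

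Third, we apply Lemma~\ref{lem:schur-power-sum-specialization} with $x=\operatorname{eigs}(g)$. Since $p_r(\operatorname{eigs}(g))=\operatorname{tr}(g^r)$, we have
\[
p_\mu(\operatorname{eigs}(g))=\prod_{j=1}^{\ell(\mu)}\operatorname{tr}(g^{\mu_j}),
\]
which converts the Schur sum into the trace form.

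The main point needing care is the lemma itself, in particular restricting the Cauchy identity to finitely many variables. I would take $x$ with $n$ variables and $y=1^n$, so that $s_\lambda(x)=0$ whenever $\ell(\lambda)>n$; this makes the restriction $\ell(\lambda)\le n$ automatic, whereas the power-sum side runs over all $\mu\vdash d$. After that one extracts the degree-$d$ part in $x$ and checks the conjugation conventions, so that $\chi_\pi^G=\overline{\chi_\pi}$ and the contragredient produce $\chi_{\pi_\lambda}$ rather than its conjugate. This is consistent with a direct check at $d=1$: the holomorphic coordinates $u_{ij}$ give $\sum_{ij}\overline{u_{ij}(I)}u_{ij}(g)=\operatorname{tr}g$, matching $n\operatorname{tr}(g)$ up to the $L_2$ normalization $\|u_{ij}\|_2^2=1/n$.
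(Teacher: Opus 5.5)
Your proposal is correct and follows essentially the paper's own route. The orthogonal block decomposition of Theorem~\ref{thm:unitary-pw-polynomials}, combined with the machine's kernel slice $d_\pi\overline{\chi_\pi}$ (using $d_{\pi_\lambda^*}=d_{\pi_\lambda}$ and $\chi_{\pi_\lambda^*}=\overline{\chi_{\pi_\lambda}}$), gives the Schur form, and Lemma~\ref{lem:schur-power-sum-specialization} with $p_r(\operatorname{eigs}(g))=\operatorname{tr}(g^r)$ converts it to the trace form; your remark that the kernel depends only on the subspace and the Haar measure, which is the same for the left-regular and biregular actions, correctly justifies borrowing the left-regular computation.
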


\begin{proof}
The second formula was obtained above from
Theorems~\ref{thm:unitary-pw-polynomials} and ~\ref{thm:homogeneous-machine}. Then Lemma~\ref{lem:schur-power-sum-specialization}
 gives
\[
        \mathbf k_{\mathcal P_d(\mathcal U_n)}(I,g)
        =
        \sum_{\mu\vdash d}
        \frac{n^{\ell(\mu)}}{z_\mu}
        p_\mu(\operatorname{eigs}(g)).
\]
If \(x=(x_1,\ldots,x_n)=\operatorname{eigs}(g)\), then
\(
        p_r(x)=\sum_{i=1}^n x_i^r=\operatorname{tr}(g^r).
\)
Thus
\[
        p_\mu(\operatorname{eigs}(g))
        =
        \prod_{j=1}^{\ell(\mu)}
        \operatorname{tr}\bigl(g^{\mu_j}\bigr),
\]
which proves the trace formula.
\end{proof}

For orientation, the first cases are
\[
        \mathbf k_{\mathcal P_1(\mathcal U_n)}(I,g)
        =
        n\,\operatorname{tr}(g),\quad
        \mathbf k_{\mathcal P_2(\mathcal U_n)}(I,g)
        =
        \frac{n^2}{2}\operatorname{tr}(g)^2
        +
        \frac{n}{2}\operatorname{tr}(g^2),
\]
and
\[
        \mathbf k_{\mathcal P_3(\mathcal U_n)}(I,g)
        =
        \frac{n^3}{6}\operatorname{tr}(g)^3
        +
        \frac{n^2}{2}\operatorname{tr}(g)\operatorname{tr}(g^2)
        +
        \frac{n}{3}\operatorname{tr}(g^3).
\]

\subsubsection{The projection constant of
\(\mathcal P_d(\mathcal U_n)\) and its high-dimensional asymptotics}
\label{sec:unitary-high-dimensional-asymptotics}

We now combine the kernel formula with the biregular projection constant
machine. This gives an exact integral formula and, after normalization by
the square root of the dimension, a simple high-dimensional limit.

\begin{theorem}
\label{thm:unitary-projection-asymptotic}
For every \(n,d\in\mathbb N\),
\[
        \boldsymbol\lambda\bigl(\mathcal P_d(\mathcal L(\ell_2^n))\bigr)
        =
        \int_{\mathcal U_n}
        \Bigg|
        \sum_{\mu\vdash d}
        \frac{n^{\ell(\mu)}}{z_\mu}
        \prod_{j=1}^{\ell(\mu)}
        \operatorname{tr}\bigl(g^{\mu_j}\bigr)
        \Bigg|
        \,d\mu_{\mathcal U_n}(g).
\]
Moreover, for fixed \(d\),
\[
        \lim_{n\to\infty}
        \frac{
        \boldsymbol\lambda\bigl(\mathcal P_d(\mathcal L(\ell_2^n))\bigr)}
        {\sqrt{\binom{n^2+d-1}{d}}}
        =
        \frac{\Gamma\left(1+\frac d2\right)}{\sqrt{d!}}.
\]
Equivalently,
\[
        \lim_{n\to\infty}
        \frac{
        \boldsymbol\lambda\bigl(\mathcal P_d(\mathcal L(\ell_2^n))\bigr)}
        {n^d}
        =
        \frac{\Gamma\left(1+\frac d2\right)}{d!}.
        \]
\end{theorem}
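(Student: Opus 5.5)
The integral formula comes directly from the machinery already in place. By Subsection~\ref{subsub:operators-to-unitary-group}, restriction identifies \(\mathcal P_d(\mathcal L(\ell_2^n))\) isometrically with \(\mathcal P_d(\mathcal U_n)\). By Theorem~\ref{thm:unitary-pw-polynomials}, \(\mathcal P_d(\mathcal U_n)\) is a finite orthogonal sum of full biregular blocks \(\mathcal E_{\pi_\lambda^*}(\mathcal U_n)\). Corollary~\ref{cor:biregular-machine} (or Theorem~\ref{thm:accessible-projection-constant} with base point \(I\)) therefore gives
\[
\boldsymbol\lambda=\int_{\mathcal U_n}|\mathbf k_{\mathcal P_d(\mathcal U_n)}(I,g)|\,d\mu_{\mathcal U_n}(g).
\]
Theorem~\ref{thm:unitary-reproducing-kernel} supplies the trace expansion of the kernel, which yields the first claim.

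For the asymptotics, I would normalize by \(n^d\), so that
\[
n^{-d}\mathbf k(I,g)=\sum_{\mu\vdash d}\frac{n^{\ell(\mu)-d}}{z_\mu}\prod_j \operatorname{tr}(g^{\mu_j}).
\]
Only \(\mu=(1^d)\) has \(\ell(\mu)=d\), and \(z_{(1^d)}=d!\). Hence
\[
n^{-d}\mathbf k(I,g)=\frac{\operatorname{tr}(g)^d}{d!}+R_n(g),
\]
where \(R_n\) is a finite combination, with fixed coefficients, of the terms \(n^{\ell(\mu)-d}\prod_j\operatorname{tr}(g^{\mu_j})\) and \(\ell(\mu)\le d-1\). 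The moment bounds \(\mathbb E|\prod_j\operatorname{tr}(g^{\mu_j})|^2\) are uniformly bounded in \(n\). This follows from Diaconis--Shahshahani: for \(n\ge d\) the mixed moments coincide exactly with those of \(\prod_j\sqrt{\mu_j}Z_{\mu_j}\), with independent standard complex Gaussians \(Z_r\). Consequently \(\|R_n\|_{L_1}\le\|R_n\|_{L_2}=O(n^{-1})\to0\), and
\[
n^{-d}\boldsymbol\lambda=\frac{1}{d!}\mathbb E|\operatorname{tr}(g)|^d+o(1).
\]

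It remains to evaluate \(\lim_n\mathbb E|\operatorname{tr} g|^d\). By Diaconis--Shahshahani, \(\operatorname{tr}(g)\Rightarrow W\), a standard complex Gaussian with \(\mathbb E|W|^2=1\). Moreover \(\mathbb E|\operatorname{tr} g|^{2k}=k!\) for \(n\ge k\), so taking \(2k>d\) gives uniform integrability of \(|\operatorname{tr} g|^d\). Hence
\[
\mathbb E|\operatorname{tr} g|^d\to\mathbb E|W|^d=\Gamma(1+d/2)
\]
by \eqref{eq:complex-gaussian-moments}, which gives the \(n^d\)-normalized limit. The other form follows since \(\binom{n^2+d-1}{d}\sim n^{2d}/d!\), so \(\sqrt{\binom{n^2+d-1}{d}}\sim n^d/\sqrt{d!}\).

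The main obstacle is the uniform control of the lower-order terms together with the uniform integrability. Both rest on the exact moment identities of Diaconis--Shahshahani, which require \(n\) large relative to the total degree. I would cite their theorem in the form
\[
\mathbb E\prod_j \operatorname{tr}(g^{j})^{a_j}\overline{\operatorname{tr}(g^{j})^{b_j}}=\delta_{a,b}\prod_j j^{a_j}a_j!\qquad\text{for } n\ge\sum_j j a_j.
\]
Applying it with total degree up to \(d\) for the \(L_2\) bounds on \(R_n\), and with \(2k\) for the uniform integrability, makes both steps immediate.
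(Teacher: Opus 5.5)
Your proposal is correct and follows essentially the same route as the paper. You obtain the integral formula from the isometric restriction to $\mathcal U_n$, Theorem~\ref{thm:unitary-pw-polynomials}, the biregular machine and the trace expansion of the kernel. You then isolate the $(1^d)$ term after normalizing by $n^d$, control the remainder through the Diaconis--Shahshahani mixed moments, and get uniform integrability from the exact moments $\mathbb E|\operatorname{tr}U_n|^{2k}=k!$. The only cosmetic differences are that you bound $\|R_n\|_{L_2}$ as a whole rather than applying Cauchy--Schwarz term by term in $L_1$, and that you use a moment of order $2k>d$ instead of $2d$.
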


\begin{proof}
The restriction map identifies
\[
        \mathcal P_d(\mathcal L(\ell_2^n))
        \quad\text{isometrically with}\quad
        \mathcal P_d(\mathcal U_n).
\]
By Theorem~\ref{thm:unitary-pw-polynomials}, the space
\(\mathcal P_d(\mathcal U_n)\) is a finite sum of biregular
Peter--Weyl blocks. Hence it is accessible, and the biregular machine
gives
\[
        \boldsymbol\lambda\bigl(\mathcal P_d(\mathcal L(\ell_2^n))\bigr)
        =
        \int_{\mathcal U_n}
        \left|
        \mathbf k_{\mathcal P_d(\mathcal U_n)}(I,g)
        \right|
        \,d\mu_{\mathcal U_n}(g).
\]
Inserting the kernel formula from
Theorem~\ref{thm:unitary-reproducing-kernel} gives the asserted integral
formula.

We now prove the asymptotic formula. Let \(U_n\) be Haar distributed on
\(\mathcal U_n\). Dividing the preceding identity by \(n^d\), we get
\[
        \frac{
        \boldsymbol\lambda\bigl(\mathcal P_d(\mathcal L(\ell_2^n))\bigr)}
        {n^d}
        =
        \mathbb E
        \Bigg|
        \sum_{\mu\vdash d}
        \frac{n^{\ell(\mu)-d}}{z_\mu}
        \prod_{j=1}^{\ell(\mu)}
        \operatorname{tr}\bigl(U_n^{\mu_j}\bigr)
        \Bigg|.
\]
The leading contribution comes from the partition
$1^d=(1,\ldots,1)$.
For this partition,
$\ell(1^d)=d$ and $z_{1^d}=d!$,
                hence its contribution is
\[
        \frac1{d!}\bigl(\operatorname{tr}U_n\bigr)^d.
\]
Thus
\[
        \sum_{\mu\vdash d}
        \frac{n^{\ell(\mu)-d}}{z_\mu}
        \prod_{j=1}^{\ell(\mu)}
        \operatorname{tr}\bigl(U_n^{\mu_j}\bigr)
        =
        \frac1{d!}\bigl(\operatorname{tr}U_n\bigr)^d
        +
        R_n,
\]
where
\[
        R_n
        :=
        \sum_{\substack{\mu\vdash d\\ \mu\neq 1^d}}
        \frac{n^{\ell(\mu)-d}}{z_\mu}
        \prod_{j=1}^{\ell(\mu)}
        \operatorname{tr}\bigl(U_n^{\mu_j}\bigr).
\]
We claim that \(R_n\to0\) in \(L^1\). For every fixed partition
\(\mu\vdash d\), the mixed-moment formula of Diaconis--Shahshahani
\cite[Theorem~2]{diaconis1994eigenvalues} gives, for all sufficiently
large \(n\),
\[
        \mathbb E
        \left|
        \prod_{j=1}^{\ell(\mu)}
        \operatorname{tr}\bigl(U_n^{\mu_j}\bigr)
        \right|^2
        =
        \prod_{r\geq1}r^{m_r(\mu)}m_r(\mu)!
        =
        z_\mu.
\]
Since \(d\) is fixed, the same threshold for \(n\) can be used for all
partitions of \(d\). By the Cauchy--Schwarz inequality and
\(\ell(\mu)\leq d-1\) for \(\mu\ne1^d\), we therefore obtain
\[
        \|R_n\|_1
        \leq
        \sum_{\substack{\mu\vdash d\\\mu\ne1^d}}
        \frac{n^{\ell(\mu)-d}}{\sqrt{z_\mu}}
        =
        O_d\left(\frac1n\right).
\]
In particular, \(\|R_n\|_1\to0\). Using
\(
        \bigl||a+b|-|a|\bigr|\leq |b|,
\)
we obtain
\[
        \left|
        \frac{
        \boldsymbol\lambda\bigl(\mathcal P_d(\mathcal L(\ell_2^n))\bigr)}
        {n^d}
        -
        \frac1{d!}
        \mathbb E
        \left|
        \operatorname{tr}(U_n)
        \right|^d
        \right|
        \longrightarrow0.
\]
By \cite[Theorem~1]{diaconis1994eigenvalues},
\[
        \operatorname{tr}(U_n)\Longrightarrow W,
\]
where \(W\) is a standard complex Gaussian random variable, normalized
by \(\mathbb E|W|^2=1\). The mixed-moment formula also gives
\[
        \mathbb E|\operatorname{tr}(U_n)|^{2d}=d!
\]
for all sufficiently large \(n\). The remaining finitely many moments
are finite because \(|\operatorname{tr}(U_n)|\leq n\). Thus
\[
        \sup_n\mathbb E|\operatorname{tr}(U_n)|^{2d}<\infty,
\]
so \(\bigl(|\operatorname{tr}(U_n)|^d\bigr)_n\) is uniformly
integrable. Therefore
\[
        \lim_{n\to\infty}
        \mathbb E
        \left|
        \operatorname{tr}(U_n)
        \right|^d
        =
        \mathbb E|W|^d.
\]
As mentioned in \eqref{eq:complex-gaussian-moments},
\[
        \mathbb E|W|^d
        =
        \Gamma\left(1+\frac d2\right).
\]
Hence
\[
        \lim_{n\to\infty}
        \frac{
        \boldsymbol\lambda\bigl(\mathcal P_d(\mathcal L(\ell_2^n))\bigr)}
        {n^d}
        =
        \frac{\Gamma\left(1+\frac d2\right)}{d!},
\]
which is the equivalent formulation. Since the operator space has
complex dimension \(n^2\), its degree-\(d\) polynomial space has
dimension
\[
        \binom{n^2+d-1}{d}.
\]
Applying \eqref{DIMO1} with \(n^2\) in place of \(n\) gives
\[
        \sqrt{\binom{n^2+d-1}{d}}
        =
        \frac{n^d}{\sqrt{d!}}
        \left(1+O_d\left(\frac1{n^2}\right)\right),
\]
which proves the normalized limit.
\end{proof}

\begin{remark} For \(d=1,2,3\), the limiting constants $\Gamma\big(1+\frac d2\big)/\sqrt{d!}$ are, respectively, \(\sqrt{\pi}/2\), \(1/\sqrt{2}\), and \(\sqrt{6\pi}/8\). \end{remark}

\section*{Acknowledgments}

The second author would like to thank Alain Valette, who approached him after
his talk at the BIRS--CMO Workshop ``Geometric Nonlinear Functional Analysis''
in Oaxaca and subsequently shared with him helpful notes on accessibility and
minimal projections. At that time, the authors were already studying
projection constants of function spaces on compact homogeneous spaces through
representation-theoretic methods, and Valette's notes helped clarify the role
of isotypic components and their relation to accessibility.

The authors used ChatGPT-5.6 Sol for language editing, stylistic revision,
and general consultation during the preparation of this manuscript.

\newcommand{\etalchar}[1]{$^{#1}$}

\end{document}